\documentclass[leqno]{amsart}
\usepackage{stmaryrd,graphicx}
\usepackage{amssymb,mathrsfs,amsmath,amscd,amsthm,color}
\usepackage{float}
\usepackage[all,cmtip]{xy}
\DeclareMathAlphabet{\mathpzc}{OT1}{pzc}{m}{it}
\usepackage{amsfonts,latexsym,wasysym,tipa}
\usepackage{quiver}
\newcommand{\im}{\operatorname{Im}}
\newcommand{\lchord}{\llbracket}
\newcommand{\rchord}{\rrbracket}
\renewcommand{\int}{\operatorname{int}}
\newcommand{\lpc}{\operatorname{{\bf lpc}}}
\newcommand{\hull}{\operatorname{hull}}

\newcommand{\ds}{\displaystyle}
\newcommand{\ui}{[0,1]}

\newcommand{\wtd}{\widetilde{d}}
\newcommand{\tX}{\widetilde{X}}

\newcommand{\lb}{\langle}
\newcommand{\rb}{\rangle}

\newcommand{\scrr}{\mathscr{R}}
\newcommand{\scrp}{\mathscr{P}}
\newcommand{\scrq}{\mathscr{Q}}

\newcommand{\mcn}{\mathcal{N}}
\newcommand{\mco}{\mathcal{O}}
\newcommand{\mcp}{\mathcal{P}}

\newcommand{\mcu}{\mathcal{U}}

\newcommand{\scrc}{\mathscr{C}}

\newcommand{\scrf}{\mathscr{F}}
\newcommand{\scrg}{\mathscr{G}}

\newcommand{\scru}{\mathscr{U}}

\newcommand{\scrs}{\mathscr{S}}

\newcommand{\bbd}{\mathbb{D}}

\newcommand{\bbn}{\mathbb{N}}

\newcommand{\bbr}{\mathbb{R}}

\newcommand{\bbz}{\mathbb{Z}}

\newcommand{\tree}{\mathbf{Tr}}
\newcommand{\pc}{\mathbf{Pc}}

\newcommand{\mcs}{\mathcal{S}}
\newcommand{\ov}{\overline}
\newcommand{\wt}{\widetilde}

\DeclareMathOperator{\diam}{diam}

\newtheorem{theorem}{Theorem}[section]
\newtheorem{lemma}[theorem]{Lemma}
\newtheorem{proposition}[theorem]{Proposition}
\newtheorem{claim}[theorem]{Claim}

\newtheorem{corollary}[theorem]{Corollary}
\theoremstyle{definition}\newtheorem{definition}[theorem]{Definition}
\newtheorem{example}[theorem]{Example}

\newtheorem{remark}[theorem]{Remark}

\title{Fundamental Groups of Disjointly Tree-Graded Spaces}

\author[J. Brazas]{Jeremy Brazas}
\address{West Chester University\\ Department of Mathematics\\
West Chester, PA 19383, USA}
\email{jbrazas@wcupa.edu}

\author[C. Kent]{Curtis Kent}
\address{Brigham Young University\\ Department of Mathematics \\
Provo, UT 84602, USA}
\email{curtkent@math.byu.edu}

\subjclass[2010]{Primary 55Q52; Secondary 20F34, 57M10, 20F69, 55P55}
\keywords{disjointly tree-graded space, tree-graded space, $\mathbb{R}$-tree, fundamental group, generalized covering map}
\date{\today}

\begin{document}

\begin{abstract}
Tree-graded spaces are a generalization of $\mathbb{R}$-trees and play an important role in describing the large-scale geometry of relatively hyperbolic groups. We consider a subclass of tree-graded spaces that we call \emph{disjointly tree-graded spaces}, determined by maps to $\mathbb{R}$-trees. We characterize the fundamental group of a disjointly tree-graded space $(X,\mathscr{P})$ in terms of the fundamental groups of its pieces. Our results apply even in cases where neither $X$ nor its pieces are locally simply connected. In particular, we show that if the pieces are uniformly $1$-$UV_0$, then the fundamental group of a disjointly tree-graded space embeds into the inverse limit of the free products of the fundamental groups of finitely many pieces.
\end{abstract}

\maketitle

\tableofcontents

\section{Introduction}

The concept of a ``tree-graded space" was introduced by Dru\textsubrhalfring{t}u and Sapir in \cite{DrutuSapir} as a generalization of $\bbr$-trees that allows one to characterize the large scale geometry of relatively hyperbolic groups. A complete geodesic metric space $X$ is said to be \textit{tree-graded} with respect to a collection $\scrp$ of closed geodesic subsets of $X$ (called \textit{pieces}) if (1) any two distinct pieces meet in at most one point and (2) every simple closed curve (equivalently, any geodesic triangle) in $X$ is contained in one piece. In a tree-graded space, pieces are arranged in a tree-like fashion so that an injective path starting and ending in distinct pieces must pass through a unique linearly ordered family of pieces and each connected subspace of $X$ that is disjoint from $\bigcup\scrp$ is an $\bbr$-tree. The use of tree-graded spaces in geometric group theory stems from the fact that, for a group, the property of all asymptotic cones having non-trivial tree-gradings is a weak form of hyperbolicity. In \cite{HKflats}, it is shown that a $CAT(0)$ group has isolated flats if and only if all its asymptotic cones are tree-graded with pieces isometric to euclidean space. This is a special case of the general results that a finitely generated group is hyperbolic relative to a collection of subgroups if and only if all of its asymptotic cones are tree-graded relative to limits of the peripheral subgroups \cite[Theorem 8.5]{DrutuSapir}.

Fundamental groups of tree-graded spaces play an important role in their applications within geometric group theory. In \cite[Proposition 2.2]{DrutuSapir} it is proved that if the pieces $\scrp$ of $X$ are locally uniformly contractible, then $\pi_1(X)$ is isomorphic to the free product of the fundamental groups of the pieces. The hypothesis ``locally uniformly contractible" restricts one to considering tree-graded spaces where all of the pieces are all large and tame, excluding situations where null-sequences of essential loops may appear. In this paper, we develop tools for understanding fundamental groups of tree-graded spaces where pieces are allowed to have arbitrarily small diameters. Note this situation can occur when considering asymptotic cones of relatively hyperbolic groups.  For example, any finitely generated group that is hyperbolic relative to a Baumslag-Solitar group (with $|p|\neq |q|$, where the relation is $ta^pt^{-1} = a^q$) will have pieces which are not locally simply connected, since Burillo \cite{Burillo1999} showed that the asymptotic cones of Baumslag-Solitar groups have $\pi_1$-essentially embedded infinite earrings.  In fact, Theorem B and Corollary C of \cite{ConnerKent2014} give a condition for a group to have asymptotic cones that are not locally simply connected  and some natural examples satisfying this condition. Additionally, many lacunary hyperbolic groups can by tree-graded by pieces which are not locally simply connected by applying techniques from \cite{OlshanskiiOsinSapir, ThomasVelickovic}.

In the current paper, we loosen some of the geometric restrictions typically imposed on the underlying space $X$ of a tree-graded space $(X,\scrp)$. For instance, we only require that $X$ be a path-connected, locally path-connected, metrizable space. On the other hand, we impose more restrictions on what we permit to be called a ``piece." Making this restriction is important to achieving our goal of characterizing the fundamental group of tree-graded space in terms of the fundamental groups of it's pieces. For instance, an $\bbr$-tree $T$ is tree-graded with respect to the collection of one-point subsets $\{t\}$, $t\in T$ but this decomposition is redundant since $T$ is simply connected. Extending this thought, we typically wish to avoid calling a one-point set $\{x\}$ a piece if it lies in an open subspace of $X$ that is an $\bbr$-tree. To this end, we consider tree-graded spaces $(X,\scrp)$ where the set $\scrp$ of pieces satisfies the following additional conditions: 
\begin{enumerate}
\item Pieces are the path-components of the union $\pc(X)=\bigcup\scrp$ of the pieces;
\item The set $\pc(X)$ is closed in $X$.
\end{enumerate}
We call $\scrp$ a \textit{disjoint tree-grading} on $X$ and we refer to the pair $(X,\scrp)$ as a \textit{disjointly tree-graded space}. In such a space, the set $\tree(X)=X\backslash \pc(X)$ is a disjoint union of open $\bbr$-trees that we call the \textit{tree-portion} of $X$ (see Figure \ref{fig1}). A structural advantage of disjoint tree-gradings is that distinct pieces are always separated by some component of the tree-portion of $X$ and thus it is possible to collapse pieces from a subset $\scrq\subseteq \scrp$ without affecting the pieces in $\scrp\backslash\scrq$. See Remark \ref{comparisonremark} for a detailed comparison of our definition with Dru\textsubrhalfring{t}u and Sapir's definition.

\begin{figure}[t]
\centering \includegraphics[height=2.3in]{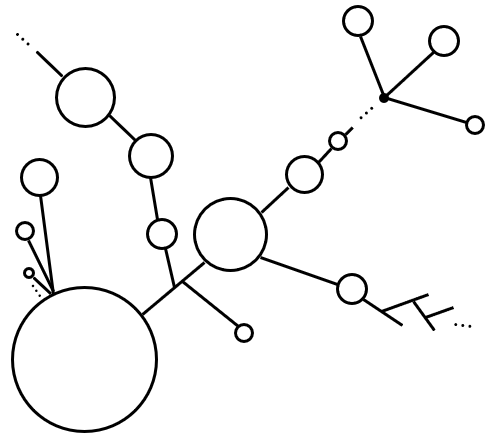}
\caption{\label{fig1}A disjointly tree-graded space where the pieces are illustrated as circles or enlarged points.}
\end{figure}


Our definition is also motivated by an apparent relevance to fundamental groups of Peano continua. If $X$ is a Peano continuum and a proper closed subspace $A\subseteq X$ is such that the quotient $X/A$ is one-dimensional, then it is known that $X/A$ admits a generalized universal covering map $p:E\to X/A$ in the sense of Fischer and Zastrow \cite{FZ07} where $E$ is an $\bbr$-tree. In forthcoming work, the authors prove that the pullback $p':E'\to X$ of $p$ along the quotient map $q:X\to X/A$ is a generalized covering map where $E'$ inherits the structure of a disjointly tree-graded space with pieces homeomorphic to the path-components of $A$. Intuitively, $E'$ is the result of ``unwinding" exactly the subspace $X\backslash A$ (assumed to be one-dimensional) into $\bbr$-tree components so that paths in $X$ lift uniquely to $E'$ relative to a chosen starting point.

When a space $X$ does not have a universal cover it is common to apply shape-theoretic methods wherein one detects the non-triviality of fundamental group elements using maps from $X$ to spaces from some special class. For instance, a space $X$ is said to be \textit{$\pi_1$-shape injective} if for every essential loop $\alpha:S^1\to X$, there exists a polyhedron $K$ and a map $f:X\to K$ such that $f\circ \alpha:S^1\to K$ is essential, that is, if maps to polyhedra detect all fundamental group elements. Spaces that are $\pi_1$-shape injective include one-dimensional metric spaces \cite{CConedim,EK98}, planar sets \cite{FZ07}, trees of manifolds \cite{FG05}, shrinking wedges of CW-complexes \cite{MM}, and many other spaces. Our overall approach is similar in spirit to $\pi_1$-shape injectivity. We detect non-trivial fundamental group elements in disjointly tree-graded spaces using maps to disjointly tree-graded spaces with finitely many pieces. However, our approach is quite a bit more general since we do not require pieces of our disjointly tree-graded spaces to be polyhedra, have tame fundamental groups, or even be $\pi_1$-shape injective themselves.

If $\scrp$ is a disjoint tree-grading on $X$ and $\scrf\subseteq \scrp$ is a subset, then there is a disjointly tree-graded space $X_{\scrf}$ and a metric quotient map $\Gamma_{\scrf}:X\to X_{\scrf}$ where each piece not in $\scrf$ is collapsed to a one-point piece of $X_{\scrf}$ (see Theorem \ref{quotientmetrictheorem}). Our main result allows us to detect the non-triviality of elements of the fundamental group by projecting to such metric quotients with finitely many pieces. Our only hypothesis is that small contractible loops in $\pc(X)$ may be contracted using relatively small null-homotopies, i.e. that $\pc(X)$ satisfies the uniformly $1$-$UV_0$ property (Definition \ref{uniformoneuvzerodef}). Note that this hypothesis permits the existence of null-sequences of essential loops.

\begin{theorem}\label{mainthm1}
Let $(X,\scrp)$ be a disjointly tree-graded space where $\pc(X)$ is uniformly $1$-$UV_0$. Then a loop $\alpha:S^1\to X$ is essential if and only if there exists a finite set of pieces $\scrf\subseteq \scrp$ such that $\Gamma_{\scrf}\circ \alpha:S^1\to X_{\scrf}$ is essential.
\end{theorem}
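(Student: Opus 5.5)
The ``if'' direction is immediate: if $\alpha$ were null-homotopic in $X$, composing a null-homotopy with the map $\Gamma_{\scrf}$ would show that $\Gamma_{\scrf}\circ\alpha$ is null-homotopic in $X_{\scrf}$. So the content is the ``only if'' direction, which I would prove by contraposition. Assume $\Gamma_{\scrf}\circ\alpha$ is null-homotopic in $X_{\scrf}$ for every finite $\scrf\subseteq\scrp$; we must contract $\alpha$ in $X$.

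\emph{Step 1: the loop meets only countably many pieces, and only finitely many of them are large.} Since $\alpha$ is uniformly continuous and distinct pieces are separated by components of $\tree(X)$, I expect that for each $\epsilon>0$ only finitely many pieces $P$ have the following property: $\alpha$ enters $P$ along a subpath that is not homotopic rel endpoints into a set of diameter less than $\epsilon$. Here I would use the tree-like arrangement: a subpath leaving and returning to a piece must backtrack through the $\bbr$-tree portion. Fix a sequence $\epsilon_n\to 0$ and take the corresponding finite sets $\scrf_1\subseteq\scrf_2\subseteq\cdots$ of ``$\epsilon_n$-large'' pieces.

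\emph{Step 2: normal form of $\alpha$ relative to a finite $\scrf$.} For finite $\scrf$, the space $X_{\scrf}$ is tree-graded by finitely many nontrivial pieces, together with point-pieces sitting in an $\bbr$-tree. Its fundamental group should be the free product $\ast_{P\in\scrf}\pi_1(P)$, by a van Kampen / Bass--Serre type argument adapted to the tree-graded structure; I would derive this from the earlier structural results. In particular, $\Gamma_{\scrf}\circ\alpha$ is null-homotopic exactly when the following reduction terminates. Cut $\alpha$ into finitely many subpaths, each either lying in a piece of $\scrf$ or mapping into the tree-like complement. Then repeatedly (i) cancel backtracking tree-like segments and (ii) replace a subloop inside a single piece by a constant path whenever it is null-homotopic in that piece. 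I would then lift this reduction back to $X$. The segments collapsed in $X_{\scrf}$ lie in the preimage, which is $X$ with the non-$\scrf$ pieces restored. The backtracking subloops there are loops in a tree-graded space whose pieces are small, namely pieces not in $\scrf_n$.

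\emph{Step 3: build the null-homotopy in $X$ as a limit.} At stage $n$ we obtain a homotopy $H_n$ from $\alpha$ to a loop $\alpha_n$. The loop $\alpha_n$ consists of pieces that are small (diameter $\lesssim\epsilon_n$) together with tree segments that retract along geodesics in the $\bbr$-tree components. The uniform $1$-$UV_0$ hypothesis on $\pc(X)$ lets us contract each small contractible subloop inside a piece by a homotopy of diameter $\delta(\epsilon_n)\to0$. Assembling the stage-$n$ corrections on nested subrectangles of $[0,1]^2$, in the usual infinite-concatenation fashion, gives a continuous null-homotopy: continuity at the limit points follows because the stage-$n$ modifications have diameter tending to $0$.

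The main obstacle is this last step. We must ensure that the subloops remaining inside the small pieces are actually null-homotopic in their pieces, and not merely small. The uniform $1$-$UV_0$ condition applies only to small loops that are already contractible, or, as I would try to read Definition \ref{uniformoneuvzerodef}, small loops contractible within a slightly larger neighborhood. I would handle this by showing that any such subloop is a backtracking piece of $\alpha$ that gets cancelled at a later stage $m$, once its piece enters $\scrf_m$. The hypothesis for $\scrf_m$ then certifies that the subloop is trivial in $\pi_1(P)$. I would also need to check that the homotopies chosen at different stages are compatible, so that the nested construction is well defined.
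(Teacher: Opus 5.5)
Your proposal has a genuine gap, and it sits exactly at the point you flag as ``compatibility.'' Nothing in the argument makes the reductions chosen at different stages cohere, and the mechanism you offer for the leftover subloops does not work.

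The reduction of $\Gamma_{\scrf_n}\circ\alpha$ in $X_{\scrf_n}$ involves choices: which visits to a piece are grouped together, and which excursions are cancelled as tree backtracking. These choices decide which loops in the collapsed pieces become holes of your partial null-homotopy in $X$. The hypothesis at a later stage $m$ only says that $\Gamma_{\scrf_m}\circ\alpha$ admits \emph{some} null-homotopy, possibly with an incompatible grouping. So it cannot certify that the particular holes left at stage $n$ are trivial.

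Here is a concrete instance. Take $X=P\cup\gamma\cup Q$, where $\gamma$ is an arc of the tree portion from $x\in P$ to $y\in Q$, with $\scrf_1=\{P\}$ and $\scrf_2=\{P,Q\}$. Let $a$ be a loop in $P$ at $x$, let $c$ be an essential loop in $Q$ at $y$, and put $e=\gamma\cdot c\cdot\gamma^{-1}$. The loop $\alpha=a\cdot e\cdot(a^{-1}\cdot a)\cdot e^{-1}\cdot a^{-1}$ is null-homotopic in $X$.
\begin{itemize}
\item Your Step~2 at stage $1$ cancels $\Gamma_{\scrf_1}\circ e$ and $\Gamma_{\scrf_1}\circ e^{-1}$ separately, since each is a loop in an arc once $Q$ is collapsed. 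It then contracts $a\,a^{-1}a\,a^{-1}$ in $P$.
\item Lifted to $X$, this leaves two holes bounded by $e$ and by $e^{-1}$. Each is essential in $X$ (apply the retraction $r_Q$).
\item The stage-$2$ null-homotopy pairs $e$ with $e^{-1}$ after cancelling $a^{-1}a$, so it says nothing about these holes.
\end{itemize}
Likewise, the homotopy $H_n$ of Step~3 cannot be produced from stage-$n$ information alone. For $a$ essential, $a\cdot e\cdot a^{-1}\cdot e^{-1}$ satisfies the stage-$1$ hypothesis but is essential in $X$. In the finite case you could just work at the last stage. With infinitely many stages, however, you need choices compatible with all later stages simultaneously, and you give no canonical normal form or compactness argument that would produce them. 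That is the heart of the theorem, not a bookkeeping check. (Also, in Step~2 you cannot cut $\alpha$ into finitely many subpaths without first homotoping in $X_{\scrf}$, since $\alpha$ may enter a piece of $\scrf$ infinitely often, and that homotopy does not lift to $X$.)

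The missing idea is how the paper obtains coherence for free. It first reduces to countably many non-degenerate pieces, using Lemma~\ref{expansionofpclemma} and the retraction $X_{\scrf}\to Y_{\scrf}$. Then, in Theorem~\ref{shapeinjectivethm}, it never builds null-homotopies out of the quotients.
\begin{itemize}
\item Each $X_{\scrf_n}$ is $1$-$UV_0$, so it has a generalized universal covering $p_n:\tX_n\to X_{\scrf_n}$.
\item The inverse limit of these coverings, pulled back along $F_{\infty}:X\to X_{\infty}$, is a generalized covering $p:\tX\to X$. Its pieces are simply connected and $\pc(\tX)$ is uniformly $1$-$UV_0$.
\item Unique path lifting forces the closed lifts of the inessential loops $\Gamma_{\scrf_n}\circ\alpha$ to be compatible, so $\alpha$ lifts to a closed loop in $\tX$.
\item $\tX$ is simply connected by Lemma~\ref{simplyconnectedlemma}. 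The tree decomposition of Lemma~\ref{kextensionlemma} produces holes with boundaries in pieces and null diameters. Because every piece of $\tX$ is simply connected, every hole can be filled via Lemma~\ref{extensionlemma} whatever grouping arose, so nothing has to be coordinated across stages.
\end{itemize}
Your uniform $1$-$UV_0$ filling step matches this final part. It only works once the pieces are simply connected, or once a single globally coherent grouping is known, and that is precisely what the covering-space construction supplies.
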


Example \ref{haexample} shows that Theorem \ref{mainthm1} can fail if $\pc(X)$ is not uniformly $1$-$UV_0$. The proof of Theorem \ref{mainthm1} involves many steps with some parts inspired by the proofs of $\pi_1$-shape injectivity referenced above. An immediate consequence of Theorem \ref{mainthm1} is that we are able to identify the fundamental group of a disjointly-tree graded space as a subgroup of an inverse limit of finite free products of the fundamental groups of the pieces of $X$. To state this precisely, we let $Fin(\scrp)$ be the set of finite subsets of $\scrp$ directed by subset inclusion. Whenever $\scrf\subseteq \scrf'$ in $Fin(\scrp)$, there exists a metric quotient map $\Gamma_{\scrf',\scrf}:X_{\scrf'}\to X_{\scrf}$ collapsing each piece in $\scrf'\backslash\scrf$ to a one-point piece (Remark \ref{levelmapremark}). If $x_0\in X$ and $x_{\scrf}=\Gamma_{\scrf}(x_0)$ for each finite set $\scrf\subseteq \scrp$, we obtain an inverse system with group homomorphism bonding maps $\Gamma_{\scrf',\scrf\#}:\pi_1(X_{\scrf'},x_{\scrf'})\to \pi_1(X_{\scrf},x_{\scrf})$. The metric quotient maps $\Gamma_{\scrf}:X\to X_{\scrf}$, $\scrf\in Fin(\scrp)$ induce a canonical homomorphism $\Phi:\pi_1(X,x_0)\to \varprojlim_{\scrf\in Fin(\scrp)}\pi_1(X_{\scrf},x_{\scrf})$ to the limit given by $\Phi(g)=(\Gamma_{\scrf\#}(g))_{\scrf}$.

\begin{corollary}\label{injectivelimitcor}
Let $(X,\scrp)$ be a disjointly tree-graded space for which $\pc(X)$ is uniformly $1$-$UV_0$. Then the canonical homomorphism \[\Phi:\pi_1(X,x_0)\to\varprojlim_{\scrf\in Fin(\scrp)}\pi_1\left(X_{\scrf},x_{\scrf}\right)\] defined above is injective.
\end{corollary}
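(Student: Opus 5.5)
Since $\Phi$ is a homomorphism, we only need to show its kernel is trivial, and this follows directly from Theorem \ref{mainthm1} applied to based loops. Let $g\in\pi_1(X,x_0)$ with $\Phi(g)=1$, and represent $g$ by a loop $\alpha:[0,1]\to X$ based at $x_0$. Regard $\alpha$ as a map $S^1\to X$. Suppose, for contradiction, that $g\neq 1$. A based loop is null-homotopic rel basepoint if and only if the corresponding free loop $S^1\to X$ is null-homotopic. So $\alpha$ is essential as a free loop, and Theorem \ref{mainthm1} provides a finite set $\scrf\in Fin(\scrp)$ such that $\Gamma_{\scrf}\circ\alpha:S^1\to X_{\scrf}$ is essential.

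Now look at the $\scrf$-coordinate of $\Phi(g)$, namely $\Gamma_{\scrf\#}(g)=[\Gamma_{\scrf}\circ\alpha]\in\pi_1(X_{\scrf},x_{\scrf})$. This is a based loop at $x_{\scrf}=\Gamma_{\scrf}(x_0)$. By the same fact as above, it is trivial in $\pi_1(X_{\scrf},x_{\scrf})$ exactly when the free loop $\Gamma_{\scrf}\circ\alpha$ is null-homotopic. Since that free loop is essential, $\Gamma_{\scrf\#}(g)\neq 1$. This contradicts $\Phi(g)=1$, which requires every coordinate to be trivial. Hence $\ker\Phi$ is trivial and $\Phi$ is injective.

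A few routine points need checking. First, $\Phi$ really lands in the inverse limit: $\Gamma_{\scrf',\scrf}\circ\Gamma_{\scrf'}=\Gamma_{\scrf}$ by Remark \ref{levelmapremark}, and we may take this as given, since the statement says the map is "defined above". Second, $\Phi$ is a homomorphism, because each $\Gamma_{\scrf\#}$ is. Third, we use the standard fact that based and free null-homotopy agree for loops. No step is a real obstacle, because all the substantive work is contained in Theorem \ref{mainthm1}. The only care needed is the translation between essential free loops in $S^1$ and nontrivial based classes, which is the standard fact just cited.
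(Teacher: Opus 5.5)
Your argument is correct and is the same as the paper's, which presents this corollary as an immediate consequence of Theorem~\ref{mainthm1}: a nontrivial $g$ is represented by an essential loop $\alpha$, and Theorem~\ref{mainthm1} supplies a finite $\scrf$ with $\Gamma_{\scrf}\circ\alpha$ essential, so the $\scrf$-coordinate $\Gamma_{\scrf\#}(g)$ of $\Phi(g)$ is nontrivial. The step that needed care was passing between based classes and free essential loops, and you handle it correctly.
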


For each $\scrf\in Fin(\scrp)$, the group $\pi_1\left(X_{\scrf},x_{\scrf}\right)$ is (non-canonically) isomorphic to the finite free product $\ast_{P\in\scrf}\pi_1(P,x_P)$ for some choice of basepoints $x_P\in P$ (see Corollary \ref{isocor2}). Hence, Corollary \ref{injectivelimitcor} successfully characterizes non-trivial elements of $\pi_1(X,x_0)$ in terms of the fundamental groups of the pieces of $X$. We also note that a disjointly tree-graded space with finitely many non-degenerate pieces all of which are polyhedra is homotopy equivalent to a polyhedron. Hence, the following is a straightforward consequence of Theorem \ref{mainthm1}.

\begin{corollary}
If $(X,\scrp)$ is a disjointly tree-graded space for which $\pc(X)$ is uniformly $1$-$UV_0$ and for which every piece $P\in\scrp$ is a polyhedron, then $X$ is $\pi_1$-shape injective.
\end{corollary}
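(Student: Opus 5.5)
The corollary follows from Theorem \ref{mainthm1} together with the observation that a disjointly tree-graded space with finitely many non-degenerate pieces, all of them polyhedra, is homotopy equivalent to a polyhedron. Let $\alpha:S^1\to X$ be an essential loop. By Theorem \ref{mainthm1}, which applies because $\pc(X)$ is uniformly $1$-$UV_0$, there is a finite set $\scrf\subseteq\scrp$ such that $\Gamma_{\scrf}\circ\alpha:S^1\to X_{\scrf}$ is essential. It then suffices to produce a polyhedron $K$ and a map $h:X_{\scrf}\to K$ that is injective on $\pi_1$, for instance a homotopy equivalence. With $f=h\circ\Gamma_{\scrf}:X\to K$, the loop $f\circ\alpha$ is essential, which is exactly $\pi_1$-shape injectivity.

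To build $K$, I describe $X_{\scrf}$ using the structure results from earlier in the paper. Its non-degenerate pieces are exactly the pieces $P\in\scrf$, each isometric/homeomorphic to the original polyhedron $P$. Every other piece is a single point, and collapsing it leaves the tree-like structure unchanged. After collapsing the finitely many pieces in $\scrf$ to points, $X_{\scrf}$ therefore becomes an $\bbr$-tree. More directly, $X_{\scrf}$ is obtained from an $\bbr$-tree $T$ by attaching the finitely many polyhedra $P\in\scrf$, where each $P$ meets the rest of the space in a closed set that the tree structure separates.

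The key step is to show that $X_{\scrf}$ deformation retracts, or is at least homotopy equivalent, to a finite wedge-like union $K$. Take $K$ to be the union of the pieces in $\scrf$ together with the geodesic arcs in the tree portion joining them. This gives a finite tree of polyhedra joined by arcs, which is a polyhedron. The deformation retraction $X_{\scrf}\to K$ is the nearest-point projection onto the closed convex subset $K$; in tree-graded spaces, projections onto convex subsets containing the relevant pieces are continuous. The homotopy slides points along the unique arcs of the tree portion.

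Of course $\Gamma_{\scrf}\circ\alpha$ might already lie partly outside $K$. This does not matter, because a homotopy equivalence preserves essentiality of loops. Alternatively one can invoke Corollary \ref{isocor2}: $\pi_1(X_{\scrf})\cong\ast_{P\in\scrf}\pi_1(P)\cong\pi_1(K)$, with the isomorphism induced by the retraction.

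The main obstacle is verifying continuity of this retraction when the tree portion is not locally finite. The arcs from each point to $K$ must vary continuously, and I expect to use the $\bbr$-tree geometry of $X_{\scrf}$ with the pieces of $\scrf$ collapsed to check this.
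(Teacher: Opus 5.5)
Your argument is the paper's: apply Theorem \ref{mainthm1} to get a finite $\scrf$ with $\Gamma_{\scrf}\circ\alpha$ essential in $X_{\scrf}$, then use that $X_{\scrf}$ is homotopy equivalent to a polyhedron because it has only finitely many non-degenerate pieces, all polyhedra (the paper simply asserts this fact). The continuity of the deformation homotopy that you flag never has to be checked: your hull $K$ is closed, path-connected and contains every simple closed curve of $X_{\scrf}$, so Corollary \ref{isocor} makes the inclusion $K\hookrightarrow X_{\scrf}$ a $\pi_1$-isomorphism, hence the retraction $r:X_{\scrf}\to K$ of Lemma \ref{retractionlemma} (already proved continuous there) is $\pi_1$-injective and $r\circ\Gamma_{\scrf}$ is the required map to a polyhedron.
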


In Section \ref{sectionapplications}, we use Theorem \ref{mainthm1} to identify situations where grade-preserving maps of disjointly tree-graded spaces (maps that send pieces into pieces) are $\pi_1$-injective. The following theorem is of particular importance to our anticipated applications to fundamental groups of Peano continua.

\begin{theorem}\label{mainapplicationcorollary}
Let $(X,\scrp)$ and $(Y,\scrq)$ be disjointly tree-graded spaces where $\pc(X)$ is uniformly $1$-$UV_0$ and let $f:X\to Y$ be a continuous function with the property that every piece of $X$ is mapped into some piece of $Y$ and such that no two pieces of $X$ are mapped into the same piece of $Y$. Then $f$ is $\pi_1$-injective if and only if $f|_{\pc(X)}:\pc(X)\to\pc(Y)$ is $\pi_1$-injective.
\end{theorem}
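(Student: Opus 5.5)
The plan is to reduce to finitely many pieces using Theorem \ref{mainthm1} and then argue in the finite free product setting. The direction ``$f$ $\pi_1$-injective $\Rightarrow$ $f|_{\pc(X)}$ $\pi_1$-injective'' is fairly formal. Each path component of $\pc(X)$ is a piece $P$. If $\alpha$ is a loop in $P$ and $f\circ\alpha$ is null-homotopic in the piece $Q\supseteq f(P)$ of $Y$, then $f\circ\alpha$ is null-homotopic in $Y$. So $\alpha$ is null-homotopic in $X$. Since $P$ is a retract of $X$ (use the retraction of a disjointly tree-graded space onto a piece, or the collapse $\Gamma_{\{P\}}$ followed by the retraction of $X_{\{P\}}$ onto $P$), $\alpha$ is null-homotopic in $P$. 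I would make sure the needed retraction is available from the earlier structural results (e.g.\ Corollary \ref{isocor2} and the description of $X_{\scrf}$).

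For the converse, take an essential loop $\alpha$ in $X$. Theorem \ref{mainthm1} gives a finite $\scrf\subseteq\scrp$ with $\Gamma_{\scrf}\circ\alpha$ essential in $X_{\scrf}$. Let $\scrg\subseteq\scrq$ be the finite set of pieces $Q_P\supseteq f(P)$, $P\in\scrf$. By hypothesis $P\mapsto Q_P$ is a bijection $\scrf\to\scrg$.

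The key construction is a map $f_{\scrf}:X_{\scrf}\to Y_{\scrg}$ with $f_{\scrf}\circ\Gamma_{\scrf}=\Gamma_{\scrg}\circ f$. It should be well defined because every piece of $X$ outside $\scrf$ maps into a piece of $Y$. That piece is either outside $\scrg$, so it gets collapsed, or it is some $Q_P$ with $P\in \scrf$, which the injectivity hypothesis forbids. Continuity should follow because $\Gamma_{\scrf}$ is a quotient map.

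Next I want to show $f_{\scrf}$ is $\pi_1$-injective. By Corollary \ref{isocor2}, $\pi_1(X_{\scrf})\cong \ast_{P\in\scrf}\pi_1(P)$ and $\pi_1(Y_{\scrg})\cong\ast_{Q\in\scrg}\pi_1(Q)$. Under these isomorphisms, $f_{\scrf\#}$ should be, up to conjugation of factors, the free product of the injections $f|_{P\#}:\pi_1(P)\to\pi_1(Q_P)$. A free product of injective homomorphisms between distinct factors is injective by the normal form theorem. Then $\Gamma_{\scrg}\circ f\circ\alpha=f_{\scrf}\circ\Gamma_{\scrf}\circ\alpha$ is essential, so $f\circ\alpha$ is essential.

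The main obstacle is justifying that $f_{\scrf\#}$ really is such a free product map. The images of the tree-portion of $X_{\scrf}$ need not lie in the tree-portion of $Y_{\scrg}$: they may pass through pieces of $\scrg$ and come back. My first attempt is to use a van Kampen/graph-of-spaces description of $X_{\scrf}$. Write $X_{\scrf}$ as a finite tree of spaces: the pieces of $\scrf$ joined through $\bbr$-tree-like connectors, with collapsed pieces being points. Choose basepoints and connecting arcs so that each generator from $\pi_1(P)$ is represented by (arc)$\cdot$(loop in $P$)$\cdot$(arc)$^{-1}$. Its image is then a conjugate of an element of $f|_{P\#}(\pi_1(P))\le\pi_1(Q_P)$. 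The conjugating elements could in principle destroy injectivity, so instead I would argue geometrically. Take a reduced word $g_1\cdots g_n$ with $g_i$ in alternating factors. Its image is a concatenation of loops in distinct consecutive pieces $Q_{P_i}\neq Q_{P_{i+1}}$ joined by paths. Using the tree-like separation in $Y_{\scrg}$ (distinct pieces are separated by tree components), these connecting paths can be homotoped to reduce or cancel, and the nontrivial factors cannot cancel. This reduces to the normal-form argument, because the map $Y_{\scrg}\to$ (tree of spaces) sends each connecting path to an element of a free product that does not involve the loops from different pieces adjacently cancelling.
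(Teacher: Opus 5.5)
Your overall route is the paper's: take a finite $\scrf$ from the main theorem, push $f$ down to a map of quotients with finitely many non-degenerate pieces, and compare free-product decompositions. Your well-definedness argument via the bijection $\scrf\to\scrg$ is right, and the easy direction via the retraction of Lemma \ref{retractionlemma} is fine. The converse has two problems, one serious and one smaller.

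\textbf{The main obstacle is not resolved.} Choose conjugating paths $\alpha_P$ in $X_{\scrf}$ as in Corollary \ref{isocor2} and set $\beta_P=f_{\scrf}\circ\alpha_P$. Then $f_{\scrf\#}\circ\phi=\psi\circ\ast_{P}(f|_{P})_{\#}$, where $\psi([\delta])=[\beta_P\cdot\delta\cdot\beta_P^{-1}]$ on $\pi_1(Q_P)$. After identifying $\pi_1(Y_{\scrg})$ with $\ast_{Q\in\scrg}\pi_1(Q)$, $\psi$ becomes a factorwise conjugation $x\mapsto h_Qxh_Q^{-1}$ ($x$ in the factor $\pi_1(Q)$). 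The elements $h_Q$ are completely arbitrary, since the $\beta_P$ may wander through pieces. Everything hinges on such an endomorphism being injective.

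Your sketch does not prove this. Its literal claim that the loop factors ``cannot cancel'' is false. In $\langle a\rangle\ast\langle b\rangle$ with $a\mapsto a$ and $b\mapsto a^{-1}ba$, the word $ab$ is sent to $ba$. So syllables do cancel against conjugators, and a normal-form count alone does not show the product stays nontrivial.

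The paper supplies the missing fact as the second statement of Corollary \ref{isocor2}: for \emph{arbitrary} paths $\beta_i$, the homomorphism $\theta'$ is injective. This rests on injectivity of factorwise conjugation in a free product, which can be proved, e.g., by a folding argument on the Bass--Serre tree. Apply it in $Y_{\scrg}$ (or $Y^{qt}_{\scrg}$, see below), whose non-degenerate pieces are exactly the $Q_P$. Then $\psi$ is injective, and injectivity of $f_{\scrf\#}$ is formal.

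\textbf{Continuity of $f_{\scrf}$ fails as argued.} The map $\Gamma_{\scrf}:X\to X_{\scrf}$ in Theorem \ref{mainthm1} is a \emph{metric} quotient map. Its universal property covers only non-expansive maps, while $f$ is merely continuous. So the induced function on metric quotients need not be continuous.

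For example, let $X=\bbr\times\{0\}\cup\bbz\times\ui$ with single piece $\bbr\times\{0\}$, and let $Y=\ui$ with single piece $\{0\}$. Set $f\equiv 0$ on the line and $f(n,t)=\min\{|n|t,1\}$. In $X_{\emptyset}$ the points $(n,1/n)$ converge to the collapsed line, but their images stay at $1$.

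The paper avoids this by working with $X^{qt}_{\scrf}\to Y^{qt}_{\scrg}$ (Corollary \ref{maincorollary}). This costs nothing: if $\Gamma_{\scrf}\circ\alpha$ is essential in $X_{\scrf}$, it is essential in $X^{qt}_{\scrf}$, because the identity $X^{qt}_{\scrf}\to X_{\scrf}$ is continuous.
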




\section{Preliminaries}\label{sectionprelims}

All spaces considered in this paper are assumed to be Hausdorff. We take $\ui$, $S^1$, and $\bbd$ to denote the closed unit interval, unit circle, and closed unit disk respectively. A space homeomorphic to $\ui$ is called an \textit{arc} and a space homeomorphic to $S^1$ is called a \textit{simple closed curve}. Generally, the term ``map" will refer to a continuous function. If $f:(X,x_0)\to (Y,y_0)$ is a based map of spaces $f_{\#}:\pi_1(X,x_0)\to \pi_1(Y,y_0)$ will denote the homomorphism induced on fundamental groups. If $f:(X,d)\to (Y,\rho)$ is a function of metric spaces, we say $f$ is \textit{non-expansive} if $f$ does not increase distance, i.e. $\rho(f(x_1),f(x))\leq d(x_1,x_2)$ for all $x_1,x_2\in X$.

A \textit{path} in a space $X$ is a continuous function $\alpha:[a,b]\to X$. When $\alpha(a)=\alpha(b)$, we call $\alpha$ a \textit{loop} and we may regard $\alpha$ as a map $S^1\to X$. A loop $\alpha:S^1\to X$ is \textit{inessential (in $X$)} if there exists a map $g:\bbd\to X$ such that $g|_{S^1}=\alpha$ (equivalently, if $\alpha$ is null-homotopic) and is \textit{essential} otherwise. We let $\alpha\cdot\beta$ denote the concatenation of paths and $\alpha^{-1}$ denote the reverse path of $\alpha$.

A space $X$ is \textit{(uniquely) arcwise-connected} if for all distinct points $x,y\in X$ there is a (unique) arc $A\subseteq X$ with endpoints $x$ and $y$. Every path-connected Hausdorff space is arcwise-connected and it is well-known that a Hausdorff space is uniquely arcwise-connected if and only if $X$ does not contain a simple closed curve as a subspace. This characterization is implied by the following lemma, which is more specific to our purposes.

\begin{lemma}\label{scclemma}
If $X$ is a Hausdorff space, $A_1,A_2\subseteq X$ are arcs sharing the same endpoints $x$ and $y$, and $z\in A_1\backslash A_2$, then there exists a simple closed curve $C\subseteq A_1\cup A_2$ containing (1) the point $z$, (2) a subarc of $A_1$, and (3) a subarc of $A_2$.
\end{lemma}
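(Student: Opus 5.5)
Parametrize the arcs by homeomorphisms $a_1,a_2:[0,1]\to X$ with $a_i(0)=x$, $a_i(1)=y$, and let $s_0\in(0,1)$ be the parameter with $a_1(s_0)=z$. Since $x,y\in A_2$ and $z\notin A_2$, we have $s_0\neq 0,1$. The set $K=a_1^{-1}(A_2)$ is closed in $[0,1]$ because $A_2$ is compact, and $X$ is Hausdorff so $A_2$ is closed. It contains $0$ and $1$ but not $s_0$.

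Let $s_-=\max(K\cap[0,s_0])$ and $s_+=\min(K\cap[s_0,1])$. These exist because $K$ is closed, and $s_-<s_0<s_+$. Then $J=a_1([s_-,s_+])$ is a subarc of $A_1$ containing $z$, with endpoints $p=a_1(s_-)$ and $q=a_1(s_+)$ in $A_2$. Its interior $a_1((s_-,s_+))$ is disjoint from $A_2$ by the choice of $s_\pm$. Also $p\neq q$ because $a_1$ is injective.

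Let $t_p,t_q\in[0,1]$ be the distinct parameters with $a_2(t_p)=p$ and $a_2(t_q)=q$. Let $L=a_2([\min(t_p,t_q),\max(t_p,t_q)])$, a nondegenerate subarc of $A_2$ with endpoints $p,q$. Then $C=J\cup L$ satisfies $J\cap L=\{p,q\}$, since $L\subseteq A_2$ and $J\cap A_2=\{p,q\}$.

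Gluing two arcs along exactly their common endpoints gives a simple closed curve. Concretely, the map from $S^1$ that sends one closed half-circle homeomorphically onto $J$ and the other onto $L$, matching endpoints, is a continuous bijection. Continuity follows from the pasting lemma, and injectivity from $J\cap L=\{p,q\}$. A continuous bijection from a compact space onto a Hausdorff space is an embedding. So $C$ is a simple closed curve in $A_1\cup A_2$ containing $z$, the subarc $J$ of $A_1$, and the subarc $L$ of $A_2$.

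The only step needing care is choosing $s_\pm$ so that the interior of $J$ misses $A_2$. This is exactly what makes $J\cap L$ only the two endpoints, and it rests on the closedness of $a_1^{-1}(A_2)$, which is where the Hausdorff hypothesis enters.
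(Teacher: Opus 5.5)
Your proof is correct and follows essentially the same route as the paper: your $s_\pm$ pick out exactly the complementary interval $(a,b)$ of the closed set $\alpha_1^{-1}(A_2)$ that contains the preimage of $z$. The curve is then the union of that subarc of $A_1$ with the subarc of $A_2$ between its endpoints, and you additionally verify $J\cap L=\{p,q\}$ and the compact-to-Hausdorff embedding, which the paper leaves implicit.
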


\begin{proof}
Let $\alpha_1,\alpha_2:\ui\to X$ be injective paths parameterizing $A_1,A_2$ respectively. Let $s$ be the unique element of $\alpha_{1}^{-1}(z)$. The set $C=\alpha_{1}^{-1}(\im(\alpha_2))$ is closed in $\ui$ and we have $\{0,1\}\subseteq C\subseteq \ui\backslash\{s\}$ by assumption. Hence, there exists a connected component $(a,b)$ of $\ui\backslash C$ with $s\in (a,b)$. Since $a,b\in C$ and $\alpha_2$ is injective, we have $\alpha_1(a)=\alpha_2(c)$ and $\alpha_1(b)=\alpha_2(d)$ for unique $c,d\in\ui$. Taking the reverse path of $\alpha_2$ if necessary, we may assume that $c<d$. Now $C=\alpha_1([a,b])\cup \alpha_2([c,d])$ is a simple closed curve satisfying the desired properties.
\end{proof}

\begin{definition}
A topological space $T$ is
\begin{enumerate}
\item a \textit{topological tree} if $T$ is Hausdorff, uniquely arcwise-connected, and locally arcwise-connected,
\item a \textit{topological $\bbr$-tree} if $T$ is a metrizable topological tree,
\item a \textit{dendrite} if $T$ is a compact topological $\bbr$-tree.
\end{enumerate}
\end{definition}

It follows from Lemma \ref{scclemma} that a Hausdorff space $X$ is a topological tree if and only $X$ is connected, locally path-connected and contains no simple closed curve. Note also that a subspace of a topological tree is connected (resp. totally disconnected) if and only if it is path-connected (resp. totally path-disconnected). A metric space $(X,d)$ is an \textit{$\bbr$-tree} if $X$ is uniquely arcwise connected and if each arc in $X$ is the image of an isometric embedding $[a,b]\to X$ of a real closed interval \cite{JTits}. If $(X,d)$ is an $\bbr$-tree, then $X$ is certainly a topological $\bbr$-tree. Far less trivial is the fact that every topological $\bbr$-tree $X$ admits a compatible metric $d$ for which $(X,d)$ is an $\bbr$-tree \cite{MayerOverstTrees}. It is well-known that $\bbr$-trees are contractible. Clearly every topological tree is weakly equivalent to a point. However, it is apparently an open question if all topological trees are contractible (in ZFC).

A \textit{Peano continuum} is a connected, locally connected compact metrizable space. The Hahn-Mazurkiewicz states that a Hausdorff space $X$ is a Peano continuum if and only if there exists a surjective path $\alpha:\ui\to X$. A topological tree is a Peano continuum if and only if it is a dendrite.

\section{Disjointly tree-graded spaces and their parameterizations}\label{sectiondisjointlytreegradedspaces}

\subsection{Basic definitions}

\begin{definition}\label{deftreegraded}
Let $X$ be a path-connected, locally path-connected, metrizable space and $\scrp$ be a collection of non-empty closed subsets of $X$ (called \textit{pieces}) such that the following properties are satisfied:
\begin{enumerate}
\item $\bigcup\scrp$ is closed in $X$,
\item The elements of $\scrp$ are the path-components of $\bigcup\scrp$,
\item Every simple closed curve in $X$ is contained in one piece.
\end{enumerate}
We call $\scrp$ a \textit{disjoint tree-grading} on $X$ and we refer to the pair $(X,\scrp)$ as a \textit{disjointly tree-graded space}.

We refer to $\pc(X)=\bigcup\scrp$ as the \textit{piece-portion} of $X$ and $\tree(X)=X\backslash\pc(X)$ as the \textit{tree-portion} of $X$. We say a piece is \textit{degenerate} if it is a one-point space and \textit{non-degenerate} otherwise.
\end{definition}


\begin{remark}
In Lemma \ref{retractionlemma} below, we will show that a disjointly tree-graded space retracts onto every one of it's pieces. Since disjointly tree-graded spaces are assumed to be locally path-connected, it follows that every piece is locally path-connected.
\end{remark}

\begin{remark}\label{comparisonremark}
When $X$ can be equipped with a complete geodesic metric, our notion of ``disjoint tree-grading" on a space $X$ can be compared with the notion of ``tree-grading" in the sense of Dru\textsubrhalfring{t}u-Sapir \cite{DrutuSapir}. In particular, every disjoint tree-grading $\scrp$ on $X$ is a tree-grading. On the other hand, if $\scrp '$ is a tree-grading on $X$, then $\bigcup\scrp '$ need not be closed, the pieces need not be the path-components of $\bigcup\scrp '$, and it may be that two distinct pieces meet at a point. In general, given a tree-grading $\scrp'$ on $X$, one can construct a disjoint tree-grading $\scrp$ on $X$ by taking the new pieces to be the smallest closed, geodesic subsets $C$ such that $P\subseteq C$ whenever $P\in\scrp '$ and $P\cap C\neq\emptyset$. This process will result in a \textit{non-trivial} disjoint tree-grading $\scrp$ if and only if $X\backslash \bigcup\scrp'$ contains a non-empty open subset, which is an $\bbr$-tree. For example, if $X=\bigcup_{n\in\bbz}D_n$ where $D_n\subseteq \bbr^2$ is the disk of radius $1/2$ centered at $(n,0)$, then $\scrp '=\{D_n\mid n\in\bbz\}$ is a tree-grading on $X$. However, no non-trivial disjoint tree-grading on $X$ exists. This particular example also highlights the fact that one cannot collapse pieces in an ordinary tree-graded space without affecting other pieces (if each disk $D_n\in\scrp '$ is identified to a point, the resulting quotient of $X$ will be a one-point space). 
\end{remark}

\begin{definition}
A \textit{path-diameter metric} on a set $X$ is a metric $\rho$ on $X$ that satisfies \[\rho(a,b)=\inf\{\diam(\im(\alpha))\mid \alpha\text{ is a path from }a\text{ to }b\}.\]
\end{definition}

If $\rho$ is a path-diameter metric on $X$, the open balls $B_{\epsilon}(x)=\{y\in X\mid \rho(x,y)<\epsilon\}$ are path-connected. Throughout the remainder of the paper, we will assume that the metric inducing the topology of a disjointly tree-graded space is a path-diameter metric.  This is justified by the following remark.

\begin{remark}\label{diametermetricremark}
If $d$ is an arbitrary metric on a set $X$ such that the induced topology gives $X$ the structure of a locally path-connected space, then we can define \[\rho(a,b)=\inf\{\diam_{d}(\im(\alpha))\mid \alpha:\ui\to X\text{ is a path from }a\text{ to }b\}\] where the diameter in this definition is taken with respect to $d$. It is straightforward to check that $\rho$ is a metric on $X$, which is compatible with the topology of $X$ and such that the identity function $(X,\rho)\mapsto (X,d)$ is non-expansive. Moreover, since $\diam_{\rho}(\im(\alpha))= \diam_{d}(\im(\alpha))$ for all paths $\alpha:\ui\to X$, $\rho$ is a path-diameter metric on $X$. Hence, any locally path-connected metrizable space may be equipped with a path-diameter metric.
\end{remark}

\subsection{The topological structure of $\tree(X)$}

In the following two subsections, we fix a disjointly tree-graded space $(X,\scrp)$ and a path-diameter metric $d$ on $X$.

\begin{proposition}\label{treexprop}
$\tree(X)$ is a disjoint union of open topological $\bbr$-trees. Moreover, if $U$ is a connected component of $\tree(X)$, then $\ov{U}\backslash U\subseteq \pc(X)$.
\end{proposition}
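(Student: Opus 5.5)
Since $\pc(X)$ is closed by condition (1) of Definition \ref{deftreegraded}, the tree-portion $\tree(X)=X\backslash\pc(X)$ is open in $X$. An open subset of a locally path-connected space is locally path-connected, so its connected components coincide with its path-components and are open in $\tree(X)$, hence open in $X$. This gives the disjoint-union decomposition into open subsets, and it remains to show that each component $U$ is a topological $\bbr$-tree and that $\ov{U}\backslash U\subseteq \pc(X)$.

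Fix a component $U$. It is metrizable as a subspace of $X$, Hausdorff, path-connected and locally path-connected (open in $X$). By the remark following the definition of topological tree (which relies on Lemma \ref{scclemma}), it therefore suffices to show that $U$ contains no simple closed curve. Suppose $C\subseteq U$ were a simple closed curve. Condition (3) of Definition \ref{deftreegraded} puts $C$ inside a single piece $P\in\scrp$, so $C\subseteq \pc(X)$. This contradicts $C\subseteq U\subseteq X\backslash \pc(X)$, because $C\neq\emptyset$. Hence $U$ is uniquely arcwise-connected, and so it is a topological $\bbr$-tree.

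For the boundary claim, let $x\in\ov{U}\backslash U$ and suppose $x\notin\pc(X)$. Then $x\in\tree(X)$, so $x$ lies in some component $V$ of $\tree(X)$. Since $V$ is open and $x\in \ov{U}$, the set $V$ meets $U$. Components are either equal or disjoint, so $V=U$ and $x\in U$, which is a contradiction. Thus $\ov{U}\backslash U\subseteq \pc(X)$.

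I do not expect a real obstacle. The only point needing care is that components of $\tree(X)$ are open, which uses local path-connectedness of $X$ together with closedness of $\pc(X)$. Everything else follows directly from the characterization of topological trees via Lemma \ref{scclemma} and from axiom (3).
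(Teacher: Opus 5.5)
Your proof is correct and follows essentially the same argument as the paper. Each component of the open set $\tree(X)$ is open, path-connected, locally path-connected, metrizable, and free of simple closed curves by axiom (3), and the boundary claim follows because a component $V$ of $\tree(X)$ containing a point of $\ov{U}\backslash U$ would be open and would therefore meet $U$. You simply spell out a few details the paper leaves implicit, such as why the components are open.
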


\begin{proof}
If $U$ is a connected component of $\tree(X)$, then $U$ is open, path-connected, locally path-connected, and contains no simple closed curve. Since $U$ is also metrizable, $U$ is a topological $\bbr$-tree. Moreover if $x\in (\ov{U}\backslash U)\cap \tree(X)$, then $x$ is an element of some other component $V$ of $\tree(X)$. But since $V$ is open and $x\in \ov{U}\cap V$, we have $U\cap V\neq \emptyset$, violating the fact that $U$ and $V$ are distinct connected components of $\tree(X)$.
\end{proof}

\begin{proposition}\label{separateprop}
If $\alpha:\ui\to X$ is an injective path where the endpoints $x=\alpha(0)$ and $y=\alpha(1)$ do not lie in the same piece of $X$, then $\alpha((0,1))\cap \tree(X)\neq\emptyset$ and for every $z\in \alpha((0,1))\cap \tree(X)$, the points $x$ and $y$ lie in distinct components of $X\backslash\{z\}$.
\end{proposition}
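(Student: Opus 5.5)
The proof splits into two parts. First I show that the open arc $\alpha((0,1))$ meets $\tree(X)$. Then I show that removing any such point separates $x$ from $y$.

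\emph{Step 1: $\alpha((0,1))\cap\tree(X)\neq\emptyset$.} Suppose instead that $\alpha((0,1))\subseteq\pc(X)$. Since $\pc(X)$ is closed, we get $\alpha([0,1])\subseteq\pc(X)$. Then $\im(\alpha)$ is a path-connected subset of $\pc(X)$, so it lies in a single path-component of $\pc(X)$. By condition (2) of Definition \ref{deftreegraded}, that path-component is a single piece. Hence $x$ and $y$ lie in the same piece, which contradicts the hypothesis.

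\emph{Step 2: separation.} Fix $z=\alpha(s)\in\tree(X)$ with $0<s<1$. Suppose, for contradiction, that $x$ and $y$ lie in the same component of $X\backslash\{z\}$.

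\begin{enumerate}
\item The set $X\backslash\{z\}$ is open in $X$, so it is locally path-connected. Its components are therefore path-components. This gives an arc $A_2\subseteq X\backslash\{z\}$ from $x$ to $y$, using the fact that path-connected Hausdorff spaces are arcwise-connected.
\item Let $A_1=\im(\alpha)$. Then $z\in A_1\backslash A_2$.
\item Lemma \ref{scclemma} now produces a simple closed curve $C\subseteq A_1\cup A_2$ that contains $z$.
\item By condition (3) of Definition \ref{deftreegraded}, $C$ lies in a single piece, so $z\in\pc(X)$. This contradicts $z\in\tree(X)$.
\end{enumerate}

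Hence $x$ and $y$ lie in distinct components of $X\backslash\{z\}$.

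The only point that needs care is in Step 2: passing from ``same component'' to ``joined by an arc avoiding $z$.'' This uses local path-connectedness of the open subspace $X\backslash\{z\}$. The rest is a direct application of Lemma \ref{scclemma} and the definition.
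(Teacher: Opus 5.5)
Your proof is correct and follows essentially the same route as the paper. For the first claim, both use that $\pc(X)$ is closed together with condition (2); for the separation, both join points in $X\backslash\{z\}$ by an arc and then apply Lemma \ref{scclemma} and condition (3).

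The only difference is that the paper first passes to a subarc $\alpha([a,b])$ lying in the tree component of $z$, and joins $\alpha(a)$ to $\alpha(b)$ rather than $x$ to $y$. Your version shows this restriction is unnecessary, since the contradiction only uses $z\in C\cap\tree(X)$.
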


\begin{proof}
Since $\tree(X)$ is open and since the pieces of $X$ are the path-components of $\pc(X)$, we must have $\alpha((0,1))\cap \tree(X)\neq\emptyset$. Let $t\in (0,1)$ such that $z=\alpha(t)\in \tree(X)$. Since the component $U$ of $\tree(X)$ containing $z$ is an open topological $\bbr$-tree (Proposition \ref{treexprop}), we may find $0<a<t<b<1$ such that $A_1=\alpha([a,b])\subseteq U$. Since $X$ is locally path-connected, the components of $X\backslash\{z\}$ are path-connected. Hence, if $x,y$ lie in the same component of $X\backslash\{z\}$, then we may find and arc $A_2\subseteq X\backslash\{z\}$ with endpoints $\alpha(a)$ and $\alpha(b)$. Since $z\in A_1\backslash A_2$, Lemma \ref{scclemma} gives a simple closed curve $C$ with $z\in C\subseteq A_1\cup A_2$ but this contradicts (3) in Definition \ref{deftreegraded}.
\end{proof}

\begin{proposition}\label{arcsprop}
If $U$ is a connected component of $\tree(X)$ and $x,y$ are distinct elements of $\ov{U}$, then there is a unique arc $A$ in $X$ with endpoints $x$ and $y$. Moreover, $A$ satisfies $A\backslash\{x,y\}\subseteq U$.
\end{proposition}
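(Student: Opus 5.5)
Existence comes first; uniqueness and the location of the arc then follow from Proposition \ref{separateprop} and Lemma \ref{scclemma}.

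\emph{Existence of an arc through $U$.} Since $X$ is path-connected and Hausdorff, it is arcwise-connected, but we want an arc lying in $U$ away from its endpoints. The idea is to approximate $x$ and $y$ from inside $U$. If $x\in U$, set $x'=x$. Otherwise $x\in\ov{U}\backslash U\subseteq\pc(X)$; choose $x_n\in U$ with $d(x_n,x)<1/n$. Because $d$ is a path-diameter metric, there is a path from $x$ to $x_n$ of diameter $<1/n$. Do the same for $y$.

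The plan is to build the arc as the union of a nested sequence of arcs in $U$ with endpoints converging to $x$ and $y$. Since $U$ is a topological tree, each pair $x_n,y_n$ is joined by a unique arc $[x_n,y_n]\subseteq U$. The key claim is that these arcs stabilize: for $m>n$, the arc $[x_m,y_m]$ contains a large common subarc with $[x_n,y_n]$. To see this, fix any interior point $z$ of some arc $[x_{n_0},y_{n_0}]$. Apply Proposition \ref{separateprop}, through the arc, to see that $x$ and $y$ are separated by $z$ whenever $z$ separates $x_n$ from $y_n$. Small paths from $x_n$ to $x$ avoid $z$ for large $n$, since $d(z,x)>0$. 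Consequently $z\in[x_n,y_n]$ for all large $n$.

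Now define $A=\{x,y\}\cup\bigcup_{n}\bigcap_{m\geq n}[x_m,y_m]$, or more concretely: fix $z$ and take the increasing unions of the segments $[z,x_m]$ and $[z,y_m]$ inside the tree $U$. These segments are monotone in the sense that $[z,x_m]\cap[z,x_{m'}]$ is an initial segment. The closure adds only $x$ and $y$ because $d(x_m,x)\to0$ and the connecting paths have small diameter. I expect this step to be the main obstacle: verifying that the limit is an arc, not a wilder continuum. I would handle it by parametrizing $[z,x_m]$ compatibly via an $\bbr$-tree metric on $U$ from \cite{MayerOverstTrees}, or alternatively by arguing directly that $A\backslash\{x\}$ is a ray properly embedded in $U$ whose end converges to $x$. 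A cleaner alternative is to take any arc $B$ from $x$ to $y$ in $X$ and show it must lie in $U\cup\{x,y\}$, which proves existence and the location of the arc simultaneously. I would try this first, as follows.

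\emph{Any arc from $x$ to $y$ lies in $U\cup\{x,y\}$; uniqueness.} Let $B$ be an arc from $x$ to $y$ and suppose $w\in B\backslash(U\cup\{x,y\})$. Using the approximating small paths and the tree structure of $U$, connect $x$ to $y$ by a path $\gamma$ whose image is contained in $U\cup\{x,y\}$ union small neighborhoods of $x$ and $y$ that can be taken to avoid $w$. From $\gamma$, extract an arc $A'$ avoiding $w$. Then Lemma \ref{scclemma} gives a simple closed curve $C$ through $w$ containing a subarc of $A'$. This subarc meets $U\subseteq\tree(X)$, so $C$ meets $\tree(X)$, contradicting condition (3) of Definition \ref{deftreegraded}. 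The one care point is that $A'$ must genuinely pass through $U$ rather than only through the small neighborhoods; this holds once those neighborhoods are small relative to $d(x,y)$.

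Hence every arc from $x$ to $y$ satisfies $A\backslash\{x,y\}\subseteq U$. Given two distinct such arcs, Lemma \ref{scclemma} produces a simple closed curve inside $U\cup\{x,y\}$ meeting $U$, which again contradicts condition (3). This proves uniqueness.
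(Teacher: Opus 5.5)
Your proposal has a genuine gap. The main argument (the ``cleaner alternative'') breaks at the sentence ``This subarc meets $U$.'' Apply Lemma \ref{scclemma} to $B$, $A'$ and $w$. It only guarantees that $C$ contains \emph{some} subarc of $A'$, namely the portion of $A'$ between the two points where $B$ leaves and rejoins $A'$ around $w$. All you know about $A'$ is that it lies in $U$ together with small neighbourhoods $N_x,N_y$ of $x$ and $y$, and these are not inside $U$. For instance, $x\in\ov{U}\backslash U\subseteq\pc(X)$, and your short path from $x$ to $x_n$ may run through the piece containing $x$, through other pieces, or through other tree components. Nothing prevents that subarc from lying entirely in $N_x\backslash U$, say inside a piece $P$ containing $x$ and $w$. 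Then $C\subseteq P$ is a perfectly legal simple closed curve and no contradiction arises. Your ``care point'' that $A'$ passes through $U$ somewhere does not help: the contradiction needs the particular subarc produced by the lemma to meet $\tree(X)$. That essentially requires $A'\backslash\{x,y\}\subseteq U$, which is the existence statement you set aside. The first approach is also unfinished, since you leave open whether the limit of the arcs $[x_n,y_n]$ is an arc.

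The missing ingredients are two steps.
\begin{itemize}
\item First treat $x,y\in U$. Any arc in $X$ between two points of $U$ lies in $U$. Otherwise Lemma \ref{scclemma}, applied with the unique arc inside the tree $U$, gives a simple closed curve containing a subarc of that tree-arc, and this curve meets $\tree(X)$, a contradiction.
\item For $y\in\ov{U}\backslash U$, take a nested basis $W_1\supsetneq W_2\supsetneq\cdots$ of path-connected open neighbourhoods of $y$, points $w_i\in W_i\cap U$ with $w_1=x$, and arcs $\beta_i\subseteq W_i$ from $w_i$ to $w_{i+1}$. By the first step each $\beta_i$ lies in $U$. The infinite concatenation, sending the final time to $y$, is continuous because the $W_i$ shrink. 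Its image lies in $U\cup\{y\}$, and any arc from $x$ to $y$ extracted from this Peano continuum is the one you want.
\end{itemize}
This also removes your worry about the limit being a wilder continuum: you never need the limit itself to be an arc. The case of two boundary points follows by joining two such arcs through a point of $U$. Once you have an arc $A_1$ with $A_1\backslash\{x,y\}\subseteq U$, uniqueness goes through via Lemma \ref{scclemma}. Every subarc of $A_1$ meets $U$, so you get a contradiction whether the extra point lies on $A_1$ or on the competing arc.
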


\begin{proof}
We prove the proposition in three successive cases.

\noindent Case I: Suppose $x,y\in U$. Let $A_1$ be an arc in $X$ with endpoints $x$ and $y$. If $A_1\nsubseteq U$, then there exists a point $z\in A_1\backslash U$. By Proposition \ref{treexprop}, $U$ is an $\bbr$-tree and so there exists a unique arc $A_2$ in $U$ with endpoints $x$ and $y$. By Lemma \ref{scclemma}, there exists a simple closed curve $C\subseteq A_1\cup A_2$ that contains a subarc of $A_2$. However, this contradicts (3) of Definition \ref{deftreegraded}.

\noindent Case II: Suppose $x\in U$ and $y\in \ov{U}\backslash U$. By Proposition \ref{treexprop}, we have $y\in P$ for some $P\in\scrp$. First we verify the existence of the desired arc. Find a properly nested neighborhood basis $X=W_1\supsetneq W_2\supsetneq W_3\supsetneq \cdots$ at $y$ consisting of path-connected open sets and finds points $w_i\in (W_i\cap U)\backslash W_{i+1}$. Specifically, choose $w_1=x$. Let $\beta_i:\ui\to W_i$ be an injective path from $w_i$ to $w_{i+1}$. By Case I, we have $\im(\beta_i)\subseteq U$ for all $i\in\bbn$. Define path $\beta:\ui\to X$ by defining $\beta$ to be $\beta_i$ on $\left[\frac{i-1}{i},\frac{i}{i+1}\right]$ and $\beta(1)=y$. By construction, $\beta$ is continuous and satisfies $\beta(0)=x$, $\beta([0,1))\subseteq U$, and $\beta(1)=y$. Any choice of injective path $\alpha:\ui\to \im(\beta)$ from $x$ to $y$ parameterizes an arc $A$ with the desired properties. For uniqueness, let $A_1=A$ and suppose that $A_2$ is an arc in $X$ with the same endpoints but for which $A_1\neq A_2$. If there is a point $z\in A_1\backslash A_2$, then Lemma \ref{scclemma} gives a simple closed curve $C$ containing $z$. However $z\in A_1\backslash\{x,y\}\subseteq U$, contradicting (3) in Definition \ref{deftreegraded}. If there is a point $z\in A_2\backslash A_1$, then Lemma \ref{scclemma} gives a simple closed curve $C$ containing a subarc of $A_1$. However, every subarc of $A_1$ must meet $U$, again giving a contradiction.

\noindent Case III: Suppose $x,y\in \ov{U}\backslash U$. Fix $z\in U$. By Case II, find arcs $A_1,A_2\subseteq \ov{U}$ with $\ov{U}\backslash A_1=\{x\}$, $\ov{U}\backslash A_2=\{y\}$, and $z\in A_1\cap A_2$. An arc $A\subseteq A_1\cup A_2$ with endpoints $x$ and $y$ satisfies the desired conditions. Uniqueness follows by the same type of argument used in Case II.
\end{proof}

\begin{proposition}\label{componentclosure}
If $U$ is a connected component of $\tree(X)$, then $\ov{U}$ is a topological $\bbr$-tree that meets each piece of $X$ in at most one point.
\end{proposition}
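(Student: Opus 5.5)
We need to show that $\ov{U}$ is Hausdorff, uniquely arcwise-connected, locally arcwise-connected and metrizable, and that it meets each piece in at most one point. Hausdorffness and metrizability are inherited from $X$.

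\textbf{Unique arcwise-connectedness.} By Proposition \ref{arcsprop}, any two distinct points $x,y\in\ov{U}$ are joined by a unique arc $A$ in $X$, and $A\backslash\{x,y\}\subseteq U$. Hence $A\subseteq\ov{U}$. Any arc in $\ov{U}$ is an arc in $X$, so this $A$ is also the unique arc in $\ov{U}$ joining $x$ and $y$.

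\textbf{Meeting pieces in at most one point.} Suppose $x\neq y$ both lie in $\ov{U}\cap P$ for some $P\in\scrp$. By Proposition \ref{treexprop}, $\ov{U}\backslash U\subseteq\pc(X)$ and $U\cap\pc(X)=\emptyset$, so $x,y\in\ov{U}\backslash U$. The arc $A$ from Proposition \ref{arcsprop} has interior in $U\subseteq\tree(X)$. Since $P$ is path-connected, there is also an arc $A'\subseteq P$ from $x$ to $y$. Pick $z\in A\backslash\{x,y\}$; then $z\in U$, so $z\notin A'$. Lemma \ref{scclemma} now gives a simple closed curve containing $z\in\tree(X)$. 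This contradicts (3) of Definition \ref{deftreegraded}, since every piece lies in $\pc(X)$.

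\textbf{Local arcwise-connectedness.} This is the main step. At points of $U$ it is immediate: $U$ is open in $X$ and is a topological $\bbr$-tree by Proposition \ref{treexprop}. So let $y\in\ov{U}\backslash U$ and $\epsilon>0$. We use that $d$ is a path-diameter metric, so the ball $B_{\epsilon}(y)$ is path-connected.

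1. For $w\in\ov{U}\cap B_{\epsilon/2}(y)$ with $w\neq y$, choose a path in $X$ from $y$ to $w$ of diameter $<\epsilon/2$. It contains an arc from $y$ to $w$, and by uniqueness this arc is the arc $A\subseteq\ov{U}$ given by Proposition \ref{arcsprop}.
2. Hence $A\subseteq B_{\epsilon}(y)$.
3. Therefore $V=\ov{U}\cap B_{\epsilon/2}(y)$ is contained in the arcwise-connected set $W=\bigcup\{A_w : w\in V\}\subseteq \ov{U}\cap B_{\epsilon}(y)$, where $A_w$ is the arc from $y$ to $w$.
4. To finish, we need an open arcwise-connected neighborhood rather than merely a connected set between two balls. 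For this, take the arc component of $y$ in $\ov{U}\cap B_\epsilon(y)$. It contains $V$, so $y$ lies in its interior. Running the same estimate at every point of that component, each point has a neighborhood in $\ov{U}$ contained in the component (via arcs staying inside $B_\epsilon(y)$), so the component is open. Equivalently, one may use that weak local arcwise-connectedness at every point implies local arcwise-connectedness.

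With all four tree properties and metrizability in hand, $\ov{U}$ is a topological $\bbr$-tree.
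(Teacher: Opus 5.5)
Your proof is correct and follows the paper's argument. Unique arcwise-connectedness and the one-point intersection with pieces come from Proposition~\ref{arcsprop} together with condition (3) of Definition~\ref{deftreegraded}, and local arcwise-connectedness comes from the fact that an arc found inside a small path-connected neighborhood must be the unique arc, which lies in $\ov{U}$. Your step~4 is unnecessary, though: the path in step~1 starts at $y$ and has diameter $<\epsilon/2$, so $A_w$ already lies in $B_{\epsilon/2}(y)$, and hence $\ov{U}\cap B_{\epsilon/2}(y)$ is itself arcwise-connected (the paper phrases this as: $V\cap\ov{U}$ is path-connected for every path-connected open neighborhood $V$ of $y$).
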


\begin{proof}
It follows from Proposition \ref{arcsprop} that $\ov{U}$ is uniquely arcwise connected. If $x\in \ov{U}\backslash U$ and $V$ is a path-connected open neighborhood of $x$ in $X$, then Proposition \ref{arcsprop} also implies that $V\cap \ov{U}$ is path-connected. Hence, $\ov{U}$ is locally path-connected and it follows that $\ov{U}$ is a topological $\bbr$-tree.

Suppose $P$ is a piece of $X$ and $x,y$ are distinct points of $\ov{U}\cap P$. By Proposition \ref{arcsprop}, there exists an arc $A_1\subseteq \ov{U}$ with endpoints $x$ and $y$ and such that $A_1\backslash \{x,y\}\subseteq U$. Since pieces are path-connected, there exists an arc $A_2\subseteq P$ with endpoints $x$ and $y$. Then $A_1\cup A_2$ is a simple closed curve that meets $U$, contradicting (3) in Definition \ref{deftreegraded}.
\end{proof}

The above characterization of $\ov{U}$ for a connected component $U$ of $\tree(X)$ allows us to identify convenient representatives of path-homotopy classes.

\begin{definition}\label{treeefficientdef}
A path $\beta:\ui\to X$ is \textit{tree-efficient} if for every connected component $J$ of $\beta^{-1}(\tree(X))$, the restriction $\beta|_{\ov{J}}:\ov{J}\to X$ is an injective path satisfying $\beta|_{\ov{J}}^{-1}(\tree(X))=J$.
\end{definition}

\begin{lemma}\label{efficientlemma}
Every non-constant path $\alpha:\ui\to X$ is path-homotopic to a tree-efficient path $\beta:\ui\to X$. Moreover, we may choose the homotopy $H$ from $\alpha$ to $\beta$ to be the constant homotopy on $\alpha^{-1}(\pc(X))$ and so that $\im(H)\subseteq \im(\alpha)$.
\end{lemma}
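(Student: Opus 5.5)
We modify $\alpha$ separately on each component of $\alpha^{-1}(\tree(X))$ and leave it untouched on the closed set $C=\alpha^{-1}(\pc(X))$. If $C=\emptyset$, then $\im(\alpha)$ lies in a single component $U$ of $\tree(X)$. We replace $\alpha$ by the injective parameterization of the unique arc in $U$ from $\alpha(0)$ to $\alpha(1)$, given by Proposition \ref{arcsprop}; if the endpoints coincide we use a constant path, which is vacuously tree-efficient. Since $\alpha$ is non-constant, we may instead take the arc through $\im(\alpha)$ in that case. The homotopy is the straight-line homotopy in $\im(\alpha)$, which is a dendrite (a Peano continuum without simple closed curves) and hence contractible. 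So assume $C\neq\emptyset$. Then $\ui\backslash C$ is a countable disjoint union of relatively open intervals $J_k$, and $\alpha(J_k)$ lies in a single component $U_k$ of $\tree(X)$. The endpoints of $\ov{J_k}$ that lie in $C$ are mapped into $\ov{U_k}\backslash U_k\subseteq\pc(X)$.

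For each $k$, the set $D_k=\alpha(\ov{J_k})$ is a Peano continuum contained in $\ov{U_k}$. By Proposition \ref{componentclosure}, $\ov{U_k}$ is a topological $\bbr$-tree, so $D_k$ is a dendrite. If $\alpha(\ov{J_k})$ has distinct endpoints $a_k,b_k$, let $\beta_k:\ov{J_k}\to D_k$ be an injective parameterization of the unique arc from $a_k$ to $b_k$ (Proposition \ref{arcsprop}); its interior lies in $U_k$, so $\beta_k^{-1}(\tree(X))=J_k$. If the endpoints agree and both lie in $C$, let $\beta_k$ be constant. This collapses $J_k$ into $\pc(X)$, so that interval is no longer a component of $\beta^{-1}(\tree(X))$ and imposes no condition. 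Since $D_k$ is a contractible Peano continuum (a dendrite) containing both $\alpha|_{\ov{J_k}}$ and $\beta_k$, there is a homotopy $H_k:\ov{J_k}\times\ui\to D_k$ rel endpoints from $\alpha|_{\ov{J_k}}$ to $\beta_k$. When $\ov{J_k}$ contains $0$ or $1$ (an endpoint not in $C$), the same construction works with that endpoint kept fixed.

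Define $\beta$ to be $\alpha$ on $C$ and $\beta_k$ on $\ov{J_k}$. Define $H$ to be the constant homotopy on $C$ and $H_k$ on $\ov{J_k}\times\ui$. Then $\im(H)\subseteq\bigcup_k D_k\cup\alpha(C)=\im(\alpha)$, and $H$ is constant on $C$. By construction, every component of $\beta^{-1}(\tree(X))$ is one of the $J_k$ on which $\beta$ is injective with $\beta|_{\ov{J_k}}^{-1}(\tree(X))=J_k$. So $\beta$ is tree-efficient once continuity is established.

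The main obstacle is the continuity of $H$ at points of $C\times\ui$ that are limits of infinitely many intervals $J_k$. We handle it with a diameter bound. Since $\alpha$ is uniformly continuous, $\diam(D_k)\to 0$ as $|J_k|\to0$, and each $H_k$ takes values in $D_k$, so $\diam(H_k(\ov{J_k}\times\ui))\le\diam(D_k)$. Now take $(t,s)$ with $t\in C$ and a sequence $(t_n,s_n)\to(t,s)$. If infinitely many $t_n$ lie in a fixed $J_k$, then $t$ is an endpoint of $J_k$ and continuity of $H_k$ handles them. Otherwise the $t_n$ lie in intervals $J_{k_n}$ whose lengths tend to $0$, or in $C$. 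In the first case, $d(H(t_n,s_n),\alpha(t))\le \diam(D_{k_n})+d(\alpha(e_n),\alpha(t))$, where $e_n$ is an endpoint of $J_{k_n}$ lying in $C$ and $e_n\to t$. In the second case, $H(t_n,s_n)=\alpha(t_n)\to\alpha(t)$ directly. The contractibility of dendrites should be cited as a standard fact; we use Peano continua here so that we do not need the open question about general topological trees.
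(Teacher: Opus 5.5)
Your main case, $\alpha^{-1}(\pc(X))\neq\emptyset$, is the paper's argument. You replace $\alpha$ on each component $J$ of $\alpha^{-1}(\tree(X))$ by the arc (or a constant) inside the dendrite $\alpha(\ov{J})\subseteq\ov{U_J}$, use simple connectivity of that dendrite to get $H_J$, and glue with the constant homotopy on $\alpha^{-1}(\pc(X))$. Your diameter argument is a correct expansion of the paper's ``continuity follows easily.'' Your treatment of $\im(\alpha)\subseteq\tree(X)$ with $\alpha(0)\neq\alpha(1)$ is also correct, and covers a case the paper does not mention.

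The gap is the remaining case: $\im(\alpha)$ lies in a component $U$ of $\tree(X)$ and $\alpha(0)=\alpha(1)$.
\begin{itemize}
\item A constant path at a point of $\tree(X)$ is \emph{not} vacuously tree-efficient. Its preimage of $\tree(X)$ is all of $\ui$, a single component on which the definition demands injectivity.
\item Your fallback, ``the arc through $\im(\alpha)$,'' does not exist, since there is no arc from a point to itself.
\item No repair is possible. Any $\beta$ path-homotopic to $\alpha$ by a homotopy with image in $\im(\alpha)\subseteq U$ is a loop in $\tree(X)$. Tree-efficiency would then force $\beta$ to be injective on $\ui$ with $\beta(0)=\beta(1)$.
\end{itemize}
So the statement as written is false for every non-constant loop contained in a component of $\tree(X)$.

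The paper's proof has the same blind spot. Its ``Otherwise'' branch asserts that each component $J$ is open or half-open with an endpoint mapped into $\pc(X)$, which fails when $\im(\alpha)\subseteq\tree(X)$. You should therefore exclude this case rather than claim to prove it, for instance by assuming $\alpha(0)\neq\alpha(1)$ or $\im(\alpha)\cap\pc(X)\neq\emptyset$. That restriction covers every use in the paper: loops there are based in $\pc(X)$, and a loop inside a component of $\tree(X)$ is null-homotopic in its own image anyway.

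Alternatively, if you give up $\im(H)\subseteq\im(\alpha)$ and $\pc(X)\neq\emptyset$, you can use $\gamma\cdot\gamma^{-1}$. Here $\gamma$ is the arc from $\alpha(0)$ to a point of $\ov{U}\cap\pc(X)$, and this path is tree-efficient.

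A minor point: there is no ``straight-line homotopy'' in a dendrite. Simply cite its contractibility to get a homotopy rel endpoints inside $\im(\alpha)$.
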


\begin{proof}
If $\alpha$ has image in $\pc(X)$, then we take $\alpha=\beta$ and let $H$ be the constant homotopy. Otherwise, for each connected component $J$ of $\alpha^{-1}(\tree(X))$ (which is an open or half-open interval), the path $\alpha|_{\ov{J}}$ maps $\int(J)$ into some connected component $U_J$ of $\tree(X)$ and at least one endpoint of $J$ into $\pc(X)$. We have $\alpha(\ov{J})\subseteq \ov{U_J}$ where $\ov{U_J}$ is a topological $\bbr$-tree by Proposition \ref{componentclosure}.

If $\alpha|_{\ov{J}}$ is a loop based at a point $x\in \pc(X)$, then $\alpha|_{\ov{J}}$ is a loop in the topological $\bbr$-tree $\ov{U_J}$ and is therefore null-homotopic in it's own image. Hence, there exists a map $H_J:\ov{J}\times\ui \to \alpha(\ov{J})$ with $H_J(s,t)=\alpha(s)$ for $s\in \ov{J}$ and $H_J(\partial J\times\ui\cup \ov{J}\times\{1\})=x$. If $\alpha|_{\ov{J}}$ is path with distinct endpoints, we write $\ov{J}=[a,b]$ and define $\beta_J:\ui\to \alpha(\ov{J})$ to be an injective path parameterizing the unique arc in the dendrite $\alpha(\ov{J})$ from $\alpha(a)$ to $\alpha(b)$. Since $\alpha(\ov{J})$ is simply connected, there exists a map $H_J:\ov{J}\times\ui \to \alpha(\ov{J})$ with $H_J(s,0)=\alpha(s)$, $H_J(s,1)=\beta(s)$ for $s\in\ov{J}$ and such that $H_J(\{a\}\times\ui)=\alpha(a)$ and $H_J(\{b\}\times\ui)=\alpha(b)$.

With all of the maps $H_J$ defined, we define a map $H:\ui^2\to X$ so that $H(s,t)=\alpha(s)$ if $s\in \alpha^{-1}(\pc(X))$. If $s\in J$ for some component $J$ of $\alpha^{-1}(\tree(X))$, we define $H(s,t)=H_J(s,t)$. Since $\im(H_J)\subseteq \alpha(\ov{J})$ for each $J$, the continuity of $H$ follows easily from the continuity of $\alpha$. The path $\beta:\ui\to X$ defined by $\beta(s)=H(s,1)$ is tree-efficient and the homotopy constructed has the desired properties.
\end{proof}

\subsection{Metric retractions}

Here, we consider special subspaces $Y$ of $X$ that admit a canonical retraction $r:X\to Y$ with several convenient properties.

\begin{lemma}\label{retractionlemma}
Let $Y\subseteq X$ be a closed, path-connected subspace with the property that if a piece $P$ of $X$ meets $Y$, then $Y\cap P$ is either a one-point space or all of $P$. Then there exists a retraction $r:X\to Y$ defined as follows: if $y\in Y$, then $r(y)=y$ and if $x\in X\backslash Y$, then $r(x)=y$ where $x$ and $y$ are endpoints of an arc $A\subseteq X$ with the property that $A\cap Y=\{y\}$. Moreover, the following hold:
\begin{enumerate}
\item if $x_1,x_2$ lie in the same connected component of $X\backslash Y$, then $r(x_1)=r(x_2)$,
\item if $\alpha:\ui\to X$ is path whose image meets $Y$, then $\im(r\circ\alpha)\subseteq \im(\alpha)$,
\item $r$ is non-expansive when $Y$ is equipped with the subspace metric.
\end{enumerate}
\end{lemma}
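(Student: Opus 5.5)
The plan is to prove that $r$ is well defined, then establish (1), (2), (3) in that order, and finally deduce continuity of $r$ from (3).

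\emph{Existence of $y$.} Fix $x\in X\backslash Y$ and choose any point $y'\in Y$. Take an injective path $\gamma:\ui\to X$ from $x$ to $y'$, which exists since $X$ is arcwise-connected. The set $\gamma^{-1}(Y)$ is closed and nonempty, so it has a least element $t_0>0$. Then $A=\gamma([0,t_0])$ is an arc with $A\cap Y=\{\gamma(t_0)\}$.

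\emph{Uniqueness of $y$.} I expect this to be the main step. Suppose $A_1$ and $A_2$ are arcs from $x$ to $y_1$ and $y_2$ respectively, with $A_i\cap Y=\{y_i\}$ and $y_1\neq y_2$.
\begin{itemize}
\item Inside $A_1\cup A_2$, take an arc $C$ from $y_1$ to $y_2$. We have $C\cap Y=\{y_1,y_2\}$, so $C$ contains a point $z\notin Y$.
\item Since $Y$ is path-connected, there is an arc $B\subseteq Y$ from $y_1$ to $y_2$. Then $z\in C\backslash B$.
\item Lemma \ref{scclemma} gives a simple closed curve $S\subseteq C\cup B$ containing $z$ and a subarc of $B$.
\item By (3) of Definition \ref{deftreegraded}, $S$ lies in a single piece $P$.
\item Now $P\cap Y$ contains a subarc of $B$, so it has more than one point. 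By the hypothesis on $Y$, this forces $P\cap Y=P$, i.e. $P\subseteq Y$.
\item But $z\in P\backslash Y$, a contradiction.
\end{itemize}
Hence $y$ is unique and $r$ is well defined. Clearly $r|_Y=\mathrm{id}$.

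\emph{Property (1).} Since $Y$ is closed and $X$ is locally path-connected, the components of the open set $X\backslash Y$ are path-connected. Let $x_1,x_2$ lie in the same component. Concatenate a path from $x_1$ to $x_2$ inside $X\backslash Y$ with an arc from $x_2$ to $r(x_2)$ that meets $Y$ only at $r(x_2)$. The result is a path from $x_1$ that meets $Y$ only at its terminal point $r(x_2)$. Extract an arc from it; this arc meets $Y$ only at $r(x_2)$. By uniqueness, $r(x_1)=r(x_2)$.

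\emph{Property (2).} Let $\alpha$ meet $Y$ and take $t$ with $\alpha(t)\notin Y$. Let $s$ be the point of the closed set $\alpha^{-1}(Y)$ nearest to $t$ on one side where it is nonempty. Then $\alpha$ restricted to the interval between $t$ and $s$ is a path from $\alpha(t)$ meeting $Y$ only at $\alpha(s)$. Extracting an arc and applying uniqueness gives $r(\alpha(t))=\alpha(s)\in\im(\alpha)$.

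\emph{Property (3).} Here I use that $d$ is a path-diameter metric. Given $x_1,x_2$ and $\epsilon>0$, pick a path $\alpha$ from $x_1$ to $x_2$ with $\diam(\im(\alpha))<d(x_1,x_2)+\epsilon$.
\begin{itemize}
\item If $\im(\alpha)$ meets $Y$, then by (2) both $r(x_1)$ and $r(x_2)$ lie in $\im(\alpha)$. So $d(r(x_1),r(x_2))\le \diam(\im(\alpha))<d(x_1,x_2)+\epsilon$.
\item If $\im(\alpha)$ misses $Y$, then $x_1,x_2$ lie in one component of $X\backslash Y$, and (1) gives $r(x_1)=r(x_2)$.
\end{itemize}
Letting $\epsilon\to 0$ shows $r$ is non-expansive. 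In particular $r$ is continuous, so it is a retraction.
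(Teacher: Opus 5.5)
Your proof is correct, and it differs from the paper's in how continuity is obtained.

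Well-definedness and properties (1) and (2) follow the paper's proof essentially step for step. In the uniqueness step, the paper notes that the arc in $A_1\cup A_2$ meets $Y$ only in $\{y_1,y_2\}$. Its union with an arc of $Y$ is therefore already a simple closed curve, so your appeal to Lemma \ref{scclemma} is harmless but unnecessary.

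The paper proves continuity directly, before (2) and (3), by a sequential argument. For $x_n\to x\in Y$ with $x_n\notin Y$, it takes arcs from $x_n$ to $x$ inside a path-connected neighborhood $V$ of $x$. The first point where such an arc meets $Y$ is $r(x_n)$, so $r(x_n)\in V$. The case $x\notin Y$ is handled by (1), since the components of $X\backslash Y$ are open. The paper's proof of (3) then treats $r\circ\alpha$ as a path, which uses that continuity.

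You observe instead that (1), (2), and the estimate in (3) use only the pointwise definition of $r$. In (3) you only need $r(x_1),r(x_2)\in\im(\alpha)$. Continuity then follows from non-expansiveness.

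Your route is shorter and removes a separate argument. The paper's route is purely topological and does not depend on $d$ being a path-diameter metric. Continuity is a topological property, though, and every space in this setting carries a compatible path-diameter metric (Remark \ref{diametermetricremark}), so your approach loses nothing.
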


\begin{proof}
First, we show that $r$ is well-defined. Given a point $x\in X\backslash Y$, find an injective path $\alpha:\ui\to X$ with $\alpha(0)=x$ and $\alpha(1)\in Y$. Since $Y$ is closed, $s=\min\{t\in\ui\mid \alpha(t)\in Y\}$ exists and $A=\alpha([0,s])$ is an arc with endpoints $x$ and $\alpha(s)\in Y$ and $A\cap Y=\{\alpha(s)\}$. We show that the endpoint in $Y$ is unique. For $i\in\{1,2\}$, suppose $A_i\subseteq X$ is an arc with endpoint $x$ and $y_i\in Y$ and such that $A_i\cap Y=\{y_i\}$. Suppose, to obtain a contradiction that $y_1\neq y_2$. Now $A_1\cup A_2$ is a Peano continuum that meets $Y$ in the two-point set $\{y_1,y_2\}$. Let $B_1\subseteq A_1\cup A_2$ be an arc with endpoints $y_1$ and $y_2$. Since $Y$ is path-connected, we may find an arc $B_2\subseteq Y$ with endpoints $y_1$ and $y_2$. Since $C=B_1\cup B_2$ is a simple closed curve, we must have $C\subseteq P$ for some piece $P$ of $X$. However, since $B_1$ is not contained entirely in $Y$, our hypothesis about how $Y$ meets the pieces of $X$ forces $P$ to be a degenerate piece of $X$ but this contradicts the fact that $\{y_1,y_2\}\subseteq P$. Therefore, we must have $y_1=y_2$ and the function $r$ is well-defined.

Next, we prove (1). Suppose $x_1,x_2$ lie in the same connected component of $X\backslash Y$. Find arcs $A_i$ with endpoints $x_i$ and $y_i\in Y$ and with the property that $A_i\cap Y=\{y_i\}$ so that $r(x_i)=y_i$. Find an arc $B\subseteq X\backslash Y$ with endpoints $x_1\in A_1$ and $x_2\in A_2$. There exists an arc $A_2'\subseteq A_1\cup B$ with endpoints $x_2$ and $y_1$. Since $A_{2}'\cap Y=\{y_1\}$, the well-definedness of $r$ from the previous paragraph gives $y_1=r(x_2)=y_2$.

Next, we check that $r$ is continuous. Let $\{x_n\}_{n\in\bbn}\to x$ be a convergent sequence in $X$. Since the connected components of $X\backslash Y$ are open, the previous paragraph makes the case where $x\in X\backslash Y$ trivial. Suppose that $x\in Y$. Since $r$ is the identity on $Y$, we may focus on the case where $x_n\in X\backslash Y$ for all $n$. Let $V$ be a path-connected open neighborhood of $r(x)=x$ in $X$. Find $N\in\bbn$ such that $x_n\in V$ for all $n\geq N$. For each $n\geq N$, let $B_n\subseteq V$ be an arc with endpoints $x_n$ and $x$. Since $x_n\in X\backslash Y$ and $Y$ is closed, we may find subarcs $A_n\subseteq B_n$, $n\geq N$ with endpoints $x_n$ and $y_n\in Y$ and such that $A_n\cap Y=\{y_n\}$. Thus $r(x_n)=y_n\in A_n\subseteq B_n\subseteq V$ for all $n\geq N$, proving that $\{r(x_n)\}_{n\in\bbn}\to x$.

For (2), suppose that $\alpha:\ui\to X$ is a path whose image meets $Y$. Then $K=\alpha^{-1}(Y)$ is non-empty and closed. if $t\in K$, then $r(\alpha(t))=\alpha(t)$. Pick a point $t\in \ui\backslash K$ and find a closed interval $I$ with endpoints $t$ and $k\in K$ and such that $I\cap K=\{k\}$. Now $\alpha(I)$ is a Peano continuum containing $\alpha(t)$ and for which $\alpha(I)\cap Y=\{\alpha(k)\}$. Thus there exists an arc $A\subseteq \alpha(I)$ with endpoints $\alpha(t)$ and $\alpha(k)$. The definition of $r$ now gives $r(\alpha(t))=\alpha(k)$. This proves that $\im(\alpha\circ r)\subseteq \im(\alpha)$.

For (3), recall that $d$ is a path-diameter metric. Let $x_1,x_2\in X$. If $x_1$ and $x_2$ lie in the same connected component of $X\backslash Y$, then (1) gives $r(x_1)=r(x_2)$ and thus $0=d(r(x_1),r(x_2))\leq d(x_1,x_2)$. Suppose $x_1,x_2$ do not both lie in a single connected component of $X\backslash Y$. Let $\alpha:\ui\to X$ be a path from $x_1$ to $x_2$. Then the image of $\alpha$ must meet $Y$ and (2) implies that $r\circ\alpha:\ui\to X$ is a path from $r(x_1)$ to $r(x_2)$ satisfying $\diam(\im(r\circ\alpha))\leq\diam(\im(\alpha))$. Thus, $d(r(x_1),r(x_2))\leq d(x_1,x_2)$ holds.
\end{proof}

The next corollary follows from combining (1) and (2) of Lemma \ref{retractionlemma}.

\begin{corollary}\label{twocor}
Let $Y_1,Y_2$ be disjoint subspaces of $X$ meeting the hypotheses of Lemma \ref{retractionlemma} and let $r_1:X\to Y_1$ and $r_2:X\to Y_2$ be the retractions from the conclusion of Lemma \ref{retractionlemma}. If $\alpha:\ui\to X$ is any path with $\alpha(0)\in Y_1$ and $\alpha(1)\in Y_2$, then $\{r_1(Y_2),r_2(Y_1)\}\subseteq \im(\alpha)$.
\end{corollary}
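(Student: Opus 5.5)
We need to show that $r_1(Y_2)$ and $r_2(Y_1)$ both lie in $\im(\alpha)$. By symmetry it suffices to treat $r_1(Y_2)$. The argument combines parts (1) and (2) of Lemma \ref{retractionlemma}.

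First, $r_1(Y_2)$ is a single point. Since $Y_2$ is path-connected and disjoint from $Y_1$, it lies inside one path-component of $X\backslash Y_1$. The components of $X\backslash Y_1$ are path-connected because $X$ is locally path-connected, so this component is a connected component of $X\backslash Y_1$. Part (1) of Lemma \ref{retractionlemma} then says $r_1$ is constant on it. Hence $r_1(Y_2)=\{r_1(\alpha(1))\}$, and we need only show $r_1(\alpha(1))\in\im(\alpha)$.

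Second, $\im(\alpha)$ meets $Y_1$, since $\alpha(0)\in Y_1$. Part (2) of Lemma \ref{retractionlemma} therefore gives $\im(r_1\circ\alpha)\subseteq\im(\alpha)$. In particular $r_1(\alpha(1))\in\im(\alpha)$, which finishes this half.

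The same argument with $r_2$ applies to $r_2(Y_1)$: the path $\alpha$ meets $Y_2$ at $\alpha(1)$, and $Y_1$ sits in a single component of $X\backslash Y_2$. The only point needing care is that "connected component" in (1) matches the path-component containing $Y_2$, and local path-connectedness of $X$ handles this. Nothing here is a serious obstacle.
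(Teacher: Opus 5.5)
Your proof is correct and is exactly the argument the paper intends: the paper says only that the corollary follows by combining parts (1) and (2) of Lemma \ref{retractionlemma}, which is what you do, with (1) collapsing $r_1(Y_2)$ to the single point $r_1(\alpha(1))$ and (2) placing that point in $\im(\alpha)$. Your detour through local path-connectedness is harmless but unnecessary: $Y_2$ is path-connected, hence connected, so it automatically lies in a single connected component of $X\backslash Y_1$.
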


\begin{corollary}
If $Y$ is a subspace of $X$ meeting the hypotheses of Lemma \ref{retractionlemma}, then the subspace metric on $Y$ is a path-diameter metric on $Y$.
\end{corollary}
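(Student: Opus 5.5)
The statement is the final corollary: if $Y$ satisfies the hypotheses of Lemma \ref{retractionlemma}, then the restriction of the path-diameter metric $d$ to $Y$ is a path-diameter metric on $Y$. The plan is to prove two inequalities between $d(a,b)$ and the infimum of $\diam(\im(\gamma))$ over paths $\gamma$ in $Y$ from $a$ to $b$, using the retraction $r:X\to Y$ of Lemma \ref{retractionlemma}.

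Fix $a,b\in Y$ and set
\[\rho_Y(a,b)=\inf\{\diam(\im(\gamma))\mid \gamma:\ui\to Y\text{ is a path from }a\text{ to }b\}.\]
Diameters are measured with $d$ restricted to $Y$, which agrees with $d$ on subsets of $Y$.

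The inequality $d(a,b)\leq\rho_Y(a,b)$ holds because every path in $Y$ is a path in $X$. So the infimum defining $\rho_Y$ is taken over a subset of the paths used to define $d(a,b)$, which is a path-diameter metric on $X$.

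For $\rho_Y(a,b)\leq d(a,b)$, let $\epsilon>0$ and choose a path $\alpha:\ui\to X$ from $a$ to $b$ with $\diam(\im(\alpha))<d(a,b)+\epsilon$.
\begin{enumerate}
\item Since $\im(\alpha)$ meets $Y$ (it contains $a$), part (2) of Lemma \ref{retractionlemma} gives $\im(r\circ\alpha)\subseteq\im(\alpha)$.
\item The map $r\circ\alpha:\ui\to Y$ is continuous because $r$ is a retraction. It runs from $r(a)=a$ to $r(b)=b$, so it is a path in $Y$ from $a$ to $b$.
\item By the inclusion in step 1, $\diam(\im(r\circ\alpha))\leq\diam(\im(\alpha))<d(a,b)+\epsilon$.
\end{enumerate}
Hence $\rho_Y(a,b)<d(a,b)+\epsilon$. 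Letting $\epsilon\to 0$ gives $\rho_Y(a,b)\leq d(a,b)$, so $\rho_Y=d|_Y$.

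There is no substantial obstacle: the corollary is a direct consequence of part (2) of Lemma \ref{retractionlemma}. The only point to check is that $r\circ\alpha$ really lands in $Y$ and is continuous, which holds because $r$ is a retraction onto $Y$.
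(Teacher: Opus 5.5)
Your proposal is correct and follows essentially the same route as the paper, which also takes a near-optimal path $\alpha$ in $X$ and uses part (2) of Lemma \ref{retractionlemma} to see that $r\circ\alpha$ is a path in $Y$ with no larger diameter. The only difference is that you also write out the trivial reverse inequality $d(a,b)\leq\rho_Y(a,b)$, which the paper leaves implicit.
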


\begin{proof}
Given $y_1,y_2\in Y$ and $\epsilon>0$, we can find a path $\alpha:\ui\to X$ from $y_1$ to $y_2$ with $\diam(\im(\alpha))<d(y_1,y_2)+\epsilon$. Since $\im(\alpha)$ meets $Y$, (2) of Lemma \ref{retractionlemma} implies that $r\circ\alpha:\ui\to Y$ is a path with $\diam(\im(r\circ\alpha))\leq \diam(\im(\alpha))<d(y_1,y_2)+\epsilon$.
\end{proof}

\begin{corollary}\label{connectedpreimagecor}
If $\beta:\ui\to X$ is an injective path, then for every piece $P\in\scrp$, $\beta^{-1}(P)$ is connected.
\end{corollary}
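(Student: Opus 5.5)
We argue by contradiction. Assume $\beta:\ui\to X$ is injective, $P\in\scrp$, and $\beta^{-1}(P)$ is not connected. Since $P$ is closed, $\beta^{-1}(P)$ is a closed subset of $\ui$. Being disconnected, it has a gap: there are $a<c<b$ with $a,b\in\beta^{-1}(P)$ and $c\notin\beta^{-1}(P)$. Let $(s,t)$ be the component of $\ui\backslash\beta^{-1}(P)$ containing $c$. Its endpoints satisfy $s,t\in\beta^{-1}(P)$ and $s<t$, and by injectivity $\beta(s)\neq\beta(t)$.

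We next apply Lemma \ref{retractionlemma} to $Y=P$. The hypothesis holds because $P$ is closed and path-connected, and $P$ meets any piece $Q$ either in $\emptyset$ or in all of $P=Q$. This gives a retraction $r:X\to P$. The set $\beta((s,t))$ is connected and disjoint from $P$, so it lies in a single connected component of $X\backslash P$. By (1) of Lemma \ref{retractionlemma}, $r$ is therefore constant on $\beta((s,t))$, say with value $y$.

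By continuity of $r\circ\beta$ and since $r$ fixes $P$, we get
\[\beta(s)=r(\beta(s))=\lim_{u\to s^+}r(\beta(u))=y,\]
and in the same way $\beta(t)=y$. Hence $\beta(s)=\beta(t)$, which contradicts injectivity. So $\beta^{-1}(P)$ is connected.

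The only point that needs checking is that $P$ satisfies the hypotheses of Lemma \ref{retractionlemma}, and the observation that pieces are disjoint (they are the path-components of $\pc(X)$) handles it. The argument also does not depend on whether $P$ is degenerate.
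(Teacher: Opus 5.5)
Your proof is correct and follows the same route as the paper: take a complementary interval $(s,t)$ of $\beta^{-1}(P)$ whose endpoints lie in $\beta^{-1}(P)$, and use part (1) of Lemma \ref{retractionlemma} for the canonical retraction $r:X\to P$ to force $\beta(s)=\beta(t)$, which contradicts injectivity. Your version is somewhat more careful than the paper's. You explicitly choose a gap with both endpoints in $\beta^{-1}(P)$, you check the hypotheses of the lemma for $Y=P$, and you use continuity of $r\circ\beta$ to pass from the open interval $(s,t)$ to its endpoints.
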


\begin{proof}
If there exists a $P\in\scrp$ for which $\beta^{-1}(P)$ is not connected, consider canonical the retraction $r:X\to P$ from Lemma \ref{retractionlemma}. Since $\beta^{-1}(P)$ is closed and not connected, there exists an open interval $(a,b)$, which is a connected component of $\ui\backslash \beta^{-1}(P)$. Then (1) of Lemma \ref{retractionlemma} implies that $\beta(a)=r(\beta(a))=r(\beta([a,b]))=r(\beta(b))=\beta(b)$, contradicting the assumption that $\beta$ is injective.
\end{proof}

\begin{proposition}\label{compactonepiece}
If $X$ is compact and $\scrp=\{P\}$, i.e. $X$ has a single piece $P$, then $P$ is a deformation retract of $X$.
\end{proposition}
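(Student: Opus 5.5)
We use the canonical retraction $r:X\to P$ of Lemma \ref{retractionlemma}; this applies because $P$ is closed and path-connected, and any piece meeting $P$ equals $P$. It suffices to build a homotopy $H:X\times\ui\to X$ with $H_0=\mathrm{id}_X$, $H_1=r$, and $H_t|_P=\mathrm{id}_P$ for all $t$.

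\textbf{Structure of the complement.} Since $\scrp=\{P\}$, we have $\tree(X)=X\backslash P$. By Proposition \ref{treexprop}, each component $U$ of $X\backslash P$ is an open topological $\bbr$-tree with $\ov{U}\backslash U\subseteq P$. Proposition \ref{componentclosure} gives that $\ov{U}$ meets $P$ in exactly one point $p_U$; it meets $P$ at least once because $X$ is connected. Hence $\ov{U}=U\cup\{p_U\}$ is a topological $\bbr$-tree, and it is compact because it is closed in the compact space $X$. So $\ov{U}$ is a dendrite, and $r$ maps all of $\ov{U}$ to $p_U$.

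\textbf{The homotopy on each component.} By \cite{MayerOverstTrees}, each dendrite $\ov{U}$ admits a compatible $\bbr$-tree metric $d_U$, which we normalize so that $\diam_{d_U}\ov{U}\le 1$. The geodesic contraction toward $p_U$ is
\[
H_U(x,t)=\text{the point on the arc }[x,p_U]\text{ at $d_U$-distance } t\,d_U(x,p_U)\text{ from }x .
\]
This is a deformation retraction of $\ov{U}$ onto $p_U$, and its track from any point $x$ is contained in the arc from $x$ to $p_U$. Define $H=H_U$ on $\ov{U}\times\ui$ for each component $U$, and $H(p,t)=p$ for $p\in P$. The pieces agree on the overlaps $\{p_U\}\times\ui$, so $H$ is well defined.

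\textbf{Continuity (main obstacle).} Each $U$ is open, so continuity at points of $U\times\ui$ is clear. The hard case is continuity at $(p,t)$ with $p\in P$, where infinitely many components $U_n$ may accumulate. The key fact is that the whole track $H(\{x\}\times\ui)$ lies in the unique arc $A_x$ from $x$ to $r(x)$. It therefore suffices to show: for every $\epsilon>0$ there is $\delta>0$ such that $d(x,p)<\delta$ implies $\diam(A_x)<\epsilon$.

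To prove this, take a path from $p$ to $x$ of diameter less than $2\delta$; this uses the path-diameter metric. Its image meets $P$, so by (2) of Lemma \ref{retractionlemma} it contains $r(x)$. Its image is a Peano continuum containing $x$ and $r(x)$, so it contains an arc from $x$ to $r(x)$. By uniqueness of arcs in $\ov{U}$ (Proposition \ref{arcsprop}), that arc is $A_x$. Hence $\diam(A_x)<2\delta$, which gives the required bound and establishes continuity of $H$ at $(p,t)$.

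Uniqueness of arcs in the dendrite also makes the choice of the metrics $d_U$ irrelevant for this estimate, so no uniformity across the components is needed. Thus $H$ is a deformation retraction of $X$ onto $P$, even a strong one.
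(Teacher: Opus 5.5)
Your proof is correct, and it handles the crucial continuity step differently from the paper.

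\textbf{The paper's route.} The paper takes an \emph{arbitrary} contraction of each dendrite $\ov{U}$ that fixes the attaching point. It proves continuity at points of $P$ by compactness. If $x_n\in U_n$ for pairwise-distinct components and $x_n\to x\in P$, it shows that $\ov{U_n}$ eventually lies in any neighborhood $V$ of $x$. To do so, it picks $a_i\in U_{n_i}\backslash V$ and extracts a convergent subsequence. Since the retraction $r$ is constant on each $\ov{U_n}$, the limit is forced to be $x$, a contradiction.

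\textbf{Your route.} You choose a specific contraction whose tracks run along the arc toward $P$. So you only need the arc $A_x$ to be small, not the whole component $\ov{U}$. Smallness of $A_x$ comes from a direct estimate: the path-diameter metric, Lemma~\ref{retractionlemma}(2), and uniqueness of arcs from Proposition~\ref{arcsprop}. This gives $H(B_\delta(p)\times\ui)\subseteq B_{3\delta}(p)$ with $\delta$ independent of $p$. It also treats all sequences at once, with no case split on how many terms lie in each component.

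\textbf{What each approach buys.} Your cost is needing a compatible $\bbr$-tree (or convex) metric on each $\ov{U}$ to define the geodesic contraction. The paper's argument works with any contraction fixing the attaching point. The payoff is that compactness plays essentially no role in your argument; you use it only to call $\ov{U}$ a dendrite.
\begin{itemize}
\item Proposition~\ref{componentclosure} already makes $\ov{U}$ a topological $\bbr$-tree without compactness.
\item The Mayer--Oversteegen metrization applies to all topological $\bbr$-trees.
\item The geodesic contraction toward a point is continuous in any $\bbr$-tree. A tripod computation gives $d_U(H_U(x,t),H_U(y,t))\le (1-t)\,d_U(x,y)$.
\end{itemize}
So your argument appears to give the non-compact version, which the paper says ``seems possible with a more technical argument.''
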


\begin{proof}
Let $\mcu$ be the collection of connected components of $X\backslash P$. For each $U\in\mcu$, we have $\ov{U}\cap P=\{x_U\}$ for some point $x_U$. Since $\ov{U}$ is a dendrite (using Prop. \ref{componentclosure} and the fact that $X$ is compact) and all such spaces are point-wise contractible, we may choose a contraction $H_U:\ov{U}\times\ui\to \ov{U}$ so that $H(x,0)=x$, $H(x,1)=x_U$, and $H(x_U,t)=x_U$ for all $t\in\ui$. Define $H:X\times\ui\to X$ so that $H(x,t)=x$ if $x\in P$ and $H(x,t)=H_U(x,t)$ if $x\in \ov{U}$. Certainly, $H$ is well-defined as a function. The only non-trivial case to check in order to verify the continuity of $H$ is if $\{(x_n,t_n)\}_{n\in\bbn}\to (x,t)$ is a convergent sequence in $X\times\ui$ where $x_n\in U_n$ for pairwise-distinct $U_1,U_2,U_3,\dots\in\mcu$. Let $V$ be a neighborhood of $H(x,t)$ in $X$. Since each element of $\scru$ is open in $X$, we must have $x\in P$ and thus $H(x,t)=x$. Thus there exists $N\in\bbn$ such that $x_n\in V$ for all $n\geq N$. Since $H(x_n,t_n)\in \im(H_{U_n})\subseteq \ov{U_n}$, it suffices to show that $\ov{U_n}\subseteq V$ for all but finitely many $n$. Supposing otherwise, we may find $N<n_1<n_2<n_3<\cdots$ and points $a_i\in U_{n_i}\backslash V$. Since $X$ is compact we may assume that $\{a_i\}_{i\in\bbn}\to a$ in $X$. Since all the $U_{n_i}$ are distinct, we must again have $a\in X$. If $r:X\to P$ is the retraction from Lemma \ref{retractionlemma}, we have $r(x_{n_i})=r(a_i)$ for all $i\in\bbn$. Since $\{r(x_{n_i})\}_{i\in\bbn}\to x$ and $\{r(a_i)\}_{i\in\bbn}\to r(a)=a$, we must have $a=x\in V$. Since $\{a_i\}_{i\in\bbn}\to a$, we have $a_i\in V$ for sufficiently large $i$, which is a contradiction.
\end{proof}

Extending Proposition \ref{compactonepiece} to the non-compact case is not necessary for our purposes but seems possible with a more technical argument. We prove the following consequence of Proposition \ref{compactonepiece} in significant generality since it will be required for a situation in which non-metrizable quotient spaces appear.

\begin{corollary}\label{isocor}
Let $B$ be a path-connected Hausdorff space and $A\subseteq B$ be a closed, path-connected, subspace such that every simple closed curve in $B$ is contained in $A$. Then the inclusion $i:A\to B$ induces an isomorphism on fundamental groups.
\end{corollary}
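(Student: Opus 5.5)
The plan is to prove surjectivity and injectivity of $i_{\#}$ separately. In both parts I would work inside compact images of paths and disks, since these are Peano continua (Hausdorff continuous images of $\ui$ or $\bbd$) and hence metrizable and locally path-connected. On such subspaces I will reuse the arguments of Lemma \ref{retractionlemma} and Proposition \ref{compactonepiece}, even though $B$ itself need not be metrizable. Fix a basepoint $a_0\in A$.

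\emph{Surjectivity.} Let $\alpha:\ui\to B$ be a loop based at $a_0$.
\begin{enumerate}
\item Write $\alpha^{-1}(B\backslash A)$ as a disjoint union of open intervals $J=(a,b)$. Since $A$ is closed, $\alpha(a),\alpha(b)\in A$ and $\alpha((a,b))\cap A=\emptyset$.
\item Show $\alpha(a)=\alpha(b)$. Otherwise the Peano continuum $\alpha([a,b])$ contains an arc $A_1$ from $\alpha(a)$ to $\alpha(b)$ with interior outside $A$. Path-connectedness of $A$ gives an arc $A_2\subseteq A$ with the same endpoints. Then $A_1\cup A_2$ is a simple closed curve not contained in $A$, a contradiction.
\item Null-homotope each piece $\alpha|_{[a,b]}$ inside its own image. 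Set $Y_J=\alpha([a,b])$; it is a Peano continuum. Every simple closed curve in $Y_J$ lies in $Y_J\cap A=\{\alpha(a)\}$, so $Y_J$ contains no simple closed curve and is a dendrite. Hence $\alpha|_{[a,b]}$ is null-homotopic rel endpoints within $Y_J$. One can see this either via Proposition \ref{compactonepiece} with the single degenerate piece $\{\alpha(a)\}$, or via contractibility of dendrites.
\item Glue these null-homotopies, and the constant homotopy on $\alpha^{-1}(A)$, into a map $H:\ui^2\to B$, exactly as in Lemma \ref{efficientlemma}.
\item Check continuity at points $s\in\alpha^{-1}(A)$ approached by infinitely many intervals $J$. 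Given an open $V\ni\alpha(s)$, every interval $J$ sufficiently close to $s$ has $\ov{J}\subseteq\alpha^{-1}(V)$. Hence $\im(H_J)\subseteq\alpha(\ov{J})\subseteq V$.
\end{enumerate}
Thus $\alpha$ is homotopic rel basepoint to a loop in $A$, which gives surjectivity.

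\emph{Injectivity.} Let $\gamma$ be a loop in $A$ and $g:\bbd\to B$ a null-homotopy of it. Put $Z=g(\bbd)$, a Peano continuum. I would build a retraction $r:Z\to Z\cap A$, copying the definition in Lemma \ref{retractionlemma}:
\[
r(z)=z \text{ for } z\in Z\cap A,\qquad r(z)=\text{the endpoint in } A \text{ of an arc } C\subseteq Z \text{ from } z \text{ with } C\cap A=\{r(z)\} \text{ otherwise}.
\]
Such arcs exist because $Z$ is arcwise connected, meets $A$, and $A$ is closed.
\begin{enumerate}
\item Well-definedness. Suppose two such arcs from $z$ end at $y_1\neq y_2$. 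Their union contains an arc from $y_1$ to $y_2$ not contained in $A$. Joining $y_1$ and $y_2$ by an arc in $A$ (allowed, since the hypothesis is about simple closed curves in all of $B$) produces a simple closed curve not in $A$, a contradiction.
\item Continuity. The proof in Lemma \ref{retractionlemma} uses only local path-connectedness of the ambient space at points of $A$ and openness of the components of the complement of $A$. Both hold in the Peano continuum $Z$, so the argument transfers.
\item Conclusion. Then $r\circ g:\bbd\to A$ is continuous and restricts to $\gamma$ on $S^1$, since $\gamma$ lies in $A$. So $\gamma$ is null-homotopic in $A$.
\end{enumerate}

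\emph{Main obstacle.} I expect the main difficulty to be continuity: of the glued homotopy $H$ in the surjectivity part, and of $r$ at points of $Z\cap A$ in the injectivity part. The issue in both is that infinitely many complementary components may accumulate, and $B$ provides no metric. The remedy is to argue only inside the compact metrizable sets $\im(\alpha)$ and $Z$, and to use neighborhoods rather than distances.
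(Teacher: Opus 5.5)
Your proof is correct and uses the same two ingredients as the paper. The first is the simple-closed-curve argument showing that every excursion of a loop outside $A$ returns to its starting point. The second is the arc-based retraction of Lemma \ref{retractionlemma}, applied inside a Peano continuum image.

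The packaging differs. The paper first shows that $Q=Y\cap A$ is path-connected, by pushing a path in the image $Y$ into $A$ excursion by excursion, so that $\{Q\}$ is a disjoint tree-grading on $Y$. Both directions then follow from results already proved: surjectivity from the deformation retraction of Proposition \ref{compactonepiece} (via $\beta\simeq r\circ\beta$), and injectivity from Lemma \ref{retractionlemma}.

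You instead glue contractions of the individual excursions, in the style of Lemma \ref{efficientlemma}. This controls the accumulation of infinitely many excursions in the parameter interval, where short subintervals are automatically close to their limit point. The paper controls it among the components of $Y\backslash Q$, as in the proof of Proposition \ref{compactonepiece}.

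In the injectivity part, you close the simple closed curve with an arc in $A$ rather than in $Z\cap A$. This removes the need to prove beforehand that $Z\cap A$ is path-connected; that fact then follows, since $Z\cap A$ is a retract of $Z$.

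The paper's route reuses general machinery and yields the stronger fact that $Y\cap A$ is a deformation retract of $Y$. Yours is more self-contained.

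One point of precision in your step 5: if $s$ is an endpoint of some interval $J_0$, then $\ov{J_0}$ need not lie in $\alpha^{-1}(V)$. On that side you should use continuity of $H_{J_0}$ on $\ov{J_0}\times\ui$. The containment $\im(H_J)\subseteq\alpha(\ov{J})\subseteq V$ handles the remaining intervals, since all but finitely many of them are short.
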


\begin{proof}
Let $\beta:S^1\to B$ be a loop based at a point in $A$. Then $Y=\im(\beta)$ is a Peano continuum. We claim that $Q=Y\cap A$ is path-connected. If $(a,b)$ is a connected component of $U=\beta^{-1}(B\backslash A)$, then we must have $\beta(a)=\beta(b)$ or previously used arguments would allow us to construct a simple closed curve in $B$ that is not contained in $A$. Define $\alpha:\ui\to A$ so that $\alpha(t)=\beta(t)$ if $t\in \ui\backslash U$ and if $t\in (a,b)$ for some connected component $(a,b)$ of $U$, then $\alpha(t)=\beta(a)$. Then $\beta$ is continuous because $\alpha$ is continuous and $\im(\alpha)=Q$. We conclude that $Q$ is path-connected. It follows that $\{Q\}$ is a disjoint tree-grading on $Y$. By Proposition \ref{compactonepiece}, $Q$ is a deformation retract of $Y$ where the resulting retraction $r:Y\to Q$, which is a homotopy equivalence, gives $r\circ\beta=\alpha$. Thus $\beta$ is homotopic to $\alpha$ in $B$, proving that $i:A\to B$ is $\pi_1$-surjective.

Let $\gamma:S^1\to A$ be a loop such that there exists a map $g:\bbd\to B$ with $g|_{S^1}=\gamma$. Then $Y=\im(g)$ is a Peano continuum. Choose a surjective path $\alpha:\ui\to Y$. Using the argument from the previous paragraph it follows that if $Q=Y\cap A$, then $\{Q\}$ is a disjoint tree-grading on $Y$. Again, the retraction $r:Y\to Q$ from Lemma \ref{retractionlemma} is a homotopy equivalence. The map $r\circ g:\bbd\to A$ satisfies $r\circ g|_{S^1}=r\circ\gamma=\gamma$ and thus $\gamma$ is inessential in $A$. This proves that $i:A\to B$ is $\pi_1$-injective.
\end{proof}

\begin{corollary}\label{isocor2}
Let $(B,b_0)$ be a based path-connected Hausdorff space and suppose $A_1,A_2,\dots ,A_m$ are pairwise-disjoint, closed, path-connected subspaces of $B$ such that every simple closed curve in $B$ is contained in $A_i$ for some $i\in\{1,2,\dots,m\}$. Then there exists paths $\alpha_i:\ui\to B$ from $b_0$ to a point $a_i\in A_i$ such that the inclusion maps $A_i\to B$ induce an isomorphism $\theta:\ast_{i=1}^{m}\pi_1(A_i,a_i)\to \pi_1(B,b_0)$ given by $\theta([\gamma])=[\alpha_i\cdot\gamma\cdot\alpha_{i}^{-1}]$ for $[\gamma]\in \pi_1(A_i,a_i)$.

Moreover, if $\beta_i:\ui\to B$ are arbitrarily chosen paths from $b_0$ to $a_i$, then the homomorphism $\theta':\ast_{i=1}^{m}\pi_1(A_i,a_i)\to \pi_1(B,b_0)$ given by $\theta'([\gamma])=[\beta_i\cdot\gamma\cdot\beta_{i}^{-1}]$ for $[\gamma]\in \pi_1(A_i,a_i)$ is injective.
\end{corollary}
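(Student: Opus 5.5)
The plan is to reduce to the one-piece case, Corollary \ref{isocor}, by collapsing all but one of the $A_i$. Because $B$ is only assumed Hausdorff, the resulting quotients may fail to be metrizable; this is exactly why Corollary \ref{isocor} was proved in that generality.

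\textbf{Step 1 (choice of paths).} Fix, for each $i$, any path $\alpha_i$ from $b_0$ to some $a_i\in A_i$.

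\textbf{Step 2 (retraction maps).} For each $j$, let $q_j:B\to B_j$ be the quotient map collapsing each $A_i$ with $i\neq j$ to a point $c_i$. We need the following facts.
\begin{enumerate}
\item $B_j$ is Hausdorff. The $A_i$ are finitely many disjoint closed sets, so this should follow at least on compact images, and we will work on Peano continua such as $\im(g)$ for a null-homotopy $g$. If Hausdorffness is a worry, it is safer to carry out the whole argument inside a single Peano continuum $Y=\im(g)$.
\item $q_j|_{A_j}$ is an embedding.
\item Every simple closed curve in $B_j$ lies in $q_j(A_j)$. If a simple closed curve $C$ passes through some $c_i$ or otherwise leaves $q_j(A_j)$, lift arcs of $C$ avoiding the $c_i$, joined through the sets $A_i$ via arcs inside $A_i$ (the $A_i$ are path-connected). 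This produces a simple closed curve in $B$ not contained in any single $A_k$, a contradiction. This lifting is routine but fiddly, so I would do it carefully with Lemma \ref{scclemma}.
\end{enumerate}
By Corollary \ref{isocor}, $A_j\to B_j$ is then a $\pi_1$-isomorphism. Hence there is a homomorphism
\[\rho_j:\pi_1(B,b_0)\to\pi_1(B_j,q_j(b_0))\cong\pi_1(A_j,a_j),\]
where the last isomorphism uses the path $q_j\circ\alpha_j$.

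\textbf{Step 3 (surjectivity of $\theta$).} Take a loop $\beta$ at $b_0$. Its image is a Peano continuum, so we may work within it. Consider $U=\beta^{-1}(B\setminus\bigcup A_i)$. On each component $(a,b)$ of $U$, the path $\beta|_{[a,b]}$ has endpoints that are either equal or lie in distinct $A_i$. By the argument of Corollary \ref{isocor}, together with the absence of other simple closed curves, only finitely many components join distinct sets: arbitrarily small excursions between distinct closed disjoint sets are impossible, by compactness and positive separation. So $\beta$ decomposes as a finite concatenation of paths, each of which either stays within one $A_i$ up to excursions returning to the same point, or is a transit segment between distinct $A_i$. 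Using Corollary \ref{isocor} piecewise, and the fact that the union of transit arcs together with the $\alpha_i$ forms a tree-like (simply connected) configuration, rewrite $[\beta]$ as a product of the classes $[\alpha_i\gamma\alpha_i^{-1}]$. The key point is that any loop consisting only of transit segments and the $\alpha_i$ is null-homotopic; this follows by applying Corollary \ref{isocor} to the space in which all $A_i$ are collapsed.

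\textbf{Step 4 (injectivity).} This is the main obstacle. Take a reduced word $w=g_1\cdots g_n$ with letters $g_k\in\pi_1(A_{i_k},a_{i_k})$ and consecutive indices distinct, and suppose $\theta(w)=1$ via a map $g:\bbd\to B$. I would build a Bass–Serre-type action: either construct the covering-like tree dual to the decomposition, or argue directly with the disk. Pull back $g^{-1}(\bigcup A_i)$. By compactness and a finiteness argument (as in Step 3), modify $g$ so that the regions mapped into $B\setminus\bigcup A_i$ form finitely many ``bands'' which are null-homotopic in the collapsed space. An innermost-region argument then shows that some letter $g_k$ is trivial in $\pi_1(A_{i_k})$, since that region retracts into $A_{i_k}$ via the map of Lemma \ref{retractionlemma}-type (Corollary \ref{isocor}). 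This contradicts reducedness.

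\textbf{Step 5 (arbitrary paths).} For the $\beta_i$, write $\beta_i=(\beta_i\alpha_i^{-1})\alpha_i$. Then $\theta'$ is $\theta$ precomposed with the map sending each factor $\pi_1(A_i,a_i)$ by conjugation by $h_i=\theta^{-1}[\beta_i\alpha_i^{-1}]$, and $\theta'$ is injective if that endomorphism of the free product is injective. Since $\theta'$ need not be surjective, we must show that the conjugates $h_iG_ih_i^{-1}$ generate their free product. I would obtain this from the same disk/reduced-word argument of Step 4, applied to words with the paths $\beta_i$ in place of the $\alpha_i$. That argument never used the specific $\alpha_i$, only that the connecting paths' transit portions are null in the collapsed space, so Step 4 directly yields injectivity for arbitrary connecting paths.
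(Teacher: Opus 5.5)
Step 3 proves a false statement. In Step 1 you fix the $\alpha_i$ arbitrarily, but $\theta$ is \emph{not} surjective for arbitrary connecting paths. That is exactly why the corollary only asserts the existence of suitable $\alpha_i$, and only injectivity of $\theta'$; your own Step 5 concedes that $\theta'$ need not be surjective, which contradicts Step 3.

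Concretely, let $B$ be two circles $A_1,A_2$ joined by an arc, with $b_0=a_1$ the attaching point on $A_1$ and $\pi_1(B,b_0)=F(x,y)$, where $x,y$ come from $A_1,A_2$. If $\alpha_1$ is constant and $\alpha_2$ first traverses a loop representing $yx$, then $\im(\theta)=\langle x,\, yxyx^{-1}y^{-1}\rangle$, which does not contain $y$.

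The faulty step is your ``key point''. Collapsing all $A_i$ and applying Corollary \ref{isocor} only shows that a loop built from transit segments and the $\alpha_i$ is null-homotopic in the collapsed space. That null-homotopy does not lift to $B$, because the $\alpha_i$ can wind around the pieces. The connecting paths must be chosen compatibly with the transit structure, and that choice has to be built into the construction.

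The more serious gap is injectivity, which is the substance of the corollary, and Step 4 does not supply it. For an arbitrary $g:\bbd\to B$ the set $g^{-1}(\bigcup_i A_i)$ can be an arbitrarily complicated closed subset of $\bbd$. No local niceness of $B$ or of the $A_i$ is available to ``modify $g$'' into finitely many bands, and the innermost-region argument is never actually formulated. The homomorphisms $\rho_j$ of Step 2 cannot fill the gap: together they only detect the image of an element under $\ast_i\pi_1(A_i)\to\prod_i\pi_1(A_i)$, whose kernel contains every commutator of elements from different factors. Step 5 then just defers to Step 4.

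The missing idea is the paper's. Let $V_i$ be the component of $B\setminus\bigcup_{j\neq i}A_j$ containing $A_i$.
\begin{itemize}
\item Corollary \ref{isocor} applied to $(V_i,A_i)$ gives $\pi_1(A_i)\cong\pi_1(V_i)$.
\item The $V_i$ cover $B$, and each $V_i\cap V_j$ is uniquely arcwise connected, hence simply connected.
\item Van Kampen applied recursively to $V_1\cup\cdots\cup V_k$ and $V_{k+1}$ then gives surjectivity and injectivity at once and produces the $\alpha_i$.
\end{itemize}
This tacitly uses that the $V_i$ are open, which holds when $B$ is locally path-connected, as in all of the paper's applications.

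The ``moreover'' part is then pure algebra. You correctly wrote $\theta'=\theta\circ\phi$, where $\phi$ conjugates the $i$-th factor by $h_i=\theta^{-1}([\beta_i\cdot\alpha_i^{-1}])$. Such a $\phi$ is injective in any free product, a standard fact the paper simply quotes, so no second disk argument is needed.
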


\begin{proof}
For each $i\in\{1,2,\dots,m\}$, let $V_i$ be the connected component of $B\backslash \bigcup_{j\neq i}A_j$ containing $A_i$. Applying Corollary \ref{isocor} to the pair $(V_i,A_i)$, we see that the inclusion $(A_i,a_i)\to (V_i,a_i)$ induces an isomorphism on fundamental groups. The sets $V_i$ cover $B$. Moreover, $V_i\cap V_j$ is uniquely arcwise-connected and therefore simply connected whenever $i\neq j$ and $V_i\cap V_j\neq \emptyset$. Reorder the sets $V_i$ so that $W_k=V_1\cup V_2\cup \cdots \cup V_k$ is connected for each $k\in\{1,2,\dots,m\}$. Now, the recursive application of the standard van Kampen Theorem to the sets $W_k$ and $V_{k+1}$ for $k\in\{1,2,\dots,m-1\}$ shows that $\theta$ is an isomorphism for some paths $\alpha_i$ resulting from the recursion and a final basepoint-change conjugation.

For the second statement, we recall the following property of free products: if $h_1,h_2,\dots, h_m\in G=\ast_{i=1}^{m}\pi_1(A_i,a_i)$, then the homomorphism $\phi:G\to G$ determined by mapping $\phi(g)=h_igh_{i}^{-1}$ for $g\in \pi_1(A_i,a_i)$ is injective. For given paths $\beta_i$, we define $\phi$ by setting $h_i=\theta^{-1}([\beta_i\cdot\alpha_{i}^{-1}])$ for $i\in\{1,2,\dots,n\}$. Since $\theta '=\theta\circ\phi$, it follows that $\theta '$ is injective.
\end{proof}

\subsection{Parameterizations of disjointly tree-graded spaces}\label{sectionparamterization}

Here, we give a structural characterization of disjointly tree-graded spaces that will be crucial in the proof of our main result.

\begin{definition}\label{defparamterization}
Let $(X,\scrp)$ be a disjointly tree-graded space. A \textit{parameterization} of $(X,\scrp)$ is a triple $(T,V,q)$ where $T$ is a topological tree, $V\subseteq T$ is a closed totally disconnected subset, and $q:X\to T$ is a continuous surjection such that $\scrp=\{q^{-1}(v)\mid v\in V\}$ and $|q^{-1}(t)|=1$ if $t\in T\backslash V$.
\end{definition}

First, we show that every disjointly tree-graded space admits a parametrization.

\begin{lemma}\label{paramterizationlemma}
Let $(X,\scrp)$ be a disjointly tree-graded space, $T$ be the quotient space of $X$ by collapsing each piece to a point, and $q:X\to T$ be the quotient map. Then $T$ is Hausdorff, path-connected, locally path-connected and $V=q(\pc(X))$ is closed and totally disconnected.
\end{lemma}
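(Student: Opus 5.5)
Path-connectedness and local path-connectedness of $T$ are inherited from $X$, because $q$ is a continuous surjective quotient map. Quotients of path-connected spaces are path-connected. Quotients of locally path-connected spaces are locally path-connected: if $W\subseteq T$ is open and $C$ is a path-component of $W$, then $q^{-1}(C)$ is a union of path-components of the open set $q^{-1}(W)$. This holds because each piece is path-connected and $q$-saturated, so a path-component of $q^{-1}(W)$ meeting $q^{-1}(C)$ maps into $C$. Path-components of open subsets of a locally path-connected space are open, so $q^{-1}(C)$ is open and hence $C$ is open. Closedness of $V$ follows since $q^{-1}(V)=\pc(X)$ is closed and saturated.

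For the Hausdorff property and total disconnectedness of $V$, I would use the retractions of Lemma \ref{retractionlemma} to build saturated open sets. \emph{Separating two pieces.} Let $P\neq P'$ be pieces. Choose an injective path from $P$ to $P'$. By Proposition \ref{separateprop} it passes through some $z\in\tree(X)$, and $P,P'$ lie in distinct components of $X\backslash\{z\}$. These components are open, since $X\backslash\{z\}$ is open and locally path-connected. Each component is saturated, because pieces are path-connected and do not contain $z$. So the two components project to disjoint open neighborhoods of $q(P)$ and $q(P')$. \emph{Tree points.} The same argument handles a tree point $x$ and a piece or another point, after replacing $P$ by $\{x\}$. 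If $x\in\tree(X)$ and $y$ is arbitrary, pick $z$ on the arc from $x$ to $y$ strictly between them inside the open tree component containing $x$, and apply the same separation at $z$. The existence of such a $z$ uses that the component $U\ni x$ is open, so the arc enters it near $x$.

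Total disconnectedness of $V$ follows from the same separation. Given $v\neq v'$ in $V$, pick the separating point $z\in\tree(X)$ as above. Then $q(z)\notin V$, and $T\backslash\{q(z)\}$ splits into disjoint open sets containing $v$ and $v'$ respectively. Intersecting with $V$ gives a clopen partition of $V$ separating $v$ from $v'$, so no connected subset of $V$ contains two points.

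The main subtlety is checking that $q(z)$ genuinely disconnects $T$, i.e.\ that the components of $X\backslash\{z\}$ map to open sets in $T$. This reduces to the saturation argument above: a piece cannot straddle two components of $X\backslash\{z\}$, since it is path-connected and misses $z$. Everything else is routine.
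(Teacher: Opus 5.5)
Your proof is correct and follows the paper's argument. Path-connectedness and local path-connectedness pass to the quotient, $V$ is closed because $\pc(X)$ is closed and saturated, and both the Hausdorff property and the total disconnectedness of $V$ come from Proposition \ref{separateprop}: it supplies a point $z\in\tree(X)$ whose complementary components are open and $q$-saturated. (Your separate case for tree points is redundant, since Proposition \ref{separateprop} already applies to any two points that do not lie in a common piece, but it does no harm.)
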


\begin{proof}
Since the properties of being path-connected and locally path-connected space are closed under forming quotient spaces, $T$ has these properties. We check that $T$ is Hausdorff. By definition, $\pc(X)$ is closed in $X$ and $q$ is defined so that $\pc(X)$ is saturated with respect to $q$. Hence, $V$ is closed in $T$. Moreover, $q$ is bijective on the open set $\tree(X)=q^{-1}(T\backslash V)$ and so $q$ maps $\tree(X)$ homeomorphically onto $T\backslash V$. In particular, $T\backslash V$ is a disjoint union of open topological $\bbr$-trees. Suppose $a,b$ are distinct points of $T$. Let $x\in q^{-1}(a)$ and $y\in q^{-1}(b)$. Since $x$ and $y$ do not lie in a single piece of $X$, Proposition \ref{separateprop} gives a point $z\in \tree(X)$ such that $x$ and $y$ lie in distinct components of $X\backslash\{z\}$. Say that $C_x$ and $C_y$ are these connected components respectively. Then $C_x$ and $C_y$ are disjoint, open, and saturated with respect to $q$ (since pieces are path-connected) and thus $q(C_x)$ and $q(C_y)$ are disjoint open neighborhoods of $a$ and $b$ in $T$. Moreover, if $x$ and $y$ lie in distinct pieces of $X$, then $q(C_x)\cap V$ and $q(C_y)\cap V$ are disjoint clopen neighborhoods in $V$. This completes the proof that $T$ is Hausdorff and $V$ is totally disconnected.
\end{proof}

\begin{lemma}\label{arcliftinglemma}
Let $(X,\scrp)$ be a disjointly tree-graded space and $q:X\to T$ be the quotient map collapsing each piece to a point. For every arc $A\subseteq T$, there exists an arc $B\subseteq X$ such that $q(B)=A$.
\end{lemma}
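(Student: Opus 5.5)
The plan is to take an arbitrary arc in $X$ between preimages of the endpoints of $A$, show that its image under $q$ is an arc in $T$ with the same endpoints, and then conclude $q(B)=A$ from unique arcwise-connectedness of $T$. So the proof has two ingredients: (i) $T$ is a topological tree, and (ii) $q$ sends arcs of $X$ onto arcs (or points) of $T$.

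For (i), Lemma \ref{paramterizationlemma} already gives that $T$ is Hausdorff, path-connected and locally path-connected, so it suffices to show that $T$ contains no simple closed curve. Suppose $C\subseteq T$ is a simple closed curve.
\begin{enumerate}
\item Since $V$ is totally disconnected, $C\not\subseteq V$. Choose $t\in C\backslash V$; then $q^{-1}(t)=\{z\}$ with $z\in\tree(X)$, and $q$ maps $\tree(X)$ homeomorphically onto $T\backslash V$.
\item Let $W$ be the component of $\tree(X)$ containing $z$. Since $W$ is an open topological $\bbr$-tree and $q$ is a local homeomorphism there, I can pick points $c_1,c_2\in C$ on either side of $t$, close to $t$, with $c_i=q(w_i)$. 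These can be chosen so that the arcs $[z,w_1]$ and $[z,w_2]$ in $W$ meet only at $z$.
\item The components of $X\backslash\{z\}$ are open, saturated (pieces are path-connected and avoid $z$) and path-connected. Hence their images are disjoint open sets partitioning $T\backslash\{t\}$.
\item The set $C\backslash\{t\}$ is connected, so $c_1,c_2$ lie in a single such image. Therefore $w_1,w_2$ lie in the same component of $X\backslash\{z\}$ and can be joined by an arc $A_2\subseteq X\backslash\{z\}$.
\item Apply Lemma \ref{scclemma} to $A_1=[w_1,z]\cup[z,w_2]$ and $A_2$, with $z\in A_1\backslash A_2$. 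This yields a simple closed curve through $z\in\tree(X)$, contradicting condition (3) of Definition \ref{deftreegraded}.
\end{enumerate}
Hence $T$ is a topological tree and, in particular, uniquely arcwise connected.

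For (ii), let $a,b$ be the endpoints of $A$, choose $x\in q^{-1}(a)$ and $y\in q^{-1}(b)$, and let $\beta:\ui\to X$ be an injective path from $x$ to $y$ with image $B$.
\begin{enumerate}
\item By Corollary \ref{connectedpreimagecor}, for each piece $P$ the set $\beta^{-1}(P)$ is a closed subinterval (possibly empty or a point). These intervals are pairwise disjoint.
\item The fibres of $q\circ\beta$ are exactly these intervals together with singletons, since $q$ is injective off $\pc(X)$ and $\beta$ is injective. So $q\circ\beta$ factors through the quotient of $\ui$ obtained by collapsing each of the (countably many) nondegenerate intervals $\beta^{-1}(P)$ to a point.
\item This quotient is an arc, being a monotone decomposition of $\ui$ into closed intervals. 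Since it is compact and $T$ is Hausdorff, the induced map into $T$ is an embedding.
\item It follows that $q(B)$ is an arc in $T$ with endpoints $a,b$; it is nondegenerate because $a\neq b$.
\end{enumerate}
By (i), $A$ and $q(B)$ are arcs in $T$ with the same endpoints, so unique arcwise-connectedness forces $q(B)=A$.

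I expect the main obstacle to be part (i), specifically steps 2 and 4. One must choose $c_1,c_2$ on genuinely opposite sides of $t$ in $C$ so that their lifts give arcs $[z,w_i]$ meeting only at $z$; this uses that $q$ is a homeomorphism near $t$. One must also check carefully that the images of the components of $X\backslash\{z\}$ really are open in the quotient topology, which follows from their saturation.
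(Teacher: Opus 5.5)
Your proof is correct, but it reverses the paper's logical order.

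\textbf{The paper's route.} The paper proves this lemma first, with no information about arcs in $T$. It fixes an arbitrary injective path $\beta$ between the two fibres and sets $U=\alpha^{-1}(T\backslash V)$ and $M=\beta^{-1}(\tree(X))$. Using that points of $\tree(X)$ separate (Proposition \ref{separateprop}), it shows directly that $\alpha(U)=q(\beta(M))$. It then recovers $q(B)=A$ from the density of $U$ and a closure argument, noting that each gap of $\ov{M}$ is mapped into a single piece. Only afterwards, in Lemma \ref{paramterizationlemma2}, does it use arc lifting to rule out simple closed curves in $T$: it lifts the two halves of a circle and closes them up inside pieces.

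\textbf{Your route.} You first show $T$ has no simple closed curve, by a local lift near a point of $C\backslash V$, the saturation of the components of $X\backslash\{z\}$, and Lemma \ref{scclemma}. You then get arc lifting from two facts. First, by Corollary \ref{connectedpreimagecor} and the monotone-decomposition fact, $q$ carries every arc of $X$ onto an arc or a point of $T$. Second, arcs in $T$ are unique. Hausdorffness of the quotient of $\ui$ in your Step~3 is automatic, since it injects continuously into $T$.

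\textbf{What each route buys.} Yours makes Lemma \ref{paramterizationlemma2} independent of the present statement and turns this lemma into a short corollary. It also spells out a local argument the paper compresses into ``which cannot occur.'' There, if $a\in A\backslash V$ has preimage outside $B$, one only gets $A\backslash\{a\}\subseteq q(C)$. Reaching a contradiction from that needs exactly your picture of two distinct directions at $a$ lifted into a component of $\tree(X)$. The paper's route, in exchange, never uses unique arcwise connectedness of $T$ or any facts about monotone decompositions of $\ui$; it works only with separating points and closures.
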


\begin{proof}
Since $V$ is totally disconnected, note that $A\cap (T\backslash V)\neq \emptyset$. Let $\alpha:\ui\to T$ be an injective path parameterizing $A$ and set $a_i=\alpha(i)$ for $i\in\{0,1\}$. Let $\beta:\ui\to X$ be an injective path with $\beta(i)\in q^{-1}(a_i)$ for $i\in\{0,1\}$ and set $B=\im(\beta)$.

Let $U=\alpha^{-1}(T\backslash V)$ and $M=\beta^{-1}(\tree(X))=(q\circ \beta)^{-1}(T\backslash V)$. We first prove that $\alpha(U)=q(\beta(M))$. Let $u\in U$ and $a=\alpha(u)\in T\backslash V$. Then there is a unique $b\in \tree(X)$ with $q(x)=a$. If $b\notin B$, then $B$ must lie in a component $C$ of $X\backslash \{b\}$. But then $A$ lies in the component $q(C)$ of $T\backslash \{b\}$, which cannot occur. Therefore, $b\in B$. Then there exists $m\in M$ such that $\beta(m)=b$ and thus $a=q(\beta(m))\in q(\beta(M))$. Thus $\alpha(U)\subseteq q(\beta(M))$. For the other inclusion, let $m\in M$. We have $b=\beta(m)\in \tree(X)$ and $a=q(b)\in T\backslash V$. If $a\notin A$, then $A$ lies in a single connected component $D$ of $T\backslash \{a\}$. Then $q^{-1}(a_0)\cup q^{-1}(a_1)$ lies in the connected component $C$ of $X\backslash \{b\}$ satisfying $q(C)=D$. Since $\{b_1,b_2\}\subseteq C$ and $b\in B\cap \tree(X)$ violates Proposition \ref{separateprop} (which applies since $\beta(0)$ and $\beta(1)$ do not lie in a single piece of $X$). Therefore, $a\in A$. In particular, $q(\beta(m))=q(b)=a=\alpha(u)$ for unique $u\in U$. This proves the other inclusion.

Since $U$ is dense in $\ui$, we have $A=\alpha(\ov{U})$ and since $\ui$ is compact and $\alpha$ is continuous, we have $\alpha(\ov{U})=\ov{\alpha(U)}$. Thus $A=\ov{\alpha(U)}=\ov{q(\beta(M))}\subseteq q(B)$. We also have $q(\beta(\ov{M}))\subseteq \ov{q(\beta(M))}=\ov{\alpha(U)}\subseteq A$. If $(s,t)$ is a component of $\ui\backslash \ov{M}$, then $\beta$ maps $[s,t]$ into a piece of $X$ and $q(\beta([s,t]))=q(\beta(s))\in q(\beta(\ov{M}))\subseteq A$. This completes the proof that $q(B)=A$.
\end{proof}

\begin{lemma}\label{paramterizationlemma2}
Let $(X,\scrp)$ be a disjointly tree-graded space. If $T$ is the quotient space of $X$ given by collapsing each piece to a point, then $T$ is a topological tree.
\end{lemma}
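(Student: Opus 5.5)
By Lemma \ref{paramterizationlemma}, $T$ is Hausdorff, path-connected and locally path-connected. It is therefore arcwise-connected and locally arcwise-connected, so it remains to show that $T$ is uniquely arcwise-connected. By the remark following Lemma \ref{scclemma}, it suffices to show that $T$ contains no simple closed curve. Suppose instead that $C\subseteq T$ is a simple closed curve. The plan is to lift $C$ to $X$ and obtain a simple closed curve in $X$ that meets $\tree(X)$, which contradicts (3) of Definition \ref{deftreegraded}.

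First I locate a point of $C$ in the tree portion. Since $V$ is totally disconnected and $C$ is connected and non-degenerate, $C\not\subseteq V$, so there is a point $c\in C\backslash V$. Pick two further points $a,b\in C$ so that $C=A_1\cup A_2$, where $A_1,A_2$ are arcs with common endpoints $a,b$, $A_1\cap A_2=\{a,b\}$ and $c\in\int(A_1)$. Let $x=q^{-1}(c)$, a single point of $\tree(X)$.

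Next I separate $x$ from the rest of the curve. Apply Lemma \ref{arcliftinglemma} to the arc $A_2$ to get an arc $B_2\subseteq X$ with $q(B_2)=A_2$. Since $c\notin A_2$ and $q^{-1}(c)=\{x\}$, we have $x\notin B_2$. The set $B_2$ is connected and contains points of $q^{-1}(a)$ and $q^{-1}(b)$, so these two fibers (pieces or points) meet a single component $W$ of $X\backslash\{x\}$. Because pieces are path-connected and miss $x$, both fibers lie entirely in $W$. Now lift $A_1$ as well: Lemma \ref{arcliftinglemma} gives an arc $B_1\subseteq X$ with $q(B_1)=A_1$. The arc $B_1$ contains $x$ and joins $q^{-1}(a)$ to $q^{-1}(b)$. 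Passing to a subarc, I take $B_1$ to run from a point $x_a\in q^{-1}(a)$ to a point $x_b\in q^{-1}(b)$ with $x$ in its interior. I need this subarc to still pass through $x$; this holds because $q$ of the subarc must still cover $A_1$, by the argument in the proof of Lemma \ref{arcliftinglemma} applied to the subarc.

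The endpoints $x_a,x_b$ lie in different pieces of $X$ (or are non-piece points), since $a\neq b$. So Proposition \ref{separateprop} applies to the injective path parameterizing $B_1$, with $z=x\in\tree(X)$. It says $x_a$ and $x_b$ lie in distinct components of $X\backslash\{x\}$. But both lie in $W$, which is a contradiction. Hence $T$ contains no simple closed curve, and $T$ is a topological tree.

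The main obstacle is the step that ensures the lifted arc $B_1$ actually passes through $x$ and runs between the two fibers. I would handle this by reusing the first inclusion $\alpha(U)\subseteq q(\beta(M))$ from the proof of Lemma \ref{arcliftinglemma}, which shows that any arc joining $q^{-1}(a)$ to $q^{-1}(b)$ hits every tree point of $A_1$. Taking any injective path in $X$ from $q^{-1}(a)$ to $q^{-1}(b)$ then suffices, so no subarc bookkeeping is needed.
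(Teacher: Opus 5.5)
Your main argument is correct, given the paper's earlier results, and it takes a different route from the paper. The paper lifts both arcs via Lemma \ref{arcliftinglemma} and trims the lifts at the fibers over the two common endpoints. It then closes them up with arcs inside those fibers to get a simple closed curve $D\subseteq X$ with $q(D)=C$, and applies condition (3) of Definition \ref{deftreegraded} directly. You instead fix a tree point $c\in C$ and play the cut-point property of $x=q^{-1}(c)$ (Proposition \ref{separateprop}) against the lift $B_2$ of the complementary arc. This avoids the end-trimming and the check that $D$ is simple; the simple closed curve is hidden inside the proof of Proposition \ref{separateprop}. The only extra facts needed are that the fibers over $a$ and $b$ miss $x$ and are path-connected, and you supply both.

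One caution about your handling of the ``main obstacle.'' The claim you import is that every arc in $X$ from $q^{-1}(a)$ to $q^{-1}(b)$ hits every tree point of $A_1$. This proves too much: a subarc of $B_2$ between the two fibers is such an arc and misses $x$, so the claim alone already gives the contradiction and $B_1$ is idle. That means the claim carries the whole content. Indeed, in the proof of Lemma \ref{arcliftinglemma}, the step for $\alpha(U)\subseteq q(\beta(M))$ only shows that the endpoints of the arc lie in one component of $T$ minus the tree point in question. Reaching a contradiction from there requires knowing that a tree point in the interior of an arc of $T$ separates the arc's endpoints in $T$, and that is exactly the crux of the present lemma.

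You do not need that claim for the subarc step. Since $q^{-1}(c)=\{x\}$, each component of $B_1\setminus\{x\}$ is mapped into one component of $A_1\setminus\{c\}$. Since $q(B_1)=A_1$, the points of $B_1$ over $a$ and over $b$ therefore lie on opposite sides of $x$, which gives your subarc.

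Better still, the separation property has a short direct proof, which makes the lemma independent of Lemma \ref{arcliftinglemma}:
\begin{enumerate}
\item $q$ maps the open set $\tree(X)$ homeomorphically onto $T\setminus V$ (proof of Lemma \ref{paramterizationlemma}). So a subarc $A'\subseteq A_1\cap(T\setminus V)$ with $c$ in its interior lifts to the arc $q^{-1}(A')\subseteq\tree(X)$ through $x$. Its endpoints $y_1,y_2$ satisfy $q(y_1)$ on the $a$-side of $c$ and $q(y_2)$ on the $b$-side.
\item Proposition \ref{separateprop} puts $y_1$ and $y_2$ in different components of $X\setminus\{x\}$.
\item These components are open (as $X$ is locally path-connected) and $q$-saturated (fibers are path-connected and miss $x$). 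Hence their images are different components of $T\setminus\{c\}$.
\item The subarcs of $A_1$ from $a$ to $q(y_1)$ and from $q(y_2)$ to $b$ miss $c$. So $c$ separates $a$ from $b$ in $T$, contradicting the connectedness of $A_2\subseteq T\setminus\{c\}$.
\end{enumerate}
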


\begin{proof}
According to Lemma \ref{paramterizationlemma}, it suffices to show that $T$ does not contain a simple closed curve. Suppose that $C\subseteq T$ is a simple closed curve. Let $A_1,A_2$ be arcs such that $C=A_1\cup A_2$ and such that $A_1\cap A_2=\{a_1,a_2\}$ is a two-point set. As before, let $q:X\to T$ denote the quotient map. By Lemma \ref{arcliftinglemma}, there exist arcs $B_1,B_2\subseteq X$ such that $q(B_i)=A_i$ for $i\in\{1,2\}$. For $i,j\in\{1,2\}$, let $b_{i,j}$ be the endpoint of $B_i$ in $B_i\cap q^{-1}(a_j)$. In the case that $a_1\in V$, an initial segment of $B_1$ and $B_2$ may lie in the piece $q^{-1}(a_1)$, By deleting this segment, we may assume that $B_i\cap q^{-1}(a_1)=\{b_{i,1}\}$ for $i\in\{1,2\}$. Making the same adjustment when $a_2\in V$, we may assume that $B_i\cap q^{-1}(a_2)=\{b_{i,2}\}$ for $i\in\{1,2\}$. Since the interiors of $A_1$ and $A_2$ are disjoint, our adjustments to the ends of $B_i$ ensures that the interiors of $B_1$ and $B_2$ are disjoint. Fix $j\in\{1,2\}$. If $b_{1,j}=b_{2,j}$, let $C_j=\{b_{1,j}\}$. On the other hand, if $b_{1,j}\neq b_{2,j}$, then $q^{-1}(a_j)$ is a piece of $X$ and we may find an arc $C_j\subseteq q^{-1}(a_j)$ with endpoints $b_{1,j}$ and $b_{2,j}$. With $C_1$ and $C_2$ defined, $D=B_1\cup B_2\cup C_1\cup C_2$ is a simple closed curve in $X$ for which $q(D)=C$. Thus $D$ must lie in a single piece of $X$, which forces $C$ to be a single point; a contradiction.
\end{proof}

\begin{example}
The topological tree $T$ constructed as the quotient of $X$ in the previous two lemmas need not be a topological $\bbr$-tree. For example let $X=\bbr\times\{0\}\cup \bbz\times [0,1]$ having single piece $P=\bbr\times\{0\}$. Then the quotient space $T=X/P$ is an infinite wedge of arcs with the weak topology and is not first countable.
\end{example}

\begin{example}
The quotient map $q:X\to T$ constructed in the previous two lemmas need not be a closed map. For instance, if $X=[0,1]\times\{0\}\cup\bigcup_{n\in\bbn}\{1/n\}\times[0,1]$ with pieces $P_n=\{1/n\}\times[0,1]$, then we may identify $T=[0,1]\times\{0\}$, $V=\{0\}\cup\{1/n\mid b\in\bbn\}$, and $q:X\to T$ with vertical projection. In this case, $q$ is not a closed map.
\end{example}

Next, we show that a disjointly tree-graded space can be completely recovered from a given parameterization.

\begin{lemma}\label{monotonelemma2}
Let $T$ be a uniquely arcwise-connected Hausdorff space, $V\subseteq T$ be a closed totally disconnected subset, and let $q:Y\to T$ be a map from a space $Y$ with the following properties:
\begin{enumerate}
\item every fiber of $q$ is path-connected,
\item $q$ is injective on $Y\backslash q^{-1}(V)$,
\end{enumerate}
If $\alpha:\ui\to Y$ is an injective path, then $q\circ\alpha:\ui\to T$ is monotone. Moreover, if $\alpha:S^1\to Y$ is an injective loop, then $q\circ\alpha:S^1\to T$ is constant.
\end{lemma}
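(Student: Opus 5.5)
The plan is a direct contradiction argument based on one fact: in a uniquely arcwise-connected Hausdorff space $T$, the image of any path from $a$ to $b\neq a$ contains the unique arc $[a,b]$ between them. This holds because the image is a Peano continuum, hence arcwise-connected, so it contains some arc from $a$ to $b$, and by uniqueness that arc is $[a,b]$. The second ingredient is that $V$ is closed and totally disconnected. So if $[a,b]$ is non-degenerate, then $[a,b]\backslash V$ is a non-empty relatively open subset of the arc $[a,b]$. It therefore contains a point $w\notin V$ with $w\neq a,b$. Injectivity of $q$ off $q^{-1}(V)$ (hypothesis (2)) then turns repeated values of $q\circ\alpha$ at such $w$ into repeated values of $\alpha$.

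For the first statement, I take monotone to mean that the preimage of each point of $T$ under $q\circ\alpha$ is connected. Suppose this fails for some $t\in T$. Then there are $s_1<s<s_2$ in $\ui$ with $q(\alpha(s_1))=q(\alpha(s_2))=t$ and $u=q(\alpha(s))\neq t$. By the fact above, the arc $[t,u]$ is contained both in $q\circ\alpha([s_1,s])$ and in $q\circ\alpha([s,s_2])$. Choose $w\in[t,u]\backslash V$ with $w\notin\{t,u\}$. Then $w=q(\alpha(a))=q(\alpha(b))$ for some $a\in(s_1,s)$ and $b\in(s,s_2)$; these are open intervals because $w$ differs from $t$ and from $u$. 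Since $w\notin V$, hypothesis (2) gives $\alpha(a)=\alpha(b)$ with $a<s<b$. This contradicts the injectivity of $\alpha$.

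For the loop statement, suppose $\alpha:S^1\to Y$ is injective and $q\circ\alpha$ is not constant. Pick $p,p'\in S^1$ with $t=q(\alpha(p))\neq u=q(\alpha(p'))$. The two complementary arcs of $S^1$ from $p$ to $p'$ each map under $q\circ\alpha$ onto a path from $t$ to $u$, so both images contain $[t,u]$. Choosing $w\in[t,u]\backslash V$ with $w\notin\{t,u\}$ as before, $w$ is attained at a point in the interior of each of the two arcs. These two points of $S^1$ are distinct, and hypothesis (2) forces $\alpha$ to agree on them, contradicting injectivity.

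Hypothesis (1) does not seem to be needed in this argument. The only delicate step is extracting a point $w\in[t,u]\backslash V$ distinct from the endpoints, which uses that $V$ is closed and totally disconnected while a non-degenerate arc is connected and has no isolated points. I expect no further obstacles beyond verifying the Peano-continuum fact about path images in $T$.
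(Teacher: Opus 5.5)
Your proof is correct. For the first statement it is essentially the paper's argument made explicit. The paper splits into the cases $a\notin V$ (the preimage is a point) and $a\in V$. In the second case it takes a gap $(c,d)$ of $\alpha^{-1}(q^{-1}(a))$ inside $[\min,\max]$ and simply asserts that unique arcwise-connectedness and total disconnectedness of $V$ give $c<c'<d'<d$ with $q\circ\alpha(c')=q\circ\alpha(d')\notin V$. Your observation, that both sub-path images contain the arc $[t,u]$ and so one can pick $w\in(t,u)\backslash V$, is exactly the justification of that step, and it handles all $t$ at once.

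For the loop statement your route genuinely differs. The paper first claims $q\circ\alpha$ is monotone on $S^1$ by covering $S^1$ with arcs. It then notes $q\circ\alpha$ is a quotient map onto its image and uses that a Hausdorff monotone quotient of $S^1$ is a point or a simple closed curve, which $T$ cannot contain. You instead apply the same arc-intersection trick directly to the two complementary arcs of $S^1$ between points with distinct images.

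Your version is self-contained and avoids two things: the classification of monotone images of $S^1$, and the somewhat glossed passage from monotonicity on covering arcs to monotonicity on all of $S^1$. The paper's version ties the lemma to monotone decompositions and reuses the first part.

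Your remark that hypothesis (1) is unnecessary is right; the paper's proof mentions path-connectedness of the fiber but never uses it. Likewise, closedness of $V$ is not needed to find $w$: if $(t,u)\subseteq V$, this would already contradict total disconnectedness.
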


\begin{proof}
Supposing $\alpha$ is injective, let $a\in T$. If $a\in T\backslash V$, then $\alpha^{-1}(q^{-1}(a))$ is contains a single point whenever it is non-empty. If $a\in V$, then $C=q^{-1}(a)$ is path-connected by assumption. Suppose that $|\alpha^{-1}(C)|>1$. Let $s=\min(\alpha^{-1}(C))$ and $t=\max(\alpha^{-1}(C))$. We claim that $\alpha^{-1}(C)=[s,t]$. Supposing otherwise, $\alpha^{-1}(C)$ is a proper closed subset of $[s,t]$. Choose a connected component $(c,d)$ of $[s,t]\backslash \alpha^{-1}(C)$. Then $\alpha|_{[c,d]}:[c,d]\to Y$ is a path with $\alpha^{-1}(C)=\{c,d\}$ and $q\circ\alpha|_{[c,d]}$ is a loop in $T$ based at $a$ that maps $(c,d)$ into a connected component $T'$ of $T\backslash\{a\}$. Since $T'$ is uniquely arcwise-connected and $V$ is totally disconnected, we may find $c<c'<d'<d$ such that $q\circ \alpha(c')=q\circ\alpha(d')\in T'\backslash V$. Since $q:Y\backslash q^{-1}(V)\to T\backslash V$ is injective, we have $\alpha(c')=\alpha(d')$; a contradiction.

For the second statement, suppose $\alpha:S^1\to Y$ is injective. We may cover $S^1$ by various arcs and apply the first part of the lemma to see that $q\circ\alpha$ is monotone. Additionally, since $T$ is Hausdorff, $q\circ \alpha$ is quotient onto it's image. Since a Hausdorff quotient of $S^1$ by a monotone quotient map is a simple closed curve and $T$ is uniquely arcwise-connected, $q\circ\alpha$ must be constant.
\end{proof}

\begin{lemma}\label{datalemma}
Suppose $X$ is a path-connected, locally path-connected metrizable space, $T$ is a topological tree, $V\subseteq T$ is a closed totally disconnected subspace, and $q:X\to T$ is a quotient map with path-connected fibers and such that the fiber of each point in $T\backslash V$ is degenerate. Then $\scrp=\{q^{-1}(v)\mid v\in V\}$ is a disjoint tree-grading on $X$ for which $(T,V,q)$ is a parameterization.
\end{lemma}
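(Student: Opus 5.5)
We need to check the three conditions of Definition \ref{deftreegraded} for $\scrp=\{q^{-1}(v)\mid v\in V\}$. The parameterization property of $(T,V,q)$ is then immediate: $q$ is a surjective map (a quotient map), $T$ is a topological tree, $V$ is closed and totally disconnected, and the fibers over $T\backslash V$ are singletons by hypothesis. Each $q^{-1}(v)$ is non-empty by surjectivity and closed by continuity. Also $\bigcup\scrp=q^{-1}(V)$ is closed since $V$ is closed, which gives (1).

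For (2), I first note that each piece $q^{-1}(v)$ is path-connected by hypothesis. It remains to show that two distinct fibers $q^{-1}(v),q^{-1}(w)$ ($v\neq w\in V$) cannot be joined by a path inside $q^{-1}(V)$. Suppose $\alpha:\ui\to q^{-1}(V)$ were such a path. Then $q\circ\alpha$ is a path in $V$ from $v$ to $w$, and its image is a connected subset of $V$ containing two points. This contradicts $V$ being totally disconnected. Hence the path-components of $q^{-1}(V)$ are exactly the fibers $q^{-1}(v)$, $v\in V$.

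For (3), let $C\subseteq X$ be a simple closed curve, parameterized by an injective loop $\alpha:S^1\to X$. The hypotheses of Lemma \ref{monotonelemma2} hold with $Y=X$. Indeed, $T$ is uniquely arcwise-connected and Hausdorff, being a topological tree, and $V$ is closed and totally disconnected. The fibers of $q$ are path-connected, and $q$ is injective on $X\backslash q^{-1}(V)$ because those fibers are degenerate. By the second part of Lemma \ref{monotonelemma2}, $q\circ\alpha$ is constant, say with value $t$. Since $C$ has more than one point, $q^{-1}(t)$ is non-degenerate, so $t\in V$. Therefore $C\subseteq q^{-1}(t)\in\scrp$.

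The only step requiring real input is (3), and it is handled entirely by Lemma \ref{monotonelemma2}. The remaining work is bookkeeping. Note that the quotient-map hypothesis is not strictly needed for these three conditions, since continuity and surjectivity suffice. I therefore expect no serious obstacle.
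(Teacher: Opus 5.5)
Your proposal is correct and follows the paper's proof essentially step for step. Condition (1) comes from closedness of $V$, the path-components of $q^{-1}(V)$ are identified using total disconnectedness of $V$, and the simple closed curve condition follows from the second part of Lemma \ref{monotonelemma2}; your remark that only continuity and surjectivity of $q$ are used also matches how the paper states the converse in Theorem \ref{paramterizationtheorem}.
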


\begin{proof}
The elements of $\scrp$ are pairwise-disjoint by construction. Since $V$ is closed in $T$, $q^{-1}(V)=\bigcup\scrp$ is closed in $X$. Since $V$ is totally disconnected and each element of $\scrp$ is path-connected, it follows that the elements of $\scrp$ are the path-components of $\bigcup\scrp$. Moreover, since $T$, $V$, and $q:X\to T$ meet the hypotheses of Lemma \ref{monotonelemma2}, we see that if $\alpha:S^1\to X$ parameterizes a simple closed curve, then $q\circ\alpha:S^1\to T$ must be constant. Thus $\im(\alpha)$ must lie in some non-degenerate fiber of $q$, namely, some element of $\scrp$.
\end{proof}

The above lemmas combine to give the following statement, which shows that one could give an equivalent definition of disjointly tree-graded spaces in terms of parameterizations.

\begin{theorem}\label{paramterizationtheorem}
Suppose $X$ is a path-connected, locally path-connected metrizable space. Every disjoint tree-grading $\scrp$ on $X$ admits a parameterization $(T,V,q)$ for $(X,\scrp)$ where $q$ is a topological quotient map. Conversely, if $(T,V,q)$ is a triple where $T$ is a topological tree, $V\subseteq T$ is a closed totally disconnected subspace, and $q:X\to T$ is a continuous surjection with path-connected fibers and such that the fiber of each point in $T\backslash V$ is degenerate, then $\scrp=\{q^{-1}(v)\mid v\in V\}$ is a disjoint tree-grading on $X$ for which $(T,V,q)$ is a parameterization.
\end{theorem}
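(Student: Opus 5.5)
The statement has two directions, and each is essentially a matter of assembling lemmas already proved in this subsection.

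\emph{Forward direction.} Given a disjoint tree-grading $\scrp$ on $X$, let $T$ be the quotient of $X$ obtained by collapsing each piece to a point. Let $q:X\to T$ be the quotient map and set $V=q(\pc(X))$. The required properties then come from the following results.
\begin{enumerate}
\item By Lemma \ref{paramterizationlemma2}, $T$ is a topological tree.
\item By Lemma \ref{paramterizationlemma}, $V$ is closed and totally disconnected.
\item $q$ is a continuous surjection and a topological quotient map by construction.
\item $\scrp=\{q^{-1}(v)\mid v\in V\}$, because the pieces are exactly the nontrivial saturated classes (degenerate pieces included), and they are the path-components of $\pc(X)$.
\item For $t\in T\backslash V$, the fiber $q^{-1}(t)$ is a single point of $\tree(X)$, since only pieces are collapsed.
\end{enumerate}
Hence $(T,V,q)$ is a parameterization in the sense of Definition \ref{defparamterization}.

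\emph{Converse direction.} Here we want to invoke Lemma \ref{datalemma}. The one mismatch is that Lemma \ref{datalemma} assumes $q$ is a quotient map, whereas the converse only assumes $q$ is a continuous surjection. Checking the proof of Lemma \ref{datalemma}, quotientness is never actually used, so the same argument applies verbatim. The argument runs as follows.
\begin{enumerate}
\item The fibers over $V$ are pairwise disjoint.
\item Their union $q^{-1}(V)$ is closed because $V$ is closed and $q$ is continuous.
\item These fibers are the path-components of $q^{-1}(V)$. Each fiber is path-connected, and a path in $q^{-1}(V)$ maps to a path in the totally disconnected set $V$, so it is constant under $q$ and stays in one fiber.
\item Every simple closed curve lies in a single fiber. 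Indeed, Lemma \ref{monotonelemma2} applies because $T$ is uniquely arcwise connected, the fibers are path-connected, and $q$ is injective off $q^{-1}(V)$. It shows that $q$ is constant on any simple closed curve. That fiber cannot be a degenerate fiber over $T\backslash V$, since it contains more than one point, so it lies over some $v\in V$ and is therefore a piece.
\end{enumerate}
Thus $\scrp$ is a disjoint tree-grading, and $(T,V,q)$ is a parameterization of it by definition.

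The only point that needs care is the gap between ``quotient map'' and ``continuous surjection'' in the converse. I expect this to be a non-issue once one verifies that the proofs of Lemma \ref{datalemma} and Lemma \ref{monotonelemma2} use only continuity. No further construction should be needed.
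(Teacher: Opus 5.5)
Your proposal is correct and follows the paper's route: the forward direction is Lemmas \ref{paramterizationlemma} and \ref{paramterizationlemma2}, and the converse is Lemma \ref{datalemma} via Lemma \ref{monotonelemma2}. You are also right that the proof of Lemma \ref{datalemma} uses only continuity and surjectivity of $q$, never the quotient-map hypothesis, which resolves a mismatch the paper leaves implicit.
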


\begin{remark}
Lemma \ref{paramterizationlemma2} shows that we may always choose the map $q:X\to T$ in a parameterization $(T,V,q)$ to be a topological quotient map if we wish to do so. In Section \ref{sectionmetricquotient}, we will show that we may also choose $T$ to be a topological $\bbr$-tree and $q$ to be non-expansive. Since we do not require that $q$ is always a quotient map, the topological tree $T$ in a parameterization need not be uniquely determined up to homeomorphism. The ability to recognize a disjoint tree-grading on a space without making specific demands on $T$ and $q$ will be important later on.
\end{remark}

\section{Grade-preserving maps and graded subspaces}\label{sectiongradedsubspaces}

The following is the appropriate notion of a morphism of tree-graded spaces.

\begin{definition}\label{morphismdef}
Let $(X_1,\scrp_1)$ and $(X_2,\scrp_2)$ be disjointly tree-graded spaces. A continuous function $f:X_1\to X_2$ is a \textit{grade-preserving map} if $f(\pc(X_1))\subseteq \pc(X_2)$. We may denote such a map as a morphism $f:(X_1,\scrp_1)\to(X_2,\scrp_2)$ of pairs.
\end{definition}

Note that we do not require that a grade-preserving map be non-expansive. This is the reason why a topological quotient spaces and required in various parts of this paper.

Suppose $f:(X_1,\scrp_1)\to(X_2,\scrp_2)$ is a grade-preserving map. Then for every piece $P\in \scrp_1$, there is some $Q\in \scrp_2$ such that $f(P)\subseteq Q$. Moreover, if these disjointly tree-graded spaces have respective parameterizations $(T_1,V_1,q_1)$ and $(T_2,V_2,q_2)$ where $q_1$ is chosen to be a quotient map, then there exists a unique map $g:T_1\to T_2$ such that $g\circ q_1=q_2\circ f$ and $g(V_1)\subseteq V_2$, i.e. the following diagram commutes where $i_1,i_2$ denote inclusion maps.
\[\xymatrix{
X_1 \ar[r]^-{q_1} \ar[d]_-{f} & T_1 \ar[d]^-{g}  & V_1 \ar[d]^-{g|_{V_1}} \ar[l]_-{i_1} \\
X_2 \ar[r]_-{q_2} & T_2  & V_2 \ar[l]^-{i_2}
}\]
Since $q_1$ may always be chosen to be quotient (Theorem \ref{paramterizationtheorem}), a grade-preserving map will always give rise to such a commutative diagram.

\begin{definition}\label{gradedsubspacedef}
Let $(X,\scrp)$ be a disjointly tree-graded space. A \textit{graded subspace} of $(X,\scrp)$ is a disjointly tree-graded space $(Y,\scrq)$ where $Y$ is a path-connected, locally path-connected subspace of $X$ and $\mathscr{Q}=\{P\cap Y\mid P\in\scrp\text{ s.t. }P\cap Y\neq \emptyset\}$.
\end{definition}

\begin{remark}\label{subspacewelldefinedremark}
The set $\scrq$ in Definition \ref{gradedsubspacedef} is always a well-defined tree-grading on $Y$. Since $Y$ is assumed to be path-connected, it follows from Lemma \ref{retractionlemma} that the retraction $r:X\to P$ satisfies $r(Y)=P\cap Y$ whenever $P\in\scrp$ and $P\cap Y\neq \emptyset$. Thus each element of $\scrq$ is path-connected. The other conditions are straightforward to check.
\end{remark}


\begin{definition}
Let $(Y,\scrq)$ be a graded subspace of $(X,\scrp)$.
\begin{enumerate}
\item We say that $(Y,\mathscr{Q})$ is a \textit{full} graded subspace of $(X,\scrp)$ if $\scrq\subseteq \scrp$.
\item We say that $(Y,\mathscr{Q})$ is a \textit{sectional} graded subspace of $(X,\scrp)$ if whenever $Q\in\scrq$, $P\in\scrp$, and $Q\subseteq P$, we have either $|Q|=1$ or $Q=P$.
\end{enumerate}
\end{definition}

%

A special case of Lemma \ref{retractionlemma} is the following lemma.

\begin{lemma}\label{retractionlemma3}
If $(Y,\scrq)$ is a sectional graded subspace of $(X,\scrp)$ where $Y$ is closed in $X$, then there is a canonical non-expansive retraction $r:X\to Y$.
\end{lemma}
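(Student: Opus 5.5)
The plan is to check that $Y$ satisfies the hypotheses of Lemma \ref{retractionlemma} and then apply that lemma directly. Part (3) of Lemma \ref{retractionlemma} gives non-expansiveness. Canonicity comes from the explicit description there: $r(x)$ is the unique point $y\in Y$ for which some arc from $x$ to $y$ meets $Y$ only in $y$. So the whole proof reduces to checking three hypotheses: $Y$ is closed, $Y$ is path-connected, and $Y$ meets each piece of $X$ in a single point or in the entire piece.

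The first two are immediate. $Y$ is closed in $X$ by assumption. It is path-connected because, by Definition \ref{gradedsubspacedef}, a graded subspace is required to be a path-connected, locally path-connected subspace of $X$.

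For the third, I would fix $P\in\scrp$ with $P\cap Y\neq\emptyset$. By Definition \ref{gradedsubspacedef}, $Q=P\cap Y$ is an element of $\scrq$, and clearly $Q\subseteq P$. Sectionality then forces $|Q|=1$ or $Q=P$. In the second case $Y\cap P=P$, so $Y\cap P$ is either a one-point space or all of $P$, which is exactly the hypothesis of Lemma \ref{retractionlemma}.

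With the hypotheses verified, Lemma \ref{retractionlemma} supplies the retraction $r:X\to Y$, and its part (3) shows $r$ is non-expansive for the subspace metric on $Y$. I do not expect a real obstacle. The only point needing care is to use the membership $P\cap Y\in\scrq$ from the definition of a graded subspace, rather than an arbitrary piece of $Y$, so that sectionality applies; Remark \ref{subspacewelldefinedremark} already confirms that these intersections are path-connected and genuinely form the pieces of $Y$.
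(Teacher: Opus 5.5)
Your proposal is correct and matches the paper, which simply notes that this lemma is a special case of Lemma \ref{retractionlemma}. You verify exactly the needed hypotheses, using $P\cap Y\in\scrq$ together with sectionality to get that $Y\cap P$ is a point or all of $P$, and then take non-expansiveness from part (3) of that lemma.
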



In the next proposition, we show that if $(Y,\scrq)$ is a graded subspace of $(X,\scrp)$, then we can always enlarge $Y$ by replacing each non-degenerate piece of $Y$ with the piece of $X$ that contains it.

\begin{proposition}\label{expansionprop}
Suppose that $(Y,\scrq)$ is a graded subspace of $(X,\scrp)$ and $\scrr\subseteq \scrq$. Define $\scrp_0=\{P\in \scrp\mid \exists R\in\scrr\text{ s.t. }R\subseteq P\}$ and let $Y_0=Y\cup \bigcup\scrp_0$ and $\scrq_0=(\scrq\backslash \scrr)\cup \scrp_0$. Then $\scrq_0$ is a disjoint tree-grading on $Y_0$ such that we have graded subspace inclusions $(Y,\scrq)\subseteq (Y_0,\scrq_0)\subseteq (X,\scrp)$. Moreover, if $Y$ is closed in $X$, then $Y_0$ is closed in $X$.
\end{proposition}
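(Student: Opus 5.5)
We will verify directly that $\scrq_0$ satisfies the three conditions of Definition \ref{deftreegraded} on $Y_0$, with $Y_0$ path-connected, locally path-connected and metrizable, and then check the graded-subspace conditions.

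\textbf{Path-connectedness and the pieces.} $Y_0$ is path-connected because each $P\in\scrp_0$ is path-connected and meets the path-connected set $Y$. Next we show that the elements of $\scrq_0$ are pairwise disjoint and are exactly the traces of pieces of $\scrp$ on $Y_0$. For $P\in\scrp_0$ we have $P\cap Y_0=P$. For $Q\in\scrq\backslash\scrr$ with $Q=P'\cap Y$, the piece $P'$ is not in $\scrp_0$: if $P'\supseteq R$ for some $R\in\scrr$, then $R=P'\cap Y=Q$, which is impossible since $Q\notin\scrr$. Hence $P'\cap Y_0=P'\cap Y=Q$, because distinct pieces of $X$ are disjoint. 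Consequently $\scrq_0=\{P\cap Y_0\mid P\cap Y_0\neq\emptyset\}$, and $\bigcup\scrq_0=\pc(X)\cap Y_0$, which is closed in $Y_0$.

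\textbf{Simple closed curves.} Every simple closed curve in $Y_0\subseteq X$ lies in some $P\in\scrp$, and therefore lies in $P\cap Y_0\in\scrq_0$.

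\textbf{Path-components of $\bigcup\scrq_0$.} Each element of $\scrq_0$ is path-connected; for elements of $\scrq\backslash\scrr$ this is Remark \ref{subspacewelldefinedremark}. Any path in $\pc(X)\cap Y_0$ lies in a single piece $P$ of $X$, so the elements of $\scrq_0$ are precisely the path-components of $\bigcup\scrq_0$.

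\textbf{Local path-connectedness.} This is the main obstacle. Metrizability is inherited from $X$. At a point $y\in Y_0$ and a neighborhood $W$, we handle two cases.
\begin{enumerate}
\item If $y\in Y$ is not in any $P\in\scrp_0$, we take a small path-connected neighborhood $N$ of $y$ in $Y$. We then set $N'=N\cup\bigcup\{P\in\scrp_0\mid P\cap N\neq\emptyset,\ P\subseteq W\}$, which is path-connected. We still need to show that $N'$ is a neighborhood of $y$ in $Y_0$, that is, that pieces of $\scrp_0$ passing near $y$ are small and meet $N$. For this we use the retraction $r$ onto a closed path-connected neighborhood-type subspace, as in Lemma \ref{retractionlemma}, together with the path-diameter metric. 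If $z\in P\in\scrp_0$ is close to $y$, then a path of small diameter in $X$ from $z$ to $y$ passes through the point $r_P(y)\in P\cap Y$. Applying the retraction $r:X\to P$ shows $P\cap Y\ni r_P(y)$ lies close to $y$. A path in $Y_0$ from $z$ to $r_P(y)$ can then be obtained by retracting a small path in $X$ onto $P$, using Lemma \ref{retractionlemma} (2) and (3), so it stays small.
\item If $y\in P\in\scrp_0$, we similarly combine small paths in $P$ (pieces are locally path-connected) with small paths in $Y$ through the single point $P\cap Y$.
\end{enumerate}
In both cases the key estimate is that the retraction onto a piece is non-expansive and keeps images inside the original path.

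\textbf{Graded-subspace inclusions.} The inclusion $(Y,\scrq)\subseteq(Y_0,\scrq_0)$ is graded: each $Q\in\scrq$ equals $Y\cap(\text{its piece in }\scrq_0)$, by the identification of traces above. The inclusion $(Y_0,\scrq_0)\subseteq(X,\scrp)$ is graded by that same identification.

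\textbf{Closedness.} If $Y$ is closed, we take $y_n\in Y_0$ with $y_n\to x$. If infinitely many $y_n$ lie in $Y$, then $x\in Y$. Otherwise $y_n\in P_n\in\scrp_0$. If the $P_n$ repeat infinitely often, then $x$ lies in that piece, which is closed. If the $P_n$ are distinct, the distance from $y_n$ to $P_n\cap Y$ tends to $0$ by the same path/retraction argument used above (a path from $y_n$ through $x$ to $y_m$ has small diameter and must meet $Y$ between distinct pieces). This gives $x\in\ov{Y}=Y$.
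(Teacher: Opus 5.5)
Your check of conditions (1)--(3) for $\scrq_0$, via $\scrq_0=\{P\cap Y_0\mid P\cap Y_0\neq\emptyset\}$, is correct. Your closedness argument is essentially the paper's, which uses the points $b_n=r_{P_n}(P_{n+1})$ for consecutive distinct pieces. You should say explicitly why the forced point lies in $Y$: by Corollary \ref{twocor}, a path \emph{inside} $Y$ from $P_n\cap Y$ to $P_{n+1}\cap Y$ must pass through it.

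The gap is local path-connectedness of $Y_0$, which the paper calls straightforward and which your sketch does not establish.
\begin{itemize}
\item In case (1), $N'$ need not be a neighborhood, because pieces of $\scrp_0$ near $y$ need not be small. For example, let $Y$ be a segment meeting unit-disk pieces of $X$ only at the points $1/n\to 0=y$, with $\scrr$ these one-point pieces. Discarding pieces not contained in $W$ then discards points arbitrarily close to $y$.
\item The pointwise argument you give afterwards is the right one. If $z\in P\in\scrp_0$ and $\gamma$ joins $z$ to $y$ with $\diam(\im(\gamma))<\delta$, then $r_P\circ\gamma$ joins $z$ to $r_P(y)\in P\cap Y$ inside $\im(\gamma)$. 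Local path-connectedness of $Y$ at $y$ finishes, so $B_\delta(y)\cap Y_0$ lies in the path-component of $y$ in $W\cap Y_0$.
\item The same argument handles case (2) when $y\in P\cap Y$. Note that $P\cap Y\in\scrr$ is a piece of $Y$, not a single point as you wrote.
\end{itemize}

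What cannot be handled is $y\in P\backslash Y$ with $y\in\ov{Y}$. A point $z\in Y$ near $y$ reaches $y$ only through $r_P(z)\in P\cap Y$. Nothing bounds paths in $Y$ from $z$ to $r_P(z)$, since $Y$ is locally path-connected only at its own points. In fact the first assertion is false without closedness:
\begin{itemize}
\item In $\bbr^3$ let $P=[-1,1]\times[-1,0]\times\{0\}$ and $o=(0,0,0)$. For $n\geq 2$, join $(1/n,0,0)$ by a segment to the corner $(1/n,1/n,0)$ of the square $D_n=\{1/n\}\times[1/n,1+1/n]\times[0,1]$. At $(1/n,2/n,0)$ attach a free segment ending at $z_n=(1/n,2/n,-1/n)$. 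With pieces $P$ and $D_n$ this $X$ is disjointly tree-graded.
\item Remove $o$ and the slits $S_n=\{1/n\}\times\{3/(2n)\}\times[0,1/2]$ to get $Y$. Then $(Y,\scrq)$ is a graded subspace, and for $\scrr=\{P\backslash\{o\}\}$ we get $Y_0=Y\cup\{o\}$.
\item A path in $Y_0$ from $z_n$ to $o$ must cross $D_n\backslash S_n$ from $(1/n,2/n,0)$ to $(1/n,1/n,0)$, so it rises above height $1/2$. Since $z_n\to o$, $Y_0$ is not locally path-connected at $o$.
\end{itemize}

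The statement does hold for closed $Y$, which is the only case used (Lemma \ref{expansionofpclemma}). Let $y\in P\backslash Y$ and $\eta=d(y,Y)>0$. A point of another $P'\in\scrp_0$ within $\eta$ of $y$ would place the point $r_{P'}(P)\in Y$ on a path of diameter $<\eta$ ending at $y$, which is impossible. Hence $B_\eta(y)\cap Y_0\subseteq P$, and local path-connectedness at $y$ follows from that of $P$.
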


\begin{proof}

It is straightforward to check that $(Y_0,\scrq_0)$ is a graded subspace of $(X,\scrp)$. For the last statement, suppose $Y$ is closed in $X$. Consider a convergent sequence $\{a_n\}_{n\in\bbn} \to x$ in $X$ where $a_n\in Y_0$ for all $n\in\bbn$. If infinitely many $a_n$ lie in $Y$, then $x\in Y\subseteq Y_0$. So we may assume that $a_n\in (\bigcup\scrp_0)\backslash Y$ for all $n\in\bbn$. If infinitely many $a_n$ lie in some $P\in\scrp_0$, then $x\in P\subseteq Y_0$ since $P$ is closed in $X$. So we may focus on the case where $a_n\in P_n$ for distinct $P_1,P_2,P_3,\dots\in \scrp_0$. Let $r_n:X\to P_n$ denote the canonical retraction from Lemma \ref{retractionlemma}. Let $c_{n+1}=r_{n+1}(P_n)$ and $b_n=r_n(P_{n+1})$ for all $n\in\bbn$. By Corollary \ref{twocor}, any injective path $\gamma:\ui\to X$ with $\gamma(0)\in P_n$ and $\gamma(1)\in P_{n+1}$ must pass through the points $b_n$ and $c_{n+1}$. Since $P_n\in\scrp_0$, there exists $R_n\in\scrr$ with $R_n\subseteq P_n$. Thus we may find point $y_n\in P_{n}\cap Y$ for each $n\in\bbn$. Since there exists an injective path $\alpha_n:\ui\to Y$ with $\alpha_n(0)=y_n$ and $\alpha_n(1)=y_{n+1}$, we must have $\{b_n,c_{n+1}\}\subseteq \im(\alpha_n)\subseteq Y$ for all $n\in\bbn$. We claim that $\{b_n\}_{n\in\bbn}\to x$ in $X$. Let $U$ be a path-connected open neighborhood of $x$ in $U$ and find $N\in\bbn$ such that $a_n\in U$ for all $n\geq N$. For $n\geq N$, let $\beta_n:\ui\to U$ be an injective path with $\beta_n(0)=a_{n}$ and $\beta_n(1)=a_{n+1}$. Then we have $b_n\in \im(\beta_n)\subseteq U$ for all $n\geq N$, completing the proof. Since $\{b_n\}_{n\in\bbn}\to x$ where $b_n\in Y$ for all $n$ and where $Y$ is closed, we have $x\in Y\subseteq Y_0$, completing the proof.
\end{proof}

\begin{definition}\label{expansiondef}
If $(Y,\scrq)$ is a graded subspace of $(X,\scrp)$ and $\scrr\subseteq \scrq$, we refer to the disjointly tree-grade $(Y_0,\scrq_0)$ constructed in Proposition \ref{expansionprop} as the \textit{expansion of $(Y,\scrq)$ in $(X,\scrp)$ at $\scrr$}.
\end{definition}

\begin{remark}\label{expansionremark}
Let $(Y_0,\scrq_0)$ be the expansion of $(Y,\scrq)$ in $(X,\scrp)$ at $\scrr$.
\begin{enumerate}
\item If $\scrr$ contains all of the non-degenerate pieces of $Y$, then $(Y_0,\scrq_0)$ is a sectional graded subspace of $(X,\scrp)$.
\item If $\scrr$ contains exactly the non-degenerate pieces of $Y$, then the inclusion $Y\to Y_0$ determines a bijection on non-degenerate pieces.
\item If $\scrq\backslash\scrp\subseteq \scrr$, then $(Y_0,\scrq_0)$ is a full graded subspace of $(X,\scrp$).
\end{enumerate}
\end{remark}

\begin{lemma}\label{pclemma}
If $(X,\scrp)$ is a disjointly tree-graded space where $X$ is a Peano continuum, then
\begin{enumerate}
\item $X$ has at most countably many non-degenerate pieces of null diameter,
\item $\tree(X)$ has at most countably many connected components.
\end{enumerate}
\end{lemma}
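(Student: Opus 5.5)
The plan is to read (1) as the statement that the non-degenerate pieces of $X$ form a null family: for every $\epsilon>0$, only finitely many pieces have diameter $\geq\epsilon$. Countability then follows because every non-degenerate piece has positive diameter, so the set of non-degenerate pieces is the countable union over $k\in\bbn$ of the finite sets of pieces of diameter $\geq 1/k$. Part (2) is soft: the components of $\tree(X)$ are pairwise-disjoint open subsets of $X$ (Proposition \ref{treexprop}). A compact metric space is separable, and a separable space contains at most countably many pairwise-disjoint non-empty open sets, since each must contain a point of a fixed countable dense set.

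For the null-family statement in (1), I argue by contradiction. Suppose there are $\epsilon>0$ and pairwise-distinct pieces $P_1,P_2,\dots$ with $\diam(P_n)\geq\epsilon$. Choose $x_n,y_n\in P_n$ with $d(x_n,y_n)\geq \epsilon/2$. By compactness of $X$, pass to a subsequence with $x_n\to x$ and $y_n\to y$, so that $d(x,y)\geq\epsilon/2$. Since $d$ is a path-diameter metric, the balls $U=B_{\epsilon/5}(x)$ and $W=B_{\epsilon/5}(y)$ are path-connected, and they are disjoint. Fix $n<m$ large enough that $x_n,x_m\in U$ and $y_n,y_m\in W$.

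The key input is Corollary \ref{twocor}, applied to the disjoint pieces $P_n,P_m$ with the canonical retraction $r_n:X\to P_n$ of Lemma \ref{retractionlemma}. It says that every path from a point of $P_m$ to a point of $P_n$ passes through the single point $b=r_n(P_m)$; this is a single point by Lemma \ref{retractionlemma}(1), since $P_m$ is connected and lies in one component of $X\backslash P_n$. A path in $U$ from $x_m$ to $x_n$ therefore gives $b\in U$, and a path in $W$ from $y_m$ to $y_n$ gives $b\in W$. This contradicts $U\cap W=\emptyset$.

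The only delicate point is this use of Corollary \ref{twocor}: the same point $b$ must be forced on both paths. That works because $b$ depends only on the pair of pieces and not on the path, which is exactly why the retraction-based argument is used rather than a separate geometric estimate for each path.
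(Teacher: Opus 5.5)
Your proof is correct and follows essentially the same argument as the paper. You take a sequence of distinct pieces with diameters bounded below and pass to convergent subsequences of far-apart points; the single point $r_n(P_m)$ from Lemma \ref{retractionlemma} and Corollary \ref{twocor} is then forced onto two paths lying in disjoint path-connected neighborhoods. Part (2) follows from separability, and the only cosmetic difference is that you work directly with $\epsilon$ and path-diameter balls, whereas the paper uses finite covers by path-connected open sets and their Lebesgue numbers.
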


\begin{proof}
Let $d$ be a metric inducing the topology of $X$. Let $\scru$ be any finite open cover of $X$ by path-connected open sets. We claim that all but finitely many pieces lie in some element of $\scru$. Supposing otherwise, we have an infinite sequence of distinct pieces $P_1,P_2,P_3,\dots\in\scrp$ such that each $P_i$ does not lie in any single element of $\scru$. If $\lambda>0$ is a Lebesgue number for $\scru$, then $\diam(P_i)\geq \lambda$ for all $i\geq 1$. Find $a_i,b_i\in P_i$ with $d(a_i,b_i)\geq \lambda$. Since $X$ is compact, we may replace $\{P_i\}_{i\in\bbn}$ with a subsequence so that $\{a_i\}_{i\in\bbn}\to a$ and $\{b_i\}_{i\in\bbn}\to b$ for some $a,b\in X$. Then $d(a,b)\geq \lambda/3$. Find path-connected open sets $U_a,U_b$ such that $a\in U_a\subseteq B_{\lambda/6}(a)$, $b\in U_b\subseteq B_{\lambda/6}(b)$. Fix $i$ sufficiently large so that $a_i,a_{i+1}\in U_a$ and $b_i,b_{i+1}\in U_b$. Now we can find injective paths $\alpha:\ui\to U_a$ from $a_i$ to $a_{i+1}$ and $\beta:\ui\to U_b$ from $b_i$ to $b_{i+1}$. Let $r_i:X\to P_i$ be the retraction constructed in Lemma \ref{retractionlemma}. Lemma \ref{retractionlemma} implies that $r_i(P_{i+1})$ is a single point $z\in P_i$. Moreover, since $\alpha$ and $\beta$ both start in $P_i$ and end in $P_{i+1}$, we have $z\in \im(\alpha)\cap \im(\beta)$. However, this is a contradiction since $\alpha$ and $\beta$ were constructed to have disjoint images. We conclude that for any finite open cover $\scru$ of $X$ by path-connected open sets, all but finitely many pieces of $X$ lie in some element of $\scru$.

Now, find a sequence $\scru_n$ of finite open covers of $X$ by path-connected open sets of diameter less than $2^{-n}$ and Lebesgue numbers $\lambda_n>0$ for $\scru_n$ such that $\{\lambda_n\}_{n\in\bbn}\to 0$. Let $\scrp_n$ be the set of a pieces $P\in\scrp$, which are not contained in some element of $\scru_n$. By the previous paragraph, $\scrp_n$ is finite for all $n\in\bbn$ and $\scrp_{\infty}=\bigcup_{n\in\bbn}\scrp_n$ is a countable subset of $\scrp$. Note that if $P\in \scrp$ is a non-degenerate piece of $X$, then $\diam(P)>0$ and there exists $n$ sufficiently large so that $P\in \scrp_n$. Thus the countable set $\scrp_{\infty}$ is the set of non-degenerate pieces of $X$ and, when enumerated, the diameters of these pieces form a null sequence. This completes the proof of (1).

Since $\tree(X)$ is a disjoint union of pairwise-disjoint connected open sets and $X$ is separable, $\tree(X)$ must have countably many connected components. Therefore, (2) holds.
\end{proof}

\begin{lemma}\label{expansionofpclemma}
Let $(X,\scrp)$ be a disjointly tree-graded space, $K$ be a Peano continuum and $f:K\to X$ be a map. Then there exists a sectional graded-subspace $(Y,\scrq)$ of $(X,\scrp)$ satisfying the following:
\begin{enumerate}
\item $\im(f)\subseteq Y$,
\item $Y$ is closed in $X$.
\item $Y$ has countably many non-degenerate pieces,
\item $\tree(Y)$ has countably many connected components,
\item $f$ is null-homotopic in $Y$ if and only if $f$ is null-homotopic in $X$.
\end{enumerate}
\end{lemma}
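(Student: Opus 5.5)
The plan is to build $Y$ as the expansion of the image of $f$. Set $Z=\im(f)$. Since $K$ is a Peano continuum and $X$ is Hausdorff, $Z$ is a Peano continuum (Hahn--Mazurkiewicz). In particular it is path-connected, locally path-connected, compact, and hence closed in $X$.

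\textbf{Step 1: $Z$ as a graded subspace.} Let $\scrq_Z=\{P\cap Z\mid P\in\scrp,\ P\cap Z\neq\emptyset\}$. By Remark \ref{subspacewelldefinedremark}, $(Z,\scrq_Z)$ is a graded subspace of $(X,\scrp)$. Since $Z$ is a Peano continuum, Lemma \ref{pclemma} applies and gives two facts:
\begin{itemize}
\item $Z$ has only countably many non-degenerate pieces;
\item $\tree(Z)$ has countably many components.
\end{itemize}

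\textbf{Step 2: expand.} Let $\scrr$ be the set of non-degenerate pieces of $Z$, and let $(Y,\scrq)$ be the expansion of $(Z,\scrq_Z)$ in $(X,\scrp)$ at $\scrr$ (Proposition \ref{expansionprop}). Then $Y=Z\cup\bigcup\scrp_0$, where $\scrp_0$ consists of the pieces of $X$ containing a non-degenerate piece of $Z$. This gives the first four properties.
\begin{itemize}
\item By Remark \ref{expansionremark}(1), $(Y,\scrq)$ is sectional.
\item $\im(f)\subseteq Y$ by construction, which is (1).
\item Since $Z$ is closed, Proposition \ref{expansionprop} gives that $Y$ is closed, which is (2).
\item By Remark \ref{expansionremark}(2), the non-degenerate pieces of $Y$ are in bijection with those of $Z$, so there are countably many, which is (3).
\item For (4), note that $Y\backslash Z\subseteq\bigcup\scrp_0\subseteq\pc(X)$. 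Hence $\tree(Y)=Y\cap\tree(X)\subseteq Z\cap \tree(X)$, and $\tree(Y)\subseteq \tree(Z)$ because $\pc(Z)=Z\cap \pc(X)$.
\end{itemize}
For (4) I still need to check that each component of $\tree(Y)$ contains a component of $\tree(Z)$, or simply that it is open in $Y$ and meets the separable set $Z$. The cleanest route: $\tree(Y)$ is an open subset of $Y$ contained in $Z$, and $Z$ is separable, so its components, being pairwise disjoint nonempty open subsets of the separable metrizable space $Z$, are countably many.

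\textbf{Step 3: property (5).} One direction is trivial, since a null-homotopy in $Y$ is one in $X$. For the converse, Lemma \ref{retractionlemma3} gives a retraction $r:X\to Y$, because $Y$ is closed and sectional. If $g$ is a null-homotopy of $f$ in $X$, then $r\circ g$ is a null-homotopy of $r\circ f=f$ in $Y$.

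The main obstacle is checking that the hypotheses of Lemma \ref{retractionlemma} really hold for $Y$, namely that $Y$ meets each piece of $X$ in either a single point or the whole piece. A piece $P$ meeting $Y$ in more than one point either lies in $\scrp_0$, in which case $P\subseteq Y$, or meets $Z$ in a nondegenerate piece of $Z$, in which case $P\in\scrp_0$ again. The only remaining case is $P\cap Y$ picking up points from some $P'\in\scrp_0$, which is impossible since pieces are disjoint. This is exactly sectionality, so the argument goes through.
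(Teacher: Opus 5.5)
Your proposal is correct and follows the paper's proof essentially step for step. You expand $\im(f)$ at its non-degenerate pieces, read off sectionality and (1)--(3) from Proposition \ref{expansionprop} and Remark \ref{expansionremark}, and obtain (5) from the retraction of Lemma \ref{retractionlemma3}. The only difference is cosmetic and concerns (4): the paper simply notes $\tree(Y)=\tree(\im(f))$, since your inclusion $\tree(Y)\subseteq\tree(Z)$ is in fact an equality, so Lemma \ref{pclemma} applies directly without your separability detour.
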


\begin{proof}
Since $C=\im(f)$ is a Peano continuum, the collection $\scrc=\{P\cap C\mid P\in\scrp \text{ and }P\cap C\neq\emptyset\}$ is a disjoint tree-grading on $C$ such that $(C,\scrc)$ is a graded subspace of $(X,\scrp)$. Moreover, Lemma \ref{pclemma} implies that $C$ has at most countably many non-degenerate pieces and $\tree(C)$ has countably many connected components. Let $\scrr$ denote the set of non-degenerate pieces of $C$ and let $(Y,\scrq)$ be the expansion of $(C,\scrc)$ in $(X,\scrp)$ at $\scrr$ (recall Definition \ref{expansiondef}). By Part (1) of Remark \ref{expansionremark}, $(Y,\scrq)$ is a section graded subspace of $(X,\scrp)$.

With the definition of $(Y,\scrq)$ complete, note that (1) holds by construction and (2) holds by the second statement of Proposition \ref{expansionprop}. (3) follows from Part (2) of Remark \ref{expansionremark}. (4) follows from the fact that the expansion construction does not affect the tree-portion, i.e. $\tree(C)=\tree(Y)$. For (5), note that any null-homotopy $H:K\times\ui\to Y$ of $f$ in $Y$ also gives a null-homotopy of $f$ in $X$ (since $Y\subseteq X$). Since we have established that $Y$ is closed in $X$ and $(Y,\scrq)$ is a sectional graded subspace of $(X,\scrp)$, Lemma \ref{retractionlemma3} applies, giving a canonical retraction $r:X\to Y$. Hence, if $H:K\times\ui\to X$ is a null-homotopy of $f$ in $X$, then $r\circ H$ is a null-homotopy of $f$ in $Y$.
\end{proof}

\section{Metric quotients of disjointly tree-graded spaces}\label{sectionmetricquotient}

Fix a disjointly tree-graded space $(X,\scrp)$ and let $\scrq\subseteq \scrp$. Let $X_{\scrq}$ denote the quotient set obtained by identifying each $P\in\scrp\backslash \scrq$ to a single point and let $\Gamma_{\scrq}:X\to X_\scrq$ denote the surjection making these identifications. For each $P\in\scrp\backslash\scrq$, let $y_P$ denote the point in $X_{\scrq}$, which is the image of $P$ under $\Gamma_{\scrq}$. For the sake of simplicity, we identify $X\backslash \bigcup\{P\mid P\in \scrp\backslash \scrq\}$ with $X_{\scrq}\backslash\{y_P\mid P\in\scrp\backslash\scrq\}$ so that $\Gamma_{\scrq}$ is the identity on $X\backslash \bigcup\{P\mid P\in \scrp\backslash \scrq\}$ and so that $\Gamma_{\scrq}(P)=y_P$ when $P\in\scrp\backslash \scrq$.

Because we do not demand that grade-preserving maps are non-expansive, we have occasional need to view $X_{\scrq}$ the topological quotient space of $X$. We write $X_{\scrq}^{qt}$ to denote $X_{\scrq}$ equipped with the quotient topology inherited from $\Gamma_{\scrq}$. If $\scrq=\emptyset$, then $X_{\scrq}^{qt}$ is precisely the topological tree from Section \ref{sectionparamterization}. The same argument used in the proof of Lemma \ref{paramterizationlemma} shows that $X_{\scrq}^{qt}$ is Hausdorff for any $\scrq\subseteq \scrp$.

We also wish to endow $X_{\scrq}$ with a disjoint tree grading $\scrp_{\scrq}=\scrq\cup\{\{y_P\}\mid P\in\scrp\backslash\scrq\}$ for which the map $\Gamma_{\scrq}:X\to X_{\scrq}$ would be grade-preserving. However, $X_{\scrq}^{qt}$ need not be first countable. Thus we must explicitly define a path-diameter metric $\rho$ on $X_{\scrq}$ such that $\Gamma_{\scrq}:X\to X_{\scrq}$ is a metric quotient map in the following sense.

\begin{definition}\label{defmetricquotientmap}
A function $q:(X,d)\to (Y,\rho)$ of metric spaces is a \textit{metric quotient map} if $q$ is a non-expansive surjection with the property that for every non-expansive map $g:(X,d)\to (X',d')$ where $g$ is constant on the fibers of $q$, there exists a unique non-expansive map $f:(Y,\rho)\to (X',d')$ such that $f\circ q=g$.
\end{definition}

This section is devoted to proving the following theorem.

\begin{theorem}\label{quotientmetrictheorem}
For any $\scrq\subseteq \scrp$, there exists a path-diameter metric $\rho$ on $X_{\scrq}$ so that when $X_{\scrq}$ is given the topology induced by $\rho$, $\scrp_{\scrq}$ is a disjoint tree-grading on $X_{\scrq}$ and $\Gamma_{\scrq}:(X,d)\to (X_{\scrq},\rho)$ is a grade-preserving, metric quotient map.
\end{theorem}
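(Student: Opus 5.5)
The plan is to build $\rho$ as the largest pseudometric on $X_{\scrq}$ that makes $\Gamma_{\scrq}$ non-expansive, and then verify the required properties one at a time. The chain (``Hausdorff'') construction does this. Write $\mathcal{C}=\scrp\backslash\scrq$ for the collapsed pieces. For $a,b\in X_{\scrq}$ set
\[\rho(a,b)=\inf\sum_{i=0}^{n} d(x_i,x_i'),\]
where the infimum is over chains with $\Gamma_{\scrq}(x_0)=a$, $\Gamma_{\scrq}(x_n')=b$, and $x_i'$, $x_{i+1}$ in the same fiber of $\Gamma_{\scrq}$. Some points are immediate from this definition: $\rho$ is a pseudometric, $\Gamma_{\scrq}$ is non-expansive, and the universal property of Definition \ref{defmetricquotientmap} holds. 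Indeed, any non-expansive $g$ constant on fibers satisfies $d'(g(x_0),g(x_n'))\le \sum d(x_i,x_i')$, and uniqueness follows from surjectivity.

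\textbf{Positivity.} Next I will show that $\rho(a,b)>0$ for $a\ne b$, using the tree structure to bound chains from below.
\begin{enumerate}
\item Take distinct $a,b$ and lift them to $x,y$ that lie in different fibers. They are then not in the same collapsed piece, and I expect Proposition \ref{separateprop} (or the retraction of Lemma \ref{retractionlemma}) to separate them.
\item If $x$ (say) lies in $\tree(X)$ or in a piece of $\scrq$, choose a closed sectional subspace $Y$ whose canonical retraction $r$ is non-expansive and constant on each collapsed piece. Natural candidates are a piece $P\in\scrq$, or the closure of a tree component together with a point of the piece. The map $r$ then descends to a non-expansive function on $X_{\scrq}$ that separates $a$ from $b$.
\item A cleaner route is to use a point $z\in\tree(X)$ separating $x$ and $y$. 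The key claim is that every path-chain from $x$ to $y$ must pass near $z$ or jump across a collapsed piece, and a collapsed piece never contains $z$.
\end{enumerate}
I expect the positivity step to be the main obstacle, because collapsed pieces can have large diameter and accumulate. What I would try first is to find a non-expansive $g:X\to\bbr$ that is constant on each collapsed piece and has $g(x)\ne g(y)$. The candidate is $g=d(r(\cdot),r(x))$, where $r$ is the retraction onto the closure of an appropriate small tree neighborhood, possibly combined with Corollary \ref{twocor}.

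\textbf{Path-diameter metric and compatibility with the quotient.} To make $\rho$ a path-diameter metric, I would replace $\rho$ by the induced path-diameter metric of Remark \ref{diametermetricremark}. Since paths in $X$ project to paths, the bound $\rho_{\mathrm{pd}}(\Gamma x,\Gamma y)\le d(x,y)$ holds because $d$ is path-diameter. I would also check that the universal property survives: a non-expansive $g$ gives $\diam g(\im\alpha)\le\diam\alpha$ along chains of paths. It may in fact be simplest to define $\rho$ from the outset as an infimum of diameters of images of chains of paths, which builds both features in at once.

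\textbf{Topology and the tree-grading.} Finally I will verify that $(X_{\scrq},\rho)$ is path-connected and locally path-connected, since the balls are path-connected, and is metrizable. For the tree-grading I apply Lemma \ref{datalemma}. Take $T$ to be the quotient tree, with $q$ factoring as $X_{\scrq}\to T$. Continuity with respect to $\rho$ needs care because $T$ has the quotient topology. If this fails, I would instead check conditions (1)--(3) of Definition \ref{deftreegraded} directly:
\begin{enumerate}
\item a simple closed curve in $X_{\scrq}$ lifts, after inserting arcs in collapsed pieces as in Lemma \ref{paramterizationlemma2}, to a simple closed curve in $X$ or else is constant;
\item $\bigcup\scrp_{\scrq}$ is closed, as the image of the saturated closed set $\pc(X)$, using the separation from the positivity step;
\item the pieces of $\scrp_{\scrq}$ are the path-components of $\bigcup\scrp_{\scrq}$, by total disconnectedness.
\end{enumerate}
Grade-preservation of $\Gamma_{\scrq}$ then holds by construction.
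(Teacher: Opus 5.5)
Your skeleton matches the paper's: the chain quotient pseudometric, with non-expansiveness, the universal property and grade-preservation immediate. Your positivity argument, however, is genuinely different, and in one respect more complete.

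\textbf{Positivity.} The paper estimates chains directly. It takes $s_1\in\tree(X)$ separating the two fibers and a short arc $A\subseteq\tree(X)$ at $s_1$ that every path between the fibers must cross. It discards trivial jumps, so all remaining jumps occur inside collapsed pieces and hence off $A$, and concludes that some link of the chain costs at least $\diam(A)\geq\epsilon$. You instead seek a non-expansive $g:X\to\bbr$ constant on collapsed pieces. Such a $g$ gives $\rho(a,b)\geq|g(x)-g(y)|$ with no chain bookkeeping, and Lemma \ref{retractionlemma} supplies it.

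To make this airtight, split into two cases.
\begin{itemize}
\item The fibers of $a$ and $b$ do not lie in a common piece. Take $z\in\tree(X)$ separating them (Proposition \ref{separateprop}), let $U$ be its tree component, and let $r:X\to\ov{U}$ be the canonical retraction. This $r$ is constant on every piece, since a piece meets $\ov{U}$ in at most one point (Proposition \ref{componentclosure}). Set $g=d(r(\cdot),r(x))$. Then $r(x)\neq r(y)$: otherwise, concatenating the arcs that define $r(x)$ and $r(y)$ gives a path from $x$ to $y$ meeting $\ov{U}$ in a single point. That point would have to be $z$, which is impossible since $z$ lies in the open set $U$.
\item $a\neq b$ come from a common piece $P$. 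Then $P\in\scrq$ and no tree point separates them. Your item (2) is exactly what is needed here: $r_P$ descends to $X_{\scrq}$ and gives $\rho\geq d$ on $\Gamma(P)$.
\end{itemize}
The paper's chain argument starts from a separating point $s_1$, which does not exist in the second case. So your retraction onto a piece of $\scrq$ covers a case the paper passes over.

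\textbf{Path-diameter property.} Drop the replacement. As written it is circular: Remark \ref{diametermetricremark} presupposes that the $\rho$-topology is locally path-connected, which you only deduce later from path-connected balls. You already have what is needed, though. Let $\rho_{\mathrm{pd}}$ be the infimum of $\rho$-diameters of $\rho$-continuous paths, such as $\Gamma\circ\alpha$. This is a pseudometric for which $\Gamma$ is non-expansive, as you observed. Maximality of the chain pseudometric then gives $\rho_{\mathrm{pd}}\leq\rho$, while $\rho\leq\rho_{\mathrm{pd}}$ is trivial. Hence $\rho$ is already path-diameter; this is exactly the content of the paper's sub-chain contradiction argument.

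\textbf{Tree-grading.} The paper treats the tree-grading of $(X_{\scrq},\rho)$ as evident. You are right that the induced map to the quotient tree need not be $\rho$-continuous; take $\bbr\times\{0\}\cup\bbz\times\ui$ with its single piece and $\scrq=\emptyset$. So checking Definition \ref{deftreegraded} directly is the right fallback, and the same test-function device supplies what it needs.
\begin{itemize}
\item The function $\min(d(\cdot,x),\epsilon)$, with $B_{2\epsilon}(x)\subseteq\tree(X)$, shows $\Gamma$ is a local isometry on $\tree(X)$ with open image.
\item The descended $r_P$ show that $\Gamma|_P$ is an isometric embedding with closed image for $P\in\scrq$.
\item The function $d(r(\cdot),r(x))$ as above, with $x\in C$, shows $\Gamma(C)$ is $\rho$-open for every component $C$ of $X\backslash\{z\}$, $z\in\tree(X)$.
\end{itemize}
These give closedness of $\bigcup\scrp_{\scrq}$ and the path-component condition. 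Together with the local tree structure at $\Gamma(z)$, they also exclude simple closed curves through points of $\Gamma(\tree(X))$.
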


Since $\scrq$ is fixed, we simplify our notation by writing $Y$ for $X_{\scrq}$ and $\Gamma:X\to Y$ for the map $\Gamma_{\scrq}$. Let $d$ be a path-diameter metric inducing the topology of $X$. We take $\rho:Y\times Y\to \bbr$ be the standard quotient pseudometric \cite{MetricGeometry} on $Y$ inherited from $(X,d)$ by the map $q$: $\rho(y,z)$ is the infimum of all sums $\sum_{i=1}^{n}d(a_{i},b_i)$ taken over the set of all finite chains $a_1,b_1,a_2,b_2,\cdots,a_n,b_n$ in $X$ where $\Gamma(a_1)=y$, $\Gamma(b_n)=z$, and $\Gamma(b_i)=\Gamma(a_{i+1})$ for $1\leq i\leq n-1$.

\begin{proposition}
The quotient pseudometric $\rho$ on $Y$ is a metric.
\end{proposition}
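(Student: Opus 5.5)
Since $\rho$ is a pseudometric by construction, it only remains to show that $\rho(y,z)>0$ whenever $y\neq z$ in $Y$. My plan is to build, for each such pair, a non-expansive function $g:(X,d)\to\bbr$ that is constant on the fibers of $\Gamma$ and takes different values on $\Gamma^{-1}(y)$ and $\Gamma^{-1}(z)$. For any chain $a_1,b_1,\dots,a_n,b_n$ as in the definition of $\rho$, we have $g(b_i)=g(a_{i+1})$, so the sum telescopes:
\[\textstyle\sum_{i=1}^n d(a_i,b_i)\ \geq\ \sum_{i=1}^n |g(a_i)-g(b_i)|\ \geq\ |g(a_1)-g(b_n)|.\]
Taking the infimum over chains gives $\rho(y,z)\geq |g(\Gamma^{-1}(y))-g(\Gamma^{-1}(z))|>0$.

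To produce $g$, I will split into cases according to whether $y,z$ lie in $\{y_P\mid P\in\scrp\backslash\scrq\}$. In every case, choose representatives $x\in\Gamma^{-1}(y)$ and $x'\in\Gamma^{-1}(z)$. If $y=y_P$ and $z=y_{P'}$ with $P\neq P'$, or if one of the two is collapsed and the other is not in that piece, then $x$ and $x'$ do not lie in a single piece. In that situation Proposition \ref{separateprop} gives a point $w\in\tree(X)$ on an arc between them that separates $x$ from $x'$. If neither $y$ nor $z$ is collapsed but $x,x'$ lie in the same piece $P\in\scrq$, then no collapsed piece meets $P$, and I will instead use the retraction $r:X\to P$ of Lemma \ref{retractionlemma}.

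In the first situation, let $C$ be the component of $X\backslash\{w\}$ containing $x'$. Since pieces are connected and avoid $w$, every piece lies entirely in $C$ or entirely in its complement. I will define $g(u)=d(u,w)$ for $u\in C$ and $g(u)=0$ otherwise; this vanishes on the fiber of $y$ when that fiber is $P\not\subseteq C$.

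The main obstacle is that $g$ need not be constant on a collapsed piece $P'\subseteq C$. To fix this, I will replace the function by $g(u)=d(r(u),w)$, where $r$ is the canonical retraction onto the closed arc $A$ from $w$ to $x'$ (or onto a suitable closed subtree through $w$). Lemma \ref{retractionlemma} applies to $A$ provided $A$ meets each piece in a point or meets it wholly, which can be arranged by truncating $A$ at its first entry into the piece containing $x'$ if needed. The retraction $r$ is non-expansive by Lemma \ref{retractionlemma}(3), and it is constant on every piece not meeting $A$ in more than one point, by part (1). Composing with $d(\cdot,w)$ keeps $g$ non-expansive. Moreover, $g(x)=0$ because $r(x)=w$, while $g(x')=d(r(x'),w)>0$.

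In the second situation, take $g(u)=d(r(u),x)$. This is non-expansive, and it is constant on every other piece by Lemma \ref{retractionlemma}(1), since other pieces lie in single components of $X\backslash P$. Points of $P$ are not collapsed, so nothing further needs checking there, and $g(x)=0<d(x,x')=g(x')$. What remains is to verify, case by case, that the chosen subspace satisfies the hypotheses of Lemma \ref{retractionlemma}.
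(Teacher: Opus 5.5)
\paragraph{Gap in the first situation.} Truncating the arc $A$ from $w$ to $x'$ at its first entry into the piece containing $x'$ only controls that last piece, and $A$ can still pass through intermediate pieces. Suppose $A$ crosses a piece $Q$ with $x,x'\notin Q$, entering and leaving through two different components of $\tree(X)$.
\begin{itemize}
\item Then $A\cap Q$ is a non-degenerate sub-arc of $A$. If, say, $Q$ is a disk crossed along a chord, $A\cap Q$ is neither a point nor all of $Q$, so Lemma \ref{retractionlemma} does not give a retraction at all.
\item If instead $Q$ is itself an arc lying in $A$, the retraction exists but restricts to the identity on $Q$.
\end{itemize}
In either case, if $Q\in\scrp\backslash\scrq$, your $g$ is not constant on the fiber $\Gamma^{-1}(y_Q)$. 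Then $g(b_i)=g(a_{i+1})$ can fail and the telescoping bound is lost. Your own caveat, that $r$ is constant only on pieces meeting $A$ in at most one point, is exactly where this breaks, and nothing in the proposal rules out such intermediate pieces. The telescoping reduction and your second situation (retracting onto a common uncollapsed piece $P\in\scrq$) are correct as written.

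\paragraph{Repair.} Keep $A$ inside the tree portion. Let $\alpha$ be the injective path from $x$ to $x'$ and $w=\alpha(t)$ with $t\in(0,1)$. Since the component of $\tree(X)$ containing $w$ is open, there is $b\in(t,1)$ with $A=\alpha([t,b])\subseteq\tree(X)$, as in the proof of Proposition \ref{separateprop}.
\begin{itemize}
\item $A$ meets no piece, so Lemma \ref{retractionlemma} applies.
\item Each piece is path-connected and disjoint from $A$, so it lies in one component of $X\backslash A$. By part (1), $r$ is constant on every piece.
\item The sub-arcs $\alpha([0,t])$ and $\alpha([b,1])$ give $r(x)=w$ and $r(x')=\alpha(b)$.
\end{itemize}
So $g=d(r(\cdot),w)$ yields $\rho(y,z)\geq d(w,\alpha(b))>0$, and no truncation is needed.

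\paragraph{Comparison with the paper.} With this change your proof is complete, and it takes a different route from the paper. The paper fixes an arc in $\tree(X)$ that every path between the two fibers must traverse, then argues directly on chains that some link has diameter at least $\epsilon$. Your non-expansive invariant handles all of the chain bookkeeping in one telescoping line. Your second situation also earns its place. When $y\neq z$ are uncollapsed points of the same piece $P\in\scrq$, no point of $\tree(X)$ separates them. The paper's argument, which begins from such a separating point, does not address that case, while your function $d(r_P(\cdot),x)$ does.
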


\begin{proof}
We check that $\rho$ is positive definite. Suppose $y$ and $z$ are distinct points in $Y$. Then there exists a point $s_1\in \tree(X)$ such that $\Gamma^{-1}(y)$ and $\Gamma^{-1}(z)$ lie in distinct connected components of $X\backslash\{s_1\}$. Since $\tree(X)$ is open in $X$, we may find $\epsilon>0$ such that $\ov{B_{\epsilon}(s_1)}\subseteq \tree(X)\backslash (\Gamma^{-1}(y)\cup \Gamma^{-1}(z))$. Since $d$ is a path-diameter metric, $\ov{B_{\epsilon}(s_1)}$ is uniquely-arcwise-connected. In particular, there is an arc $A$ with endpoints $s_1$ and $s_0\in \partial B_{\epsilon}(s_1)$ such that the image of any path $\alpha:\ui\to X$ from a point in $\Gamma^{-1}(y)$ to a point in $\Gamma^{-1}(z)$ must contain $A$ (passing through $s_0$ first and then $s_1$). Note that $\diam(A)\geq d(s_0,s_1)= \epsilon$.

Consider a finite chain $a_1,b_1,a_2,b_2,\cdots,a_n,b_n$ in $X$ where $\Gamma(a_1)=y$, $\Gamma(b_n)=z$, and $y_i=\Gamma(b_i)=\Gamma(a_{i+1})$ for $1\leq i\leq n-1$. We will show that $\sum_{i=1}^{n}d(a_i,b_i)\geq \epsilon$. Note that if $b_j=a_{j+1}$, then $\sum_{i=1}^{n}d(a_i,b_i)$ is greater than or equal to \[ d(a_1,b_1)+\cdots +d(a_{j-1},b_{j-1})+d(a_{j},b_{j+1})+d(a_{j+2},b_{j+2})+\cdots+d(a_n,b_n)\]
Thus, we may assume that $b_i\neq a_{i+1}$ for all $1\leq i\leq n-1$. Note that this implies that no $a_i$ or $b_i$ lies in $\tree(X)$. Let $\alpha_i:\ui\to X$ be injective paths from $a_i$ to $b_i$ for $1\leq i\leq n$. Since the fibers of $\Gamma$ are path-connected, we may find injective paths $\beta_i:\ui\to \Gamma^{-1}(y_i)$ for $1\leq i\leq n-1$ from $b_i$ to $a_{i+1}$. Let $\alpha:\ui\to X$ be the concatenation path $\alpha_1\beta_1\alpha_2\beta_2\cdots \beta_{n-1}\alpha_n$. According to the first paragraph, $A\subseteq \im(\alpha)$. Since the paths $\beta_i$ lie in $\pc(X)$, there must exist $k\in\{1,2,\dots,n\}$ such that $A\subseteq \im(\alpha_k)$. Moreover, since $d$ is a path-diameter metric and since the image of every path from $a_k$ to $b_k$ must contain $A$, we have $d(a_k,b_k)\geq \diam(A)$. Thus $\sum_{i=1}^{n}d(a_i,b_i)\geq d(a_k,b_k)\geq \epsilon$. We conclude that $\rho(x,y)\geq \epsilon$. Since $\rho$ is positive definite, it is a metric on $Y$.
\end{proof}

\begin{proposition}
$\rho$ is a path-diameter metric on $Y$.
\end{proposition}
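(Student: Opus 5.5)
The plan is to verify the two inequalities in the definition of a path-diameter metric directly from the definition of $\rho$ as the quotient pseudometric. One of them is trivial. If $\gamma:\ui\to (Y,\rho)$ is any path from $y$ to $z$, then $y,z\in\im(\gamma)$, so $\diam_{\rho}(\im(\gamma))\geq \rho(y,z)$. Hence the infimum over paths is at least $\rho(y,z)$. All of the work is in the reverse inequality: for every $\epsilon>0$ we must produce a $\rho$-continuous path from $y$ to $z$ whose $\rho$-diameter is less than $\rho(y,z)+\epsilon$.

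The first step is to note that $\Gamma:(X,d)\to (Y,\rho)$ is non-expansive. This holds because the one-link chain $a,b$ gives $\rho(\Gamma(a),\Gamma(b))\leq d(a,b)$. In particular $\Gamma$ is continuous into the $\rho$-topology, so $\Gamma\circ\alpha$ is a path in $(Y,\rho)$ for every path $\alpha$ in $X$, and $\diam_{\rho}(\im(\Gamma\circ\alpha))\leq \diam_d(\im(\alpha))$.

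Next, fix $\epsilon>0$ and choose a chain $a_1,b_1,\dots,a_n,b_n$ with $\Gamma(a_1)=y$, $\Gamma(b_n)=z$, $\Gamma(b_i)=\Gamma(a_{i+1})$, and $\sum_{i=1}^n d(a_i,b_i)<\rho(y,z)+\epsilon$. Since $d$ is a path-diameter metric on $X$, for each $i$ we can pick a path $\alpha_i$ in $X$ from $a_i$ to $b_i$ with $\diam_d(\im(\alpha_i))<d(a_i,b_i)+\epsilon/n$. The paths $\Gamma\circ\alpha_i$ then have consecutive matching endpoints, because $\Gamma(b_i)=\Gamma(a_{i+1})$. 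Their concatenation $\gamma=(\Gamma\circ\alpha_1)\cdots(\Gamma\circ\alpha_n)$ is therefore a path in $(Y,\rho)$ from $y$ to $z$.

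The only step requiring a small argument is bounding the diameter of $\im(\gamma)$, which is a union $C_1\cup\cdots\cup C_n$ of sets with $C_i\cap C_{i+1}\neq\emptyset$. Take $u\in C_j$ and $v\in C_k$ with $j\leq k$. Using the triangle inequality through the shared points $\Gamma(b_j),\dots,\Gamma(b_{k-1})$ gives $\rho(u,v)\leq\sum_{i=j}^{k}\diam_{\rho}(C_i)$. Consequently
\[\diam_{\rho}(\im(\gamma))\leq \sum_{i=1}^n \diam_d(\im(\alpha_i))<\sum_{i=1}^n d(a_i,b_i)+\epsilon<\rho(y,z)+2\epsilon.\]
Letting $\epsilon\to 0$ gives the reverse inequality, so $\rho$ is a path-diameter metric on $Y$.
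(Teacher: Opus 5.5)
Your proof is correct and follows essentially the same route as the paper: take a near-optimal chain, realize each link $a_i,b_i$ by a path $\alpha_i$ of nearly minimal diameter, and bound the $\rho$-diameter of the concatenation of the paths $\Gamma\circ\alpha_i$ by $\sum_i\diam(\im(\alpha_i))$. The paper phrases this as a contradiction and gets the same estimate by building an explicit sub-chain through $b_j,a_{j+1},\dots,a_k$; you instead argue directly, using non-expansiveness of $\Gamma$ and the triangle inequality, which is slightly cleaner.
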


\begin{proof}
Suppose to obtain a contradiction that
\[\rho(y,z)<\delta=\inf\{\diam(\im(\beta))\mid \beta\text{ is a path from }y\text{ to }z\}.\] Then there exists a finite chain $a_1,b_1,a_2,b_2,\cdots,a_n,b_n$ in $X$ where $\Gamma(a_1)=y$, $\Gamma(b_n)=z$, and $y_i=\Gamma(b_i)=\Gamma(a_{i+1})$ for $1\leq i\leq n-1$ and such that $\sum_{i=1}^{n}d(a_i,b_i)<\delta$. Since $d$ is a path-diameter metric, there exists paths $\alpha_i:\ui\to X$ from $a_i$ to $b_i$ such that $\sum_{i=1}^{n}\diam(\im(\alpha_i))<\delta$. Setting $\beta_i=\Gamma\circ\alpha_i$ for each $i$, the concatenation $\beta=\beta_1\beta_2\cdots\beta_n$ is a path from $y$ to $z$ in $Y$. Then $\sum_{i=1}^{n}\diam(\im(\alpha_i))<\diam(\im(\beta))$ and so there exists $y_0,y_1\in \im(\beta)$ such that $\sum_{i=1}^{n}\diam(\im(\alpha_i))< \rho(y_0,y_1)$. Find $1\leq j\leq k\leq 1$, $s\in [0,1)$, and $t\in (0,1]$ such that $\Gamma\circ\alpha_j(s)=y_0$ and $\Gamma\circ\alpha_k(t)=y_1$. Let $a_{j}'=\alpha_j(s)$ and $b_{k}'=\alpha_k(t)$. Now $a_{j}',b_{j},a_{j+1},b_{j+2},\dots,a_{k-1},b_{k-1},a_{k},b_{k}'$ is a chain of points in $X$ where $\Gamma(a_j')=y_0$, $\Gamma(b_k')=y_1$, and $\Gamma(b_i)=\Gamma(a_{i+1})$ for $j\leq i\leq k-1$. Additionally, we have
\begin{eqnarray*}
 &&d(a_j',b_j)+\sum_{i=j+1}^{k-1}d(a_i,b_i)+d(a_k,b_k')\\
 &\leq & \diam(\alpha_j([s,1]))+\sum_{i=j+1}^{k-1}\diam(\im(\alpha_i))+\diam(\alpha_k([0,t]))\\
&\leq & \sum_{i=1}^{n}\diam(\im(\alpha_i))\\
&<& \rho(y_0,y_1).
\end{eqnarray*}
However, this contradicts the definition of $\rho$. Note that if $j=k$ we obtain the same contradiction with a simpler statement.
\end{proof}

Our use of the quotient pseudometric $\rho$ on $Y$ ensures that $\Gamma:(X,d)\to (Y,\rho)$ is a metric quotient map. It is clear from the construction of $Y$ that $\Gamma$ is grade-preserving. This completes the proof of Theorem \ref{quotientmetrictheorem}.

\begin{remark}\label{levelmapremark}
If $(X,\scrp)$ is a disjointly tree-graded space and $\scrr\subseteq \scrq\subseteq \scrp$, then the universal property of the metric quotient map $\Gamma_{\scrq}:X\to X_{\scrq}$ induces a metric quotient map $\Gamma_{\scrq,\scrr}:X_{\scrq}\to X_{\scrr}$ such that the following diagram commutes.
\[\xymatrix{
& X \ar[dl]_-{\Gamma_{\scrq}} \ar[dr]^-{\Gamma_{\scrr}}\\
X_{\scrq}  \ar[rr]_-{\Gamma_{\scrq,\scrr}} && X_{\scrr}
}\]
\end{remark}

We conclude this section with two useful consequences of Theorem \ref{quotientmetrictheorem}. First, note that in the case where $\scrq=\scrp$, $X_{\scrp}$ is a topological $\bbr$-tree and $\Gamma_{\scrp}$ is a continuous surjection whose non-degenerate fibers are precisely the non-degenerate pieces of $X$. Combined with Theorem \ref{paramterizationtheorem}, we have the following.

\begin{corollary}
Every disjointly tree-graded space $(X,\scrp)$ admits a parameterization $(T,V,q)$ where $T=X_{\scrp}$ is a topological $\bbr$-tree and $q=\Gamma_{\scrp}$ is a metric quotient map.
\end{corollary}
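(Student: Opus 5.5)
The plan is to apply Theorem \ref{quotientmetrictheorem} to the quotient in which \emph{every} piece of $X$ is collapsed to a point, and then check the conditions of Definition \ref{defparamterization} directly. In the notation of Section \ref{sectionmetricquotient} this is the index set for which $\scrp\backslash\scrq$ is all of $\scrp$. The statement writes this as $X_{\scrp}$, $\Gamma_{\scrp}$; I follow that convention. Concretely, $T=X_{\scrp}$ is the set obtained by identifying each $P\in\scrp$ to a point $y_P$, with the quotient metric $\rho$, and $q=\Gamma_{\scrp}$. Theorem \ref{quotientmetrictheorem} then gives the following:
\begin{enumerate}
\item $\rho$ is a path-diameter metric on $T$;
\item $q:(X,d)\to(T,\rho)$ is a continuous, non-expansive, surjective metric quotient map;
\item the collection $\{\{y_P\}\mid P\in\scrp\}$ is a disjoint tree-grading on $T$.
\end{enumerate}

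The first step is to show that $T$ is a topological $\bbr$-tree. It is metrizable, and path-connected as a continuous image of $X$. Because $\rho$ is a path-diameter metric, open $\rho$-balls are path-connected, so $T$ is locally path-connected. By condition (3) of Definition \ref{deftreegraded}, every simple closed curve in $T$ lies in a single piece. Every piece of this grading is a point, so $T$ contains no simple closed curve. By Lemma \ref{scclemma}, $T$ is then uniquely arcwise-connected, hence a topological tree, and being metrizable it is a topological $\bbr$-tree. (If an honest $\bbr$-tree metric is wanted, \cite{MayerOverstTrees} supplies one, though it need not be $\rho$.)

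The second step is to check the conditions on $V=\{y_P\mid P\in\scrp\}=\pc(T)$. It is closed by condition (1) of Definition \ref{deftreegraded} applied to the grading on $T$. For total disconnectedness, the pieces $\{y_P\}$ are the path-components of $V$ by condition (2). In a topological tree, connected subsets are path-connected, as noted after the definition of topological trees. Hence every component of $V$ is a single point, and $V$ is totally disconnected.

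Finally, the fibers are checked directly from the construction of $\Gamma$: $q^{-1}(y_P)=P$ for each $P\in\scrp$, so $\scrp=\{q^{-1}(v)\mid v\in V\}$, and $q$ is the identity on $\tree(X)=q^{-1}(T\backslash V)$, so fibers over $T\backslash V$ are singletons. Thus $(T,V,q)$ is a parameterization. Alternatively, the converse direction of Theorem \ref{paramterizationtheorem} can be invoked at this point. The only step needing real care is the first, namely that $T$ is a tree. This rests entirely on $\rho$ being a path-diameter metric (for local path-connectedness) and on the tree-grading on $T$ (to rule out simple closed curves), both supplied by Theorem \ref{quotientmetrictheorem}.
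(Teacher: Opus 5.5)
Your proof is correct and follows the paper's route. You apply Theorem \ref{quotientmetrictheorem} with every piece collapsed, correctly reading the paper's ``$\scrq=\scrp$'' as $\scrq=\emptyset$ in the notation of Section \ref{sectionmetricquotient}; since the induced grading has only one-point pieces, $X_{\scrp}$ has no simple closed curves and is a topological $\bbr$-tree, from which the parameterization follows. The paper finishes by citing the converse half of Theorem \ref{paramterizationtheorem} instead of checking Definition \ref{defparamterization} directly, and it leaves implicit the details you spell out, such as local path-connectedness via the path-diameter metric and total disconnectedness of $V$.
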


\begin{lemma}\label{identitypoioneisomorphismlemma}
Let $(X,\scrp)$ be a disjointly tree-graded space, $x_0\in X$, and $\scrf\subseteq \scrp$ be a finite subset of pieces. The continuous identity function $id:X_{\scrf}^{qt}\to X_{\scrf}$ induces an isomorphism on fundamental groups.
\end{lemma}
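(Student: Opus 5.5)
We plan to compute both fundamental groups via Corollary \ref{isocor2} and then compare them through the identity map. Write $Y=X_{\scrf}$ with the metric topology and $Y^{qt}=X_{\scrf}^{qt}$ with the quotient topology. The identity $id:Y^{qt}\to Y$ is continuous because $\Gamma_{\scrf}:X\to Y$ is continuous, so it factors through the quotient topology. The key point is that both spaces are Hausdorff and path-connected (the Hausdorff property of $Y^{qt}$ was noted after the definition of $X_{\scrq}^{qt}$). Moreover, in each of them the pieces $P\in\scrf$ are pairwise-disjoint, closed, path-connected subspaces, and every simple closed curve lies in one of them.

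First, we verify these hypotheses for $Y^{qt}$. Each $P\in\scrf$ is saturated and closed in $X$, so it is closed in $Y^{qt}$. Since $\Gamma_{\scrf}$ is injective on $P$, continuous, and the topology of $P$ in $Y$ is the metric-subspace one, $P$ is path-connected in either topology. The fact that $id$ is a continuous bijection means any path in $P\subseteq Y^{qt}$ stays a path in $Y$; conversely, a path in $P\subseteq X$ maps to a path in $Y^{qt}$ under $\Gamma_{\scrf}$. For simple closed curves: a simple closed curve $C$ in $Y^{qt}$ is mapped by the continuous injection $id$ to a simple closed curve in $Y$ (compactness makes $id|_C$ an embedding). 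By Theorem \ref{quotientmetrictheorem}, $\scrp_{\scrf}$ is a disjoint tree-grading on $Y$, so $id(C)$ lies in a piece, and since it is not a point it lies in some $P\in\scrf$. Thus $C\subseteq P$ in $Y^{qt}$ as well. The same holds directly in $Y$.

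Now fix the basepoint $y_0=\Gamma_{\scrf}(x_0)$ and apply Corollary \ref{isocor2} to $Y^{qt}$. This gives paths $\alpha_P$ in $Y^{qt}$ from $y_0$ to $a_P\in P$ and an isomorphism $\theta:\ast_{P\in\scrf}\pi_1(P,a_P)\to\pi_1(Y^{qt},y_0)$. Here $\pi_1(P,a_P)$ is computed in $P$ with the topology it inherits, which agrees in both spaces because it is $P$ with its original topology from $X$: on $P$, the restriction of $id$ is a continuous bijection from a closed subspace of $Y^{qt}$ homeomorphic to $P$. To justify this homeomorphism, note that $\Gamma_{\scrf}|_P$ is non-expansive with respect to $d$, and on $P$ the quotient metric satisfies $\rho\ge$ something comparable. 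Rather than prove metric comparisons, it is easier to argue that the retraction $r:X\to P$ of Lemma \ref{retractionlemma} is constant on each collapsed piece. Hence it descends to continuous retractions from $Y^{qt}$ and, being non-expansive, from $Y$ as well (by the universal property of metric quotient maps). This gives continuous inverses and shows that $P$ carries its original topology in both spaces.

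Then $id_\#\circ\theta$ is exactly the homomorphism $\theta'$ of the "moreover" part of Corollary \ref{isocor2}, applied in $Y$ with the paths $\beta_P=id\circ\alpha_P$. Therefore $id_\#\circ\theta$ is injective, which makes $id_\#$ injective. For surjectivity, apply the first part of Corollary \ref{isocor2} in $Y$: $\pi_1(Y,y_0)$ is generated by the classes $[\beta'_P\cdot\gamma\cdot\beta_P'^{-1}]$ with $\gamma$ a loop in $P$. The main obstacle is that $\beta'_P$ need not be continuous in $Y^{qt}$. To handle this, we change basepoint paths: $[\beta'_P\cdot\gamma\cdot\beta_P'^{-1}]=[\beta'_P\beta_P^{-1}]\,[\beta_P\gamma\beta_P^{-1}]\,[\beta'_P\beta_P^{-1}]^{-1}$. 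So it suffices to show that every loop in $Y$ based at $y_0$ is generated by images of loops from $Y^{qt}$. We argue by induction using the isomorphism: every element of $\pi_1(Y,y_0)$ is a word in the $\theta$-images built with arbitrary connecting paths, and the elements $[\beta'_P\beta_P^{-1}]$ themselves lie in $\pi_1(Y,y_0)$. Cleaner: the "moreover" statement in $Y$ says that the map $\theta'$, with paths $\beta_P=id\circ\alpha_P$, is injective. To get surjectivity as well, we note that $\theta'=\theta_Y\circ\phi$, where $\phi$ is conjugation by $h_P$ on each factor. Such a $\phi$ is surjective when the $h_P$ can be chosen with $[\beta_P\alpha_P'^{-1}]$ trivial. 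We would therefore choose the $\alpha_P$ for $Y^{qt}$ to be arcs: since $\Gamma_{\scrf}$ is injective off the collapsed pieces and arcs in $X$ between points of $X$ map to paths in $Y^{qt}$, we can take the $\alpha_P$ to be $\Gamma_{\scrf}$-images of paths in $X$, which are continuous in both topologies. Since $Y$ contains no essential loops outside pieces (for instance via Corollary \ref{isocor} applied to the complement structure), $\beta_P$ and the $Y$-recursion paths differ by elements generated by the pieces, and $\phi$ is an inner-type automorphism. This yields surjectivity and completes the proof.
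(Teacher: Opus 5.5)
Your check that Corollary \ref{isocor2} applies to both $Y^{qt}$ and $Y$ is fine. This includes the point that each $P\in\scrf$ carries its original topology in both spaces, via the descended retractions. The injectivity half is also correct: $id_{\#}\circ\theta$ is the homomorphism $\theta'$ of the ``moreover'' clause for the paths $\beta_P=id\circ\alpha_P$, up to a harmless change of basepoint inside each $P$, so it is injective and hence $id_{\#}$ is injective.

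The surjectivity half has a genuine gap. What you need is a family of paths $\beta_P$ that are continuous in $Y^{qt}$ \emph{and} make $\theta'$ onto $\pi_1(Y,y_0)$.

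\begin{itemize}
\item You reduce this to the claim that $\phi$, which conjugates the $P$-factor by $h_P=\theta_Y^{-1}([\beta_P\cdot(\alpha'_P)^{-1}])$, is an automorphism. That claim is never established, and it is false in general. Such a map is always injective but need not be surjective: in $\langle a\rangle\ast\langle b\rangle$, the map $a\mapsto bab^{-1}$, $b\mapsto aba^{-1}$ has an image that does not contain $a$.
\item This actually happens for paths of the kind you propose. Take $X$ to be two circles joined by an arc, with $\scrf=\scrp$ so that $Y^{qt}=Y=X$, and $y_0$ the midpoint of the arc. Let the path to each circle first run once around the other circle. Both paths are $\Gamma_{\scrf}$-images of paths in $X$ and continuous in both topologies, yet $\theta'$ has exactly the non-surjective image above. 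So ``images of paths in $X$'' gives no control over the $h_P$.
\item Your sentence that the $\beta_P$ and the $Y$-recursion paths ``differ by elements generated by the pieces'' is vacuous. Since $\theta_Y$ is an isomorphism, every element of $\pi_1(Y,y_0)$ is generated by the pieces.
\item Restricting to arcs might be made to work, but only with an honest argument controlling the $h_P$, which you do not give.
\end{itemize}

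The missing idea, which is what the paper uses, is to run the proof of Corollary \ref{isocor2} in parallel in the two spaces. The open sets $V_i$ (components of the complement of the other pieces) and the unions $W_k$ are the same subsets of $X_{\scrf}$ and are open in both topologies. So every basepoint and connecting path in the van Kampen recursion can be chosen in $Y^{qt}$, where it remains a path after applying $id$. The same recursion, with the same data, is then valid in $Y$. This produces one family of paths $\alpha_P$ for which $\theta^{qt}$ and $\theta^{Y}$ are both isomorphisms and $\theta^{Y}=id_{\#}\circ\theta^{qt}$, which gives injectivity and surjectivity of $id_{\#}$ at once.
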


\begin{proof}
Let $\scrf=\{P_1,P_2,\dots,P_m\}$. Both $X_{\scrf}^{qt}$ and $X_{\scrf}$ meet the hypotheses of Corollary \ref{isocor2}. Moreover, the open covers used in the proof of Corollary \ref{isocor2} are identical for these two spaces. Hence, we may choose a sequence of conjugating paths $\alpha_i:\ui\to X_{\scrf}^{qt}$ for the isomorphism $\ast_{i=1}^{m}\pi_1(P_i,x_i)\to\pi_1(X_{\scrf}^{qt},x_0)$ and use these same paths for the isomorphism $\ast_{i=1}^{m}\pi_1(P_i,x_i)\to\pi_1(X_{\scrf},x_0)$. The result follows.
\end{proof}

%
%

\section{Relatively small contractions of loops: the $1$-$UV_0$ Property}\label{sectionsmallcontractionsofloops}

The next two definitions are properties formalize the idea that ``small" inessential loops contract by ``small" null-homotopies. The first appears in \cite[Lemma 4.1]{CMRZZsmall} and is was named in \cite[Def. 6.6]{BFTestMap}.

\begin{definition}
A space $X$ is $1$-$UV_0$ at $x\in X$ if for open neighborhood $U$ of $x$, there exists a neighborhood $V$ of $x$ such that $V\subseteq U$ and such that for every loop $\alpha:S^1\to V$ that is inessential in $X$, $\alpha$ is also inessential in $U$. We say that $X$ is $1$-$UV_0$ if it $1$-$UV_0$ at all of its points.
\end{definition}

\begin{definition}\label{uniformoneuvzerodef}
A metric space $(X,d)$ is \textit{uniformly $1$-$UV_0$} if for every $\epsilon>0$, there exists $\delta\in (0,\epsilon)$ such that for every inessential loop $\alpha:S^1\to X$ with $\diam(\im(\alpha))<\delta$, there exists a map $g:\bbd\to X$ such that $\diam(\im(g))<\epsilon$ and $g|_{S^1}=\alpha$.
\end{definition}

The $1$-$UV_0$ property is a topological property whereas the uniformly $1$-$UV_0$ property depends on a choice of metric on $X$. All one-dimensional metric spaces are uniformly $1$-$UV_0$ since every null-homotopic loop in such a space contracts in it's own image \cite[Theorem 3.7]{CConedim}.

\begin{example}
For each $k\in\bbn$, let $D_k\subseteq \bbr^2$ be the closed disk of radius $1/3$ centered at $(k,0)$ and set $X=\bigcup_{k\in\bbn}D_k$ with the disjoint union topology. Certainly, $X$ is $1$-$UV_0$. If $d_1$ is the Euclidean metric on $X$, then $(X,d_1)$ is uniformly $1$-$UV_0$. However, one can also define a metric $d_2$ on $X$, which (1) induces the topology of $X$, (2) satisfies $\diam(D_k)=2/3$ for all $k\in\bbn$, and (3) satisfies $\ds\lim_{k\to\infty}\diam(\partial D_k)=0$. For any such metric $d_2$, the metric space $(X,d_2)$ is not uniformly $1$-$UV_0$.
\end{example}

The previous example motivates the following construction of a disjointly tree-graded space $(X,\scrp)$ where $\pc(X)$ is not (uniformly) $1$-$UV_0$.

\begin{example}\label{haexample}
For $n\geq 1$, let $C_n\subseteq \bbr^3$ denote the cone whose base is the circle in the $xy$-plane of radius $\frac{1}{2^{n+1}}$ centered at $(\frac{1}{2^n},0,0)$ and with vertex $(\frac{1}{2^n},0,1)$. Connecting $C_n$ and $C_{n+1}$ with a line segment $A_n$ in the $x$-axis, we let $X=\{(0,0,0)\}\cup \bigcup_{n\in\bbn}A_n\cup C_n$ (see Figure \ref{fig2}). Let $\scrp$ be the set of cones $C_n$ and $\{(0,0,0)\}$. Then $\scrp$ is a disjoint tree-grading on $X$ such that $\pc(X)$ is not uniformly $1$-$UV_0$. If $\scrf\subseteq \scrp$ is finite, it is clear that the resulting quotient $X_{\scrf}$ where all but finitely many cones are collapsed to points, is simply connected. However, $X$ itself has fundamental group isomorphic to that of the harmonic archipelago \cite{hojka} and is uncountable. Thus the conclusion of Theorem \ref{mainthm1} fails for this space.
\end{example}

\begin{figure}[H]
\centering \includegraphics[height=1.8in]{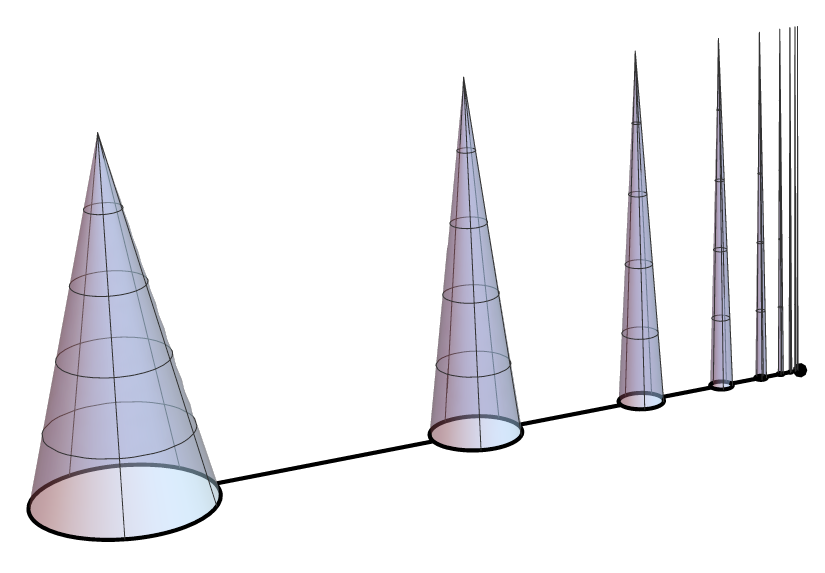}
\caption{\label{fig2}A disjointly tree-graded space $(X,\scrp)$ where $\pc(X)$ is not uniformly $1$-$UV_0$.}
\end{figure}

\begin{lemma}
Let $(X,d)$ be a metric space. If $(X,d)$ is uniformly $1$-$UV_0$, then $X$ is $1$-$UV_0$. Moreover, if $X$ is compact, then the converse holds.
\end{lemma}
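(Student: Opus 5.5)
The forward implication is immediate; the converse needs compactness and a sequential-compactness argument.

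\emph{Uniformly $1$-$UV_0$ implies $1$-$UV_0$.} Fix $x\in X$ and an open neighborhood $U$ of $x$, and pick $\epsilon>0$ with $B_{2\epsilon}(x)\subseteq U$. Let $\delta\in(0,\epsilon)$ be the constant from Definition \ref{uniformoneuvzerodef}, and set $V=B_{\delta/2}(x)$.
\begin{itemize}
\item A loop $\alpha:S^1\to V$ that is inessential in $X$ has $\diam(\im(\alpha))<\delta$.
\item Hence there is a map $g:\bbd\to X$ with $g|_{S^1}=\alpha$ and $\diam(\im(g))<\epsilon$.
\item Since $\im(g)$ contains $\alpha(1)\in V$, every point of $\im(g)$ lies within $\delta/2+\epsilon<2\epsilon$ of $x$.
\item So $\im(g)\subseteq U$, and $\alpha$ is inessential in $U$.
\end{itemize}

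\emph{Converse for compact $X$.} Suppose $X$ is compact and $1$-$UV_0$ but not uniformly $1$-$UV_0$. Then there is $\epsilon>0$ such that for every $n$ there is an inessential loop $\alpha_n$ with $\diam(\im(\alpha_n))<1/n$, yet $\alpha_n$ admits no null-homotopy of image diameter less than $\epsilon$. Pick points $x_n\in\im(\alpha_n)$. By compactness we may pass to a subsequence with $x_n\to x$.

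Apply the $1$-$UV_0$ property at $x$ with $U=B_{\epsilon/3}(x)$, which has diameter at most $2\epsilon/3<\epsilon$. This gives a neighborhood $V\subseteq U$ of $x$. Choose $r>0$ with $B_r(x)\subseteq V$. For large $n$ we have $d(x_n,x)<r/2$ and $1/n<r/2$, so $\im(\alpha_n)\subseteq B_r(x)\subseteq V$. Since $\alpha_n$ is inessential in $X$, it is inessential in $U$. The resulting null-homotopy has image of diameter less than $\epsilon$, which is a contradiction.

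The only delicate point is this use of compactness to find a common accumulation point $x$ at which to invoke the pointwise property. Everything else is bookkeeping with balls.
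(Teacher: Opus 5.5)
Your proof is correct in both directions and follows the same idea as the paper: the forward implication is the routine ball-chasing the paper calls clear, and for the converse compactness turns the pointwise $1$-$UV_0$ neighborhoods into a uniform $\delta$. The only difference is packaging. The paper argues directly: for each $x$ it chooses $\delta_x$ so that loops in $B_{\delta_x/2}(x)$ that are inessential in $X$ contract in $B_{\epsilon/2}(x)$, then takes a Lebesgue number of the cover $\{B_{\delta_x/2}(x)\}$. You instead run the equivalent sequential-compactness contradiction.
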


\begin{proof}
The first statement is clear. Suppose $X$ is compact metric and $1$-$UV_0$. Let $\epsilon>0$. For each $x\in X$, find $\delta_x\in (0,\epsilon)$ such that every loop $\alpha:S^1\to B_{\delta_x/2}(x)$ that is inessential in $X$ is also inessential in $B_{\epsilon/2}(x)$. Let $\delta>0$ be a Lebesgue number for the cover $\{B_{\delta_x/2}(x)\mid x\in X\}$ of $X$. Suppose $\alpha:S^1\to X$ is an inessential map with $\diam(\im(\alpha))<\delta$. Then $\im(\alpha)\subseteq B_{\delta_x/2}(x)$ for some $x\in X$ and by our choice of cover, we have a map $g:\bbd\to B_{\epsilon/2}(x)$ with $g|_{S^1}=\alpha$. It follows that $\diam(\im(g))<\epsilon$.
\end{proof}

We will need an alternative characterization of the $1$-$UV_0$ property in order to construct continuous maps by gluing together infinite sequences of continuous maps with ``shrinking" images.

\begin{definition}
Let $W,X$ be spaces and $x\in X$. We say a sequence $\{f_k\}_{k\in\bbn}$ of maps $f_k:W\to X$ \textit{converges to} $x$ if for every neighborhood $U$ of $x$ in $X$, we have $\im(f_k)\subseteq U$ for all but finitely many $k\in\bbn$.
\end{definition}

The following provides a useful characterization of the $1$-$UV_0$ property similar to that in \cite[Prop. 6.8]{BFTestMap}. The proof is straightforward and is left as an exercise.

\begin{lemma}\label{uvsequencelemma}
If $X$ is a metrizable space, then the following are equivalent.
\begin{enumerate}
\item $X$ is $1$-$UV_0$
\item for every sequence $\{\alpha_k\}_{k\in\bbn}$ of inessential loops $\alpha_k:S^1\to X$ that converges to a point $x\in X$, there exists a sequence $\{g_k\}_{k\in\bbn}$ of maps $g_k:\bbd\to X$ that converges to $x$ and such that $(g_k)|_{S^1}=\alpha_k$ for all $k\in\bbn$.
\end{enumerate}
Moreover, if $X$ is locally path-connected, then the loops $\alpha_k$ in (2) may be assumed to be based at $x$.
\end{lemma}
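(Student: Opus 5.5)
We prove (1)$\Rightarrow$(2), then (2)$\Rightarrow$(1), and finally the basepoint refinement.

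\emph{(1)$\Rightarrow$(2).} Fix a compatible metric $d$ on $X$ and let $\{\alpha_k\}_{k\in\bbn}$ be inessential loops converging to $x$. For each $n\in\bbn$, apply the $1$-$UV_0$ property at $x$ to $U=B_{1/n}(x)$. This gives a neighborhood $V_n\subseteq B_{1/n}(x)$ of $x$ such that every loop in $V_n$ that is inessential in $X$ is inessential in $B_{1/n}(x)$. Shrinking if needed, we may take $V_{n+1}\subseteq V_n$. Since $\alpha_k\to x$, there is an increasing sequence $k_1<k_2<\cdots$ with $\im(\alpha_k)\subseteq V_n$ for all $k\geq k_n$. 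Define $n(k)$ to be the largest $n$ with $k\geq k_n$, and put $n(k)=0$ if $k<k_1$. For $k<k_1$ let $g_k$ be any null-homotopy of $\alpha_k$ in $X$. For $k\geq k_1$, choose $g_k:\bbd\to B_{1/n(k)}(x)$ extending $\alpha_k$. Since $n(k)\to\infty$, for any neighborhood $U$ of $x$ all but finitely many $g_k$ have image in $U$, so $g_k\to x$.

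\emph{(2)$\Rightarrow$(1).} Suppose $X$ is not $1$-$UV_0$ at $x$. Then there is an open $U\ni x$ such that for every $n$ some loop $\alpha_n$ with image in $B_{1/n}(x)\cap U$ is inessential in $X$ but not in $U$. The sequence $\alpha_n$ converges to $x$. By (2) there are null-homotopies $g_n\to x$, so $\im(g_n)\subseteq U$ for large $n$. This contradicts the choice of $\alpha_n$.

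\emph{Basepoint refinement.} Assume $X$ is locally path-connected and that (2) holds for loops based at $x$. Given arbitrary $\alpha_k\to x$, choose path-connected neighborhoods $W_n$ of $x$ forming a nested base. For $k$ with $\im(\alpha_k)\subseteq W_{n}$ and $n$ maximal, choose a path $\gamma_k$ in $W_{n}$ from $x$ to $\alpha_k(1)$. Then $\gamma_k\to x$, and the based loops $\beta_k=\gamma_k\cdot\alpha_k\cdot\gamma_k^{-1}$ are inessential and converge to $x$. The based version of (2) yields null-homotopies $h_k:\bbd\to X$ of $\beta_k$ with $h_k\to x$.

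It remains to convert $h_k$ into a map $g_k$ with $g_k|_{S^1}=\alpha_k$ and image in $\im(h_k)\cup\im(\gamma_k)$. Use the standard fact that $\alpha_k$ is freely homotopic to $\beta_k$ through the path $\gamma_k$. Explicitly, $\bbd$ can be reparameterized as an annulus glued to a smaller disk: map the annulus by the free homotopy sliding along $\gamma_k$, and the inner disk by $h_k$. Because both $\im(h_k)$ and $\im(\gamma_k)$ shrink to $x$, so does $\im(g_k)$.

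The main obstacle is bookkeeping rather than substance. In (1)$\Rightarrow$(2) one must choose $n(k)$ so that the finitely many early loops do not spoil convergence. In the refinement one must ensure the conjugating paths $\gamma_k$ themselves converge to $x$, which is exactly where local path-connectedness is used.
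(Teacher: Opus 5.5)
Your argument is correct. The paper leaves this lemma as an exercise (pointing to the analogous \cite[Prop.~6.8]{BFTestMap}), and your route is the routine one intended: the diagonal choice of $n(k)$ for (1)$\Rightarrow$(2), the contradiction via loops in $B_{1/n}(x)\cap U$ for the converse, and, for the based refinement, conjugating by paths $\gamma_k$ in a nested path-connected base $W_n$ and then gluing an annulus free homotopy to $h_k$.

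The only loose ends are trivial. The finitely many $\alpha_k$ not contained in $W_1$ can be given arbitrary null-homotopies; this matters because no connecting path $\gamma_k$ need exist when $X$ is not path-connected. A loop constant at $x$, for which no maximal $n$ exists, is already based at $x$.
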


%

Using (2) in Lemma \ref{uvsequencelemma} as the definition of $1$-$UV_0$, the next statement has a simple proof.

\begin{corollary}\label{oneuvzeroretractioncor}
If $X$ is a $1$-$UV_0$ metrizable space and $Y\subseteq X$ is a retract of $X$, then $Y$ is $1$-$UV_0$.
\end{corollary}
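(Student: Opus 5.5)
We use characterization (2) of Lemma \ref{uvsequencelemma} as the working definition of $1$-$UV_0$ for both $X$ and $Y$. Let $r:X\to Y$ be a retraction, $y\in Y$, and $\{\alpha_k\}_{k\in\bbn}$ a sequence of loops $\alpha_k:S^1\to Y$, each inessential in $Y$, converging to $y$ in $Y$. The plan is to regard these loops in $X$, contract them there using that $X$ is $1$-$UV_0$, and then push the contractions back into $Y$ with $r$.

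First, since $Y\subseteq X$ carries the subspace topology, the inclusion is continuous. Hence each $\alpha_k$ is inessential in $X$, and $\{\alpha_k\}$ converges to $y$ as a sequence of maps into $X$. Indeed, if $U$ is a neighborhood of $y$ in $X$, then $U\cap Y$ is a neighborhood of $y$ in $Y$, so $\im(\alpha_k)\subseteq U\cap Y\subseteq U$ for all but finitely many $k$.

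Applying (2) to $X$ gives maps $g_k:\bbd\to X$ with $(g_k)|_{S^1}=\alpha_k$ and $\{g_k\}\to y$ in $X$. We set $h_k=r\circ g_k:\bbd\to Y$. Since $r$ fixes $Y$ pointwise, $(h_k)|_{S^1}=r\circ\alpha_k=\alpha_k$.

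It remains to check convergence. Let $W$ be a neighborhood of $y$ in $Y$. By continuity of $r$ and $r(y)=y$, the set $r^{-1}(W)$ is a neighborhood of $y$ in $X$. So $\im(g_k)\subseteq r^{-1}(W)$, and hence $\im(h_k)\subseteq W$, for all but finitely many $k$. Thus $\{h_k\}\to y$ in $Y$, and $Y$ satisfies (2).

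Finally, $Y$ is metrizable as a subspace of the metrizable space $X$, so Lemma \ref{uvsequencelemma} applies to $Y$ and gives that $Y$ is $1$-$UV_0$. The only step needing care is the equivalence in Lemma \ref{uvsequencelemma}, which the paper states before this corollary; everything else is routine.
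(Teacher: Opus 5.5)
Your proof is correct and follows the route the paper indicates. The paper gives no written proof, only the remark that the corollary is simple once characterization (2) of Lemma \ref{uvsequencelemma} is taken as the definition. Your argument supplies exactly those details: contract the loops in $X$, compose with the retraction $r$, check convergence via $r^{-1}(W)$, and note that $Y$ is metrizable.
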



\begin{lemma}\label{oneuvzerofinitetreelemma}
Let $(X,\scrp)$ be a disjointly tree-graded space with finitely many non-degenerate pieces. Then $X$ is $1$-$UV_0$ if and only if all of it's pieces are $1$-$UV_0$.
\end{lemma}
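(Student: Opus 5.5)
The forward direction follows from Corollary \ref{oneuvzeroretractioncor}. Lemma \ref{retractionlemma} gives a retraction $r:X\to P$ for each piece $P$. So if $X$ is $1$-$UV_0$, every piece is $1$-$UV_0$. (Degenerate pieces are trivially $1$-$UV_0$, so only the finitely many non-degenerate pieces $P_1,\dots,P_m$ matter.)

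For the converse, assume each $P_i$ is $1$-$UV_0$. I will check condition (2) of Lemma \ref{uvsequencelemma} at each $x\in X$. Let $\{\alpha_k\}$ be inessential loops converging to $x$, based at $x$ (we may assume this since $X$ is locally path-connected). I split into cases according to where $x$ lies.

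\emph{Case $x\notin P_1\cup\dots\cup P_m$.} Then $x$ lies in $\tree(X)$ or in a degenerate piece. Since the $P_i$ are closed, some neighborhood $W$ of $x$ misses them all. Shrinking $W$ to a path-connected open set, every simple closed curve in $W$ lies in a piece, hence in a degenerate piece. So $W$ contains no simple closed curve and is a topological tree. A loop in a topological tree is null-homotopic in its own image; this is the argument of Lemma \ref{efficientlemma}, using that $\im(\alpha_k)$ is a dendrite. This gives null-homotopies $g_k$ with $\im(g_k)=\im(\alpha_k)$, and these converge to $x$.

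\emph{Case $x\in P_i$.} Choose a path-connected neighborhood $W$ of $x$ missing $P_j$ for $j\neq i$. For large $k$, $\alpha_k$ lies in $W$. Let $r:X\to P_i$ be the canonical retraction, which is non-expansive and satisfies $\im(r\circ\alpha_k)\subseteq\im(\alpha_k)$ by Lemma \ref{retractionlemma}(2). Then $r\circ\alpha_k$ is a loop in $P_i$ that is inessential in $P_i$, since $r\circ(\text{null-homotopy in }X)$ is a null-homotopy in $P_i$. These loops converge to $x$. Since $P_i$ is $1$-$UV_0$, there are null-homotopies $h_k:\bbd\to P_i$ of $r\circ\alpha_k$ converging to $x$.

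It remains to homotope $\alpha_k$ to $r\circ\alpha_k$ inside a small set. Each component $J$ of $\alpha_k^{-1}(X\backslash P_i)$ is an open interval whose endpoints map to a single point of $P_i$, by Lemma \ref{retractionlemma}(1). The restriction $\alpha_k|_{\ov J}$ is therefore a loop in $\im(\alpha_k)\cap W$. This set meets $\pc(X)$ only in $P_i$ and degenerate pieces, so its part outside $P_i$ contains no simple closed curve. I then apply the construction of Corollary \ref{isocor}, i.e. Proposition \ref{compactonepiece}, to the Peano continuum $Y_k=\im(\alpha_k)$ with the single piece $Y_k\cap P_i$. Here I am treating all other pieces meeting $Y_k$ as degenerate, which is legitimate in $W$. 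This gives a homotopy from $\alpha_k$ to $r\circ\alpha_k$ with image in $\im(\alpha_k)$. Concatenating it with $h_k$ gives $g_k$ with image in $\im(\alpha_k)\cup\im(h_k)$, and the $g_k$ converge to $x$.

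The main obstacle is this last step: confirming that the homotopy from $\alpha_k$ to $r\circ\alpha_k$ stays within $\im(\alpha_k)$. This needs $\{Y_k\cap P_i\}$ to be a genuine disjoint tree-grading of $Y_k$. That in turn requires $Y_k\cap P_i$ to be path-connected, which holds by Remark \ref{subspacewelldefinedremark}, and requires simple closed curves of $Y_k$ to lie in $P_i$, which holds since $Y_k\subseteq W$.
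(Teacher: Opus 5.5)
Your proof is correct and follows the paper's argument. The forward direction goes through Corollary \ref{oneuvzeroretractioncor}, and the converse goes through Lemma \ref{uvsequencelemma} with a tree case and a piece case, where in the piece case $\alpha_k$ is homotoped within $\im(\alpha_k)$ to a loop in $P_i$ and then the $1$-$UV_0$ property of $P_i$ is applied. The only difference is cosmetic: the paper gets that homotopy from Lemma \ref{efficientlemma} inside the one-piece open set $U$ (the component of $X\backslash\bigcup_{j\neq i}P_j$ containing $P_i$), while you use the canonical retraction with Proposition \ref{compactonepiece} on $\im(\alpha_k)$, and both yield the same loop $r\circ\alpha_k$.
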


\begin{proof}
Since each piece of $X$ is a retract of $X$, one direction follows from Corollary \ref{oneuvzeroretractioncor}. For the converse, we may assume that $\scrp=\{P_1,P_2,\dots,P_n\}$ and that $P_i$ is $1$-$UV_0$ for each $i$. We apply Lemma \ref{uvsequencelemma} to show that $X$ is $1$-$UV_0$. Let $\{\alpha_k\}_{k\in\bbn}$ be a sequence of inessential loops $\alpha_k:S^1\to X$ that converges to a point $x$ in $X$.

If $x\in \tree(X)$, then there exists $K\in\bbn$ such that $\im(\alpha_k)\subseteq \tree(X)$ for all $k\geq K$. Since each component of $\tree(X)$ is a topological $\bbr$-tree, whenever $\im(\alpha_k)\subseteq \tree(X)$, there exists a map $g_k:\bbd\to \im(\alpha_k)$ such that $(g_k)|_{S^1}=\alpha_k$. Choosing $g_k$, $k<K$ arbitrarily, we obtain a sequence of maps $g_k:\bbd\to X$ that converges to $x$ and such that $(g_k)|_{S^1}=\alpha_k$ for all $k\in\bbn$.

If $x\in P_j$ for some $j$, Let $U$ be the component of $X\backslash \bigcup_{i\neq j}P_i$ that contains $P_j$. Then $U$ is open and is $\scrp'=\{P_j\}$ is a disjoint tree-grading on $U$. Since $U$ is open, we may assume that $\im(\alpha_k)\subseteq U$ for all $k$. If $\im(\alpha_k)$ lies in the disjoint union of $\bbr$-trees $U\backslash P_j$, then there is a map $g_k:\bbd\to \im(\alpha_k)$ with $(g_k)|_{S^1}=\alpha_k$.
Let $T=\{k\in \bbn\mid im(\alpha_k)\cap P_j\neq \emptyset\}$. If $T$ is finite, then the remainder of the proof is clear since we have defined $g_k$ for all $k\in\bbn\backslash T$. Suppose $T$ is infinite. If $k\in T$, then we may assume that $\alpha_k$ is based in $P_j$. By Lemma \ref{efficientlemma}, $\alpha_k$ is path-homotopic to a tree-efficient loop $\beta_k:S^1\to U$ by a homotopy $H_k:S^1\times\ui\to U$ with image in $\im(\alpha_k)$. However, since $U$ only has one piece, we have $\im(\beta_k)\subseteq P_j$. Hence, $\{\beta_k\}_{k\in T}$ is a sequence of inessential loops in $P_j$ that converges to $x$. Since $P_j$ is assumed to be $1$-$UV_0$, there exists a sequence $\{g_k\}_{k\in T}$ of maps $g_k:\bbd\to P_j$ such that $\{g_k\}_{k\in T}$ converges to $x$ and $(g_k)|_{S^1}=\beta_k$. Since $\im(H_k)\subseteq \im(\alpha_k)$, composing homotopies allows us to construct a sequence $\{h_k\}_{k\in T}$ of maps $h_k:\bbd\to  \im(\alpha_k)\cup\im(g_k)$ where $(h_k)|_{S^1}=\alpha_k$ for all $k\in T$. With $g_k$ defined for all $k\in\bbn$, we see that $\{h_k\}_{k\in\bbn}$ converges to $x$ in $X$.
\end{proof}



Much like Lemma \ref{uvsequencelemma}, the following provides a useful characterization of the uniformly $1$-$UV_0$ property and the proof is left as an exercise.

\begin{lemma}\label{uniformuvsequencelemma}
If $(X,d)$ is a metric space, then the following are equivalent:
\begin{enumerate}
\item $X$ is uniformly $1$-$UV_0$,
\item For every sequence $\{\alpha_k\}_{k\in\bbn}$ of inessential loops $\alpha_k:S^1\to X$ such that $\ds\lim_{k\to\infty}\diam(\im(\alpha_k))=0$, there exists a sequence $\{g_k\}_{k\in\bbn}$ of maps $g_k:\bbd\to X$ such that $\ds\lim_{k\to\infty}\diam(\im(g_k))=0$ and such that $(g_k)|_{S^1}=\alpha_k$ for all $k\in\bbn$.
\end{enumerate}
\end{lemma}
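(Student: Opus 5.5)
We prove the two implications separately, arguing by contradiction in each direction.

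For (1)$\Rightarrow$(2), let $\{\alpha_k\}$ be inessential loops with $\diam(\im(\alpha_k))\to 0$. Apply Definition \ref{uniformoneuvzerodef} with $\epsilon_n=1/n$ to obtain $\delta_n\in(0,1/n)$, and arrange that $\delta_1>\delta_2>\cdots$ decreases to $0$. For each $k$, let $n(k)$ be the largest $n\le k$ such that $\diam(\im(\alpha_k))<\delta_n$. If no such $n$ exists, choose any null-homotopy $g_k$ of $\alpha_k$ in $X$; this affects only finitely many $k$, since eventually $\diam(\im(\alpha_k))<\delta_1$. Otherwise, choose $g_k:\bbd\to X$ with $(g_k)|_{S^1}=\alpha_k$ and $\diam(\im(g_k))<1/n(k)$. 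We must check that $n(k)\to\infty$. Given $N$, there is $K\ge N$ such that $\diam(\im(\alpha_k))<\delta_N$ for all $k\ge K$. For such $k$ we have $n(k)\ge N$, so $\diam(\im(g_k))\to 0$.

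For (2)$\Rightarrow$(1), suppose $X$ is not uniformly $1$-$UV_0$. Then there is $\epsilon>0$ such that for every $k$, taking $\delta=\min(\epsilon,1/k)/2$, the uniform condition fails. This gives an inessential loop $\alpha_k$ with $\diam(\im(\alpha_k))<1/k$ such that every null-homotopy $g$ of $\alpha_k$ satisfies $\diam(\im(g))\ge\epsilon$. The sequence $\{\alpha_k\}$ satisfies the hypothesis of (2), so there are extensions $g_k$ with $\diam(\im(g_k))\to 0$. In particular $\diam(\im(g_k))<\epsilon$ for large $k$, which is a contradiction.

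The only point needing care is the diagonal choice of $n(k)$ in the first direction, which ensures that the bound $\diam(\im(g_k))<1/n(k)$ actually tends to zero. No topological input beyond the definitions is needed, since the argument is purely metric.
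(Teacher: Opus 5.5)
Your argument is correct in both directions. In the forward direction, the cap $n\le k$ in the diagonal choice of $n(k)$ handles constant loops, and once $k\ge K\ge N$ the condition $\diam(\im(\alpha_k))<\delta_N$ forces $n(k)\ge N$. In the reverse direction, negating the definition with $\delta=\min(\epsilon,1/k)/2$ is exactly right.

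The paper gives no proof of this lemma (it is left as an exercise), and yours is the routine argument intended. One small remark: arranging the $\delta_n$ to be decreasing is harmless but is never actually used.
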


%

Using (2) from Lemma \ref{uniformuvsequencelemma} for the definition of uniformly $1$-$UV_0$, the next corollary has a simple proof.

\begin{corollary}\label{uniformretractcor}
If $(X,d)$ is a uniformly $1$-$UV_0$ metric space and $Y\subseteq X$ is a subspace for which there exists a non-expansive retraction $r:X\to Y$, then $Y$ is uniformly $1$-$UV_0$.
\end{corollary}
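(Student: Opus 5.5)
Because the statement is phrased via condition (2) of Lemma \ref{uniformuvsequencelemma}, I will verify that condition for $Y$ directly, using the retraction $r$ to push null-homotopies from $X$ back into $Y$.

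Let $\{\alpha_k\}_{k\in\bbn}$ be a sequence of loops $\alpha_k:S^1\to Y$ that are inessential in $Y$ and satisfy $\lim_{k\to\infty}\diam(\im(\alpha_k))=0$. Composing with the inclusion $Y\subseteq X$, each $\alpha_k$ is also an inessential loop in $X$. Its diameter is the same in $X$, since $Y$ carries the subspace metric. By hypothesis $X$ is uniformly $1$-$UV_0$, so condition (2) of Lemma \ref{uniformuvsequencelemma} applied in $X$ gives maps $h_k:\bbd\to X$ with $(h_k)|_{S^1}=\alpha_k$ and $\lim_{k\to\infty}\diam(\im(h_k))=0$.

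Now set $g_k=r\circ h_k:\bbd\to Y$. Because $r$ is a retraction and $\im(\alpha_k)\subseteq Y$, we get $(g_k)|_{S^1}=r\circ\alpha_k=\alpha_k$. Because $r$ is non-expansive, $\diam(\im(g_k))=\diam(r(\im(h_k)))\leq \diam(\im(h_k))$, so $\lim_{k\to\infty}\diam(\im(g_k))=0$. Hence condition (2) holds for $Y$, and Lemma \ref{uniformuvsequencelemma} shows that $Y$ is uniformly $1$-$UV_0$.

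The only point needing any care is the exercise-level equivalence in Lemma \ref{uniformuvsequencelemma}, which the paper allows us to assume. Alternatively, one can bypass it and argue with the $\epsilon$–$\delta$ definition directly: given $\epsilon$, take the $\delta$ that works for $X$, and apply $r$ to the small null-homotopy in $X$. The argument is the same.
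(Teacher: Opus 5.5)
Your proposal is correct and follows the paper's intended argument: the paper says only that the result follows easily by using condition (2) of Lemma~\ref{uniformuvsequencelemma} as the definition, and your proof does exactly that by composing the small null-homotopies in $X$ with the non-expansive retraction $r$. Your direct $\epsilon$--$\delta$ variant is equally valid and avoids the unproved lemma altogether.
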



\begin{lemma}\label{sectionaluniformlemma}
Let $(X,\scrp)$ be a disjointly tree-graded space and $d$ be a compatible path-diameter metric for $X$. If $\pc(X)$ is uniformly $1$-$UV_0$ and $(Y,\scrq)$ is a sectional graded subspace of $(X,\scrp)$, then $\pc(Y)$ is uniformly $1$-$UV_0$.
\end{lemma}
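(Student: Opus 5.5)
We use the sequential characterization of Lemma \ref{uniformuvsequencelemma}. Let $\{\alpha_k\}$ be inessential loops in $\pc(Y)$ with $\diam(\im(\alpha_k))\to 0$. Since $\im(\alpha_k)$ is path-connected, each $\alpha_k$ lies in a single piece $Q_k\in\scrq$. If $Q_k$ is degenerate, $\alpha_k$ is constant and we take $g_k$ constant. Otherwise sectionality gives $Q_k=P_k$ for some $P_k\in\scrp$. In either case $\alpha_k$ is a loop in $\pc(X)$ with small diameter.

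The main issue is that $\alpha_k$ is assumed inessential in $\pc(Y)$, hence in $Q_k=P_k$, whereas we want to apply the uniform $1$-$UV_0$ property of $\pc(X)$. Since $\alpha_k$ is null-homotopic in $P_k\subseteq\pc(X)$, it is inessential in $\pc(X)$. Uniform $1$-$UV_0$ of $\pc(X)$, via Lemma \ref{uniformuvsequencelemma}, then gives maps $h_k:\bbd\to\pc(X)$ with $h_k|_{S^1}=\alpha_k$ and $\diam(\im(h_k))\to 0$. Each $\im(h_k)$ is path-connected and contained in $\pc(X)$, so it lies in the single path-component $P_k$ of $\pc(X)$ containing $\im(\alpha_k)$. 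Thus $h_k:\bbd\to P_k=Q_k\subseteq\pc(Y)$, and we set $g_k=h_k$.

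The diameters are computed with the restriction of the metric $d$ to $\pc(Y)\subseteq\pc(X)$, so they agree with those measured in $X$. Hence $\diam(\im(g_k))\to 0$, and Lemma \ref{uniformuvsequencelemma} shows that $\pc(Y)$ is uniformly $1$-$UV_0$.

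The step that needs care is the use of sectionality. It is exactly what guarantees that a non-degenerate piece of $Y$ is a whole piece of $X$, so that small null-homotopies in $\pc(X)$ stay inside $\pc(Y)$. The same argument can be run with the $\epsilon$–$\delta$ form of Definition \ref{uniformoneuvzerodef}, taking the $\delta$ that works for $\pc(X)$.
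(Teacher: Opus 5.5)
Your proof is correct and is essentially the paper's argument. The loop lies in a single piece $Q$ of $Y$; sectionality makes a non-degenerate $Q$ a full piece of $X$; and the small null-homotopy in $\pc(X)$ stays inside the path-component $Q\subseteq\pc(Y)$. The only difference is that the paper works directly with the $\epsilon$--$\delta$ form of Definition \ref{uniformoneuvzerodef}, whereas you go through the sequential characterization of Lemma \ref{uniformuvsequencelemma}.
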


\begin{proof}
Let $\epsilon>0$. Find $\delta\in(0,\epsilon)$ such that if $\alpha:S^1\to \pc(X)$ is an inessential loop with $\diam(\im(\alpha))<\delta$, then there exists a map $g:\bbd\to \pc(X)$ such that $g|_{S^1}=\alpha$ and $\diam(\im(g))<\epsilon$. Let $\alpha:S^1\to \pc(Y)$ be an inessential loop with $\diam(\im(\alpha))<\delta$. Let $Q$ be the piece of $Y$ for which $\im(\alpha)\subseteq Q$. Since $(Y,\scrq)$ is a sectional graded subspace, either $Q$ is a one-point space or $Q$ is a piece of $X$. If $\alpha$ is constant at a point $x$, then the constant map $g:\bbd\to \pc(Y)$ at $x$ suffices. Suppose that $\alpha$ not constant. Then $Q$ is a non-degenerate piece of both $X$ and $Y$. By our choice of $\delta$, there exist a map $g:\bbd\to \pc(X)$ such that $g|_{S^1}=\alpha$ and $\diam(\im(g))<\epsilon$. Since $Q$ is the path-component of $\pc(X)$ containing $\im(\alpha)$, we must have $\im(g)\subseteq Q\subseteq \pc(Y)$.
\end{proof}

The next lemma highlights the exact way in which we will use the uniformly $1$-$UV_0$ property. It will be important for constructing null-homotopies of loops in tree-graded spaces.

\begin{lemma}\label{extensionlemma}
Let $K$ be a compact subset of $\bbd$ such that $S^1\subseteq K$ and let $\mco=\{O_1,O_2,O_3,\dots\}$ be an enumerated (possibly finite) collection of connected components of $\bbd\backslash K$ such that every set $O_n\in\mco$ is convex. Let $(X,d)$ be a metric space and $A$ be a uniformly $1$-$UV_0$ metric subspace of $X$. If $g:K\to X$ is a map such that
\begin{enumerate}
\item $g(\partial O_n)\subseteq A$ for all $n$,
\item $g|_{\partial O_n}:\partial O_n\to A$ is null-homotopic in $A$ for all $n$,
\item $\ds\lim_{n\to\infty}\diam(g(\partial O_n))=0$ if $\mco$ is infinite,
\end{enumerate}
then there exists a map $H:K\cup (\bigcup_{n=1}^{\infty}O_n)\to X$ such that $H|_{K}=g$. Moreover, if $\bigcup_{n=1}^{\infty}O_n=\bbd\backslash K$, then $g|_{S^1}:S^1\to X$ is inessential.
\end{lemma}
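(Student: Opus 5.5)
The plan is to extend $g$ over each $O_n$ separately and then check that the resulting function is continuous, with condition (3) supplying continuity at accumulation points of the $O_n$.

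\textbf{Step 1: choosing the extensions.} Since $O_n$ is a convex open subset of the plane with compact closure in $\bbd$, its closure $\ov{O_n}$ is a compact convex set with nonempty interior. Hence $\partial O_n$ is a simple closed curve and $\ov{O_n}$ is a disk. Fix a homeomorphism $(\ov{O_n},\partial O_n)\cong(\bbd,S^1)$, for instance by radial projection from an interior point.

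By (2), each loop $g|_{\partial O_n}$ is inessential in $A$. The diameters $\diam(g(\partial O_n))$ tend to $0$ by (3), so Lemma \ref{uniformuvsequencelemma} (applied to $A$) gives maps $h_n:\ov{O_n}\to A$ with $h_n|_{\partial O_n}=g|_{\partial O_n}$ and $\diam(\im(h_n))\to 0$. If $\mco$ is finite, any null-homotopies in $A$ will do.

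Define $H=g$ on $K$ and $H=h_n$ on $O_n$. Each $\partial O_n$ lies in $K$, because $O_n$ is a component of the open set $\bbd\backslash K$. So $H$ is well defined.

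\textbf{Step 2: continuity, the main obstacle.} Let $D=K\cup\bigcup_n O_n$. Continuity at points of the open sets $O_n$, and at points of $K$ where only finitely many $O_n$ come near, follows from the pasting lemma: each $\ov{O_n}$ and $K$ is closed, and $H$ is continuous on each.

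The delicate case is a point $x\in K$ and a sequence $x_k\to x$ with $x_k\in O_{n_k}$, $n_k\to\infty$. For each $k$, join $x_k$ by a segment to a point $y_k\in\partial O_{n_k}$ closest to $x$. This is possible because $O_{n_k}$ is convex, and it gives $|y_k-x|\le|x_k-x|$, so $y_k\to x$. Then
\[
d(H(x_k),H(x))\le d(H(x_k),g(y_k))+d(g(y_k),g(x))\le \diam(\im(h_{n_k}))+d(g(y_k),g(x)).
\]
The first term tends to $0$ by the choice of the $h_n$, and the second tends to $0$ by continuity of $g$ on the compact set $K$.

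A general sequence in $D$ converging to $x$ splits into finitely many subsequences. Those in $K$ are handled by continuity of $g$. Those in a fixed $\ov{O_n}$ are handled by continuity of $h_n$. Those spread over infinitely many $O_n$ are handled by the estimate above. Hence $H$ is continuous.

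\textbf{Step 3: the final claim.} If $\bigcup_n O_n=\bbd\backslash K$, then $D=\bbd$. In that case $H:\bbd\to X$ extends $g|_{S^1}$, so $g|_{S^1}$ is inessential.
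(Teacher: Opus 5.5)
Your proof is correct and follows the paper's argument essentially step for step. You identify each $\ov{O_n}$ with a disk, use Lemma \ref{uniformuvsequencelemma} to get extensions whose image diameters tend to $0$, and prove continuity at points of $K$ by comparing $H(x_k)$ with $g$ at a nearby point of $\partial O_{n_k}$. The differences are cosmetic: you take the point of $\partial O_{n_k}$ nearest to $x$ instead of the point where the segment from $x$ to $x_k$ meets $\partial O_{n_k}$, and your ``finitely many subsequences'' bookkeeping should be the paper's split into $n_k<N$ and $n_k\geq N$ for an $\epsilon$-dependent $N$, a case your estimate already covers.
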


\begin{proof}
Since each set $O_n\in\mco$ is open and convex, there is a homeomorphism of pairs $(\ov{O_n},\partial O_n)\cong (\bbd,S^1)$. Hence, $\alpha_n=g|_{\partial O_n}:\partial O_n\to A$ may be regarded as an inessential loop that extends to a map on $\ov{O_n}$ (see Figure \ref{fig4}). The conclusion is clear if $\mco$ is finite. Suppose $\mco$ is infinite. Since $\ds\lim_{n\to\infty}\diam(g(\partial O_n))=0$ in the uniformly $1$-$UV_0$ metric subspace $A$, Lemma \ref{uniformuvsequencelemma} implies that there exists maps $g_{n}:\ov{O_n}\to A$, $n\in\bbn$ such that $(g_n)|_{\partial O_n}=g|_{\partial O_n}$ for all $n\in\bbn$ and such that $\ds\lim_{n\to\infty}\diam(\im(g_n))=0$. Define $H:K\cup (\bigcup_{n=1}^{\infty}O_n)\to X$ to agree with $g$ on $K$ and to agree with $g_n$ on $\ov{O_n}$.

To see that $H$ is continuous, consider a convergent sequence $\{a_k\}_{k\in\bbn}\to a$ in $K\cup (\bigcup_{n=1}^{\infty}O_n)$ and let $\epsilon>0$. We will show that $d(H(a_k),H(a))<\epsilon$ for all but finitely many $k$. Since $H$ is continuous on $K$, it suffices to consider the case where for each $k\in\bbn$, we have $a_k\in O_{n_k}$ for some $n_k\in\bbn$. If $S=\{n_k\mid k\in\bbn\}$ is finite, then our conclusion follows from the continuity of $g$ and each $g_n$. Supposing that $S$ is infinite, we must have $a\in K$. Let $b_k$ a point in the intersection of $\partial O_{n_k}$ and the line segment in $\bbd$ with endpoints $a$ and $a_k$. Then $\{b_k\}_{k\in\bbn}\to a$ in $K$ and the continuity of $H|_{K}$ gives that $\{H(b_k)\}_{k\in\bbn}\to H(a)$. Find $N\in\bbn$ such that $\diam(\im(g_n))<\frac{\epsilon}{2}$ for all $n\geq N$. In particular, we have $d(H(a_k),H(b_k))=d(g_{n_k}(a_k),g_{n_k}(b_k))<\frac{\epsilon}{2}$ whenever $n_k\geq N$.
Find $K_0$ such that $d(H(b_k),H(a))<\frac{\epsilon}{2}$ for all $k\geq K_0$. When $n_k\geq N$ and $k\geq K_0$, we have $d(H(a_k),H(a))\leq d(H(a_k),H(b_k))+d(H(b_k),H(a))<\epsilon$. If $n<N$ and $T_n=\{k\in\bbn\mid n_k=n\}$ is infinite, then the continuity of $g_n$ implies that $\{H(a_k)\}_{k\in T_n}\to H(a)$ and so we may find $K_n\in\bbn$ such that $d(H(a_k),H(a))<\epsilon$ whenever $k\in T_n$ and $k\geq K_n$. If $n<N$ and $T_n$ is finite, let $K_n=\max(T_n)+1$. Set $K=\max\{K_n\mid 0\leq n<N\}$. If $k\geq K$, the we have $d(H(a_k),H(a))<\epsilon$, completing the proof.

For the last statement, note that if $\mco$ consists of all of the connected components of $\bbd\backslash K$, then we obtain a map $H:\bbd\to X$ such that $H|_{S^1}=g|_{S^1}$. Thus $g_{S^1}$ is inessential.
\end{proof}

\begin{figure}[H]
\centering \includegraphics[height=2.3in]{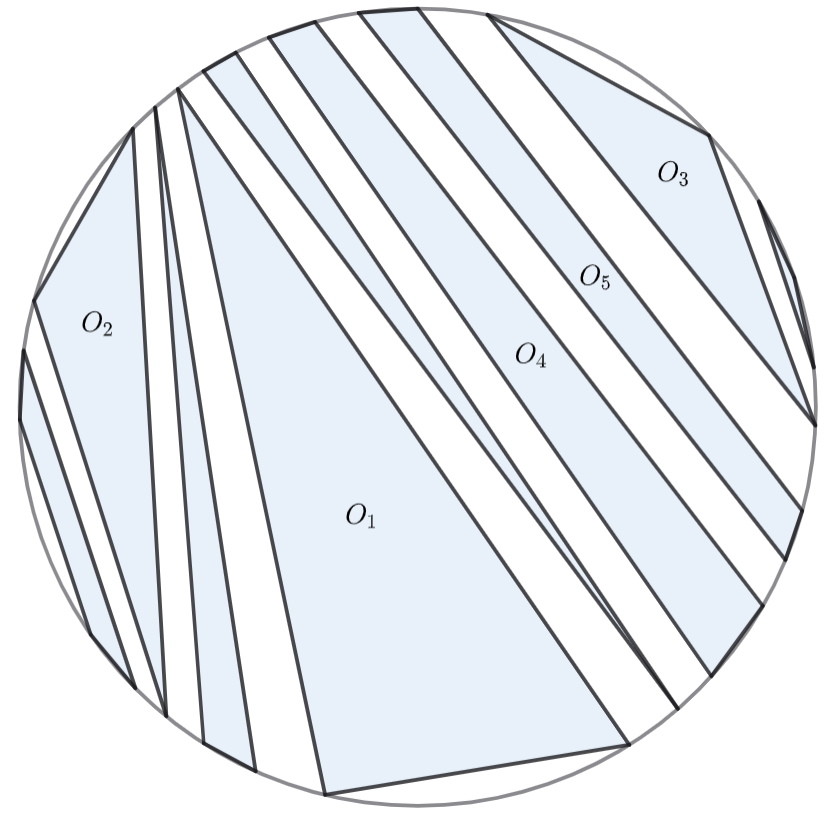}
\caption{\label{fig4} Open convex open sets $O_1,O_2,O_3,\dots$ enumerating some of the connected components of $\bbd\backslash K$. The map $g:K\to X$ sends the boundaries $\partial O_n$ into $A$ by null-homotopic loops that shrink in diameter as $n\to\infty$.}
\end{figure}

\section{Null-homotopies of loops in disjointly tree-graded spaces}\label{sectionconstructingnullhomotopies}

The next lemma is the most technical result in this paper. We use the notation $\lchord a,b\rchord$ to denote the chord of $S^1$ with endpoints $a,b\in S^1$ and $\hull(A)$ to denote the convex hull of a set $A\subseteq \bbr^2$.

\begin{lemma}\label{kextensionlemma}
Let $(X,\scrp)$ be a disjointly tree-graded space and $\alpha:S^1\to X$ be a tree-efficient loop. Then there exists a compact set $K$ with $S^1\subseteq K\subseteq \bbd$ and a map $g:K\to X$ such that if $\mco=\{O_1,O_2,O_3,\dots\}$ is an enumeration of the (possibly finite) collection of connected components of $\bbd\backslash K$, then the following hold:

    \begin{enumerate}
        \item $g|_{S^1}=\alpha$ and $\im(g)=\im(\alpha)$,
        \item every element of $\mco$ is convex,
        \item there exists (not necessarily distinct) pieces $P_1,P_2,P_3,\dots\in\scrp$ such that $g(\partial O_n)\subseteq P_n$ for each $O_n\in\mco$, and
        \item\label{item: null sequence} $\ds\lim_{n\to\infty}\diam(g(\partial O_n))=0$ if $\mco$ is infinite.
    \end{enumerate}

Moreover,  if $\alpha$ is inessential, then, for every $n$, $g|_{\partial O_n}:\partial O_n\to P_n$ is inessential in $P_n$.
\end{lemma}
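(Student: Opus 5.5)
The plan is to build $K$ as the unit circle together with a closed, non-crossing family of chords, one chord for each pair of parameters that the tree structure forces to be identified, and to define $g$ on each chord as the constant value at its endpoints. Write $\alpha$ on $S^1$ and let $M=\alpha^{-1}(\tree(X))$. This is an open subset of $S^1$. Each component $J$ of $M$ is an open arc (or a half-open arc with one endpoint in $\pc(X)$), and by tree-efficiency $\alpha|_{\ov{J}}$ is injective with $\alpha|_{\ov J}^{-1}(\tree(X))=J$. Fix $t\in J$ and $z=\alpha(t)$. By Proposition \ref{separateprop}, $z$ separates $X$, so the loop must return to $z$; call the return parameter $t'\neq t$. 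Injectivity on the components containing $t$ and $t'$ makes $t'$ unique. Moreover, the component $J'$ containing $t'$ is traversed in the reverse direction over a common subarc, so that $\alpha(t+s)=\alpha(t'-s)$ locally.

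The first key step is to show that the relation $t\sim t'$ is closed and that its chords $\lchord t,t'\rchord$ are pairwise non-crossing. Non-crossing follows from separation. Suppose $\lchord a,a'\rchord$ and $\lchord b,b'\rchord$ cross. Then the loop alternately visits $\alpha(a)$, $\alpha(b)$, $\alpha(a')$, $\alpha(b')$, with $\alpha(a)=\alpha(a')=z$ and $\alpha(b)=\alpha(b')=w$. Since $z,w\in\tree(X)$ are cut points, this would produce two arcs between $z$ and $w$ meeting only at their endpoints, hence a simple closed curve through a tree point, contradicting Definition \ref{deftreegraded}(3) via Lemma \ref{scclemma}.

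Let $L$ be the closure of the union of these chords. At limit points, the added chords join parameters that lie in the boundary of $M$ and have equal $\alpha$-values, which is what makes $L$ a closed set. Then I refine $K$ by convexity. Each component of $\bbd\backslash(S^1\cup L)$ is an open region bounded by chords and arcs of $S^1\backslash M$. Its boundary is a convex curve whose arc portions are mapped by $\alpha$ into $\pc(X)$, and whose chord portions are mapped to single points of $\pc(X)$ (limits of tree points). Because consecutive portions share endpoints, the whole boundary is mapped into one path-component, namely a single piece $P_n$. Set $K=S^1\cup L$ and take $g$ equal to $\alpha$ on $S^1$ and constant on each chord. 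Continuity follows from the equality of $\alpha$-values at chord endpoints together with uniform continuity of $\alpha$. This gives $\im(g)=\im(\alpha)$, yielding (1)–(3). Chords that bound regions with empty interior should be discarded or merged so that every component of $\bbd\backslash K$ is an open convex region.

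For (4), note that each $\partial O_n$ meets $S^1$ in a closed set. Suppose that infinitely many $O_n$ had $\diam(g(\partial O_n))\ge\epsilon$. Then, since the $O_n$ are disjoint convex regions of the disk, their boundary arcs on $S^1$ must shrink in total length. Their chords converge to limiting chords or points, and by uniform continuity of $\alpha$ the images shrink. The one case to rule out is that a region's image is large while its arcs are short; this cannot happen, since the chords map to points and $g(\partial O_n)=\alpha(\partial O_n\cap S^1)$. So a Lebesgue-number argument on a fine partition of $S^1$ bounds all but finitely many diameters.

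The main obstacle is the final statement. Suppose $\alpha$ is inessential, with a null-homotopy $G:\bbd\to X$. I would first obtain $G':\bbd\to X$ agreeing with $g$ on $K$: the map $g$ on $K$ is homotopic rel $S^1$ to $G|_K$ through the $\bbr$-tree closures (each chord is a constant path, and the pieces of $G$ along chords correspond to loops in trees). Then $g|_{\partial O_n}$ is null-homotopic in $X$. Now apply the retraction $r:X\to P_n$ of Lemma \ref{retractionlemma}, which fixes $P_n$. The composite $r\circ G'|_{\ov{O_n}}$ is a null-homotopy of $g|_{\partial O_n}$ inside $P_n$.

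The simplest way to obtain $G'$ is to avoid it altogether. By Corollary \ref{isocor2}, or more directly Corollary \ref{isocor}, it suffices to see that $g|_{\partial O_n}$ is null in $X$. This can be argued as follows: contracting all other regions and the chords exhibits $g|_{\partial O_n}$ as a factor in a free-product decomposition of $[\alpha]=1$. I expect this step, making that argument rigorous when there are infinitely many regions, to be the delicate part.
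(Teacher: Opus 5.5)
There is a genuine gap. The chord system you build is not determined by $\alpha$, and the choice you leave open is exactly what the final assertion depends on.

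\textbf{The chord system is not well defined, and non-crossing is not forced.} First, $t'$ need not be unique. A loop alternating between two pieces $P,Q$ joined by an arc $e\subseteq\ov{U}$ of a tree component passes through each interior point of $e$ at every transition. A branch point of $\tree(X)$ can be passed three times; then there is no natural pairing, and the closure produces a triangular gap whose boundary maps to a tree point, violating (3) unless you absorb it into $K$.

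Your non-crossing argument also fails. Let $e$ run from $q\in Q$ to $p\in P$, let $a,b$ be loops at $p,q$, and take $\alpha=e\,a\,e^{-1}\,b\,e\,a\,e^{-1}\,b$. Join the first and last passages through a point $z\in e$, and join the first two passages through a point $w\in e$ nearer $q$. Each chord joins reverse-direction passages, yet the two chords cross. No simple closed curve results, because all four subpaths contain the subarc of $e$ between $w$ and $z$.

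Since (1)--(4) are needed for loops not yet known to be null (as in Lemma \ref{simplyconnectedlemma}), you need a mechanism that selects a coherent non-crossing family. The paper's mechanism is planar. It extends $q\circ\alpha$ to $f_0:\bbd\to T$ for a parameterization $(T,V,q)$. Distinct fiber components of $f_0$ are disjoint continua in $\bbd$, so the hulls $\hull(C\cap S^1)$ cannot cross, and $\bbd/H$ is a dendrite with a light map to $T$.

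\textbf{The final assertion fails for some chord systems your scheme allows.} Take $\alpha=b\,e\,a\,e^{-1}\,b^{-1}\,b\,e\,a^{-1}\,e^{-1}\,b^{-1}$ with $a,b$ essential in $P,Q$. This loop is tree-efficient and null-homotopic. Pairing the first $e$ with the first $e^{-1}$, and the second with the second, is a non-crossing choice made by reverse-direction returns. But then the region adjacent to the first $a$-segment has boundary loop $a$, which is essential in $P$. The good pairing, which caps off the null $Q$-stays $b^{-1}b$, is visible only through a null-homotopy.

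So ``$g$ is homotopic rel $S^1$ to $G|_K$'' cannot be proved for a $K$ chosen independently of $G$. The restriction of $G$ to a chord is an arbitrary loop in $X$, not a loop in a tree. Your free-product alternative presupposes that the other boundary loops are null, which is circular, especially with infinitely many regions and no $1$-$UV_0$ hypothesis available.

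The paper redoes the construction with $f_0=q\circ f''$ for a null-homotopy $f'':\bbd\to X$. It then:
\begin{enumerate}
\item glues $g$ on the holed upper hemisphere to $f''$ on the lower hemisphere of $S^2$;
\item shows that any two holes are separated by a fiber component over a tree point;
\item collapses the bounded sides of maximal separating fiber components, found via Zorn's lemma.
\end{enumerate}
This gives a null-homotopy of $g|_{\partial O_i}$ in $X$, which the retraction $X\to P_i$ moves into $P_i$.

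\textbf{Condition (4).} Your argument here is also incomplete. $\partial O_n\cap S^1$ has small length but can be spread around $S^1$, so small length does not bound $\diam(\alpha(\partial O_n\cap S^1))$. The paper obtains (4) by observing that $S^1/G_{S^1}$ is a compact disjointly tree-graded space and applying Lemma \ref{pclemma}.
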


\begin{proof}
    Let $(T,V,q)$ be a parameterization for $(X,\scrp)$.  Suppose that $\alpha: S^1 \to X$ is a tree-efficient loop.  Since $T$ is a topological tree, $q\circ\alpha$ is nullhomotopic. Let $f_0:\bbd \to T$ be an extension of $q\circ\alpha$.  We may then modify $f_0$ to obtain a map $f':\bbd \to T$ by making $f'$ constant on the closure of bounded components of the complements of maximal point preimages of $f_0$ that separate $\mathbb R^2$ (for details, see proof of \cite[Theorem 3.7]{CConedim} or Claim \ref{clm: maximal separate} for an analogous proof on the two-sphere.)  Then no point preimage of $f'$ separates $\mathbb R^2$ and $f'|_{S^1}= f_0|_{S^1} = q\circ\alpha$.  For each component $C$ of $f'^{-1}(t)$, there exists a unique ``bounding component" $C'$ of $f_0^{-1}(t)$ such that $C$ is the union of $C'$ with the bounded components of $\mathbb R^2\backslash C'$ and $C\cap S^1 = C'\cap S^1$. (If $C'$ doesn't separate $\mathbb R^2$, then $C = C'$).   Set
     \[H'= \bigl\{ C \mid \exists\ t\in T \text{ such that } C \text{ is a component of } f'^{-1}(t)\bigr\} \text{ and}\] \[H= \bigl\{ \hull(C\cap S^1)\mid \exists\ t\in T \text{ such that } C \text{ is a component of } f'^{-1}(t)\bigr\}.\] By the proof of \cite[Theorem 3.7]{CConedim} and \cite[Subsection 3.2.1]{CConedim} respectively, $H'$ and $H$ are upper semicontinuous decompositions of $\bbd$ such that $\bbd/H'$ and $\bbd/H$ are dendrites.

    Let $\rho: \bbd \to \bbd/H$ and $\rho': \bbd \to \bbd/H'$ be the closed, monotone, quotient maps. Then there exist maps $\eta': \bbd/H' \to T$ and $\eta: \bbd/H \to T$ such that $\eta\circ\rho|_{S^1} = q\circ\alpha = \eta'\circ\rho'|_{S^1}$ and $\eta'\circ\rho'= f'$, which are uniquely defined and continuous by the universal property of quotient maps.  Let $f = \eta\circ\rho$.

    By construction, $\rho|_{S^1}$ is a quotient map and $\rho'|_{S^1}$ is constant on the fibers of $\rho'$. Hence, there exists an induced continuous map $\theta: \bbd/H \to \bbd/H'$. There is a bijection between elements of $H$ and elements of $H'$ that intersect the boundary given by $h = \hull(C\cap S^1) \mapsto C$. Then, it is immediate that $\theta$ is injective and $\eta= \eta'\circ \theta$. Thus we have the following commutative diagram.

\[\begin{tikzcd}
	\bbd && {S^1} && \bbd \\
	& {\bbd/H} & {} & {\bbd/H'} \\
	&& T
	\arrow["\rho"', from=1-1, to=2-2]
	\arrow["f"', curve={height=30pt}, from=1-1, to=3-3]
	\arrow[hook', from=1-3, to=1-1]
	\arrow[hook, from=1-3, to=1-5]
	\arrow["{q\circ\alpha}"{pos=0.2}, from=1-3, to=3-3]
	\arrow["{\rho'}", from=1-5, to=2-4]
	\arrow["{f'}", curve={height=-30pt}, from=1-5, to=3-3]
	\arrow["\theta"{pos=0.7}, hook, no head, from=2-2, to=2-3]
	\arrow["\eta"', from=2-2, to=3-3]
	\arrow[from=2-3, to=2-4]
	\arrow["{\eta'}", from=2-4, to=3-3]
\end{tikzcd}\]

    Since $\rho$ and $\rho'$ are a monotone closed maps, the preimage under either map, of a connected  set is connected, see \cite[Proposition I.4.2]{Daverman2007}. If $A$ is a connected subset of $\eta'^{-1}(t)$ for some $t\in T$, then $\rho'^{-1}(A)$ is connected and must be contained in a single element of $H'$.  Thus $A$ is a point.  Therefore, $\eta'$ is a light map.  Since $\eta= \eta'\circ \theta$, $\eta$ is also a light map.

    Suppose that $A$ is a connected subset of $\bbd/H'$ contained in $\eta'^{-1}(V)$.  Since $\eta'$ is a light map and $V$ is totally disconnected, $A\subseteq \eta'^{-1}(v)$ for some $v\in V$.  Then the connected set $\rho'^{-1}(A)$ must be contained in an element of $H'$ and $A$ is a point. Thus every non-degenerate connected subset of $\bbd/H'$ intersects $\eta'^{-1}(T\backslash V)$ and any two points in $\bbd/H'$ can be separated by a point in $\eta'^{-1}(T\backslash V)$.

    \begin{claim}\label{clm: separate components}
        Let $h_1$ and $h_2$ be distinct elements of $H$ and $C_1$ and $C_2$ the corresponding elements of $H'$, i.e., $h_i = \hull(C_i\cap S^1)$ for $i= 1,2$.  Then there exists a $t\in T\backslash V$ and $C$ a component of $f'^{-1}(t)$ such that $C$ separates $C_1$ and $C_2$ in $\bbd$ and $\hull(C\cap S^1)$ separates $h_1$ and $h_2$ in $\bbd$. In addition, the unique bounding component $C'$ of $f_0^{-1}(t)$ contained in $C$ also separates $C_1$ and $C_2$ in $\bbd$.
    \end{claim}

    \begin{proof}[Proof of claim.]
      Suppose that $h_1, h_2\in H$ are distinct elements of $H$.  Choose $C_1, C_2\in H'$ such that $C_i\cap S^1 = h_i\cap S^1$.  (Recall that $C_1$ and $C_2$ are uniquely determined.)  Thus $\rho'(C_1)$ and $\rho'(C_2)$ are distinct points of $\bbd/H'$, which means that there exists $d\in\bbd/H'$ such that $\eta'(d)\in T\backslash V$ and such that $d$ separates $\rho'(C_1)$ and $\rho'(C_2)$. By construction, $C= \rho'^{-1}(d)$ is a component of $f'^{-1}\bigl(\eta'(d)\bigr)$ and $C$ separates $C_1$ and $C_2$ in $\bbd$. In particular, $C\cap S^1\neq\emptyset$.

      Notice that $\rho\bigl(\hull(C\cap S^1)\bigr) = \theta^{-1}(d)$ and that $\theta^{-1}(d)$ separates $\theta^{-1}\bigl(\rho'(C_1)\bigr)=\rho(h_1)$ and $\theta^{-1}\bigl(\rho'(C_2)\bigr)=\rho(h_2)$ in $\bbd/H$.  Thus $\hull(C\cap S^1)$ must separate $h_1$ and $h_2$ in $\bbd$.

      The set of components of  $\bbd \backslash C'$ is the set of the components of $\bbd \backslash C$ together with the bounded components of $\mathbb R^2\backslash C'$.  Thus $C'$ must also separates $C_1$ and $C_2$ in $\bbd$.
    \end{proof}

    Notice that each element of $H$ is contained in an element of \[\tilde H = \bigl\{ C \mid \exists\ t\in T \text{ such that } C \text{ is a component of } f^{-1}(t)\bigr\}.\]  By the previous claim, no two distinct elements of $H$ can be contained in the same component of $f^{-1}(t)$ for any $t\in T$. Thus   $H= \tilde H$.

    Let $H_{\tree}= \bigl\{ h\in H\mid f(h)\in T\backslash V\bigr\}$ and $H_{\tree}'= \bigl\{ h\in H'\mid f'(h)\in T\backslash V\bigr\}$.
\begin{claim}\label{clm: constant on endpoints}
    For every $h\in H$ and every component $A$ of $S^1\backslash h$, $\alpha$ is constant on the endpoints of $A$.
\end{claim}

\begin{proof}[Proof of Claim \ref{clm: constant on endpoints}]

    Let $h,A$ be as in the statement of the claim, and $a,b$ be the endpoints of $A$.  Since elements of $H$ are convex, $\hull(A)$ is a $\rho$-saturated open subset of $\bbd$ and, hence, $\rho\bigl(\hull(A)\bigr)$ is an open connected subset of $\bbd/H$.  Choose a nested sequence of compact, connected subsets $S_n$ of $A$ such that $\bigcup_{n\in\bbn} S_n = A$.
    Choose $e_n \in \rho(A)$ separating $\rho(h)$ and $\rho\bigl(\hull(S_n)\bigr)$ such that $\eta(e_n)\in T\backslash V$.

    Fix $\epsilon>0$.  Then there exists a $\delta>0$ such that $d(x,y)< \delta$ implies that $d\bigl(\alpha(x),\alpha(y)\bigr)< \epsilon/2$. Choose $n\in\bbn$ sufficiently large so that $ \hull(S_n)\cap B_\delta(a)\neq \emptyset $ and  $\hull(S_n)\cap B_\delta(b)\neq \emptyset$ (see Figure \ref{figendpoints}).  Since $e_n$ separates $\rho(h)$ and $\rho\bigl(\hull(S_n)\bigr)$, $\rho^{-1}(e_n)$ must intersect both $B_\delta(a)$ and $B_\delta(b)$.  By construction, $\alpha\bigl( \rho^{-1}(e_n)\bigr)$ is a point. Thus $d\bigl(\alpha(a), \alpha(b)\bigr)\leq d\bigl(\alpha(a),\alpha\bigl( \rho^{-1}(e_n)\bigr)\bigr)+d\bigl(\alpha\bigl( \rho^{-1}(e_n)\bigr),\alpha(b)\bigr)< \epsilon$.  Since $\epsilon$ was arbitrary,  $\alpha(a) = \alpha(b)$.
\end{proof}

\begin{figure}[H]
\centering \includegraphics[height=1.8in]{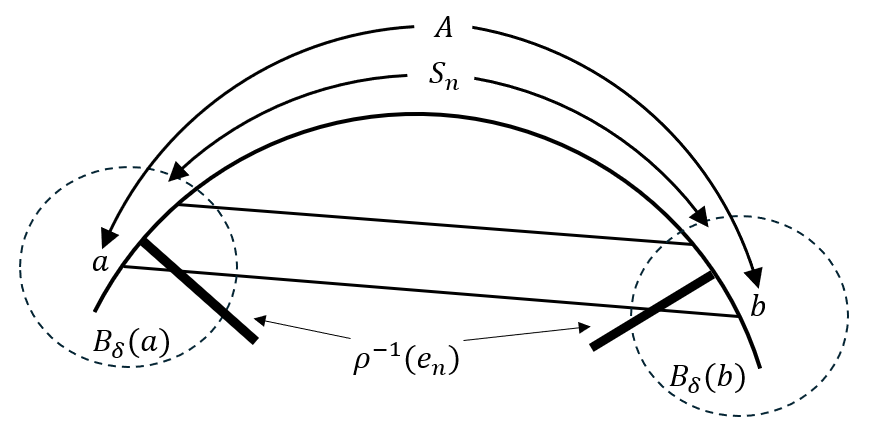}
\caption{\label{figendpoints} $e_n$ is chosen in $\eta^{-1}(T\backslash V)$ to separate $\rho(h)$ and $\rho\bigl(\hull(S_n)\bigr)$.}
\end{figure}

  Every element of $H$ is a point on $S^1$, a chord, or a topological disc.  If $h$ is a disc, since $h = \hull(C\cap S^1)$ for some $C\in H'$, then the boundary of $h$ consists of subsets of $S^1$ and chords in $\bbd$.  Let  $H_{\pc}= \bigl\{ h\in H\mid  f(h)\in V \text{ and } h \text{ is a topological disc}\}$. Observe that  if $ h\in H\backslash H_\pc $, then $\alpha$ is constant on $h\cap S^1$.  (If $h$ is a point, the observation is trivial.  If $h$ is a chord mapped into $V$ by $f$, then apply Claim \ref{clm: constant on endpoints}.  If $h\in H_\tree$, then the observation is true since $q$ is injective on $\tree(X)$ and $f|_{S^1} = q\circ \alpha$.)


  Let $K = \bbd\backslash \bigcup\limits_{h\in H_\pc} \int_\bbd(h)$.

  We will now construct an upper semicontinuous decomposition $G$ of $K$ that refines $H$ and such that $K/G$ maps into $X$.  For each $h\in H_\pc$, let $H_{\partial h}$ be the maximal connected subsets $J$ of $\partial h$ such that $\alpha$ is constant on $J\cap S^1$.  An endpoint of such a $J$ is either an endpoint of a chord in $\partial h$ or a limit point of $J\cap S^1$. It then follows from Claim \ref{clm: constant on endpoints} that the elements of $H_{\partial h}$ are closed. It is immediate that  $H_{\partial h}$ is a monotone decomposition of $\partial h$ and hence $\partial h/H_{\partial h}$ is either a point (if $\alpha$ was constant on $h\cap S^1$) or a simple closed curve.

  Consider the decompositions $G = \Bigl(H\backslash H_\pc \Bigr) \cup  \Bigl(\bigcup\limits_{h\in H_\pc} H_{\partial h} \Bigr)$ and $G_{S^1} = \{g\cap S^1\mid g\in G\}$ of $K$ and $S^1$ respectively. Let $\rho_1:S^1\to S^1/G_{S^1}$ and $\rho_2:K\to K/G$ be the quotient maps.

  \begin{claim}
    $G$ and $G_{S^1}$ are upper semicontinuous decompositions of $K$ and $S^1$ respectively.
  \end{claim}

  \begin{proof}
    Suppose that $g_1$ and $g_2$ are distinct elements of $G$ that are contained in distinct elements $h_1$ and $h_2$ of $H$.  Since $H$ is an upper semicontinuous decomposition of $\bbd$, there exists $U$ and $V$ disjoint $\rho$ saturated subsets of $\bbd$ containing $h_1$ and $h_2$ respectively.  Then $U\cap K$ and $U\cap K$ are disjoint $\rho_2$-saturated subsets of $K$ containing $g_1$ and $g_2$ respectively.  Thus $\rho_2(g_1)$ and $\rho_2(g_2)$ can be separated by disjoint open sets.

    Suppose that $g_1, g_2\in G$ are distinct and both contained in a single element $h$ of $H$.  Then $h\in H_\pc$ and, thus, $h$ is a convex topological disc in $\bbd$.  Since $g_1$ and $g_2$ are distinct closed subintervals (possibly degenerate) of $\partial h$, there exists $g, g'\in G$ also contained in $\partial h$ that separate $g_1$ from $g_2$ in $\partial h$.    Let $x\in g\cap S^1$ and $x'\in g'\cap S^1$ and $U,V$ be the two components of $\bbd\backslash \lchord x,y\rchord $.  Then $U\cap K\backslash\{g\cup g'\}$ and $V\cap K\backslash\{g\cup g'\}$ are disjoint open $\rho_2$ saturated sets one of which contains $g_1$ and the other contains $g_2$.  Thus $\rho_2(g_1)$ and $\rho_2(g_2)$ can be separated by disjoint open sets.

    Thus $K/ G$ is Hausdorff.  Since $K$ is compact, $\rho_2$ is a closed map which implies that $G$ is an upper semicontinuous decomposition.  Then Exercise 2 on page 62 of \cite{CCcodiscrete} implies that $G_{S^1}$ is also an upper semicontinuous decomposition.
  \end{proof}

    It is an exercise to see that the inclusion of $S^1$ into $K$ induces a canonical homeomorphism of $\psi: S^1/G_{S^1}\to K/G$ such that $\rho_2|_{S^1}=\psi\circ \rho_1$. Since $\rho:\bbd\to \bbd/H$ is constant on the elements of $G_{S^1}$, there is a quotient map $q':S^1/G_{S^1}\to \bbd/H$ such that $q'\circ \rho_1=\rho|_{S^1}$. Let $q''=q'\circ \psi^{-1}$. Since every element of $G$ is contained in some element of $H$, we have $q''\circ \rho_2=\rho|_{K}$.

By construction, $\alpha$ is constant on elements of $G_{S^1}$. Hence, there is unique continuous map $g':S^1/G_{S^1}\to X$ such that $\alpha = g'\circ\rho_{1}$ (see Figure \ref{rhoone}). Let $g: K\to X$ be the continuous map $g= g'\circ \psi^{-1}\circ \rho_2$. Thus $g|_{S^1}= \alpha$ and $q\circ g = f|_K$. Since $\rho_1$ is an epimorphism, we have $\eta\circ q'=q\circ g'$. The following diagram commutes and Condition (1) is satisfied.

\[\begin{tikzcd}
	{S^1} & K & \bbd \\
	{S^1/G_{S^1}} & {K/G} \\
	X & {\bbd/H} \\
	T
	\arrow[hook, from=1-1, to=1-2]
	\arrow["{\rho_1}", two heads, from=1-1, to=2-1]
	\arrow["\alpha"', curve={height=30pt}, from=1-1, to=3-1]
	\arrow[hook, from=1-2, to=1-3]
	\arrow["{\rho_2}", two heads, from=1-2, to=2-2]
	\arrow["\rho", curve={height=-12pt}, from=1-3, to=3-2]
	\arrow["\psi", dashed, two heads, from=2-1, to=2-2]
	\arrow["{g'}", dashed, from=2-1, to=3-1]
	\arrow["{q'}"', from=2-1, to=3-2]
	\arrow["{q''}",from=2-2, to=3-2]
	\arrow["q", from=3-1, to=4-1]
	\arrow["\eta", from=3-2, to=4-1]
\end{tikzcd}\]

\begin{figure}[H]
\centering \includegraphics[height=3.2in]{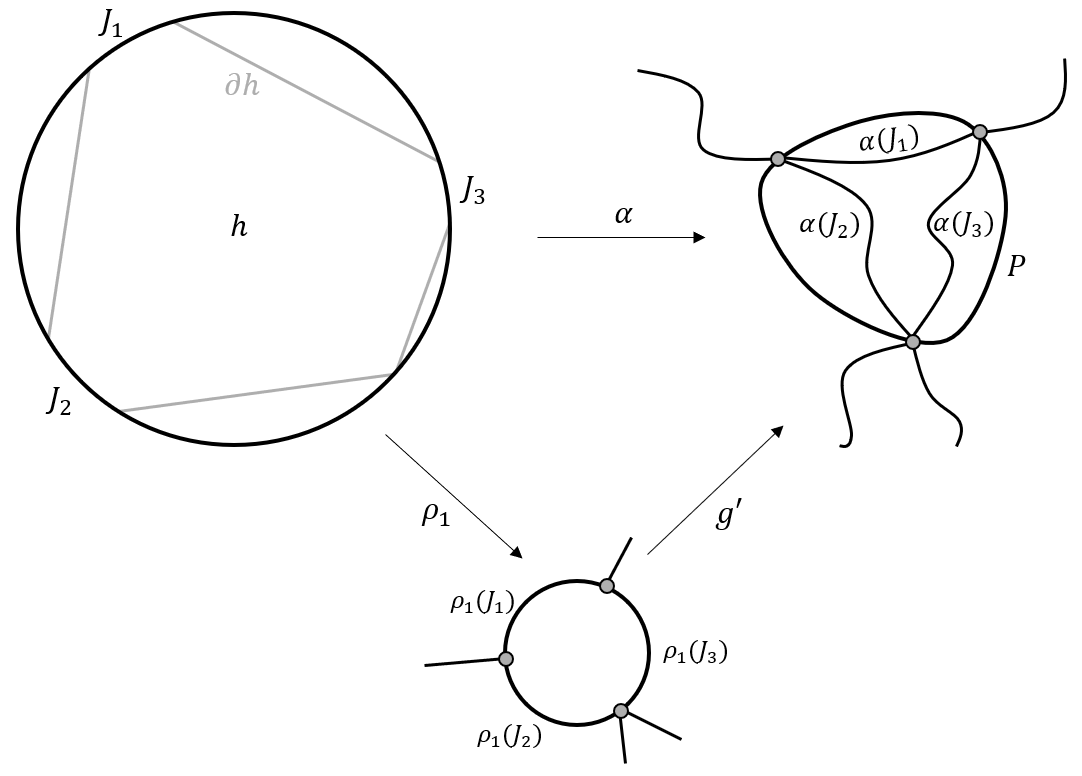}
\caption{\label{rhoone} A set $h\in H_{\pc}$ such that the components $J_1,J_2,J_3$ of $\partial h\cap S^1$ are mapped into the piece $P$ of $X$ non-trivially by $\alpha$. The image of $\partial h$ under quotient map $\rho_1$ is a simple closed curve where the closures of the three components of $\partial h\backslash (J_1\cup J_2\cup J_3)$ are elements of $H_{\partial h}$ and thus identified to points. }
\end{figure}

  Recall that $K = \bbd\backslash \bigcup\limits_{h\in H_\pc} \int_\bbd(h)$. So $K$ has countably many complementary components and $g$ takes the boundary of each into a single piece of $\pc(X)$. Let $\mco=\{O_1,O_2,O_3,\dots\}$ be an enumeration of the (possibly finite) collection of connected components of $\bbd\backslash K$.  Choose $h_i\in H_\pc$ such that $\int_\bbd(h_i) = O_i$. Since $h_i$ is convex, so is $O_i$, giving Condition (2).

The space $S^1/G_{S^1}$ is a Peano continuum and since $\eta$ is a light map $\eta^{-1}(V)$ is a totally disconnected subset of the dendrite $\bbd/H$. A non-degenerate fiber of $q': S^1/G_{S^1}\to \bbd/H$ must have the form $\rho_1(S^1\cap h_i)$ for some $i$, which is a simple closed curve since $\psi\circ\rho_1(S^1\cap h_i)=\rho_2(\partial h_i)$ where $\rho_2$ identifies closed connected subsets of $\partial h_i$. In particular, all fibers of $q'$ are path-connected. Additionally, since $h_i\in H_\pc$, $q'$ must map a non-degenerate fiber $\rho_1(S^1\cap h_i)$ to the point $\rho(S^1\cap h_i)\in \eta^{-1}(V)$. Note that since $q\circ g(\partial h_i)=\eta\circ \rho(S^1\cap h_i)\in V$ and $X$ is a disjointly tree-graded space, $g$ must map $\partial h_i$ into some piece of $X$, giving Condition (3) of the current lemma. Additionally, with all of the conditions of Lemma \ref{datalemma} now satisfied, $S^1/G_{S^1}$ inherits the structure of a disjointly tree-graded space with parameterization $(\bbd/H,\eta^{-1}(V),q')$. Since $S^1/G_{S^1}$ is compact, Lemma \ref{pclemma} gives that the diameters of the pieces of $S^1/G_{S^1}$ form a null sequence (with respect to any choice of compatible metric). Thus their image in $X$ under $g'$ must also form a null sequence and we have Condition (\ref{item: null sequence}).

We are now ready to prove the last conclusion of the lemma.  Suppose that $\alpha$ is an inessential loop. Then we can assume that $f_0= q\circ f''$ for a map $f'': \bbd\to X$. We will fix an identification of the upper hemisphere and lower hemisphere of $S^2$ with $\bbd$, which agree on the equator.  With this identification, we can view $H$ as an upper semicontinuous decomposition of the upper hemisphere of $S^2$ and $H'$ as an upper semicontinuous decomposition of the lower hemisphere of $S^2$.

Let $\widetilde K = S^2\backslash \bigcup\limits_{h\in H_\pc} \int_\bbd(h)\subseteq S^2$.  Then $\widetilde K$ is the two-sphere with a sequence of open discs removed from the upper hemisphere.

\begin{claim}\label{clm: 2-sphere separations}
   Let $l: \widetilde K \to X$ be given by $g$ on the upper hemisphere and $f''$ on the lower hemisphere.    Then every two elements of $H_\pc$  can be separated by a component of $l^{-1}(x)$  for some $x\in \tree(X)$.
\end{claim}

\begin{proof}
  Since $g|_{S^1} = \alpha = f''|_{S^1}$, the map $l$ is well-defined and continuous. Notice that for $x\in \tree(X)$ and $t = q(x)$, we have $q^{-1}(t)=\{x\}$ and thus \[l^{-1}(x) = g^{-1}(x) \cup f''^{-1}(x)= (q\circ g)^{-1}(t)\cup (q\circ f'')^{-1}(t) = f^{-1}(t) \cup f_0^{-1}(t).\]

  Let $h_1$ and $h_2$ be distinct elements of $H$ (identified with the corresponding subset of the upper hemisphere of $S^2$) and $C_1$ and $C_2$ be the corresponding elements of $H'$ (identified with the corresponding subset of the lower hemisphere of $S^2$).  By Claim \ref{clm: separate components}, there exists $C\in H_{\tree}'$ such that $C$ separates $C_1$ and $C_2$ in the lower hemisphere and $h=\hull(C\cap S^1)\in H_{\tree}$ separates $h_1$ and $h_2$ in the upper hemisphere. Let $t= f'(C)$ and note $t\in T\backslash V$. There is a unique $x\in \tree(X)$ such that $q(x)=t$. The final statement of Claim \ref{clm: separate components} ensures that the corresponding bounding component $C'$ of $f_0^{-1}(t)$ contained in $C$ also separates $C_1$ and $C_2$ in the lower hemisphere. Since $q$ is injective on $T\backslash V$ and $f_0(C') = f'(C) = f(h)=t$, we have that $f''(C) = g(h)=x$. Then $C'\cup h$ separates $h_1$ and $h_2$ in $S^2$ and  $C'\cup h\subseteq l^{-1}(x)$.

\end{proof}

Fix $i\in \bbn$ where $O_i$ is a component of $\bbd\backslash K$. Then $h_i\in H_\pc$ is such that $\int_\bbd(h_i) = O_i$. We will say that a subset of $S^2$ is ``bounded" if it is disjoint from $O_i$, which lies in the upper hemisphere. Let $\mcs$ be the set of all components of point-preimages of $l$ that separate $S^2$. For $D, D'\in\mathcal S$, we will say that $D< D'$ if $D$ is contained in a bounded component of $S^2\backslash D'$.

\begin{claim}\label{clm: maximal separate}
    Every element of $\mathcal S$ is less than or equal to a maximal one and any two distinct elements are disjoint.
\end{claim}

\begin{proof}[Proof of claim.]
   The second statement of the claim is trivial. We wish to apply Zorn's lemma; so we need only show that every chain is bounded.  Suppose that $\{D_\iota\}$ is a chain in $\mcs$, which we may assume does not have a maximal element. Let $U_\iota$ be the union of bounded components of $S^2\backslash D_\iota$. Note this implies that $U_\iota\cup D_\iota\subseteq U_\kappa$ whenever $\iota< \kappa$.  Let $U =\cup_\iota U_\iota$.  Fix $\epsilon>0$ and choose $\delta>0$ such that $d(x,y)<\delta$ implies that $d\bigl(l(x),l(y)\bigr)< \epsilon/2$.  Notice that $\partial U\subseteq \cup_\iota \mcn_\delta(U_\iota)$ where $\mcn_\delta(U_\iota)$ denotes the $\delta$-neighborhood of $U_{\iota}$. Since the collection $\{\mcn_\delta(U_\iota)\}_{\iota}$ is nested and $\partial U$ is compact, $\partial U\subseteq \mcn_\delta(U_\iota)$ for some $\iota$.  Then, for any $D_\kappa>D_\iota $, we have that $D_\kappa $ separates $U_\iota $ from $\partial U$ and, hence, that $\partial U\subseteq \mcn_\delta(D_\kappa)$. (This follows since we are using a length metric on $S^2$.) Then  $ \diam\bigl(l(\partial U)\bigr)\leq \diam\bigl(l\bigl( \mcn_\delta(D_\kappa)\bigr)\bigr)< \epsilon$. Since $\epsilon$ was arbitrary, $l|_{\partial U}$ is constant and there exists $x\in X$ such that $\partial U\subseteq l^{-1}(x)$.

We now check that $\partial U$ is connected. Suppose that $W$, $V$ is an open separation of $\partial U$.  Fix $a\in W\cap \partial U$ and $b\in V\cap \partial U$.  Since $W$ and $V$ are open and $\partial U$ is compact, we may choose $\delta>0$ such that $B_\delta(a)\subseteq W$, $B_\delta(b)\subseteq V$ and $\mcn_\delta(\partial U) \subseteq W\cup V$. Then $\overline U\backslash \mcn_\delta(\partial U) = U \backslash \mcn_\delta(\partial U) \subseteq \cup_\iota U_\iota$, which implies there exist $\iota_0$ such that $U\backslash \mcn_\delta(\partial U) \subseteq  U_{\iota_0}$. Thus, for all $D_\kappa > D_{\iota_0}$, we have $D_\kappa \subseteq U\backslash U_{\iota_0}\subseteq \mcn_\delta (\partial U) \subseteq W\cup V$.

    As in the first paragraph of the proof, there exists an $\kappa$ such that $D_\kappa> D_{\iota_0}$ and $\partial U\subseteq \mcn_\delta(D_\kappa)$. Thus $D_\kappa\cap W \neq \emptyset \neq D_\kappa\cap V$, which means that $W$, $V$ give an open separation of the connected set $D_\kappa$; a contradiction. Thus $\partial U$ is connected. We conclude that the component of $l^{-1}(x)$ containing $\partial U$ is an upper bound for the chain $\{D_\iota\}$ and the claim follows by Zorn's Lemma.
\end{proof}

    We can now define $\tilde l: S^2\backslash O_i \to X$, a nullhomotopy of $g|_{\partial O_i}$.  Let $U$ be the union of all bounded components of maximal elements of $\mcs$.  By Claim \ref{clm: 2-sphere separations}, $U$ contains $\bigcup_{j\neq i} \int_\bbd(h_j)$.  Let $\tilde l(x) = l(x)$ for $x\in S^2\backslash \bigl(O_i\cup U\bigr)$. For $x\in U$, let $\tilde l|_{D} = l(\partial D)$ where $x$ is contained in a bounded component of $S^2\backslash D$ and $D$ is a maximal element of $S$, i.e., $\tilde l$ collapses each component of $U$.  It is a trivial exercise to see that $\tilde l|_{\overline U}$ is continuous and agrees with $l$ on $\partial U$.  Thus $\tilde l$ is continuous and $\partial O_i$ is inessential in $X$.  Since each piece of the disjoint tree-grading is a retract, $\partial O_i$ is inessential in the piece that contains it.

\end{proof}

The next statement is a direct combination of Lemmas \ref{extensionlemma} and \ref{kextensionlemma} is the most important consequence of these technical results.

\begin{lemma}\label{simplyconnectedlemma}
If $(X,\scrp)$ is a disjointly tree-graded space such that every piece of $X$ is simply connected and $\pc(X)$ is uniformly $1$-$UV_0$, then $X$ is simply connected.
\end{lemma}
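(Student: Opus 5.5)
To show $X$ is simply connected, I would take an arbitrary loop $\alpha:S^1\to X$ and prove that it is inessential. Since $X$ is path-connected, it suffices to treat a single loop.

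First I reduce to tree-efficient loops. If $\alpha$ is constant there is nothing to prove. Otherwise Lemma \ref{efficientlemma} gives a path-homotopy from $\alpha$ to a tree-efficient loop $\beta$; it is applied to $\alpha$ viewed as a path on $\ui$ with equal endpoints. So it suffices to show that $\beta$ is inessential, and I rename $\beta$ as $\alpha$.

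Next I apply Lemma \ref{kextensionlemma} to the tree-efficient loop $\alpha$. This produces a compact set $K$ with $S^1\subseteq K\subseteq\bbd$, a map $g:K\to X$ with $g|_{S^1}=\alpha$, and an enumeration $\mco=\{O_1,O_2,\dots\}$ of \emph{all} components of $\bbd\backslash K$. Each $O_n$ is convex, $g(\partial O_n)\subseteq P_n$ for some piece $P_n$, and $\diam(g(\partial O_n))\to 0$ when $\mco$ is infinite.

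Now I check the hypotheses of Lemma \ref{extensionlemma} with $A=\pc(X)$, which is uniformly $1$-$UV_0$ by assumption.
\begin{enumerate}
\item Condition (1) holds because $g(\partial O_n)\subseteq P_n\subseteq\pc(X)$.
\item Condition (3) is item (4) of Lemma \ref{kextensionlemma}.
\item Condition (2) requires that each loop $g|_{\partial O_n}$ be null-homotopic in $\pc(X)$. Here I use the hypothesis that every piece is simply connected: $g|_{\partial O_n}$ is a loop in the simply connected piece $P_n$, so it is null-homotopic in $P_n\subseteq\pc(X)$.
\end{enumerate}
This avoids the ``moreover'' clause of Lemma \ref{kextensionlemma}, which would presuppose that $\alpha$ is inessential.

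Since $\mco$ consists of all components of $\bbd\backslash K$, we have $\bigcup_n O_n=\bbd\backslash K$. The final statement of Lemma \ref{extensionlemma} then gives that $g|_{S^1}=\alpha$ is inessential, so $X$ is simply connected.

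There is no real obstacle, since the difficult work is packaged in Lemmas \ref{extensionlemma} and \ref{kextensionlemma}. The only points to get right are that the enumeration covers all complementary components, so that the final clause of Lemma \ref{extensionlemma} applies, and that simple connectivity of the pieces supplies condition (2) directly. One should also note that Lemma \ref{extensionlemma} is stated for a metric subspace $A\subseteq X$; $\pc(X)$ with the subspace metric qualifies.
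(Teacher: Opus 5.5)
Your proposal is correct and is exactly the paper's argument: the paper presents this lemma as a direct combination of Lemmas \ref{kextensionlemma} and \ref{extensionlemma}, with simple connectivity of the pieces supplying condition (2) and the uniform $1$-$UV_0$ hypothesis on $\pc(X)$ handling the null sequence of boundary loops. One detail to add: when you make $\alpha$ tree-efficient via Lemma \ref{efficientlemma}, base it at a point of $\pc(X)$. A loop missing $\pc(X)$ lies in a single $\bbr$-tree component of $\tree(X)$ and is trivially inessential. Choosing this basepoint ensures tree-efficiency on $\ui$ gives a tree-efficient loop on $S^1$; a basepoint in $\tree(X)$ can merge two components across the basepoint into one on which the loop is not injective.
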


\begin{remark}
Lemmas \ref{extensionlemma} and \ref{kextensionlemma} can also be used to prove the non-trivial direction of the following statement: If $(X,\scrp)$ is a disjointly tree-graded space and $d$ is a compatible path-diameter metric on $X$, then $X$ is uniformly $1$-$UV_0$ if and only if $\pc(X)$ is uniformly $1$-$UV_0$ as a metric subspace. Since we do not use this fact directly, we omit the proof. However, it is worth noting that the hypothesis on $\pc(X)$ in Theorem \ref{mainthm1} may be replaced with the hypothesis that $X$ itself is uniformly $1$-$UV_0$.
\end{remark}

The next statement is a weak version of Corollary \ref{maincorollary} that is required in our proof of Theorem \ref{mainthm1}.

\begin{proposition}\label{pioneinjectiveprop}
Suppose that $(X,\scrp)$ and $(Y,\scrq)$ are disjointly tree-graded spaces where $\pc(X)$ is uniformly $1$-$UV_0$. If $f:X\to Y$ is a grade-preserving injective map such that $f(\tree(X))\subseteq \tree(Y)$ and such that whenever $f(P)\subseteq Q$ for $P\in\scrp$ and $Q\in\scrq$, the restriction $f|_{P}:P\to Q$ is $\pi_1$-injective, then $f$ is $\pi_1$-injective.
\end{proposition}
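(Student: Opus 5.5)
Let $\alpha:S^1\to X$ be a loop with $f\circ\alpha$ inessential in $Y$; we show $\alpha$ is inessential in $X$. By Lemma \ref{efficientlemma} we may replace $\alpha$ by a tree-efficient loop, since the homotopy stays inside $X$. We then apply Lemma \ref{kextensionlemma} to $\alpha$. This gives a compact set $K$ with $S^1\subseteq K\subseteq\bbd$, convex complementary components $O_n$, a map $g:K\to X$ with $g|_{S^1}=\alpha$, and pieces $P_n$ such that $g(\partial O_n)\subseteq P_n$, with $\diam(g(\partial O_n))\to 0$. The plan is to show that each boundary loop $g|_{\partial O_n}$ is inessential in $P_n$. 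Lemma \ref{extensionlemma}, applied with $A=\pc(X)$ (uniformly $1$-$UV_0$ by hypothesis), then extends $g$ over all of $\bbd$, so $\alpha$ is inessential.

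To see that $g|_{\partial O_n}$ is inessential in $P_n$, we compare with the analogous decomposition in $Y$. The composite $f\circ\alpha$ is again a loop, and we would like it to be tree-efficient. Since $f$ is injective and $f(\tree(X))\subseteq\tree(Y)$, for each component $J$ of $\alpha^{-1}(\tree(X))$ the restriction $f\circ\alpha|_{\ov J}$ is injective. However, we must check that $(f\circ\alpha)^{-1}(\tree(Y))=\alpha^{-1}(\tree(X))$, i.e. that $f$ maps $\pc(X)$ into $\pc(Y)$; this holds because $f$ is grade-preserving. So $f\circ\alpha$ is tree-efficient and $f(\tree(X))\subseteq\tree(Y)$.

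The key step avoids constructing a second decomposition for $f\circ\alpha$. Instead we run the ``moreover'' argument of Lemma \ref{kextensionlemma} directly on $f\circ g:K\to Y$, using a null-homotopy $f''$ of $f\circ\alpha$ in $Y$. Recall what that argument used: a parameterization $(T,V,q)$ and a null-homotopy $f_0$ of $q\circ\alpha$ in $T$. We take the parameterization $(T_Y,V_Y,q_Y)$ of $Y$ and $f_0=q_Y\circ f''$. Let $q_X:X\to T_X$ be the quotient parameterization. Then $f$ induces $\bar f:T_X\to T_Y$ with $\bar f\circ q_X=q_Y\circ f$, and $\bar f$ is injective on $T_X\setminus V_X$ because $f$ is injective on $\tree(X)$ and maps it into $\tree(Y)$. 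We build the decompositions $H,H'$ from $f_0$ as maps into $T_Y$, but recover $g$ on $K$ via $X$. This works because $\alpha$ is constant on the relevant fibers: fibers over $T_Y\setminus V_Y$ meeting $\im(f\circ\alpha)$ pull back to single points of $\tree(X)$, by injectivity of $f$ together with $f(\pc(X))\subseteq\pc(Y)$. The remaining conclusions (1)–(4) go through as before, and Claims \ref{clm: 2-sphere separations} and \ref{clm: maximal separate} yield that $f\circ g|_{\partial O_n}$ is inessential in $Y$. Since every piece of $Y$ is a retract of $Y$ (Lemma \ref{retractionlemma}), this loop is then inessential in the piece $Q_n\supseteq f(P_n)$. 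Finally, $f|_{P_n}:P_n\to Q_n$ is $\pi_1$-injective, so $g|_{\partial O_n}$ is inessential in $P_n$.

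The main obstacle is this middle step, namely that Lemma \ref{kextensionlemma} as stated only applies to a single space. Rather than applying it twice and matching two unrelated decompositions, we redo its construction with the tree-map $\bar f\circ q_X$ in place of $q$. Concretely, we verify that the only properties of $q$ that proof uses hold for $\bar f\circ q_X$: constancy of $\alpha$ on the decomposition elements, lightness, and injectivity over the non-$V$ part. This works because $\bar f^{-1}(V_Y)\supseteq V_X$ and $\bar f$ is injective off $V_X$. Some care is needed when $\bar f$ sends points of $T_X\setminus V_X$ into $V_Y$. This cannot happen here, since $f(\tree(X))\subseteq\tree(Y)$, and this is exactly where that hypothesis is used.
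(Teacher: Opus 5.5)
Your proposal is correct and is essentially the paper's argument. The paper applies Lemma \ref{kextensionlemma}, including its ``moreover'' clause, to $f\circ\alpha$ in $Y$ as a black box, then pulls the resulting map back to $X$ through the homeomorphism $f|_{\im(\alpha)}:\im(\alpha)\to\im(f\circ\alpha)$, which works because conclusion (1) gives $\im(g)=\im(f\circ\alpha)$; so you do not need to reopen the lemma's proof, and the $K$ in your first paragraph must be the one built from $q_Y\circ f''$ rather than from an application of the lemma in $X$. The remaining steps match the paper (using $f(\tree(X))\subseteq\tree(Y)$ to place each $g(\partial O_n)$ in a piece of $X$, then $\pi_1$-injectivity on pieces, then Lemma \ref{extensionlemma} with $A=\pc(X)$), and your explicit tree-efficient replacement supplies a step the paper leaves implicit.
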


\begin{proof}
Let $\beta:S^1\to X$ be a map such that $\alpha=f\circ\beta:S^1\to Y$ is inessential. By Lemma \ref{kextensionlemma}, we may find a compact set $S^1\subseteq K\subseteq \bbd$ and map $g:K\to Y$ satisfying the conclusions of the statement. Since $\im(\beta)$ is compact and $f$ is a continuous injection, $f$ maps $\im(\beta)$ homeomorphically onto $\im(\alpha)$. Since $\im(g)=\im(\alpha)$, there exists $h:K\to X$ with $f\circ h=g$. Let $O$ be a connected component of $\bbd\backslash K$. Since $\partial O$ is a simple closed curve in $\bbd$ with $g(\partial O)\subseteq Q$ for some $Q\in\scrq$, the assumption that $f(\tree(X))\subseteq \tree(Y)$ ensures that $h(\partial O)\subseteq P$ for some $P\in\scrp$ with $f(P)\subseteq Q$. Since $g|_{\partial O}=f|_{P}\circ h|_{\partial O}$ is inessential in $Q$ and $f|_{P}:P\to Q$ is assumed to be $\pi_1$-injective, $h|_{\partial O}$ is inessential in $P$ (and thus is essential in $\pc(X)$). We conclude that $h|_{\partial O}$ is inessential for every component $O$ of $\bbd\backslash K$.

Since $\ds\lim_{n\to\infty}\diam(g(\partial O_n))=0$ in the compact metric space $\im(g)$, and $f$ maps $\im(h)$ homeomorphically onto $\im(g)$, it follows that $\ds\lim_{n\to\infty}\diam(h(\partial O_n))=0$ in $\im(h)\cap \pc(X)$. Since $\pc(X)$ is uniformly $1$-$UV_0$ it follows from Lemma \ref{extensionlemma} that there exists a map $H:\bbd\to X$ such that $H|_{K}=h$. Since $H|_{S^1}=\beta$, we conclude that $\beta$ is inessential.
\end{proof}

\begin{example}\label{bijectionexample}
Let $X$ be a subspace of $\bbr^3$ where a circle of fixed diameter is attached to $\ui$ at each point in the sequence $1,1/2,1/3,1/4,\dots$. Let $Y$ be similarly constructed but where the circles have null diameters (see Figure \ref{fig3}). Each of these spaces is disjointly tree-graded with pieces $\{0\}$ and the attached circles. However $Y$ is compact and $Y$ is not compact. The canonical continuous, grade-preserving, bijection $f:X\to Y$ is $\pi_1$-injective by Proposition \ref{pioneinjectiveprop} but is not an isomorphism since $\pi_1(X)$ is countable and $\pi_1(Y)$ is uncountable.
\end{example}

\begin{figure}[H]
\centering \includegraphics[height=1.1in]{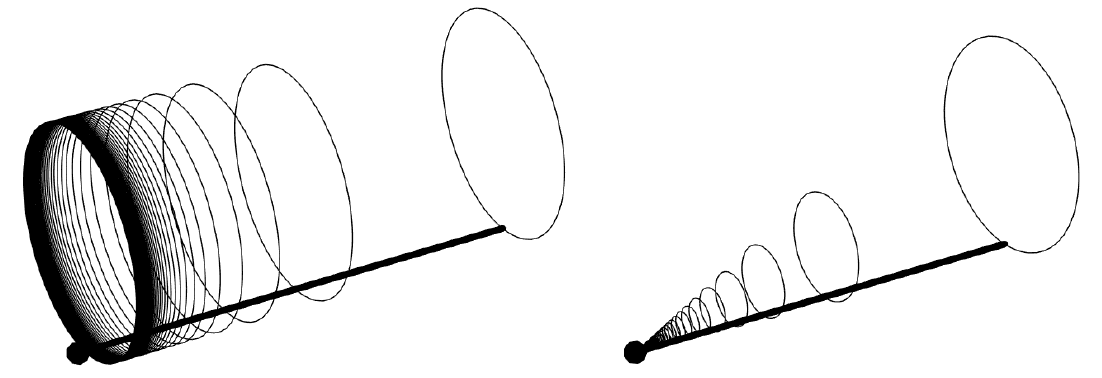}
\caption{\label{fig3} Disjointly tree-graded spaces $X$ (left) and $Y$ (right) with a canonical continuous, grade-preserving, bijection $f:X\to Y$.}
\end{figure}

\section{Generalized covering maps}\label{sectiongencovmaps}

An important tool in our proof is the generalized covering space theory of Fischer-Zastrow \cite{FZ07}, which uses the usual lifting property of covering maps as a definition.

\begin{definition}
A map $p:E\to X$ is a \textit{generalized covering map} if
\begin{enumerate}
\item $E$ is path-connected and locally path-connected,
\item for every path-connected, locally path-connected space $Y$, based map $f:(Y,y)\to (X,x)$, and point $e\in p^{-1}(x)$ such that $f_{\#}(\pi_1(Y,y))\leq p_{\#}(\pi_1(E,e))$, there exists a unique map $\wt{f}:(Y,y)\to (E,e)$ such that $p\circ \wt{f}=f$.
\end{enumerate}
If, in addition, $p_{\#}(\pi_1(E,e_0))$ is a normal subgroup of $\pi_1(X,p(e_0))$ for some $e_0\in E$, then we refer to $p$ as a \textit{generalized regular covering map}. If $E$ is simply connected and $X$ is path-connected, then we refer to $p$ as a \textit{generalized universal covering map} and $E$ as a \textit{generalized universal covering space} of $X$.
\end{definition}

\begin{remark}\label{fibersremark}
Note that every generalized covering map has the unique path lifting property and therefore all of it's fibers are $T_1$.
\end{remark}

\begin{remark}
Formally, we do not require that a generalized covering map be surjective since the definition permits $X$ to have multiple path-components. However, a generalized covering map $p:E\to X$ maps surjectively onto a path-component of $X$.
\end{remark}

\begin{remark}[A basis for the topology of $E$]
If $p:E\to X$ is a generalized covering map, the topology of $E$ can be identified with the so-called ``whisker topology" \cite[Lemma 5.10]{Brazcat}. Let $e\in E$ and $U$ be an open neighborhood of $p(e)$ in $X$. Then a basic neighborhood of $e$ in $E$ is given by \[N(e,U)=\{\wt{\gamma}(1)\mid \gamma:\ui\to U\text{ s.t. }\gamma(0)=p(e)\text{ and } \wt{\gamma}(0)=e\}\]
where $\wt{\gamma}$ denotes the unique lift of $\gamma$ starting at $e$. It is straightforward to check that $N(e,U)$ is path-connected.
\end{remark}

\begin{example}[Locally path-connected coreflections]\label{lpccoreflectionexample}
An important example of a generalized covering map is one which is bijective but need to be a homeomorphism. For any space $X$, let $\lpc(X)$ be the space with the same underlying set as $X$ but with the topology generated by the path-components of the open sets in $X$. Sometimes $\lpc(X)$ is referred to as the \textit{locally path-connected coreflection of} $X$ or the \textit{universal lpc-space of} $X$ \cite{BDLM08}. It is well-known that $\lpc(X)$ is locally path-connected, the identity $id:\lpc(X)\to X$ is continuous and has the property that if $f:Z\to X$ is a map from a locally path-connected space $Z$, then $f:Z\to \lpc(X)$ is continuous. Note that $\lpc(X)=X$ if and only if $X$ is locally path-connected. It follows that whenever $X$ is path-connected, the identity function $id:\lpc(X)\to X$ is a generalized covering map.
\end{example}

\begin{remark}[Discreteness of fibers]\label{evenlycoveredtreesremark}
If $p:E\to X$ is a generalized covering map, then $p$ is a Serre fibration with unique path lifting. However, the fibers of $p$ need not be homeomorphic to each other. In fact, some fibers will may be discrete while others are non-discrete. A fiber $p^{-1}(x)$ is discrete if and only if for every $e\in p^{-1}(x)$, there exists an open neighborhood $U_e$ of $x$ in $X$ such that if $i:U_e\to X$ is the inclusion, then $i_{\#}(\pi_1(U_e,x))\leq p_{\#}(\pi_1(E,e))$. Moreover, if there exists a path-connected and locally path-connected open neighborhood $U$ of $x$ such that $i_{\#}(\pi_1(U,x))\leq p_{\#}(\pi_1(E,e))$ for all $e\in p^{-1}(x)$, then $U$ is evenly covered by $p$ (it is straightforward to check that $N(e,U)\cap N(e',U)=\emptyset$ when $e\neq e'$ and $p$ maps $N(e,U)$ homeomorphically onto $U$). Hence, if $X$ is a locally path-connected space and $X$ is semilocally simply connected at $x$, then $x$ admits a neighborhood $U$ that is evenly covered by $p$. For example, if $(X,\scrp)$ is a disjointly tree-graded and $p:E\to X$ is a generalized covering map, then each connected component $U$ of $\tree(X)$ is evenly covered by $p$.
\end{remark}

\begin{remark}[pullbacks of generalized covering maps]\label{pullbackremark}
If $p:(E,e_0)\to (X,x_0)$ is a based generalized covering map with $H=p_{\#}(\pi_1(E,e_0))$ and $f:(Y,y_0)\to (X,x_0)$ is a based map from a path-connected space $Y$, then there is a pullback generalized covering $q:(E',e_0')\to (Y,y_0)$ with $q_{\#}(\pi_1(E',e_0'))=f_{\#}^{-1}(H)$ \cite[Lemma 2.34]{Brazcat}. We briefly recall the construction and some relevant facts. Let $Y\times_{X}E=\{(y,e)\in Y\times E\mid p(e)=f(y)\}$ be the ordinary pullback and $P$ be the path-component of $(y_0,e_0)$ in $Y\times_{X}E$. We set $E'=\lpc(P)$ and $e_0'=(y_0,e_0)$ and we let $q:E'\to Y$ and $g:E'\to E$ be the restrictions of the projection maps so the following diagram of continuous functions commutes (note that $g:E'\to Y$ is continuous since the identity function $id:E'\to P$ is continuous). \[\xymatrix{
(E',e_0') \ar[d]_-{q} \ar[r]^-{id} & (P,(y_0,e_0)) \ar[dl]^-{q} \ar[r]^-{g} & (E,e_0) \ar[d]^-{p}\\
(Y,y_0)\ar[rr]_-{f} && (X,x_0)
}\]
Using the lifting property of $p$ and the universal properties of pullbacks and locally path-connected coreflections, it is straightforward to check that $q_{\#}(\pi_1(E',e_0'))=f_{\#}^{-1}(H)$ and that $q$ has the following lifting property: if $(Z,z_0)$ is a path-connected and locally-path-connected based space and $h:(Z,z_0)\to (Y,y_0)$ and $k:(Z,z_0)\to (E,e_0)$ are maps such that $p\circ k=f\circ h$, then there is a unique based map $\wt{h}:(Z,z_0)\to (E',e_0')$ such that $q\circ \wt{h}=h$ and $g\circ \wt{h}=k$.
\[\xymatrix{
& E' \ar[r]_-{g} \ar[d]^-{q} & E \ar[d]^-{p}\\
Z \ar@/^2.7pc/[urr]^-{k} \ar@{-->}[ur]^-{\wt{h}} \ar[r]_-{h} & Y \ar[r]_-{f} & X
}\]
We are particularly interested in the following application of such pullbacks: If $p$ is a generalized universal covering map and $f$ is $\pi_1$-injective, then $q$ is also a generalized universal covering map. Moreover, if $f$ is an injective map, then the projection map $Y\times_{X}E\to E$ is injective and thus $g:E'\to E$ is injective as well. Note that if $f$ is bijective, then the full projection map $Y\times_{X}E\to E$ is bijective; however, the restricted projection $g:E'\to E$ need not be surjective.
\end{remark}

\begin{lemma}\label{restrictguclemma}
Let $X$ be path-connected and $p:\tX\to X$ be a based generalized universal covering map and $A$ be a retract of $X$. If $\wt{A}$ is a path-component of $p^{-1}(A)$, then $p|_{\wt{A}}:\wt{A}\to A$ is a generalized universal covering map.
\end{lemma}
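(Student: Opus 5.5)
The plan is to verify the two defining properties of a generalized universal covering map for $p|_{\wt{A}}:\wt{A}\to A$: that $\wt{A}$ is simply connected and path-connected, locally path-connected, and that the lifting criterion holds.

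\emph{Step 1: lifting.} Let $r:X\to A$ be the retraction. Take a path-connected, locally path-connected based space $(Y,y)$, a map $f:(Y,y)\to(A,a)$, and a point $e\in\wt{A}\cap p^{-1}(a)$. Composing with the inclusion $i:A\to X$ and using that $\tX$ is simply connected is not enough, because we need $(i\circ f)_{\#}(\pi_1(Y,y))\leq p_{\#}(\pi_1(\tX,e))=1$. That would force $f_{\#}$ to be trivial, which fails in general. So the correct tool is the pullback of Remark~\ref{pullbackremark} along the inclusion $i:A\to X$. Since $r\circ i=\mathrm{id}_A$, the inclusion $i$ is $\pi_1$-injective, and Remark~\ref{pullbackremark} then gives a generalized universal covering $q:E'\to A$ together with a map $g:E'\to\tX$. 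Because $i$ is injective, $g$ is injective.

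\emph{Step 2: identify $E'$ with $\wt{A}$.} The pullback $A\times_X\tX$ is canonically homeomorphic to $p^{-1}(A)$ via the second projection. Its path component $P$ through $(a,e)$ therefore corresponds to the path component $\wt{A}$ of $p^{-1}(A)$ containing $e$. The remark's $E'$ is $\lpc(P)$, so we obtain a continuous bijection $\lpc(\wt{A})\to\wt{A}$ under which $q$ corresponds to $p|_{\wt{A}}$. It remains to show that $\wt{A}$ is already locally path-connected, so that $\lpc(\wt{A})=\wt{A}$.

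\emph{Step 3: local path-connectivity, the main obstacle.} The idea is to use the retraction to build a lift $\wt{r}:\tX\to\tX$ of $r\circ p$. This lift exists because $\tX$ is simply connected and locally path-connected, so the lifting criterion is automatic. Choose $\wt{r}(e)=e$ for a fixed base point $e\in\wt{A}$. Then $\wt{r}$ maps into $p^{-1}(A)$, and by path-connectedness of $\tX$ it lands in $\wt{A}$. On $\wt{A}$, $\wt{r}$ restricts to a lift of $p|_{\wt{A}}$ that fixes $e$. By uniqueness of path lifts, $\wt{r}$ is the identity on $\wt{A}$: for $x\in\wt{A}$, take a path in $\wt{A}$ from $e$ to $x$; both it and its image under $\wt{r}$ are lifts of the same path starting at $e$. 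Hence $\wt{A}$ is a retract of the locally path-connected space $\tX$. Retracts of locally path-connected spaces are locally path-connected: if $U$ is a path-connected neighborhood in $\tX$ of $x\in\wt{A}$, then $\wt{r}(U)$ is a path-connected subset of $\wt{A}$ containing $U\cap\wt{A}$, and such sets form a neighborhood base. Therefore $\lpc(\wt{A})=\wt{A}$.

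\emph{Step 4: conclusion.} With Step 3, $E'\cong\wt{A}$ and $q=p|_{\wt{A}}$, so $p|_{\wt{A}}$ is a generalized covering map. Simple connectivity of $\wt{A}$ also follows directly: $\wt{r}$ retracts the simply connected space $\tX$ onto $\wt{A}$. This also makes the appeal to the pullback's $\pi_1$ computation unnecessary. The delicate points are the continuity and well-definedness of the lift $\wt{r}$ and the claim that it fixes $\wt{A}$ pointwise; Step 3 is where I expect to need the most care.
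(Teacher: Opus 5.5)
Your argument is correct and is a close variant of the paper's. The paper also pulls $p$ back along the $\pi_1$-injective inclusion $i:A\to X$ and lifts $r\circ p$ using simple connectivity of $\tX$. The difference is that it lifts into the pullback $E$ rather than into $\tX$. Uniqueness of lifts then gives $\wt{r}\circ\wt{i}=\mathrm{id}_E$, so $\wt{i}:E\to\tX$ is an embedding onto $\wt{A}$. Local path-connectedness of $\wt{A}$ is thus inherited from $E$, without analyzing the topology of the pullback or of $\lpc(P)$. Your version instead produces an honest retraction $\tX\to\wt{A}$ and uses it to show that the $\lpc$ step is vacuous.

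One correction: your reason for rejecting the direct approach in Step~1 is wrong, and given your Step~3 the pullback is superfluous. The lifting criterion for $p|_{\wt{A}}$ only involves maps $f:(Y,y)\to(A,a)$ with $f_{\#}(\pi_1(Y,y))\leq (p|_{\wt{A}})_{\#}(\pi_1(\wt{A},e))$. This subgroup is trivial, either because $\wt{A}$ is simply connected or because $i_{\#}\circ(p|_{\wt{A}})_{\#}$ factors through $\pi_1(\tX,e)=1$ and $i_{\#}$ is injective. For such $f$, the map $i\circ f$ lifts through $p$ to $(\tX,e)$. The lift has path-connected image in $p^{-1}(A)$ containing $e$, so it lies in $\wt{A}$, and uniqueness is inherited from $p$. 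Combined with Step~3, which shows $\wt{A}$ is locally path-connected and simply connected, this is a shorter, self-contained proof. It avoids Remark~\ref{pullbackremark} and the result it cites altogether.
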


\begin{proof}
Fix $x_0\in A$ and since $A$ is path-connected and $\wt{A}$ is a path-component of $p^{-1}(A)$, we may use path lifting to to choose $\wt{x}_0\in \wt{A}\cap p^{-1}(x_0)$. Let $i:A\to X$ be the inclusion and $r:X\to A$ be a retraction satisfying $r\circ i=id_{A}$. Since $i$ is $\pi_1$-injective, Remark \ref{pullbackremark} implies that the pullback based generalized covering map $q:(E,e_0)\to (A,x_0)$ of $p:(\tX,\wt{x}_0)\to (X,x_0)$ along $i:(A,x_0)\to (X,x_0)$ is a generalized universal covering map. Let $\wt{i}:E\to \tX$ be the other projection map so that $p\circ \wt{i}=i\circ q$. Since $\tX$ is simply connected, the map $r\circ p:(\tX,\wt{x}_0)\to (A,x_0)$ lifts to a map $\wt{r}:(\tX,\wt{x}_0)\to (E,e_0)$. Since $r\circ i=id_{A}$, uniqueness of lifts ensures that $\wt{r}\circ \wt{i}=id_{E}$. Thus $\wt{i}$ is a embedding. 
\[\xymatrix{
E  \ar[d]^{q} \ar[r]^-{\wt{i}} & \wt{X}   \ar[d]^{p} \ar@{-->}[r]^-{\wt{r}} & E \ar[d]^{q} \\
A \ar[r]_-{i} & X \ar[r]_-{r} & A
}\]
Lastly, we note that $\wt{i}(E)=\wt{A}$ follows from the assumption that $\wt{A}$ is a path-component of $p^{-1}(A)$ and our choice of basepoint $\wt{x}_0\in \wt{A}$. 
\end{proof}

When $(X,d)$ is a metric space and $p:E\to X$ is a generalized covering map, the metric on $X$ lifts naturally to a metric on $E$ as follows: Given $a,b\in E$, let $\mcp(E,a,b)$ denote the set of paths in $E$ from $a$ to $b$. Define \[\wt{d}(a,b)=\inf\{\diam_{d}(\im(p\circ\wt{\alpha}))\mid \wt{\alpha}\in\mcp(E,a,b) \}.\]

\begin{lemma}\label{liftedmetriclemma}
If $p:E\to X$ is a generalized covering map and $d$ is a compatible metric on $X$, then $\wtd$ is a compatible path-diameter metric on $E$ such that $p:(E,\wtd)\to (X,d)$ is non-expansive. Moreover, if $Y$ is a path-connected space, $\wt{f}:Y\to E$ is a map, and $f=p\circ\wt{f}$, then $\diam(\im(f))=\diam(\im(\wt{f}))$.
\end{lemma}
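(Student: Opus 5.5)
We must show that $\wtd$ is a metric, that it is compatible with the topology of $E$, that it is a path-diameter metric, and that $p$ is non-expansive; then we prove the diameter statement. The plan is to handle the metric axioms and non-expansiveness by direct estimates, to deduce compatibility from the whisker-topology basis $N(e,U)$, and to obtain the diameter statement from the definition of $\wtd$ plus unique path lifting.

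\textbf{Metric axioms and non-expansiveness.} Symmetry and the triangle inequality are routine: reverse paths, and note that the image of $p$ composed with a concatenation is the union of the two images, whose diameter is at most the sum of the two diameters because the images share the point over the junction. Since $d(p(a),p(b))\leq\diam_d(\im(p\circ\wt{\alpha}))$ for every $\wt{\alpha}\in\mcp(E,a,b)$, we get $d(p(a),p(b))\leq\wtd(a,b)$, so $p$ is non-expansive. For positive definiteness, take $a\neq b$. If $p(a)\neq p(b)$, this follows from non-expansiveness. If $p(a)=p(b)=x$, we use that fibers are $T_1$ (Remark \ref{fibersremark}) and that the sets $N(a,U)$ form a basis. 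Choose an open $U\ni x$ with $b\notin N(a,U)$, and choose $\epsilon>0$ with $B_\epsilon(x)\subseteq U$. Any path from $a$ to $b$ whose projection has diameter less than $\epsilon$ projects into $B_\epsilon(x)\subseteq U$, so its endpoint $b$ would lie in $N(a,U)$, a contradiction. Hence $\wtd(a,b)\geq\epsilon$.

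\textbf{Compatibility.} Let $\wt{B}_\epsilon(a)$ denote the open $\wtd$-ball. First we show $N(a,B_{\epsilon}(p(a)))\subseteq\wt{B}_{2\epsilon}(a)$: a lift of a path in $B_\epsilon(p(a))$ has projection of diameter at most $2\epsilon$ (strictly less on the compact image). Conversely, $\wt{B}_\epsilon(a)\subseteq N(a,B_\epsilon(p(a)))$ by the same argument used for positivity. Since the sets $N(a,U)$ with $U$ ranging over balls form a neighborhood basis at $a$, the two topologies coincide.

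\textbf{Path-diameter property and the diameter statement.} The key step is the claim $\diam_{\wtd}(\im(\wt{f}))=\diam_d(\im(p\circ\wt{f}))$ for any map $\wt{f}$ from a path-connected $Y$. The inequality $\geq$ follows from non-expansiveness. For $\leq$, take $\wt{f}(y_1),\wt{f}(y_2)$ and a path $\gamma$ in $Y$ from $y_1$ to $y_2$. Then $\wt{f}\circ\gamma\in\mcp(E,\wt{f}(y_1),\wt{f}(y_2))$ has projection inside $\im(f)$, so $\wtd(\wt{f}(y_1),\wt{f}(y_2))\leq\diam(\im(f))$. Applying the claim with $Y=\ui$ gives $\diam_{\wtd}(\im(\wt{\alpha}))=\diam_d(\im(p\circ\wt{\alpha}))$ for every path $\wt{\alpha}$, so the defining infimum of $\wtd$ is exactly the path-diameter infimum. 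Thus $\wtd$ is a path-diameter metric, and the claim is also the final statement of the lemma.

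The main obstacle is the compatibility step, specifically the identification of $\wtd$-balls with whisker neighborhoods. This rests on the cited fact that the topology of $E$ is the whisker topology, and I would invoke it directly.
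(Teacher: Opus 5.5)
Your proof is correct, and apart from one step it follows the paper's outline. The shared parts are these:
\begin{itemize}
\item the same reversal/concatenation argument for symmetry and the triangle inequality;
\item the same estimate $d(p(a),p(b))\le\diam_d(\im(p\circ\wt{\alpha}))$ for non-expansiveness;
\item a comparison of $\wtd$-balls with whisker neighborhoods for compatibility. The paper uses $N(e,B_{\delta/2}(p(e)))\subseteq B_\delta(e)\subseteq N(e,B_{2\delta}(p(e)))$; your inclusion $\wt{B}_\epsilon(a)\subseteq N(a,B_\epsilon(p(a)))$ is slightly sharper;
\item the same path-in-$Y$ argument for the diameter identity, specialized to $Y=\ui$ to get the path-diameter property.
\end{itemize}

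The genuine difference is positive definiteness when $p(a)=p(b)=x$.
\begin{itemize}
\item The paper argues by contradiction. It takes paths $\wt{\alpha}_n$ from $a$ to $b$ whose projections have diameters tending to $0$, and glues the loops $p\circ\wt{\alpha}_n$ into a map on the fan $H=\bigcup_n L_n$ of segments shrinking to the origin. Since $H$ is a dendrite, this map lifts by the defining lifting property. The lift sends the points $h_n\to(0,0)$ to $b$ but sends $(0,0)$ to $a$, contradicting that fibers are $T_1$.
\item You instead turn the $T_1$ property directly into a whisker neighborhood $N(a,U)$ that misses $b$. Unique path lifting then gives the explicit bound $\wtd(a,b)\ge\epsilon$ whenever $B_\epsilon(x)\subseteq U$.
\end{itemize}

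Your version is shorter. A single observation handles both positivity and compatibility: a path from $a$ whose projection has diameter $<\epsilon$ ends in $N(a,B_\epsilon(p(a)))$. The paper's version has the merit that positivity depends only on the lifting property in the definition plus $T_1$ fibers. It does not need the identification of the topology of $E$ with the whisker topology, although the paper invokes that identification for compatibility anyway.
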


\begin{proof}
Since $\mcp(E,b,a)=\{\wt{\alpha}^{-1}\mid \wt{\alpha}\in \mcp(E,a,b)\}$, we have $\wtd(a,b)=\wtd(b,a)$. If $\wt{\alpha}\in \mcp(E,a,b)$ and $\wt{\beta}\in \mcp(E,b,c)$, then $\wt{\alpha}\cdot\wt{\beta}\in \mcp(E,a,c)$ where $\diam(\im(p\circ(\wt{\alpha}\cdot\wt{\beta})))\leq \diam(\im(p\circ\wt{\alpha}))+\diam(\im(p\circ\wt{\beta}))$. The triangle inequality follows. If $a=b$, then the constant path gives $\wtd(a,b)=0$. Suppose $a\neq b$ in $E$. If $p(a)\neq p(b)$, then any path between the points has diameter at least $d(p(a),p(b))$ and thus $\wtd(a,b)>0$. Suppose $p(a)=p(b)=x$ and, to obtain a contradiction, that $\wt{d}(a,b)=0$. Then there exists a sequence of paths $\wt{\alpha_n}:\ui\to E$, $n\in\bbn$ from $a$ to $b$ such that $\alpha_n=p\circ\wt{\alpha}_n$ is a loop based at $x$ and $\{\diam(\im(\alpha_n))\}_{n\in\bbn}\to 0$. Let $L_n\subseteq \bbr^2$ be the line segment with endpoints $(0,0)$ and $h_n=(1/n,1/n^2)$ and set $H=\bigcup_{n\in\bbn}L_n$. Define $f:H\to X$ to be the path $\alpha_n$ on $L_n$. Since the diameters of the paths $\alpha_n$ are null, $f$ is continuous. Moreover, since $H$ is a dendrite, there is a unique lift $\wt{f}:(H,(0,0))\to (E,a)$, which is defined to be $\wt{\alpha_n}$ on $L_n$. But $\{h_n\}\to (0,0)$ in $H$, $\wt{f}(0,0)=a$, and $\wt{f}(h_n)=b$ for all $n\in\bbn$. This contradicts the fact from Remark \ref{fibersremark} that $p^{-1}(x)$ is $T_1$. Thus $\wtd(a,b)>0$ and we conclude that $\wtd$ is a metric.

To see that $\wtd$ induces the topology of $E$, notice that for every $e\in E$ and $\delta>0$, we have $N(e,B_{\delta/2}(p(e)))\subseteq B_{\delta}(e)\subseteq N(e,B_{2\delta}(p(e)))$ where the middle set is the open ball in the metric $\wt{d}$.

To see that $p:(\tX,\wtd)\to (X,d)$ is non-expansive, note that $d(p(a),p(b))\leq \diam(\im(p\circ\wt{\alpha}))$ for all $\wt{\alpha}\in \mcp(E,a,b)$. Hence, $d(p(a),p(b))\leq \wtd(a,b)$.

For the last statement, we have $\diam(\im(f))\leq\diam(\im(\wt{f}))$ since $p$ is non-expansive. Let $y_0,y_1\in Y$ and find a path $\gamma:\ui\to Y$ from $y_0$ to $y_1$. Then $\wt{f}\circ \gamma\in \mcp(E,\wt{f}(y_0),\wt{f}(y_1))$ and $\diam(f\circ\gamma)\leq \diam(\im(f))$. Thus $\wtd(\wt{f}(y_0),\wt{f}(y_1))\leq \diam(\im(f))$, proving that $\diam(\im(\wt{f}))\leq\diam(\im(f))$.

Finally, we check that $\wtd$ is a path-diameter metric. Given $a,b\in E$ and $\wt{\alpha}\in \mcp(E,a,b)$, we have $\diam(\im(\wt{\alpha}))=\diam(\im(p\circ\wt{\alpha}))$ by the previous paragraph. Thus\[\wtd(a,b)=\inf\{\diam(\im(p\circ\wt{\alpha}))\mid \wt{\alpha}\in \mcp(E,a,b)\}=\inf\{\diam(\im(\wt{\alpha}))\mid \wt{\alpha}\in \mcp(E,a,b)\}.\]
\end{proof}

We refer to the metric $\wt{d}$ on $E$ as the \textit{lifted metric}. Shifting our attention to the existence of generalized covering spaces, we note that the $1$-$UV_0$ property provides a useful sufficient condition.

\begin{theorem}\cite[Theorem 6.9]{BFTestMap}\label{uvzeroimpliesgenunicovtheorem}
If $X$ is a $1$-$UV_0$ metrizable space, then there exists a generalized universal covering map $p:\tX\to X$.
\end{theorem}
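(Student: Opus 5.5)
Since the statement is quoted from \cite[Theorem 6.9]{BFTestMap} and will not be re-proved in full here, what follows is the route I would take to establish it; the limit step in the third paragraph is the one I have not settled. I would use the Fischer--Zastrow construction \cite{FZ07}. Fix $x_0\in X$ and let $\tX$ be the set of path-homotopy classes $[\gamma]$ of paths starting at $x_0$. Give $\tX$ the whisker topology, with basic sets $N([\gamma],U)=\{[\gamma\cdot\delta]\mid \delta:\ui\to U,\ \delta(0)=\gamma(1)\}$, and let $p([\gamma])=\gamma(1)$. By \cite{FZ07}, $\tX$ is path-connected, locally path-connected and simply connected, and $p$ satisfies the lifting criterion, provided one property holds: \emph{lifts of paths are unique}. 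Equivalently, the only lift of a path $\alpha$ starting at $[c_{x_0}]$ is the standard lift $t\mapsto[\alpha|_{[0,t]}]$. So the whole proof reduces to this uniqueness of path lifts.

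By the known equivalences it suffices to show that $X$ is homotopically path-Hausdorff. Concretely, suppose $\wt{\alpha}$ is a lift of $\alpha$ starting at $[c_{x_0}]$ with $\wt{\alpha}(1)=[\beta]\neq[\alpha]$. I would argue by contradiction. Fix a metric and apply the $1$-$UV_0$ property pointwise along the compact set $\im(\alpha)$. Using Lemma~\ref{uvsequencelemma}, this gives, for each scale $1/n$, a Lebesgue-number-type radius $\delta_n$ with the following property: loops of diameter less than $\delta_n$ near $\im(\alpha)$ that are inessential in $X$ bound disks of diameter less than $1/n$. Continuity of $\wt{\alpha}$ in the whisker topology then yields nested partitions $0=t_0^n<\dots<t_{k_n}^n=1$ and paths $\gamma_n\simeq\beta$ rel endpoints. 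Each $\gamma_n$ agrees with $\alpha$ at the partition points, and each segment $\gamma_n|_{[t_{i-1}^n,t_i^n]}$ lies in a $\delta_n$-ball about $\alpha(t_{i-1}^n)$.

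The main obstacle is turning these data into a path-homotopy from $\alpha$ to $\beta$. The difficulty is that the small loops $\alpha|_{[t_{i-1},t_i]}\cdot\gamma_n|_{[t_{i-1},t_i]}^{-1}$ are small but need not be inessential individually, while the global homotopies $\gamma_n\simeq\gamma_{n+1}$ are inessential but need not be small.

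\emph{Proposed approach, not yet checked.} I would choose the nested partitions so that the lift is forced stage by stage. Along $[t_{i-1}^n,t_i^n]$, the value of $\wt{\alpha}$ at a partition point of stage $n+1$ is represented both by $\gamma_n$ and by $\gamma_{n+1}$. This should make each small loop $\gamma_n|_{[a,b]}\cdot\gamma_{n+1}|_{[a,b]}^{-1}$ on a subinterval of stage $n+1$ null-homotopic, because both paths represent the same point of the fiber over $\alpha(b)$, provided the partitions are fine enough. At the moment this is an assumption and still needs justification. If it holds, the $1$-$UV_0$ choice of $\delta_n$ contracts each such loop by a disk of diameter less than $1/n$. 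I would then stack these disks in $\ui\times[1-2^{-n},1-2^{-n-1}]$, following the same shrinking-disk gluing argument as in Lemma~\ref{extensionlemma}. The resulting map on $\ui\times[0,1)$ extends continuously by $\alpha$ on $\ui\times\{1\}$, since the pieces shrink to points of $\im(\alpha)$. This gives $\beta\simeq\gamma_1\simeq\alpha$ rel endpoints, contradicting $[\beta]\neq[\alpha]$.
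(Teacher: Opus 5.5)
The paper gives no proof of this statement; it is quoted from \cite{BFTestMap}, where it comes out of that paper's test-map framework for unique path lifting. Your plan is a viable direct route: reduce to unique path lifting for the Fischer--Zastrow space, compare consecutive stages of approximating paths, fill the resulting small inessential loops using $1$-$UV_0$, and stack the fillings. As written, however, it has a genuine gap at exactly the step you flagged, and that step is misformulated. Take a stage-$(n+1)$ subinterval $[a,b]$ that is not a stage-$n$ interval. Then $\gamma_n(a)$ and $\gamma_n(b)$ need not equal $\alpha(a)$ and $\alpha(b)$, so $\gamma_n|_{[a,b]}\cdot\gamma_{n+1}|_{[a,b]}^{-1}$ is not a loop, and ``both represent the same point of the fiber'' has no content. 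Moreover, the data you record about $\gamma_n$ cannot give the claim under any fineness assumption. Knowing $\gamma_n\simeq\beta$, $\gamma_n(t^n_i)=\alpha(t^n_i)$ and that the segments are small does not determine the classes of the individual segments. For instance, you can modify $\gamma_n$ by a small loop $\ell$ at one partition point and by a conjugate of $\ell^{-1}$ at the next without changing any of these properties.

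What is missing is the invariant $\wt{\alpha}(t^n_i)=[\gamma_n|_{[0,t^n_i]}]$ at \emph{every} partition point, and you can build $\gamma_n$ so that it holds. Cover $\ui$ by open intervals $W_\tau$ with $\wt{\alpha}(W_\tau)\subseteq N(\wt{\alpha}(\tau),B_{\epsilon_n}(\alpha(\tau)))$. Take the stage-$n$ partition to refine stage $n-1$ and to be subordinate to this cover. For $[t_{i-1},t_i]\subseteq W_\tau$, write $\wt{\alpha}(\tau)=[\beta_\tau]$, $\wt{\alpha}(t_{i-1})=[\beta_\tau\cdot\delta']$ and $\wt{\alpha}(t_i)=[\beta_\tau\cdot\delta'']$ with $\delta',\delta''$ in $B_{\epsilon_n}(\alpha(\tau))$, and set $\gamma_n|_{[t_{i-1},t_i]}=\delta'^{-1}\cdot\delta''$. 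Induction then gives the invariant. Now take a stage-$n$ interval $[a,b]$; by nesting, its endpoints are also stage-$(n+1)$ points. Cancellation in the fundamental groupoid gives
\[[\gamma_n|_{[a,b]}]=\wt{\alpha}(a)^{-1}\wt{\alpha}(b)=[\gamma_{n+1}|_{[a,b]}],\]
so $\gamma_n|_{[a,b]}\cdot\gamma_{n+1}|_{[a,b]}^{-1}$ is inessential. This loop passes through $\alpha(a)\in\im(\alpha)$ and lies in $B_{\epsilon_n+2\epsilon_{n+1}}(\alpha(\tau))$, so it has diameter $<6\epsilon_n$ if the $\epsilon_n$ decrease. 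Choose $\epsilon_n$ by the uniform form of $1$-$UV_0$ near the compact set $\im(\alpha)$, so that such loops bound disks of diameter $<1/n$. These disks assemble into path-homotopies $\gamma_n\simeq\gamma_{n+1}$ that fix the stage-$n$ partition points. Since also $\diam(\alpha([t^n_{i-1},t^n_i]))\le 2\epsilon_n$, your stacked map extends continuously by $\alpha$ on $\ui\times\{1\}$, and $[\beta]=[\gamma_1]=[\alpha]$. With these two corrections — working on stage-$n$ intervals and carrying the partial-concatenation invariant — your argument is complete, and no additional fineness hypothesis is needed.
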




The next two statements ensure that the (uniformly) $1$-$UV_0$ property always lifts to a generalized covering space. The first is straightforward to prove using Lemma \ref{uvsequencelemma} and the second follows easily from the last statement of Lemma \ref{liftedmetriclemma}.

\begin{lemma}
Suppose $(X,d)$ is a metrizable space and $p:E\to X$ is a generalized covering map. If $X$ is $1$-$UV_0$, then $E$ is $1$-$UV_0$. Moreover, the converse holds if $X$ is locally path-connected.
\end{lemma}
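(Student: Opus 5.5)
The plan is to use the sequential characterization of Lemma \ref{uvsequencelemma} together with unique path lifting and the basic neighborhoods $N(e,U)$ of $E$.

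\emph{Forward direction.} Assume $X$ is $1$-$UV_0$. Let $\{\wt{\alpha}_k\}$ be a sequence of loops in $E$ that are inessential in $E$ and converge to a point $e\in E$; since $E$ is locally path-connected we may take them based at $e$. Put $\alpha_k=p\circ\wt{\alpha}_k$ and $x=p(e)$. Since $p$ is continuous, $\alpha_k\to x$, and each $\alpha_k$ is inessential in $X$. Lemma \ref{uvsequencelemma} then gives maps $g_k:\bbd\to X$ extending $\alpha_k$ with $g_k\to x$. Because $\bbd$ is simply connected and locally path-connected, each $g_k$ lifts to $\wt{g}_k:\bbd\to E$ sending a chosen boundary point to $e$. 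By uniqueness of lifts, $\wt{g}_k|_{S^1}=\wt{\alpha}_k$.

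It remains to show $\wt{g}_k\to e$. Take a basic neighborhood $N(e,U)$ with $U$ a neighborhood of $x$. For large $k$ we have $\im(g_k)\subseteq U$. Every point of $\wt{g}_k(\bbd)$ is then the endpoint of the lift, starting at $e$, of a path in $U$ based at $x$: take a radial path in $\bbd$ from the basepoint and push it forward by $g_k$. Hence $\im(\wt{g}_k)\subseteq N(e,U)$, and Lemma \ref{uvsequencelemma} shows $E$ is $1$-$UV_0$.

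\emph{Converse.} Assume $E$ is $1$-$UV_0$ and $X$ is locally path-connected. Let $\alpha_k\to x$ be inessential loops in $X$ based at $x$, and fix $e\in p^{-1}(x)$. Each $\alpha_k$ lifts to a loop $\wt{\alpha}_k$ at $e$, since $\alpha_k$ is inessential and hence its class lies in $p_{\#}(\pi_1(E,e))$. Projecting a lifted null-homotopy by $p$ shows $\wt{\alpha}_k$ is inessential in $E$.

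The main obstacle is showing $\wt{\alpha}_k\to e$. Given a neighborhood $N(e,U)$ with $U$ path-connected (possible by local path-connectedness), eventually $\im(\alpha_k)\subseteq U$. Every point of $\wt{\alpha}_k$ is the endpoint of the lift of an initial segment of $\alpha_k$, which is a path in $U$ starting at $x$, so $\im(\wt{\alpha}_k)\subseteq N(e,U)$.

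Lemma \ref{uvsequencelemma} in $E$ now gives extensions $\wt{g}_k\to e$. Their projections $p\circ\wt{g}_k$ extend $\alpha_k$ and converge to $x$ by continuity of $p$, so $X$ is $1$-$UV_0$. Local path-connectedness of $X$ is used only to reduce to based loops and to make the neighborhoods $U$ path-connected.
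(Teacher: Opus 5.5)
Your proof takes the route the paper intends. The paper gives no argument beyond saying the lemma is straightforward from Lemma \ref{uvsequencelemma}, and your combination of that lemma with lifting maps out of $\bbd$ and $S^1$ and the basic neighborhoods $N(e,U)$ fills it in correctly in both directions. One small addition is needed: you apply Lemma \ref{uvsequencelemma} to $E$, which requires $E$ to be metrizable, so cite Lemma \ref{liftedmetriclemma} for that.

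There is one genuine gap, in the converse: the step ``fix $e\in p^{-1}(x)$''. The paper explicitly allows generalized covering maps that are not surjective; they map onto a single path-component of $X$. Without surjectivity the converse is actually false. Let $Z$ be a path-connected, locally path-connected metrizable space that is not $1$-$UV_0$ (e.g.\ the harmonic archipelago), and put $X=\{c\}\sqcup Z$. The inclusion $p:\{c\}\to X$ is then a generalized covering map, since the lifting condition is vacuous over $Z$ and trivial over $c$. Here $E=\{c\}$ is $1$-$UV_0$ and $X$ is locally path-connected and metrizable, yet $X$ is not $1$-$UV_0$. So you need to add the hypothesis that $X$ is path-connected, which is surely the intended reading; it makes $p$ surjective, and with it your argument is complete.
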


\begin{lemma}\label{uniformuvliftlemma}
Suppose $(X,d)$ is a metric space, $A\subseteq X$ is a metric subspace, and $p:E\to X$ is a generalized covering map. Then $A$ is uniformly $1$-$UV_0$ if and only if $p^{-1}(A)$ is uniformly $1$-$UV_0$ with respect to the subspace metric inherited from $(E,\wtd)$.
\end{lemma}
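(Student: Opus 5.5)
Since $\wtd$ is a lifted metric, the plan is to use the last statement of Lemma \ref{liftedmetriclemma}: whenever $\wt{f}:Y\to E$ is a map from a path-connected space and $f=p\circ\wt{f}$, we have $\diam(\im(f))=\diam(\im(\wt{f}))$. We then transfer the sequential characterization of Lemma \ref{uniformuvsequencelemma} in both directions, using the unique lifting of maps from the simply connected, locally path-connected spaces $S^1\times\ui$ (or $\bbd$) and the fact that $p$ restricted to $p^{-1}(A)$ lands in $A$.

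For ($\Rightarrow$), assume $A$ is uniformly $1$-$UV_0$. Let $\wt{\alpha}_k:S^1\to p^{-1}(A)$ be inessential loops in $p^{-1}(A)$ with $\diam(\im(\wt{\alpha}_k))\to 0$, and set $\alpha_k=p\circ\wt{\alpha}_k$. These are loops in $A$ with $\diam(\im(\alpha_k))\to 0$. They are inessential in $A$: a null-homotopy $\wt{G}_k:\bbd\to p^{-1}(A)$ projects to $p\circ\wt{G}_k:\bbd\to A$. Lemma \ref{uniformuvsequencelemma} then gives maps $g_k:\bbd\to A$ extending $\alpha_k$ with $\diam(\im(g_k))\to 0$. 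Since $\bbd$ is simply connected and locally path-connected, $g_k$ lifts to $\wt{g}_k:\bbd\to E$ agreeing with $\wt{\alpha}_k$ at a basepoint of $S^1$. Uniqueness of path lifts forces $\wt{g}_k|_{S^1}=\wt{\alpha}_k$, and $\im(\wt{g}_k)\subseteq p^{-1}(A)$ because $p\circ\wt{g}_k=g_k$ has image in $A$. By Lemma \ref{liftedmetriclemma}, $\diam(\im(\wt{g}_k))=\diam(\im(g_k))\to 0$, and this holds in the subspace metric as well, since that is the restriction of $\wtd$.

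For ($\Leftarrow$), assume $p^{-1}(A)$ is uniformly $1$-$UV_0$. Let $\alpha_k:S^1\to A$ be inessential in $A$ with diameters tending to $0$, and fix extensions $G_k:\bbd\to A$. Lift $G_k$ to $\wt{G}_k:\bbd\to E$; its image lies in $p^{-1}(A)$. Then $\wt{\alpha}_k=\wt{G}_k|_{S^1}$ is a loop in $p^{-1}(A)$, inessential there, and by Lemma \ref{liftedmetriclemma} (with $S^1$ path-connected) $\diam(\im(\wt{\alpha}_k))=\diam(\im(\alpha_k))\to 0$. The hypothesis gives $\wt{g}_k:\bbd\to p^{-1}(A)$ extending $\wt{\alpha}_k$ with diameters tending to $0$. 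Finally $g_k=p\circ\wt{g}_k$ extends $\alpha_k$, maps into $A$, and has $\diam(\im(g_k))\le\diam(\im(\wt{g}_k))\to 0$ because $p$ is non-expansive.

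The only delicate point is the diameter comparison. It rests on Lemma \ref{liftedmetriclemma}, where diameters are measured with the ambient $\wtd$ restricted to the subspace, so the comparison is exact and not merely an estimate. The rest consists of standard lifting arguments.
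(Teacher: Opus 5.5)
Your proof is correct and is essentially the paper's argument. The paper only says the lemma follows from the last statement of Lemma \ref{liftedmetriclemma}, and you make this explicit by transporting the sequential criterion of Lemma \ref{uniformuvsequencelemma} across $p$, using unique lifting from $\bbd$ and $S^1$ together with the exact equality of diameters.

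One step in ($\Leftarrow$) relies on an unstated hypothesis: lifting $G_k$ needs a point of $E$ over $G_k(\bbd)$. That point exists because a generalized covering map is surjective onto the path-component it covers, so you should assume $X$ is path-connected, as it is in all of the paper's uses. Without that assumption the ($\Leftarrow$) direction can actually fail; for example, take $p$ to be the inclusion of one path-component of a disconnected $X$.
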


Now we focus our attention to generalized coverings of disjointly tree-graded spaces. One might suspect that if $p:E\to X$ is a generalized (regular) covering map where $X$ is disjointly tree-graded, then $E$ inherits a disjoint tree-grading where the pieces of $E$ are the path-components of the preimages $p^{-1}(P)$ of the pieces $P$ of $X$. However, this is false even in simple scenarios.

\begin{example}
 Let $X$ be the union of two disjoint circles $P_1,P_2$ adjoined by a single arc $A$ where the basepoint $x_0$ is the intersection point of $A$ and $P_1$. $X$ is disjointly tree-graded by taking the pieces to be the two circles. Let $a$ be the homotopy class of a loop traversing $P_1$ and let $b$ the homotopy class of the loop moving along $A$, traversing $P_2$ and then moving back along $A$ to the basepoint. Consider the 2-fold covering map $p:E\to X$ corresponding to the normal subgroup $H=\lb a^2,ba^{-1},a^{-1}b\rb$ of $\pi_1(X,x_0)$. Then $p^{-1}(P_1)$ and $p^{-1}(P_2)$ are both topological circles and $p^{-1}(A)$ consists of two arcs both having endpoints on these circles. Hence, $E$ admits no non-trivial disjoint tree-grading.
\end{example}

\begin{lemma}\label{notreducelemma}
Let $(X,\scrp)$ be a disjointly tree-graded space and $p:E\to X$ be a generalized covering map. If $\alpha:S^1\to X$ is an inessential loop whose image meets $\tree(X)$, then no lift of $\alpha$ to $E$ is injective.
\end{lemma}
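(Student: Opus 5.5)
Suppose, for contradiction, that $\wt{\alpha}:S^1\to E$ is an injective lift of $\alpha$. Since $\alpha$ is inessential, $\alpha_\#(\pi_1(S^1))$ is trivial. Hence the lift property of $p$ applies to $\alpha$ itself: a lift is determined by any point over a basepoint, and every lift of $\alpha$ is a closed loop. The goal is to push $\wt{\alpha}$ into a space where injectivity forces it into a single piece, which will contradict $\im(\alpha)\cap\tree(X)\neq\emptyset$.

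First I would transfer the tree structure along $\wt{\alpha}$. Let $(T,V,q)$ be a parameterization of $(X,\scrp)$ and set $Y=\im(\wt{\alpha})$, a simple closed curve in $E$. Consider the composition $q\circ p|_Y:Y\to T$. The fibers of $p|_Y$ over points of $\tree(X)$ should be single points of $Y$. Indeed, by Remark \ref{evenlycoveredtreesremark}, each component $U$ of $\tree(X)$ is evenly covered, so $p$ is injective on each sheet $N(e,U)$. However, several sheets may meet $Y$. So I would argue directly instead, as follows.

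The loop $q\circ\alpha:S^1\to T$ is a loop in a topological tree, and $\alpha$ meets $\tree(X)$. First replace $\alpha$ by the tree-efficient representative of Lemma \ref{efficientlemma}? That would destroy injectivity of the lift, so instead work with $\alpha$ directly. Pick $s\in S^1$ with $x=\alpha(s)\in\tree(X)$, lying in a component $U$ of $\tree(X)$. Because $\alpha$ is a loop in $X$ and $\ov{U}$ is a tree meeting each piece in at most one point, Proposition \ref{separateprop} and Lemma \ref{retractionlemma} give the following. Let $r:X\to\ov{U}$ be the retraction and consider $r\circ\alpha$, a loop in the tree $\ov{U}$ passing through $x$. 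The loop $r\circ\alpha$ must return through $x$, or else turn around at $x$. Concretely, I would choose a small arc $A\subseteq U$ through $x$. Then $\alpha^{-1}(A)$ must contain two distinct parameter subintervals mapped onto a common subarc of $A$, or a point where $\alpha$ backtracks; this is because a loop in $X$ cannot cross the cut point $x$ only once. Hence there are $s\neq s'$ with $\alpha(s)=\alpha(s')\in U$ and $\alpha$ mapping small neighborhoods of $s$ and $s'$ into $U$.

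Now lift. Since $U$ is evenly covered, the lifts $\wt{\alpha}(s)$ and $\wt{\alpha}(s')$ lie in sheets over $U$, and injectivity forces them into different sheets $N(e,U)\neq N(e',U)$. The main step, and the one I expect to be the obstacle, is to show that they actually lie in the same sheet. For this I would use inessentiality. Let $\gamma=\alpha|_{[s,s']}$, which is a loop based at $x$. I would show that $\gamma$ is inessential, so that its lift is closed and $\wt{\alpha}(s)=\wt{\alpha}(s')$, contradicting injectivity. To get this, choose $s,s'$ more carefully: take them to be the first and last passages through $x$ in a cut-point sense, so that the complementary arc $\alpha|_{[s',s]}$ stays on one side of $x$. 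This side lies in a single component $W$ of $X\backslash\{x\}$, and $\gamma$ stays on the other side. Then $\alpha\simeq\gamma\cdot\delta$, where $\delta$ is a loop in $\ov{W}$, and $\gamma$ is a loop in $X\backslash W$. Since $x$ is a cut point, van Kampen (as in Corollary \ref{isocor2}) gives $\pi_1(X,x)\cong\pi_1(X\backslash W,x)\ast\pi_1(\ov{W},x)$; both pieces are retracts via Lemma \ref{retractionlemma}. Triviality of $[\gamma][\delta]$ in the free product then forces $[\gamma]=1$. Therefore the lift of $\gamma$ starting at $\wt{\alpha}(s)$ is closed, which gives the contradiction. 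If instead $\alpha$ merely backtracks at $x$ without recrossing, I would perturb $s$ to a nearby point of $U$ that is crossed twice and argue identically.
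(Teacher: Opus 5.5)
Your target is the same as the paper's: find parameters $s\neq s'$ with $\alpha(s)=\alpha(s')=x\in\tree(X)$ such that $\alpha|_{[s,s']}$ is null-homotopic rel endpoints, so the lift closes up and $\wt{\alpha}(s)=\wt{\alpha}(s')$. The paper gets this subarc geometrically from an extension $g:\bbd\to X$: it takes a component of $g^{-1}(x)$ meeting $S^1$ twice and collapses the regions of $\bbd\backslash g^{-1}(x)$ on one side. You want to get it algebraically from a free splitting at $x$. That route can work, but your choice of $s,s'$ does not exist in general.

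You need one arc of $S^1$ between $s$ and $s'$ to map into $\ov{W}$, for a single component $W$ of $X\backslash\{x\}$, and the other arc to map into $X\backslash W$. Suppose the points of $\alpha^{-1}(x)$ cut $\alpha$ into subloops lying successively in $\ov{W_1},\ov{W_2},\ov{W_1},\ov{W_2}$; this is compatible with $\alpha$ crossing $x$ at every passage. Then every component that occurs is visited in two separated stretches. So no pair $s,s'$ works, and $\alpha$ is not of the form $\gamma\cdot\delta$ that you use. Your cancellation ``$[\gamma][\delta]=1\Rightarrow[\gamma]=1$'' then has nothing to act on. Two further gaps: you never show $\alpha^{-1}(x)$ is finite, so ``first and last passages'' is not well defined, and the backtracking case is handled only by a vague perturbation.

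All of these are fixed by a fact you noted but did not use. Since $U$ is evenly covered and $\wt{\alpha}$ is injective, the restriction of $\wt{\alpha}$ to each component $J$ of $\alpha^{-1}(U)$ lies in one sheet, so $\alpha|_J$ is injective. Consequences:
\begin{itemize}
\item Each $J$ contains at most one point of $\alpha^{-1}(x)$, so by compactness $\alpha^{-1}(x)=\{t_1,\dots,t_k\}$ is finite.
\item With $\alpha_i=\alpha|_{[t_i,t_{i+1}]}$ ($t_{k+1}=t_1$), each $\alpha_i$ is a loop in $\ov{W_i}$ for some component $W_i$ of $X\backslash\{x\}$.
\item The subloops just before and after each $t_i$ lie in different components of $X\backslash\{x\}$. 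Indeed, they lie in distinct components of $U\backslash\{x\}$, and these lie in distinct components of $X\backslash\{x\}$ by Lemma \ref{scclemma}. In particular $k\geq 2$ and there is no backtracking.
\end{itemize}

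Now base at $t_1$, put $W=W_1$, and group the $\alpha_i$ into maximal consecutive runs lying in $\ov{W}$ or in $X\backslash W$. Because $W_k\neq W_1$, there are at least two runs, and they alternate between the factors of
\[
\pi_1(X,x)\cong\pi_1(X\backslash W,x)\ast\pi_1(\ov{W},x).
\]
Van Kampen applies here because $x$ has a simply connected open neighborhood $N\subseteq U$; use the open sets $(X\backslash W)\cup N$ and $W\cup N$ together with Corollary \ref{isocor}. If every run were essential, the word would be reduced of length at least $2$, contradicting $[\alpha]=1$. So some run is null-homotopic, namely $\alpha$ on a proper subarc from $t_i$ to some $t_j\neq t_i$, and this gives $\wt{\alpha}(t_i)=\wt{\alpha}(t_j)$. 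With this normal-form argument in place of your two-arc splitting, your approach is a valid alternative that trades the paper's planar topology for van Kampen.
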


\begin{proof}
Suppose $\alpha:S^1\to X$ is inessential, $\im(\alpha)\cap \tree(X)\neq \emptyset$, and that $\wt{\alpha}:S^1\to E$ is an injective lift of $\alpha$. Since each connected component $U$ of $\tree(X)$ is evenly covered by $p$ (recall Remark \ref{evenlycoveredtreesremark}), $\alpha$ must be tree-efficient in the sense of Definition \ref{treeefficientdef}. Let $J$ be a connected component of $\alpha^{-1}(\tree(X))$. Then there is a connected component $U$ of $\tree(X)$ such that $\alpha|_{\ov{J}}:\ov{J}\to \ov{U}$ is injective. Fix $s\in J$ and let $x=\alpha(s)$. Since $x$ separates $X$, there must be at least one other connected component $K$ of $\alpha^{-1}(\tree(X))$ and a point $t\in K$ such that $\alpha(t)=x$.

Since $\alpha$ is an inessential loop, there is a map $g:\bbd\to X$ such that $g|_{S^1}=\alpha$. Since $x$ separates $X$ and $g^{-1}(x)\cap S^1$ has at least two components (by the previous paragraph), some connected component $C$ of $g^{-1}(x)$ must separate $\bbd^2$. Pick points $a,b\in S^1$ that lie in distinct components of $C\cap S^1$. Let $A$ be one of the two arcs in $S^1$ with endpoints $a$ and $b$. Define a map $h:\bbd\to X$ to agree with $g$ on $S^1\cup g^{-1}(x)$ and also on any connected component of $\bbd\backslash g^{-1}(x)$ that meets $A$. If $V$ is a connected component of $\bbd\backslash g^{-1}(x)$ that does not meet $A$, then we set $h(V)=x$. Then $h:\bbd\to X$ is a continuous map that gives a null-homotopy of the loop $\alpha|_{A}$ based at $x$. However, this implies that the lift $\wt{\alpha}$ maps the endpoints of $A$ to a single point, contradicting the assumption that $\wt{\alpha}$ is injective.
\end{proof}

\begin{lemma}\label{finitegencovlemma}
Let $(X,\scrp)$ be a disjointly tree-graded space with finitely many non-degenerate pieces and let $p:\tX\to X$ be a generalized universal covering map. Then there exists a disjoint tree-grading $\wt{\scrp}$ on $\tX$ such that
\begin{enumerate}
\item a piece of $\tX$ is a path-component of the preimage $p^{-1}(P)$ of some piece $P$ of $X$,
\item every piece of $\tX$ is simply connected,
\item $p:(\tX,\wt{\scrp})\to (X,\scrp)$ is grade-preserving,
\item if $\wt{P}\in\wt{\scrp}$ is the path-component of $p^{-1}(P)$ for $P\in\scrp$, then the restriction $p|_{\wt{P}}:\wt{P}\to P$ is a generalized universal covering map.
\end{enumerate}
\end{lemma}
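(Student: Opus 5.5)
We will build the tree-grading on $\tX$ via a parameterization and Lemma \ref{datalemma}. Let $\wt{\scrp}$ be the set of path-components of the sets $p^{-1}(P)$, $P\in\scrp$. Since $X$ has finitely many non-degenerate pieces, the degenerate pieces form a closed set of points of $\pc(X)$ each having a neighborhood that is a one-piece space, so $\pc(X)$ is locally path-connected. Using Remark \ref{evenlycoveredtreesremark} (each component of $\tree(X)$ is evenly covered), $p^{-1}(\tree(X))$ is a disjoint union of open sets mapped homeomorphically onto components of $\tree(X)$; in particular $p^{-1}(\pc(X))$ is closed in $\tX$. Item (3) is then automatic.

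\textbf{Step 1: pieces are simply connected covers.} Each piece $P$ is a retract of $X$ by Lemma \ref{retractionlemma}, so Lemma \ref{restrictguclemma} shows that for each path-component $\wt{P}$ of $p^{-1}(P)$, the map $p|_{\wt{P}}:\wt{P}\to P$ is a generalized universal covering map. This gives (4), and (2) follows since a generalized universal covering space is simply connected.

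\textbf{Step 2: simple closed curves lie in one piece.} Let $\wt{\alpha}:S^1\to\tX$ be injective and $\alpha=p\circ\wt{\alpha}$. Since $\tX$ is simply connected, $\alpha$ is inessential. If $\im(\alpha)$ met $\tree(X)$, Lemma \ref{notreducelemma} would forbid an injective lift; hence $\im(\alpha)\subseteq\pc(X)$, so $\im(\alpha)\subseteq P$ for a single piece $P$ (pieces are the path-components of $\pc(X)$). Then $\im(\wt{\alpha})$ is a path-connected subset of $p^{-1}(P)$ and therefore lies in one element of $\wt{\scrp}$.

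\textbf{Step 3: the remaining axioms.} First, $\bigcup\wt{\scrp}=p^{-1}(\pc(X))$ is closed. The elements of $\wt{\scrp}$ are the path-components of this union because preimages of distinct pieces are disjoint and closed, and any path in $p^{-1}(\pc(X))$ projects into a single piece. Each $\wt{P}$ is closed: a limit point $e$ of $\wt{P}$ lies in $p^{-1}(P)$, and I would use the basic neighborhoods $N(e,U)$ with $U$ a path-connected neighborhood of $p(e)$ in $X$. Retracting paths in $U$ onto $P$ via Lemma \ref{retractionlemma}(2) shows that points of $N(e,U)\cap p^{-1}(P)$ are joined to $e$ by lifts of paths in $P$, so $e\in\wt{P}$. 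Finally, $\tX$ is metrizable by Lemma \ref{liftedmetriclemma} and is path-connected and locally path-connected by definition.

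\textbf{Main obstacle.} The delicate step is this closedness and path-component argument. Here the retraction $r:X\to P$ must interact correctly with unique path lifting: lifting $r\circ\gamma$ must stay in $N(e,U)$ and end at the right point. I would handle this by lifting the homotopy between $\gamma$ and $r\circ\gamma$ rel endpoints, which lies in $U$ when $\gamma$ ends in $P$ by Lemma \ref{retractionlemma}(2).
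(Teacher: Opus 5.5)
Your Steps 1 and 2 are fine. Step 2 is exactly the paper's use of Lemma \ref{notreducelemma}, and applying Lemma \ref{restrictguclemma} to every piece at the outset is legitimate. The parts of Step 3 about $\bigcup\wt{\scrp}$ and path-components are also fine.

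The gap is the closedness of each $\wt{P}$. This is precisely where the finiteness hypothesis has to enter, and your argument never uses it. You propose to lift ``the homotopy between $\gamma$ and $r\circ\gamma$ rel endpoints.'' But Lemma \ref{retractionlemma}(2) only gives $\im(r\circ\gamma)\subseteq\im(\gamma)$; it produces no homotopy, and in general none exists. Take two circles $P,Q$ joined by an arc meeting $P$ at $x$, and let $\gamma$ run from $x$ along the arc, once around $Q$, and back. Then $r\circ\gamma$ is constant at $x$ while $\gamma$ is essential. So $\wt{\gamma}(1)$ lies in a different path-component of $p^{-1}(P)$ than $e$, and for $U=X$ your claim that the points of $N(e,U)\cap p^{-1}(P)$ are joined to $e$ inside $p^{-1}(P)$ is false. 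Whether the lift of $r\circ\gamma$ stays in $N(e,U)$ is irrelevant: path-homotopic paths have lifts with the same endpoint, and that is all you need.

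To repair your route, suppose $P$ is non-degenerate and choose $U$ path-connected with $p(e)\in U\subseteq X\backslash\bigcup Q$, the union taken over the finitely many non-degenerate pieces $Q\neq P$. That union is closed and misses $p(e)$; this is where finiteness enters. Each excursion $\gamma|_{[a,b]}$ of $\gamma$ off $P$ is a loop at a point of $P$ (by Lemma \ref{retractionlemma}(1) and continuity of $r$). Its image is a Peano continuum containing no simple closed curve, i.e.\ a dendrite. Contracting each excursion in its own image, as in Lemma \ref{efficientlemma}, gives $\gamma\simeq r\circ\gamma$ rel endpoints.

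The paper does the analogous thing with Corollary \ref{isocor}. Let $V$ be the component of $X\backslash\bigcup_{j\neq i}P_j$ containing $P_i$; then $P_i\to V$ is $\pi_1$-surjective, so a loop in $V$ through the limit point can be replaced by a loop in $P_i$ before lifting. For degenerate $P$, $\wt{P}$ is a single point and there is nothing to prove.

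A shorter fix uses only your Step 1. Since $p|_{\wt{P}}:\wt{P}\to P$ is a generalized universal covering and $\tX$ is simply connected, $r\circ p$ lifts to $\wt{r}:\tX\to\wt{P}$ fixing a chosen $e_0\in\wt{P}$. Uniqueness of lifts gives $\wt{r}|_{\wt{P}}=id_{\wt{P}}$, so $\wt{P}$ is a retract of the Hausdorff space $\tX$ and hence closed.

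Finally, your opening claim that $\pc(X)$ is locally path-connected is false. Degenerate pieces may accumulate at a point of a non-degenerate piece, e.g.\ a disk with an attached arc carrying degenerate pieces at points converging to the attachment point. Nothing in your argument depends on this claim, though: closedness of $p^{-1}(\pc(X))$ is just continuity of $p$.
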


\begin{proof}
Defining the elements of $\wt{\scrp}$ to satisfy (1) in the statement of the lemma, we have $\bigcup\wt{\scrp}=p^{-1}(\pc(X))$. Thus $\bigcup\wt{\scrp}$ is closed in $\tX$. The elements of $\wt{\scrp}$ are pairwise-disjoint and path-connected by construction. Since the elements of $\scrp$ are the path-components of $\pc(X)$, it follows that the elements of $\wt{\scrp}$ are the path-components of $\bigcup\wt{\scrp}$. Suppose $\wt{\alpha}:S^1\to \tX$ is an injective loop. Since $\tX$ is simply connected, $\wt{\alpha}$ and $\alpha=p\circ\wt{\alpha}$ are inessential. However, Lemma \ref{notreducelemma} implies that $\im(\alpha)\subseteq \pc(X)$. In particular, $\alpha$ must have image in a single piece $P\in\scrp$. Thus $\wt{\alpha}$ must have image in some path-component of $p^{-1}(P)$. Hence, every simple closed curve in $\tX$ lies in some element of $\wt{\scrp}$.

Next, we show that the elements of $\wt{\scrp}$ are closed in $\tX$. Since the pieces of $X$ are closed, it suffices to show that if $\wt{P}\in\wt{\scrp}$ is a path-component of $p^{-1}(P)$ for $P\in\scrp$, then $\wt{P}$ is closed in $p^{-1}(P)$. Let $P_1,P_2,\dots,P_n$ be the non-degenerate pieces of $X$. If $P=\{x\}\in\scrp $ is a degenerate piece of $X$, then the connected component $U$ of $X\backslash \bigcup_{i=1}^{n}P_i$ containing $x$ is a topological $\bbr$-tree. According to Remark \ref{evenlycoveredtreesremark}, $p^{-1}(x)$ is discrete and the conclusion is clear. Therefore, we may focus our attention to a path-component $\wt{P}$ of $p^{-1}(P_i)$ for fixed non-degenerate piece $P_i$.

Before proceeding, we make two observations. First, note that we may describe a path-component $\wt{P}$ of $p^{-1}(P_i)$ explicitly as follows: Fix a point $e_0\in \wt{P}$. Then \[\wt{P}=\{\wt{\eta}(1)\mid \eta:\ui\to P_i\text{ is a path with }\wt{\eta}(0)=e_0 \}\]where $\wt{\eta}$ denotes the lift of $\eta$ starting at $e_0$. Second, let $V$ be the connected component of $X\backslash \bigcup_{j\neq i}P_j$ containing $P_i$. Then $V$ is open in $X$ and Corollary \ref{isocor} gives that the inclusion $P_i\to V$ induces an isomorphism on fundamental groups.

Consider a convergent sequence $\{\wt{a}_n\}_{n\in\bbn}\to \wt{b}$ in $p^{-1}(P_i)$ where $\wt{a}_n\in \wt{P}$ for all $n\in\bbn$. Write $x_0=p(\wt{b})$ and $a_n=p(\wt{a}_n)$ and note that $\{a_n\}_{n\in\bbn}\to x_0$ is a convergent sequence in $P_i$. Choose a point $\wt{a}\in \wt{P}\cap p^{-1}(x_0)$. Since $p^{-1}(V)$ is an open neighborhood of $p^{-1}(P_i)$ and $\tX$ is locally path-connected, we may find a path-connected neighborhood $W$ of $\wt{b}$ such that $W\subseteq p^{-1}(V)$. Fix an $n$ sufficiently large so that $\wt{a}_n\in W$. Let $\wt{\alpha}:\ui\to \wt{P}$ be a path from $\wt{a}$ to $\wt{a}_n$ and $\wt{\beta}:\ui\to W$ be a path from $\wt{a}_n$ to $\wt{b}$. Let $\wt{\gamma}=\wt{\alpha}\cdot\wt{\beta}$ and note that $\gamma=p\circ \wt{\gamma}$ is a loop in $V$ based at $x_0\in P$. Since the inclusion $P_i\to V$ is $\pi_1$-surjective, $\gamma$ is path-homotopic to a loop $\delta:\ui\to P_i$ based at $x_0$. But if $\wt{\delta}:\ui\to \tX$ is the lift of $\delta$ starting at $\wt{a}$, then $\wt{\delta}(1)=\wt{b}$. Thus $\wt{b}\in\wt{P}$, completing the proof that $\wt{P}$ is closed.

Having now established that $\wt{\scrp}$ is a disjoint tree-grading on $\wt{X}$, it is clear that $p$ is grade-preserving. Moreover, since $\tX$ is simply connected and the pieces of $\tX$ are retracts of $\tX$ by Lemma \ref{retractionlemma}, each piece of $\tX$ is simply connected. For (4), note that since each piece $\wt{P}$ of $\tX$ is a retract of $\tX$, $\wt{P}$ is locally path-connected and simply connected. If $\wt{P}$ is a path-component of $p^{-1}(P)$ for $P\in\scrp$, then it follows from Lemma \ref{restrictguclemma} that $p|_{\wt{P}}:\wt{P}\to P$ is a generalized universal covering map.
\end{proof}

\section{Inverse limits of disjointly tree-graded spaces}

Here, we show that certain inverse limits of disjointly tree-graded spaces inherit a natural disjoint tree-grading. A basic fact we will use is the following.

\begin{lemma}\label{limitsoftreeslemma}
Let $\varprojlim_{n\in\bbn}(T_n,g_{n+1,n})$ be the inverse limit of a sequence of uniquely arcwise-connected Hausdorff spaces $T_n$ and continuous functions $g_{n+1,n}:T_{n+1}\to T_n$. Then $\varprojlim_{n\in\bbn}(T_n,g_{n+1,n})$ contains no simple closed curve. Moreover, if $T_{lim}$ is a path component of $\varprojlim_{n\in\bbn}(T_n,g_{n+1,n})$, then $\lpc(T_{lim})$ is a topological tree.
\end{lemma}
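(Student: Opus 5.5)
Let $L=\varprojlim_{n\in\bbn}(T_n,g_{n+1,n})$ with projections $\pi_n:L\to T_n$. To show that $L$ contains no simple closed curve, suppose $C\subseteq L$ is a simple closed curve parameterized by an injective loop $\gamma:S^1\to L$. Each $\pi_n\circ\gamma:S^1\to T_n$ is a loop in a uniquely arcwise-connected Hausdorff space. Since $\gamma$ is injective and $L$ carries the product-type topology, for distinct points $a,b\in C$ there is some $n$ with $\pi_n(a)\neq\pi_n(b)$.

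Pick four points $a,c,b,c'$ in cyclic order on $C$. Let $A_1,A_2$ be the two arcs of $C$ from $a$ to $b$, with $c\in A_1$ and $c'\in A_2$. Choose $n$ large enough that $\pi_n$ separates all four of these points. We have $\pi_m\circ g=\pi_m$ compatibility, so separation persists for all $m\geq n$.

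The images $\pi_n(A_1)$ and $\pi_n(A_2)$ are Peano continua in $T_n$ joining $\pi_n(a)$ to $\pi_n(b)$. Each therefore contains the unique arc $I_n\subseteq T_n$ between these points. This uses the standard fact that a path-connected subset of a uniquely arcwise-connected Hausdorff space containing $x,y$ contains the arc from $x$ to $y$.

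The intended contradiction runs as follows. For a point $z$ of $I_n$ interior to it, $\pi_n^{-1}(z)$ must meet both $A_1$ and $A_2$. These fibers form compact subsets of $C$ which separate $a$ from $b$ along each side. I would run this for every $m\geq n$ and every point of the arc $I_m$, obtaining nested nonempty compact sets $\pi_m^{-1}(z_m)\cap A_j$. Taking $z_m$ compatible, with $g_{m+1,m}(z_{m+1})=z_m$, their intersection over $m$ gives points $p_1\in A_1$ and $p_2\in A_2$ with $\pi_m(p_1)=\pi_m(p_2)$ for all $m$. Hence $p_1=p_2$, which lies in $A_1\cap A_2=\{a,b\}$.

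Producing a compatible thread $(z_m)$ that stays away from the endpoints is the main obstacle. Here $g_{m+1,m}$ maps $I_{m+1}$ onto a continuum containing $I_m$, by the same arc-containment fact. So one can choose $z_m$ as the image of a point $w\in C$ that I fix first. Take $w=c$, which lies in the interior of $A_1$, and choose $n$ so that $\pi_n(c)$ lies off the arc $\{\pi_n(a),\pi_n(b)\}$ ends.

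The cleaner formulation is this. For each $m$ the set $F_m=\{x\in A_2:\pi_m(x)=\pi_m(c)\}$ is a nonempty compact set, provided $\pi_m(c)\in I_m$. The sets $F_m$ are decreasing, so their intersection yields $x\in A_2$ with $x=c$, contradicting $c\notin A_2$. The proof therefore reduces to arranging $\pi_m(c)\in I_m$ for cofinally many $m$.

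If $\pi_m(c)\notin I_m$, retract $T_m$ onto the arc $I_m$ via the first-point map, as in Lemma \ref{retractionlemma}. This map is well defined in a uniquely arcwise-connected space, though possibly discontinuous. I would instead replace $c$ by the point of $A_1$ whose projection is the first point of $I_m$ reached from $\pi_m(c)$. A compactness argument over $m$ on these replacement points, which lie in $A_1\setminus\{a,b\}$ once $m$ is large, then gives the contradiction.

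For the second statement, set $T_{lim}$ to be a path component of $L$. The space $\lpc(T_{lim})$ is locally path-connected and path-connected, since paths in $T_{lim}$ remain continuous into $\lpc(T_{lim})$ by Example \ref{lpccoreflectionexample}. It is Hausdorff, because its topology refines that of $T_{lim}$. A simple closed curve in $\lpc(T_{lim})$ maps to a simple closed curve in $L$ under the continuous identity, which contradicts the first part. By the remark after Lemma \ref{scclemma}, $\lpc(T_{lim})$ is therefore a topological tree.
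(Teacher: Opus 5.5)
There is a genuine gap, at exactly the step you flagged as the main obstacle. Your nested sets $F_m$ give a contradiction only if $\pi_m(c)\in I_m$ for cofinally many $m$, and nothing forces this: $\pi_m(A_1)$ may leave $I_m$ along a side branch that contains $\pi_m(c)$.

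The fallback does not close the gap. A compactness argument on replacement points $c_m\in A_1$ and partners $x_m\in A_2$ with $\pi_m(x_m)=\pi_m(c_m)$ only produces a common limit $c^*=x^*\in A_1\cap A_2=\{a,b\}$. That is a contradiction only if the $c_m$ stay a definite distance away from $a$ and $b$. Knowing that each $c_m$ lies in $A_1\setminus\{a,b\}$ gives no such bound.

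Moreover, ``the point of $A_1$'' lying over the first point of $I_m$ reached from $\pi_m(c)$ is not unique, and a careless choice can be $a$ itself. For example, if $\pi_m(A_1)$ runs from $\pi_m(a)$ out a side branch to $\pi_m(c)$ and back to $\pi_m(a)$ before following $I_m$, then that first point is $\pi_m(a)$.

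Note also that, by your own nested-fiber argument, any compatible thread $(z_m)$ with $z_m\in I_m\subseteq\pi_m(A_1)\cap\pi_m(A_2)$ for all $m$ is automatically a point of $A_1\cap A_2$. So exhibiting such a thread other than $a,b$ is no easier than the statement itself. Tracking single points cannot get past the obstacle without some connectedness input.

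A clean repair within your framework is to replace points by the continua $K_m=\pi_m(A_1)\cap\pi_m(A_2)$. In a uniquely arcwise-connected Hausdorff space, the intersection of two arcwise-connected sets is arcwise-connected, since both contain the unique arc between any two common points. So each $K_m$ is a continuum containing $\pi_m(a)$ and $\pi_m(b)$, and $g_{m+1,m}(K_{m+1})\subseteq K_m$. Hence $\varprojlim_m K_m$ is a continuum containing $a$ and $b$. Your nested-fiber argument shows it lies in $A_1\cap A_2=\{a,b\}$, which is a contradiction; only two points $a\neq b$ are needed.

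Alternatively, your replacement idea can be salvaged with a specific choice. Take $c_m^-$ to be the last point of the subarc of $A_1$ from $a$ to $c$ whose projection lies in $I_m$, and $c_m^+$ the first such point of the subarc from $c$ to $b$. Both project to the first point of $I_m$ reached from $\pi_m(c)$. A compactness argument then forces $c_m^\pm\to c$ along a subsequence, which supplies the missing bound away from $a$ and $b$.

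Either way, this is an unpacked version of what the paper does by citation. The paper observes that the $\pi_n(C)$ are dendrites with $C\cong\varprojlim\pi_n(C)$. It then invokes standard continuum theory to conclude that such a limit contains no simple closed curve; the $K_m$ argument is essentially the unicoherence proof behind that fact. Your treatment of $\lpc(T_{lim})$ is fine.
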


\begin{proof}
Let $g_n$ denote the projection maps of the limit. If $C\subseteq \varprojlim_{n\in\bbn}(T_n,g_{n+1,n})$ is a simple closed curve, then $D_n=g_n(C)$ is a dendrite and $C$ is homeomorphic to the inverse limit of the sequence $(D_n,(g_{n+1,n})|_{D_{n+1}})$ of dendrites. However, standard results in Continuum Theorem such as \cite[Exercise 2.19]{Nadler} can be used to show that an inverse limit of dendrites cannot contain a simple closed curve.
\end{proof}

\begin{definition}
For each $n\in\bbn$, let $(X_{n},\scrp_n)$ be a disjointly tree-graded space and $f_{n+1,n}:(X_{n+1},\scrp_{n+1})\to (X_n,\scrp_n)$ be a grade-preserving map. We say that a sequence $\{P_n\}_{n\in\bbn}$ of pieces $P_n\in \scrp_n$ is \textit{coherent} if $f_{n+1,n}(P_{n+1})\subseteq P_n$ for all $n\in\bbn$.
\end{definition}

\begin{theorem}\label{inversesystemthm}
For each $n\in\bbn$, let $(X_{n},\scrp_n)$ be disjointly tree-graded space and $f_{n+1,n}:(X_{n+1},\scrp_{n+1})\to (X_n,\scrp_n)$ be a grade-preserving map. Suppose that for every coherent sequence of pieces $\{P_n\}_{n\in\bbn}$, $f_{n+1,n}$ maps $P_{n+1}$ homeomorphically onto $P_n$ for all but finitely many $n\in\bbn$.

Let $X_{lim}$ be the path-component of a fixed basepoint $(x_n)_{n\in\bbn}$ in $\varprojlim_{n}(X_n,f_{n+1,n})$ and let $X_{\infty}=\lpc(X_{lim})$. Then $X_{\infty}$ admits a disjoint tree-grading as follows: a piece of $X_{\infty}$ has the form $X_{lim}\cap\prod_{n\in\bbn}P_n $ where $\{P_n\}_{n\in\bbn}$ is a coherent sequence of pieces $P_n\in\scrp_n$. Moreover, the projection maps $f_n:X_{\infty}\to X_n$ are grade-preserving and, for each piece $P$ of $X_{\infty}$, $f_n$ maps $P$ homeomorphically onto a piece of $X_n$ for all but finitely many $n\in\bbn$.
\end{theorem}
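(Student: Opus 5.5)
The plan is to verify the three conditions of Definition \ref{deftreegraded} for $X_{\infty}$, using the parameterization viewpoint of Theorem \ref{paramterizationtheorem} where convenient. First, $X_{\infty}=\lpc(X_{lim})$ is path-connected and locally path-connected. It is metrizable because it is a subspace of a countable product of metrizable spaces, but with a finer topology; I would handle metrizability by taking the lifted path-diameter metric built from the product metric, in the spirit of Remark \ref{diametermetricremark}. Define $\scrp_{\infty}$ to consist of the nonempty sets $P_{\infty}=X_{lim}\cap\prod_n P_n$, with $\{P_n\}$ coherent. These sets are pairwise disjoint, since distinct coherent sequences differ at some coordinate and distinct pieces of $X_n$ are disjoint. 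Their union is $X_{lim}\cap\prod_n \pc(X_n)$. Indeed, any point $(y_n)$ with every $y_n\in\pc(X_n)$ determines a coherent sequence, because grade-preserving maps send pieces into pieces. This union is therefore closed in the product, hence closed in $X_{\infty}$. Each $f_n$ is grade-preserving by construction.

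Next I show that each nonempty $P_{\infty}$ is path-connected, and that it is mapped homeomorphically onto $P_N$ for large $N$. By hypothesis there is an $N$ such that $f_{n+1,n}|_{P_{n+1}}$ is a homeomorphism onto $P_n$ for all $n\ge N$. Then $\prod_n P_n\cap\varprojlim X_n$ is homeomorphic to $P_N$ via the projection $f_N$: the coordinates for $n\ge N$ are determined by the inverse homeomorphisms, and the coordinates for $n<N$ are determined by the bonding maps. Since $P_N$ is path-connected, it follows that $P_{\infty}$ lies in a single path component. Hence $P_{\infty}\subseteq X_{lim}$ whenever it meets $X_{lim}$, and paths in $P_N$ lift to paths in $X_{lim}$. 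Because $P_N$ is locally path-connected (by the remark following Definition \ref{deftreegraded}), this homeomorphism is unaffected by passing to $\lpc$. Consequently the $P_{\infty}$ are path-connected and closed, and they are exactly the path components of $\pc(X_{\infty})$. Here the key point is that the coherent sequence is constant along any path in $\pc(X_{\infty})$, which follows from the fact that the pieces of each $X_n$ are the path components of $\pc(X_n)$.

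The main obstacle is condition (3): every simple closed curve $C$ in $X_{\infty}$ must lie in a single piece. My approach is to pass to parameterizations. Let $q_n:X_n\to T_n$ be the quotient trees of Lemma \ref{paramterizationlemma2}, with $V_n=q_n(\pc(X_n))$. The maps $f_{n+1,n}$ induce maps $g_{n+1,n}:T_{n+1}\to T_n$ satisfying $g_{n+1,n}(V_{n+1})\subseteq V_n$. This gives a map $q:X_{\infty}\to\varprojlim T_n$, whose image lies in a path component $T_{lim}$. By Lemma \ref{limitsoftreeslemma}, $\lpc(T_{lim})$ is a topological tree, and since $X_{\infty}$ is locally path-connected, $q$ is continuous into it.

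Now set $V=\varprojlim V_n\cap T_{lim}$. I need three facts about $V$. It is closed. It is totally disconnected, because a connected subset of $V$ projects to connected subsets of the totally disconnected sets $V_n$, so it is a single coherent point. And the fiber of $q$ over $V$ is exactly one $P_{\infty}$. Over points outside $V$, some coordinate $t_n$ lies outside $V_n$, so it has a unique $q_n$-preimage. I expect this to force degenerate fibers by the following argument. The fiber over $t\notin V$ is a single point provided that the fibers of $q_m$ over $t_m$ are singletons for all $m\ge n$. This holds because $g_{m,n}(V_m)\subseteq V_n$, so $t_n\notin V_n$ forces $t_m\notin V_m$. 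The coordinates at indices below $n$ are then determined by the bonding maps.

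With these facts in place, $q$ has path-connected fibers and degenerate fibers off $V$. Lemma \ref{monotonelemma2} then shows that an injective loop maps to a constant, so the loop lies in a single fiber over $V$, that is, in one piece. I will not try to make $q$ a quotient map. Lemma \ref{datalemma} is not needed, since Lemma \ref{monotonelemma2} only requires a map to a uniquely arcwise-connected Hausdorff space.
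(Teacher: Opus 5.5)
Your proposal is correct and takes essentially the same route as the paper. You check closedness, disjointness and the path-component condition directly in $X_{lim}$ and transfer them to $\lpc(X_{lim})$. You show that each piece projects homeomorphically onto $P_N$ for large $N$. You then get the simple-closed-curve condition by mapping into the inverse limit of the quotient trees and applying Lemma \ref{monotonelemma2}, with singleton fibers off $V$ coming from $g_{m,n}(V_m)\subseteq V_n$.

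The differences are cosmetic. You build the path-diameter metric on $\lpc(X_{lim})$ by hand rather than citing Lemma \ref{liftedmetriclemma}; the same construction works even though Remark \ref{diametermetricremark} assumes local path-connectedness. You use $\lpc$ of a path component of $\varprojlim T_n$ rather than the image of $q_\infty$. You also make explicit that $\varprojlim X_n\cap\prod_n P_n$ lies in $X_{lim}$ once it meets it, a point the paper passes over.
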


\begin{proof}
Let $f_n:X_{lim}\to X_n$ denote the restriction of the $n$-th projection map of the limit. Since limits of inverse sequences of metrizable spaces are metrizable, $\varprojlim_{n}(X_n,f_{n+1,n})$ is metrizable. Recall that $X_{\infty}$ and $X_{lim}$ have the same underlying set and note that the continuous inclusion function $\lpc(X_{lim})\to \varprojlim_{n}(X_n,f_{n+1,n})$ is a continuous bijection onto $X_{lim}$. It follows from Lemma \ref{liftedmetriclemma} that $X_{\infty}$ is a path-connected, locally path-connected, metrizable space.

We construct the collection of pieces $\scrp_{\infty}$ of $X_{\infty}$ as described in the statement of the theorem: a piece $P\in\scrp_{\infty}$ has the form of a non-empty set $P=X_{lim}\cap\prod_{n\in\bbn}P_n$ where $\{P_n\}_{n\in\bbn}$ is a coherent sequence of pieces $P_n\in\scrp_n$. Since the identity function $id:X_{\infty}\to X_{lim}$ is a continuous bijection, if a set $S\subseteq X_{lim}$ is closed, then $S$ is also closed in $X_{\infty}$. Additionally, a set $S\subseteq X_{lim}$ is path-connected (resp. a simple closed curve) if and only if $S$ is path-connected (resp. a simple closed curve) as a subspace of $X_{\infty}$. Hence, the remainder of the conditions that must be checked (to verify that $\scrp_{\infty}$ is a disjoint tree-grading on $X_{\infty}$), may be checked within $X_{lim}$.

Suppose $\{Q_n\}_{n\in\bbn}$ is a coherent sequence of pieces such that $Q=X_{lim}\cap\prod_{n\in\bbn}Q_n $ and $P=X_{lim}\cap\prod_{n\in\bbn}P_n$ both contain a point $(a_n)_{n\in\bbn}$. Then $a_n\in P_n\cap Q_n$ for all $n$ but since the pieces of $\scrp_n$ are pairwise-disjoint, we must have $P_n=Q_n$ for all $n\in\bbn$. Thus $P=Q$. We conclude that the elements of $\scrp_{\infty}$ are pairwise-disjoint.

Next, note that since $P_n$ is closed in $X_n$ for all $n$, it follows that $P$ is closed in $X_{lim}$. Since the bonding maps $f_{n+1,n}$ map $P_{n+1}$ homeomorphically onto $P_n$ for all but finitely many $n$, $f_n:X_{lim}\to X_n$ maps $P$ homeomorphically onto $P_n$ for all but finitely many $n$. Find sufficiently large $n$ and embedding $i:P_n\to X_{lim}$ onto $P$ so that $f_n\circ i=id_{P_n}$. Since $P_n$ is locally path-connected, $i:P_n:\to X_{\infty}$ is also continuous. Thus $f_n:X_{\infty}\to X_n$ maps $P$ homeomorphically onto $P_n$ as well.

Since $\pc(X_n)$ is closed in $X_n$ for each $n$, the set $Y=X_{lim}\cap\prod_{n\in\bbn}\pc(X_n)$ is closed in $X_{lim}$. The inclusion $\bigcup\scrp_{\infty}\subseteq Y$ is clear. If $(a_n)_{n\in\bbn}\in Y$, then $a_n\in P_n$ for some piece $P_n$ of $X_n$. Since $f_{n+1,n}(a_{n+1})=a_{n}\in P_n$ and $f_{n+1,n}$ preserves grading, it must be that $f_{n+1,n}(P_{n+1})\subseteq P_n$. Hence $\{P_n\}_{n\in\bbn}$ is a coherent sequence for which $(a_n)_{n\in\bbn}\in P=  X_{lim}\cap\prod_{n\in\bbn}P_n$. Thus $(a_n)_{n\in\bbn}\in \bigcup\scrp_{\infty}$, proving that $\bigcup\scrp_{\infty}=Y$. We conclude that $\bigcup\scrp_{\infty}$ is closed in $X_{lim}$ and $f_{n}(\bigcup\scrp_{\infty})\subseteq \pc(X_n)$ for all $n\in\bbn$.

Next, suppose that $P$ and $Q$ are distinct elements of $\scrp_{\infty}$ as described above. Suppose that $\alpha:\ui\to \bigcup\scrp_{\infty}$ is a path with $\alpha(0)\in P$ and $\alpha(1)\in Q$. Then for each $n\in\bbn$, $f_{n}\circ\alpha:\ui\to X_n$ is a path in $\pc(X_n)$. Since paths in $\pc(X_n)$ must lie in a single piece of $X_n$, we must have $P_n=Q_n$ for all $n\in\bbn$. Hence, $P=Q$. This proves that the elements of $\scrp_{\infty}$ are the path-components of $\scrp_{\infty}$.

For the simple closed curve condition, we appeal to parameterizations. Let $(T_n,V_n,q_n)$ be a parameterization of $(X_n,\scrp_n)$. Recall from Lemma \ref{paramterizationlemma2} that we may assume that $q_n$ is a topological quotient map for each $n\in\bbn$. Then there exists induced maps $g_{n+1,n}:T_{n+1}\to T_n$ such that $g_{n+1,n}\circ q_{n+1}=q_n\circ f_{n+1,n}$ and $g_{n+1}(V_{n+1})\subseteq V_n$. Let $h_{n+1,n}:V_{n+1}\to V_n$ be the restriction of $g_{n+1,n}$. We have the following morphisms of inverse systems where $i_n:V_n\to T_n$ is the inclusion map.
\[\xymatrix{
X_{lim} \ar[d]^-{q_{\infty}} \ar[r] & \varprojlim_{n}X_n \ar[d]^-{\varprojlim_n q_n}  & \cdots \ar[r]^-{f_{4,3}} & X_3 \ar[d]^-{q_3} \ar[r]^-{f_{3,2}} & X_2 \ar[d]^-{q_2}  \ar[r]^-{f_{2,1}} & X_1 \ar[d]^-{q_1} \\
T_{lim} \ar[r] & \varprojlim_{n}T_n  & \cdots \ar[r]^-{g_{4,3}} & T_3 \ar[r]^-{g_{3,2}} & T_2 \ar[r]^-{g_{2,1}} & T_1\\
V_{lim} \ar[u]_-{i_{\infty}} \ar[r] & \varprojlim_{n}V_n \ar[u]_-{\varprojlim_n i_n}   & \cdots \ar[r]^-{h_{4,3}} & V_3 \ar[u]_-{i_3} \ar[r]^-{h_{3,2}} & V_2 \ar[u]_-{i_2}  \ar[r]^-{h_{2,1}} & V_1 \ar[u]_-{i_1} \\
}\]
On the left, we let $q_{\infty}$ be the restriction of $\varprojlim_n q_n$ to $X_{lim}$ and $T_{lim}=\im(q_{\infty})$. Note that $\varprojlim_n i_n$ is a closed embedding and so we define $V_{lim}=T_{lim}\cap \im (\varprojlim_n i_n)$. Then the three unlabeled maps are inclusion maps. In the next paragraph, we show that $q_{\infty}$ meets the hypotheses of Lemma \ref{monotonelemma2}.

Note that since each set $V_n$ is closed in $T_n$ and $V_{lim}=T_{lim}\cap \prod_{n\in\bbn}V_n$, that $V_{lim}$ is closed in $T_{lim}$. Also, since an inverse limit of a sequence of totally disconnected spaces is totally disconnected, $\varprojlim_{n}V_n$ and it's subspace $V_{lim}$ are totally disconnected. Using the above diagram, it is straightforward to check that $q_{\infty}^{-1}(V_{lim})=\bigcup\scrp_{\infty}$. Specifically, if $(v_n)_{n\in\bbn}\in V_{lim}$ and $P_n=q_{n}^{-1}(v_n)$, then $q_{\infty}^{-1}((v_n)_{n\in\bbn})=X_{lim}\cap\prod_{n\in\bbn}P_n$. Next, we check that $q_{\infty}$ maps $X_{lim}\backslash \bigcup\scrp_{\infty}$ injectively into $T_{lim}\backslash V_{lim}$. If $(a_n)_{n\in\bbn}\in X_{lim}\backslash \bigcup\scrp_{\infty}$, then there exists $N\in\bbn$ such that $a_n\in \tree(X_n)$ for all $n\geq N$. Since $q(a_N)\notin V_n$, we have $q_{\infty}((a_n)_{n\in\bbn})=(q_n(a_n))_{n\in\bbn}\notin V_{lim}$. Moreover, suppose $(b_n)_{n\in\bbn}\in X_{lim}\backslash \bigcup\scrp_{\infty}$ and $q_{\infty}((a_n)_{n\in\bbn})=q_{\infty}((b_n)_{n\in\bbn})$. Find $M\geq N$ such that for all $n\geq M$, we have $b_n\in \tree(X_n)$. If $n\geq M$, we have $q_n(a_n)=q_n(b_n)$ where $a_n,b_n\in \tree(X_n)$. But $q_n$ maps $\tree(X_n)$ bijectively onto $T_n\backslash V_n$. Thus $a_n=b_n$ for all $n\geq M$ and we conclude that $(a_n)_{n\in\bbn}=(b_n)_{n\in\bbn}$. Altogether, we see that the fibers of $q_{\infty}$ are always path-connected (either a point or an element of $\scrp_{\infty}$). By Lemma \ref{limitsoftreeslemma}, $T_{lim}$ contains no simple closed curve (and $T_{\infty}$ is a topological tree).

With the hypotheses of Lemma \ref{monotonelemma2} confirmed, it follows that if $\alpha:S^1\to X_{lim}$ is an injective loop, then $q_{\infty}\circ\alpha$ is constant. Every simple closed curve in $X_{lim}$ must have image in some non-degenerate fiber of $q_{\infty}$, which is an element of $\scrp_{\infty}$. With all of the conditions on the set of pieces verified for $X_{lim}$, we conclude that they also hold for $X_{\infty}$. This completes the proof that $\scrp_{\infty}$ is a disjoint tree-grading of $X_{\infty}$.
\end{proof}

In the remainder of this section, we fix a disjointly tree-graded space $(X,\scrp)$ with countably many non-degenerate pieces $P_1,P_2,P_3,\dots$ (and possibly many degenerate pieces). For each $n\in\bbn$, let $\scrf_n=\{P_1,P_2,\dots,P_n\}$ and let $X_{n}$ denote the metric quotient $X_{\scrf_n}$ from Section \ref{sectionmetricquotient} where all pieces $P\in\scrp\backslash\scrf_n$ are collapsed to a point. We identify $P_1,P_2,\dots, P_n$ with their homeomorphic images in $X_n$ and take $\scrp_n$ to denote the disjoint tree-grading on the metric quotient $X_n$ (including images of degenerate pieces). Let $F_n:X\to X_n$ and $f_{n+1,n}:X_{n+1}\to X_n$ denote the canonical grade-preserving metric quotient maps. Let $X_{lim}=\varprojlim_{n}(X_n,f_{n+1,n})$ be the inverse limit and $f_{n}:X_{lim}\to X_n$ be the projection maps of the limit. The maps $F_n$ agree with the bonding maps $f_{n+1,n}$ and thus induce a canonical map $F_{\infty}:X\to X_{lim}$ given by $F_{\infty}(x)=(F_n(x))_{n\in\bbn}$. Let $X_{\infty}=\lpc(X_{lim})$ as in Theorem \ref{inversesystemthm}. Since the disjointly tree-graded space $X$ is locally path-connected by definition, that the universal property of locally path-connected coreflections gives that $F_{\infty}:X\to X_{\infty}$ is continuous.
\[\xymatrix{
X \ar@/_1pc/[dr]^-{F_{n+1}} \ar@/_1.5pc/[ddr]_-{F_n} \ar[r]^-{F_{\infty}} & X_{\infty}  \ar[r]^-{id} & X_{lim} \ar@/^1pc/[dl]_-{f_{n+1}} \ar@/^1.5pc/[ddl]^-{f_n}\\
& X_{n+1} \ar[d]^-{f_{n+1,n}} \\
& X_n
}\]
It is clear that the inverse system of disjointly tree-graded spaces $(X_n,\scrp_n)$ meets the hypotheses of Theorem \ref{inversesystemthm} and so $X_{\infty}$ admits a canonical disjoint tree-grading, which we denote by $\scrp_{\infty}$. Since $X$ is locally path-connected, $F_{\infty}:X\to X_{\infty}$ is continuous. While $F_{\infty}$ need not be a homeomorphism (see Example \ref{nothomeoexample}), we show that it does preserve much of the structure of $X$. In particular, $X_{\infty}$ may be regarded as $X$ with a potentially coarser topology.

\begin{example}\label{nothomeoexample}
Recall the disjointly tree-graded spaces $X$ and $Y$ from Example \ref{bijectionexample}. Let $P_n$ denote the circle in $X$ that is attached to $\ui$ at the point $1/n$. In this case, $F_{\infty}:X\to X_{lim}$ is not a homeomorphism. Indeed, $X$ is not compact. However, $X_{lim}$ is compact since it is an inverse limit of the compact spaces $[0,1]\cup \bigcup_{k=1}^{n}P_k$. Thus, $X_{lim}$ is homeomorphic to $Y$.
\end{example}

\begin{lemma}\label{thetalemma}
The map $F_{\infty}:X\to X_{\infty}$ is bijective, grade-preserving, and maps the pieces of $X$ homeomorphically to the pieces of $X_{\infty}$. Moreover, if $\pc(X)$ is uniformly $1$-$UV_0$, then $F_{\infty}:X\to X_{\infty}$ is $\pi_1$-injective.
\end{lemma}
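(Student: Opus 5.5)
We check the claimed properties of $F_{\infty}$ in turn and obtain $\pi_1$-injectivity by applying Proposition \ref{pioneinjectiveprop}.

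\emph{Injectivity.} Let $x\neq y$ in $X$.
\begin{itemize}
\item If $x$ and $y$ lie in distinct pieces, or one of them lies in $\tree(X)$, then $\Gamma_{\scrf}$ is injective on $\tree(X)$ and does not identify points of distinct pieces. Hence already $F_1(x)\neq F_1(y)$.
\item If $x,y$ lie in a common piece $P$, then $P$ is non-degenerate, so $P=P_k$ for some $k$. Since $F_n$ is the identity on $P_k$ for $n\geq k$, we get $F_n(x)\neq F_n(y)$.
\end{itemize}

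\emph{Surjectivity.} Take $(a_n)\in X_{lim}$.
\begin{itemize}
\item If some $a_N\in\tree(X_N)$, then $a_N$ has a unique $F_N$-preimage $x\in\tree(X)$. Compatibility gives $a_n=F_n(x)$ for $n\leq N$. For $n>N$, $a_n$ is an $f_{n,N}$-preimage of a point of $\tree(X_N)$, and that preimage is unique, so again $a_n=F_n(x)$.
\item Otherwise every $a_n\in\pc(X_n)$, and $(a_n)$ lies in some piece $X_{lim}\cap\prod_n Q_n$ of $X_{\infty}$ for a coherent sequence $(Q_n)$. If $Q_n=P_k$ for some $n\geq k$, then $(a_n)$ is determined by the point $a_n\in P_k$, and that point is the image of the same point of $X$. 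If instead every $Q_n$ is a collapsed point $y_{P^{(n)}}$, coherence forces $P^{(n)}$ to be a single piece $P$ of $X$ (degenerate, or $P_k$ with $n<k$ for all $n$, which is impossible). Then $(a_n)=F_{\infty}(p)$ for the unique point $p\in P$.
\end{itemize}

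\emph{Grading.} The same bookkeeping shows that $F_{\infty}$ sends pieces onto pieces, with $F_{\infty}(P_k)=X_{lim}\cap\prod_n Q_n$ and $Q_n=P_k$ for $n\geq k$. It also shows that $F_{\infty}(\tree(X))=\tree(X_{\infty})$, since a point is in $\tree$ exactly when some coordinate is in $\tree(X_n)$. Theorem \ref{inversesystemthm} gives that $f_k:X_{\infty}\to X_k$ maps $F_{\infty}(P_k)$ homeomorphically onto $P_k$. We also have $f_k\circ F_{\infty}|_{P_k}=F_k|_{P_k}=\mathrm{id}$. Hence $F_{\infty}|_{P_k}$ is a homeomorphism onto its image, being the inverse of a homeomorphism. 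Degenerate pieces are trivial.

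\emph{$\pi_1$-injectivity.} Assume $\pc(X)$ is uniformly $1$-$UV_0$. Then $F_{\infty}:X\to X_{\infty}$ is a continuous grade-preserving injection with $F_{\infty}(\tree(X))\subseteq\tree(X_{\infty})$. Each restriction to a piece is a homeomorphism onto a piece, and is therefore $\pi_1$-injective. Proposition \ref{pioneinjectiveprop} then gives that $F_{\infty}$ is $\pi_1$-injective.

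The main obstacle is the homeomorphism-on-pieces claim. The topology of $X_{\infty}$ is the $\lpc$ topology, and continuity of the inverse $F_{\infty}(P_k)\to P_k$ must be checked in that topology. This is exactly what Theorem \ref{inversesystemthm} supplies, through the locally path-connected embedding $i:P_k\to X_{\infty}$. A secondary subtlety is the surjectivity argument for coherent sequences of collapsed points: we must confirm that coherence pins down a single piece of $X$. This follows because the fiber of $f_{n+1,n}$ over $y_P$ is $y_P$ itself, or $P_{n+1}$ when $P=P_{n+1}$.
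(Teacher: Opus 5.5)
Your proposal is correct and follows the paper's proof. It establishes bijectivity by bookkeeping in the inverse limit, which the paper only calls a ``straightforward analysis''; it gets the homeomorphism on pieces from a continuous left inverse built from a projection $f_n$, where the paper uses $r\circ f_n$ with the retraction $r:X_n\to P_n$; and it deduces $\pi_1$-injectivity from Proposition \ref{pioneinjectiveprop}.

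One small remark: the appeal to Theorem \ref{inversesystemthm} is unnecessary, and that theorem only gives the homeomorphism for all but finitely many indices, not for $n=k$ specifically. Since $f_k\circ F_{\infty}|_{P_k}=\mathrm{id}_{P_k}$ and $f_k$ is continuous on the finer topology of $X_{\infty}=\lpc(X_{lim})$, the continuous bijection $F_{\infty}|_{P_k}$ is already a homeomorphism onto its image. So continuity of the inverse is not really an obstacle.
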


\begin{proof}
A straightforward analysis of the inverse limit shows that $F_{\infty}$ is bijective and that $F_{\infty}$ maps $P\in\scrp$ to the piece $F_{\infty}(P)=X_{lim}\cap \prod_{n\in\bbn}F_n(P)$ of $X_{\infty}$. Additionally, if $P_n$ is a non-degenerate piece of $X$ and $i:P_n\to X$ is the inclusion map, then $F_{\infty}\circ i:P_n\to X_{\infty}$ is a section with left-inverse given by $r\circ f_{n}:X_{\infty}\to P_n$ where $r:X_n\to P_n$ is the canonical non-expansive retraction.
\[\xymatrix{
X \ar@/^2pc/[rrr]^-{F_n} \ar[r]_-{F_{\infty}} & X_{\infty} \ar[r]_-{id} & X_{lim} \ar[r]_-{f_n} & X_n \ar[d]^-{r} \\
P_n \ar@{=}[rrr] \ar[u]^-{i} &&& P_n
}\]
Hence, $F_{\infty} $ maps the pieces of $X$ homeomorphically onto the pieces of $X_{\infty}$. Note that $F_{\infty}$ meets the required hypotheses of Proposition \ref{pioneinjectiveprop}. Therefore, if we assume that $\pc(X)$ is uniformly $1$-$UV_0$, then $F_{\infty}$ is $\pi_1$-injective by Proposition \ref{pioneinjectiveprop}.
\end{proof}

\begin{theorem}\label{shapeinjectivethm}
Fix $x_0\in X$ and set $x_n=F_n(x_0)$ for all $n\in\bbn$. If $\pc(X)$ is uniformly $1$-$UV_0$, then the canonical homomorphism \[\phi:\pi_1(X,x_0)\to \varprojlim_{n}(\pi_1(X_n,x_n),f_{n+1,n\#})\]
given by $\phi([\alpha])=([F_n\circ\alpha])_{n\in\bbn}$ is injective.
\end{theorem}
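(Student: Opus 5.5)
Since Lemma \ref{thetalemma} already gives that $F_{\infty}:X\to X_{\infty}$ is $\pi_1$-injective, the task reduces to showing that the canonical homomorphism $\psi:\pi_1(X_{\infty},F_{\infty}(x_0))\to\varprojlim_n\pi_1(X_n,x_n)$, $\psi([\beta])=([f_n\circ\beta])_n$, is injective on the image of $F_{\infty\#}$; indeed $\phi=\psi\circ F_{\infty\#}$. So fix a loop $\alpha$ in $X$ with $F_n\circ\alpha$ inessential in $X_n$ for every $n$, and set $\beta=F_{\infty}\circ\alpha$. The aim is to build a null-homotopy of $\beta$ in $X_{\infty}$ (hence of $\alpha$ in $X$, by Lemma \ref{thetalemma}).

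The tool will be generalized universal coverings. By Lemma \ref{oneuvzerofinitetreelemma} together with Corollary \ref{oneuvzeroretractioncor}, each $X_n$ has finitely many non-degenerate pieces, and these are $1$-$UV_0$, being retracts of $\pc(X)$ (uniform $1$-$UV_0$ passes to $\pc(X_n)$ since pieces of $X_n$ are pieces of $X$ or points). Hence $X_n$ is $1$-$UV_0$, and Theorem \ref{uvzeroimpliesgenunicovtheorem} gives a generalized universal covering $p_n:\tX_n\to X_n$. Lemma \ref{finitegencovlemma} gives $\tX_n$ a disjoint tree-grading with simply connected pieces, each mapping by a generalized universal covering onto a piece of $X_n$. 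Choose basepoints $\tilde x_n$ over $x_n$. Since $\tX_{n+1}$ is simply connected, $f_{n+1,n}\circ p_{n+1}$ lifts to a grade-preserving map $\tilde f_{n+1,n}:\tX_{n+1}\to\tX_n$. For a coherent sequence of pieces of $\tX_n$, the bonding maps are eventually the lift of the identity between universal covers of a fixed piece $P_k$, hence eventually homeomorphisms. Theorem \ref{inversesystemthm} then yields a disjointly tree-graded $\tX_{\infty}=\lpc$ of the basepoint path-component of $\varprojlim\tX_n$, whose pieces are inverse limits of simply connected pieces that are eventually homeomorphic, hence simply connected.

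Next I lift $\beta$. Because each $F_n\circ\alpha$ is inessential, it lifts to a loop $\tilde\alpha_n$ in $\tX_n$ at $\tilde x_n$, and uniqueness of lifts gives $\tilde f_{n+1,n}\circ\tilde\alpha_{n+1}=\tilde\alpha_n$. The resulting loop $\tilde\beta=(\tilde\alpha_n)_n$ is continuous into $\tX_{\infty}$ since $S^1$ is locally path-connected, and it projects to $\beta$ under the induced map $p_{\infty}:\tX_{\infty}\to X_{\infty}$. It then suffices to show that $\tX_{\infty}$ is simply connected, for then $\tilde\beta$, and hence $\beta=p_\infty\circ\tilde\beta$, is null-homotopic.

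Simple connectivity of $\tX_{\infty}$ will come from Lemma \ref{simplyconnectedlemma}: its pieces are simply connected, so what remains is that $\pc(\tX_{\infty})$ is uniformly $1$-$UV_0$. This is the main obstacle. My approach is to use lifted metrics (Lemma \ref{liftedmetriclemma}) on the $\tX_n$, note via Lemma \ref{uniformuvliftlemma} that $\pc(\tX_n)$ is uniformly $1$-$UV_0$ with the same $\epsilon$–$\delta$ function as $\pc(X)$, and then metrize $\tX_{\infty}$ compatibly so that each piece inherits the lifted metric from the stage where it stabilizes. One could alternatively restrict everything to the compact image of a null-homotopy candidate (Lemma \ref{expansionofpclemma}) to control the metrics. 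If this uniformity proves delicate, the fallback is to apply Lemma \ref{kextensionlemma} directly to a tree-efficient representative of $\tilde\beta$. This gives boundary loops $g|_{\partial O_j}$ in single pieces of $\tX_\infty$, which are inessential because those pieces are simply connected. Projecting these loops to $X$ via the piece homeomorphisms, their diameters tend to $0$, so Lemma \ref{extensionlemma}, applied with $A=\pc(X)$, produces the required filling.
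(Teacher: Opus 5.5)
Your setup, through the lift $\tilde\beta:S^1\to\tX_\infty$, matches the paper's. The problem is your primary plan for the last step, which has a genuine gap.

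\textbf{The metrization cannot work.} You cannot metrize $\tX_\infty$ compatibly so that each piece carries the lifted metric from the stage where it stabilizes. Take Example \ref{nothomeoexample}. The lifts of the circles $P_k$ attached along the lifted base arc converge in $\tX_\infty$ to the lift of $0$: for $n<k$ their images in $\tX_n$ are single points on the lifted arc tending to the lift of $0$, and each is joined to it by a subarc of that arc. So in any compatible metric their diameters tend to $0$. The stage-$k$ lifted metric, however, gives each of them diameter at least $\diam(P_k)$, which is bounded away from $0$. (The canonical retraction onto $P_k$ factors through $X_k$, so $P_k$ sits isometrically in $X_k$.)

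\textbf{How the paper avoids this.} Whether $\pc(X_\infty)$, and hence $\pc(\tX_\infty)$, is uniformly $1$-$UV_0$ is exactly what the paper sets aside. It instead pulls $p_\infty$ back along the continuous bijection $F_\infty$ to a generalized covering $p:\tX\to X$. It checks that $\tX$ is disjointly tree-graded with $\pc(\tX)=p^{-1}(\pc(X))$, and that $\wt{F}_\infty$ maps its pieces homeomorphically onto the simply connected pieces of $\tX_\infty$. Uniform $1$-$UV_0$ of $\pc(\tX)$ then comes for free from Lemma \ref{uniformuvliftlemma}, Lemma \ref{simplyconnectedlemma} makes $\tX$ simply connected, and $\alpha$ itself is lifted to $\tX$.

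\textbf{Your fallback is valid but skips the key step.} It is a genuinely shorter route, but as written it omits the point that makes it work. The piecewise maps $(F_\infty|_P)^{-1}\circ p_\infty$ do not define a map on all of $K$. They are also not uniformly continuous across pieces: in Example \ref{nothomeoexample}, pieces shrink in $X_\infty$ but not in $X$. So they cannot justify that the projected diameters tend to $0$.

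The fix is as follows. $F_\infty$ is a continuous injection of the compact set $\im(\alpha)$ into the Hausdorff space $X_\infty$, hence an embedding. Also $p_\infty(\im\tilde\beta)=\im(\beta)=F_\infty(\im\alpha)$. Therefore $\Psi=(F_\infty|_{\im\alpha})^{-1}\circ p_\infty$ is continuous, indeed uniformly continuous, on the compact set $\im(\tilde\beta)$, which contains $\im(g)$. Consequently:
\begin{itemize}
\item $\Psi\circ g:K\to X$ is continuous;
\item it sends each $\partial O_j$ into a piece of $X$ by a loop that is inessential there, because $F_\infty$ restricts to homeomorphisms on pieces and the pieces of $\tX_\infty$ are simply connected;
\item $\diam(\Psi g(\partial O_j))\to 0$.
\end{itemize}
Lemma \ref{extensionlemma} with $A=\pc(X)$ then shows that $\Psi\circ\tilde\beta'$ is inessential in $X$, where $\tilde\beta'$ is your tree-efficient representative. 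Applying $\Psi$ to the homotopy of Lemma \ref{efficientlemma}, which stays inside $\im(\tilde\beta)$, gives $\Psi\circ\tilde\beta'\simeq\Psi\circ\tilde\beta=\alpha$.

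\textbf{Comparison.} This is the argument of Proposition \ref{pioneinjectiveprop} run on the cover. It shows $\alpha$ is null directly, so you need neither the simple connectivity of $\tX_\infty$, nor the covering property of $p_\infty$, nor the $\pi_1$-injectivity part of Lemma \ref{thetalemma}. The paper's pullback route takes more work but yields more: a generalized universal covering of $X$ itself, whose disjoint tree-grading has simply connected pieces.
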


\begin{proof}
To verify the injectivity of $\phi$, we construct an inverse limit of generalized covering maps (most of this proof will be devoted to analyzing this construction). Since $\pc(X)$ is uniformly $1$-$UV_0$ and each piece of $X$ is a retract of $\pc(X)$ by a non-expansive retraction, each piece of $X$ is uniformly $1$-$UV_0$ by Corollary \ref{uniformretractcor}. For the moment, fix $n\in\bbn$. Since $X_n$ has finitely many non-degenerate pieces and all of them are $1$-$UV_0$, Lemma \ref{oneuvzerofinitetreelemma} implies that $X_n$ is $1$-$UV_0$. By Theorem \ref{uvzeroimpliesgenunicovtheorem}, there exists a generalized universal covering map $p_n:\tX_n\to X_n$ for each $n\in\bbn$. Lemma \ref{finitegencovlemma} then ensures that $\tX_n$ admits a disjoint tree-grading $\wt{\scrp}_n$ consisting of the path-components of the preimages of the pieces of $X_n$ under $p_n$. In particular, each piece of $\tX_n$ is simply connected and a non-degenerate piece of $\tX_n$ is a path-component of $p_{n}^{-1}(P_i)$ for some $i\in\{1,2,\dots,n\}$. Part (4) of Lemma \ref{finitegencovlemma} ensures that if $\wt{Q}$ is a path-component of $p_{n}^{-1}(P_i)$, $i\in\{1,2,\dots, n\}$, then $p_n$ restricts to a generalized universal covering map $\wt{Q}\to P_i$. Allowing $n$ to vary, we note that since $\wt{X}_{n+1}$ is locally path-connected and simply connected, the bonding map $f_{n+1,n}:X_{n+1}\to X_n$ lifts to a map $\wt{f}_{n+1,n}:\tX_{n+1}\to \tX_n$ satisfying $p_n\circ \wt{f}_{n+1,n}=f_{n+1,n}\circ p_{n+1}$. Since each $f_{n+1,n}$ is grade-preserving, it follows that each lift $\wt{f}_{n+1,n}$ is a grade-preserving map. 

Next, we choose basepoints in the generalized universal covering spaces. Choose $\wt{x}_1\in p_{1}^{-1}(x_1)$ arbitrarily. Recall from Corollary \ref{isocor2} that inclusion maps $P_i\to X_n$, $1\leq i\leq n$ (with some choice of basepoints for the pieces $P_i$ and an appropriate choice of conjugating paths) induce an isomorphism $\ast_{i=1}^{n}\pi_1(P_i)\to \pi_1(X_n)$. Making this identification for each $n$, the induced homomorphism $f_{n+1,n\#}$ becomes identified with the homomorphism $\ast_{i=1}^{n+1}\pi_1(P_i)\to \ast_{i=1}^{n}\pi_1(P_i)$ collapsing the $(n+1)$-st free factor. It follows that the bonding maps $f_{n+1,n}:X_{n+1}\to X_n$ are $\pi_1$-surjective and so each $\wt{f}_{n+1,n}$ maps the fiber $p_{n+1}^{-1}(x_{n+1})$ surjectively onto $p_{n}^{-1}(x_n)$. Hence, we may recursively choose points $\wt{x}_n\in \wt{X}_n$ so that $\wt{f}_{n+1,n}(\wt{x}_{n+1})=\wt{x}_n$ and $x_n=p_n(\wt{x}_n)$ for each $n\in\bbn$. Thus $\wt{x}_{\infty}=(\wt{x}_n)_{n\in\bbn}\in \varprojlim_{n}(\wt{X}_n,\wt{f}_{n+1,n})$. With these basepoints, we obtain the following inverse system of based generalized universal covering maps. Let $\varprojlim_{n}p_n$ be the corresponding inverse limit map (in the topological category).
\[\xymatrix{
\varprojlim_{n}(\wt{X}_n,\wt{f}_{n+1,n}) \ar[d]^-{\varprojlim_{n}p_n} & \cdots \ar[r] & \wt{X}_3  \ar[d]^-{p_3} \ar[r]^-{\wt{f}_{3,2}} & \wt{X}_2 \ar[d]^-{p_2} \ar[r]^-{\wt{f}_{2,1}} &\wt{X}_1 \ar[d]^-{p_1} \\
X_{lim} &\cdots \ar[r] & X_3 \ar[r]_-{f_{3,2}} & X_2 \ar[r]_-{f_{2,1}} & X_1 \\
}\]
Let $\wt{X}_{lim}$ be the path-component of $\wt{x}_{\infty}$ in $\varprojlim_{n}(\wt{X}_n,\wt{f}_{n+1,n})$ and $\wt{X}_{\infty}=\lpc(\wt{X}_{lim})$. 

Again fix $n\in\bbn$. Certainly, $\wt{f}_{n+1,n}$ maps a degenerate piece homeomorphically onto a degenerate piece. Since $f_{n+1,n}$ maps all non-degenerate pieces of $X_{n+1}$, except $P_{n+1}$, homeomorphically to a piece of $X_n$, it is straightforward to check that $\wt{f}_{n+1,n}$ maps each path-component of $p_{n+1}^{-1}(P_{i})$ for $1\leq i\leq n$ homeomorphically onto a piece of $\wt{X}_n$ (continuity of the inverse is essentially the same argument used in the proof of Lemma \ref{restrictguclemma}). Allowing $n$ to vary, note that if have a coherent sequence of pieces $\wt{Q}_n\in \wt{\scrp}_n$ (i.e. such that $\wt{f}_{n+1,n}(\wt{Q}_{n+1})\subseteq \wt{Q}_{n}$ for all $n\in\bbn$) then for sufficiently large $n$, $\wt{f}_{n+1,n}$ maps $\wt{Q}_{n+1}$ homeomorphically onto $\wt{Q}_n$. Hence, the hypotheses of Theorem \ref{inversesystemthm} are met and we conclude that $\wt{X}_{\infty}$ inherits the structure of a disjointly tree-graded space where pieces are non-empty sets of the form $\wt{Q}_{\infty}=\wt{X}_{lim}\cap \prod_{n\in\bbn}\wt{Q}_n$ where $\{\wt{Q}_n\}_{n\in\bbn}$ is a coherent family of pieces. Moreover, Theorem \ref{inversesystemthm} concludes that $\wt{f}_n$ maps $\wt{Q}_{\infty}$ homeomorphically onto $\wt{Q}_n$ for sufficiently large $n$.

Let $p_{lim}:\wt{X}_{lim}\to X_{lim}$ denote the restriction of $\varprojlim_{n}p_n$ to $\wt{X}_{lim}$, let $\wt{f}_n:\wt{X}_{lim}\to \wt{X}_n$ denote the restriction of the $n$-th projection map, and let $p_{\infty}=\lpc(p_{lim}):\wt{X}_{\infty}\to X_{\infty}$. This construction makes $p_{\infty}$ the limit of the sequence of based generalized covering maps $p_n:(\wt{X}_n,\wt{x}_n)\to (X_n,x_n)$ in the category of based generalized covering maps  and thus $p_{\infty}:\wt{X}_{\infty}\to X_{\infty}$ is a generalized covering map (this follows by directly combining the lifting property of generalized covering maps with the universal property of inverse limits but a proof can be found in \cite[Lemma 2.31]{Brazcat}).
\[\xymatrix{
\wt{X}_{\infty} \ar[d]_-{p_{\infty}} \ar[r]^-{id} & \wt{X}_{lim} \ar[d]^-{p_{lim}} & \cdots \ar[r] & \wt{X}_3  \ar[d]^-{p_3} \ar[r]^-{\wt{f}_{3,2}} & \wt{X}_2 \ar[d]^-{p_2} \ar[r]^-{\wt{f}_{2,1}} &\wt{X}_1 \ar[d]^-{p_1} \\
 X_{\infty} \ar[r]_-{id} & X_{lim} & \cdots \ar[r] & X_3 \ar[r]_-{f_{3,2}} & X_2 \ar[r]_-{f_{2,1}} & X_1 \\
}\]
In the next two propositions, we analyze the disjoint tree-grading structure of $\tX_{\infty}$.

\begin{proposition}\label{equalityprop1}
$\pc(\wt{X}_{\infty})=p_{\infty}^{-1}(\pc(X_{\infty}))$. In particular, the pieces of $\tX_{\infty}$ are precisely the path-components of the preimages of the pieces of $X_{\infty}$ under $p_{\infty}$.
\end{proposition}

\begin{proof}
Suppose $(\wt{a}_n)$ is an element of piece $\wt{X}_{lim}\cap \prod_{n\in\bbn}\wt{A}_n$ of $\tX_{\infty}$ where $\wt{A}_n$ is a path-component of $p_{n}^{-1}(A_n)$ for $A_n\in\scrp_n$. Then $\{A_n\}_{n\in\bbn}$ is a coherent sequence and we have $p_{\infty}((\wt{a}_n))\in X_{lim}\cap \prod_{n\in\bbn}A_n\subseteq \pc(X_{\infty})$. This proves one inclusion. For the other, suppose $p_{\infty}((\wt{a}_n))$ lies in piece $X_{lim}\cap \prod_{n\in\bbn}A_n$ of $X_{\infty}$ where $A_n\in \scrp_n$. Let $\wt{A}_n$ be the path-component of $p_{n}^{-1}(A_n)$ containing $\wt{f}_n(\wt{a}_n)$. Now $\{\wt{A}_n\}_{n\in\bbn}$ is a coherent sequence of pieces and thus $(\wt{a}_n)\in \wt{X}_{lim}\cap \prod_{n\in\bbn}\wt{A}_n\subseteq \pc(\tX_{\infty})$.

Since the pieces of $\tX_{\infty}$ are the path-components of $\pc(\wt{X}_{\infty})$ (by definition of disjoint tree-grading), the second statement follows from the first. 
\end{proof}

\begin{proposition}\label{piecesprop1}
If $\wt{A}_{\infty}$ is a piece of $\tX_{\infty}$, then $A_{\infty}=p(\wt{A}_{\infty})$ is a piece of $X_{\infty}$ and the restriction map $p_{\infty}|_{\wt{A}_{\infty}}:\wt{A}_{\infty}\to A_{\infty}$ is a generalized universal covering map. In particular, the pieces of $\wt{X}_{\infty}$ are simply connected.
\end{proposition}

\begin{proof}
That $A_{\infty}$ is a piece of $X_{\infty}$ and that $\wt{A}_{\infty}$ is a path-component of $p^{-1}(A_{\infty})$ follows from Proposition \ref{equalityprop1}. Find $n$ sufficiently large so that (1) $\wt{f}_n$ maps $\wt{A}_{\infty}$ homeomorphically onto a piece $\wt{A}_n$ of $\wt{X}_n$, (2) $f_n$ maps $A_{\infty}$ homeomorphically onto a piece $A_n$ on $X_n$, and (3) $A_n=p_n(\wt{A}_n)$. In the second paragraph of this proof, we established that the restriction map $p_n:\wt{A}_n\to A_n$ is a generalized universal covering map. The following diagram, in which we confuse maps with their restrictions, makes it clear that $p_{\infty}$ restricts to a generalized universal covering map on pieces.
\[\xymatrix{
\wt{A}_{\infty} \ar[d]_-{p_{\infty}} \ar[r]^-{\wt{f}_n}_-{\cong} & \wt{A}_n \ar[d]^-{p_n} \\
A_{\infty} \ar[r]_-{f_n}^-{\cong} & A_n
}\]
\end{proof}

Unfortunately, we cannot conclude that $\wt{X}_{\infty}$ is simply connected using Lemma \ref{simplyconnectedlemma} because it is not clear if $\pc(X_{\infty})$ is uniformly $1$-$UV_0$ and this appears tedious to verify. Hence, we will work to construct an analogous disjointly tree-graded space over $X$ itself. Applying Remark \ref{pullbackremark}, take $p:\wt{X}\to X$ to be the pullback based generalized covering map of $p_{\infty}:\wt{X}_{\infty}\to X_{\infty}$ along the continuous bijection $F_{\infty}:(X,x_0)\to (X_{\infty},(x_n))$. Let $\wt{F}_{\infty}:\wt{X}\to \wt{X}_{\infty}$ be the restriction of the projection map. Since $F_{\infty}$ is bijective, $\wt{F}_{\infty}$ is injective (note that $F_{\infty}$ need not be bijective because $\wt{X}$ is constructed by taking the locally path-connected coreflection of a path-component of the topological pullback).
\[\xymatrix{
\wt{X}  \ar[r]^{\wt{F}_{\infty}} \ar[d]_-{p} & \wt{X}_{\infty}  \ar[r]^-{id} \ar[d]^-{p_{\infty}} & \wt{X}_{lim} \ar[d]^-{p_{lim}} \ar[r]^-{\wt{f}_n} & \wt{X}_n \ar[d]^-{p_n}\\
X \ar@/_1.5pc/[rrr]_-{F_n} \ar[r]^-{F_{\infty}} & X_{\infty} \ar[r]^{id} & X_{lim} \ar[r]^-{f_n} & X_n
}\]

\begin{proposition}\label{equalityprop2}
$p^{-1}(\pc(X))=\wt{F}_{\infty}^{-1}(\pc(\wt{X}_{\infty}))$.
\end{proposition}

\begin{proof}
If $\wt{x}\in \tX$ and $p(\wt{x})$ lies in piece $P$ of $X$, then $p_{\infty}\circ \wt{F}_{\infty}(\wt{x})=F_{\infty}\circ p(\wt{x})$ lies in the piece $F_{\infty}(P)$ of $X_{\infty}$. Using Proposition \ref{equalityprop1}, we have $\wt{x}\in \wt{F}_{\infty}^{-1}(p_{\infty}^{-1}(\pc(X_{\infty})))= \wt{F}_{\infty}^{-1}(\pc(\wt{X}_{\infty}))$. For the other inclusion, suppose $\wt{F}_{\infty}(\wt{x})=(\wt{a}_n)$ lies in piece $\wt{X}_{lim}\cap \prod_{n}\wt{A}_n$ of $\wt{X}_{\infty}$ where $\wt{A}_n$ is the path-component of $p_{n}^{-1}(A_n)$ for piece $A_n$ of $X_n$. Then $(p_n(\wt{a}_n))$ lies in the piece $X_{lim}\cap \prod_{n}A_n$ of $X_{\infty}$. Let $A$ be the unique piece of $X$ that $F_{\infty}$ maps onto $X_{lim}\cap \prod_{n}A_n$. Since $F_{\infty}\circ p(\wt{x})=(p_n(\wt{a}_n))\in X_{lim}\cap \prod_{n}A_n=F_{\infty}(A)$ where $F_{\infty}$ is injective, we must have $p(\wt{x})\in A\subseteq \pc(X)$.
\end{proof}

\begin{proposition}\label{equalityprop3}
A path-component of $p^{-1}(\pc(X))$ is the pre-image of a piece of $\wt{X}_{\infty}$ under $\wt{F}_{\infty}$.
\end{proposition}

\begin{proof}
Let $\wt{A}$ be a path-component of $p^{-1}(A)$ where $A\in\scrp$. It follows from Proposition \ref{equalityprop2} that $\wt{F}_{\infty}(\wt{A})$ must be contained in a piece $\wt{A}_{\infty}$ of $\wt{X}_{\infty}$. We check that $\wt{F}_{\infty}(\wt{A})=\wt{A}_{\infty}$. Set $A_{\infty}=p_{\infty}(\wt{A}_{\infty})$. By Proposition \ref{equalityprop1}, $A_{\infty}$ is a piece of $X_{\infty}$. Hence, $F_{\infty}$ maps $A$ homeomorphically onto $A_{\infty}$. We have the following diagrams where we temporarily confuse the maps with their respective restrictions.
\[\xymatrix{
\wt{A} \ar[d]_-{p} \ar[r]^{\wt{F}_{\infty}}  & \wt{A}_{\infty} \ar[d]^-{p_{\infty}}\\
A \ar[r]_-{F_{\infty}}^{\cong} & A_{\infty}
}\]
Let $\wt{b}_{\infty}\in \wt{A}_{\infty}$. Fix a point $\wt{a}_0\in \wt{A}$ and set $\wt{a}_{\infty}=\wt{F}_{\infty}(\wt{a}_0)$. Find a path $\wt{\gamma}:\ui\to \wt{A}_{\infty}$ from $\wt{a}_{\infty}$ to $\wt{b}_{\infty}$. Using the diagram above, we have a unique path $\delta:\ui\to A$ such that $F_{\infty}\circ\delta=p_{\infty}\circ\wt{\gamma}$. Since $p(\wt{a}_0)=\delta(0)$, we may find a unique path $\wt{\delta}:\ui\to \wt{X}$ with $\wt{\delta}(0)=\wt{a}_0$ and $p\circ\wt{\delta}=\delta$. In particular, since $\delta$ has image in $A$, $\wt{\delta}$ must have image in $\wt{A}$. Now $\wt{F}_{\infty}\circ\wt{\delta}:\ui\to \wt{X}_{\infty}$ is a path starting at $\wt{a}_{\infty}$ and satisfying $p_{\infty}\circ \wt{F}_{\infty}\circ\wt{\delta}=p_{\infty}\circ\wt{\gamma}$. Since $p_{\infty}$ is a generalized covering map, we have $\wt{F}_{\infty}\circ\wt{\delta}=\wt{\gamma}$. Thus, $\wt{F}_{\infty}(\wt{\delta}(1))=\wt{b}_{\infty}$ for $\wt{\delta}(1)\in \wt{A}$, completing the proof that $\wt{A}_{\infty}\subseteq\wt{F}_{\infty}(\wt{A})$.
\end{proof}

Let $\wt{\scrp}$ denote the set of path-components of $p^{-1}(\pc(X))$. Certainly, $\tX$ is path-connected, locally path-connected, and metrizable (as a generalized covering space over a metrizable space). It follows from Proposition \ref{equalityprop3} that each element of $\wt{\scrp}$ is the preimage $\wt{A}=\wt{F}_{\infty}^{-1}(\wt{A}_{\infty})$ where $\wt{A}_{\infty}$ is a piece of $\wt{X}_{\infty}$. Since $\wt{A}_{\infty}$ is closed in $\tX_{\infty}$, $\wt{A}$ is closed in $\tX$. Moreover, since $\pc(\tX_{\infty})$ is closed in $\tX_{\infty}$, $p_{\infty}^{-1}(\pc(\wt{X}_{\infty}))=\bigcup\wt{\scrp}$ is closed in $\tX$. If $C\subseteq \wt{X}$ is a simple closed curve, then $F_{\infty}(C)$ is a simple closed curve in $\tX_{\infty}$ and therefore lies in some piece $\wt{A}_{\infty}$ of $\tX_{\infty}$. Thus $C$ must lie in the preimage $\wt{A}=\wt{F}_{\infty}^{-1}(\wt{A}_{\infty})$, which is an element of $\scrp$. Since all conditions of Definition \ref{deftreegraded} are met, we conclude that $\wt{\scrp}$ is a disjoint tree-grading on $\tX$. Next, we check that each piece of $\tX$ is simply connected.

\begin{proposition}\label{piecesprop2}
$\wt{F}_{\infty}$ maps each element of $\wt{\scrp}$ homeomorphically to a piece of $\wt{X}_{\infty}$. In particular, every element of $\wt{\scrp}$ is simply connected.
\end{proposition}

\begin{proof}
Let $\wt{A}\in \wt{\scrp}$ and $\wt{a}_0\in \wt{A}$. By Proposition \ref{equalityprop3}, $F_{\infty}$ maps $\wt{A}$ by a continuous bijection to a piece $\wt{A}_{\infty}$ of $\wt{X}_{\infty}$. Set $A=p(\wt{A})$ and $A_{\infty}=p_{\infty}(\wt{A}_{\infty})$ and fix basepoints $a_0=p(\wt{a}_0)\in A$, $\wt{b}_0=\wt{F}_{\infty}(\wt{a}_0)\in\wt{A}_{\infty}$, and $b_0=p_{\infty}(\wt{b}_0)\in A_{\infty}$. Since $F_{\infty}$ maps $A$ homeomorphically onto $A_{\infty}$, we may take $g:(A_{\infty},b_0)\to (A,a_0)$ to be the inverse map, which satisfies $F_{\infty}\circ g=id_{A_{\infty}}$. Consider the commutative diagram from the proof of Proposition \ref{equalityprop3}. By Proposition \ref{piecesprop1}, the right map $p_{\infty}:\wt{A}_{\infty}\to A_{\infty}$ is a generalized universal covering map. Since $p$ is a generalized covering map and $\wt{A}_{\infty}$ is simply connected, the map $g\circ p_{\infty}:(\wt{A}_{\infty},\wt{b}_0) \to (A,a_0)$ lifts uniquely to a map $\wt{g}:(\wt{A}_{\infty},\wt{b}_0)\to (\wt{X},\wt{a}_0)$ such that $p\circ \wt{g}=g\circ p_{\infty}$. Since $\wt{g}$ has image in $p^{-1}(A)$ and maps $\wt{b}_0$ to $\wt{a}_0$, it must have image in $\wt{A}$. Moreover, since $p\circ \wt{g}\circ \wt{F}_{\infty}=g\circ F_{\infty}\circ p=p$, uniqueness of lifts gives $\wt{g}\circ \wt{F}_{\infty}=id_{\wt{A}_{\infty}}$. Thus $\wt{F}_{\infty}$ maps $\wt{A}$ homeomorphically onto $\wt{A}_{\infty}$. 

In Proposition \ref{piecesprop1}, we established that the pieces of $\tX_{\infty}$ are simply connected. Hence, the current Proposition is proved.
\end{proof}

Summarizing, we have shown that $p:\tX\to X$ is a generalized covering map and that the pieces of $\tX$ are precisely the path-components of the preimages of the pieces of $X$ under $p$. Since $\pc(\tX)=p^{-1}(\pc(X))$ and $\pc(X)$ is assumed to be uniformly $1$-$UV_0$, Lemma \ref{uniformuvliftlemma} gives that $\pc(\tX)$ is uniformly $1$-$UV_0$ when $\tX$ is equipped with the lifted metric $\wtd$. By Proposition \ref{piecesprop2}, every piece of $\tX$ is simply connected. We now appeal to Lemma \ref{simplyconnectedlemma} to conclude that $\tX$ is simply connected. It follows that $p:\tX\to X$ is a generalized \textit{universal} covering map.

With the above construction completed and the properties of the generalized covering maps verified, we finally address the injectivity of $\phi$. For convenience, we will take loops to be based at the basepoints fixed in the second paragraph of the proof (of Theorem \ref{shapeinjectivethm}). Suppose that $\alpha:\ui\to X$ is a loop based at $x_0$ such that $\beta_n=F_n\circ\alpha$ is inessential in $X_n$ for all $n\in\bbn$. We seek to show that $\alpha$ is essential in $X$. 

Let $\beta=F_{\infty}\circ\alpha:S^1\to X_{\infty}$ and note that $f_n\circ\beta=\beta_n$ for all $n$. Each inessential loop $\beta_n$ lifts to a unique loop $\wt{\beta}_n:S^1\to \wt{X}_n$ based at $\wt{x}_n$. Uniqueness of lifting ensures that $\wt{f}_{n+1,n}\circ\wt{\beta}_{n+1}=\wt{\beta}_n$ for all $n$. Thus there is an induced loop $\wt{\beta}:S^1\to \wt{X}_{lim}$ based at $\wt{x}_{\infty}$ such that $\wt{f}_{n}\circ\wt{\beta}=\wt{\beta}_n$ for all $n\in\bbn$. The universal property of the inverse limit $X_{lim}$ ensures that $p_{lim}\circ \wt{\beta}=\beta$. 
\[\xymatrix{
& \wt{X}_{lim} \ar[d]^-{p_{lim}} \ar[r]_-{\wt{f}_n} & \wt{X}_n \ar[d]^-{p_n}\\
S^1 \ar@{-->}[ur]^-{\wt{\beta}} \ar[r]_-{\beta} \ar@/_2pc/[rr]_-{\beta_n} \ar@/^3pc/[urr]^-{\wt{\beta}_n} & X_{lim} \ar[r]_-{f_n} & X_n
}\]
Since $\wt{X}_{\infty}=\lpc(X_{lim})$, $p_{\infty}=\lpc(p_{lim})$ and $S^1$ is locally path-connected, $\wt{\beta}:S^1\to \wt{X}_{\infty}$ is continuous and satisfies $p_{\infty}\circ\wt{\beta}=\beta$. Since $F_{\infty}\circ\alpha =\beta=p_{\infty}\circ\wt{\beta}$ and $p$ was constructed as the pullback generalized covering map, there is a unique lift $\wt{\alpha}:S^1\to \tX$ based at $\wt{x}_0$ such that $p\circ \wt{\alpha}=\alpha$ and $\wt{F}_{\infty}\circ\wt{\alpha}=\wt{\beta}$.
\[\xymatrix{
 S^1  \ar@/_3pc/[drr]_-{\beta} \ar[dr]_-{\alpha}  \ar@/^2pc/[rr]^-(.84){\wt{\beta}} \ar@/^3pc/[rrr]^-{\wt{\beta}} \ar@{-->}[r]^-{\wt{\alpha}}   & \wt{X}  \ar[r]^{\wt{F}_{\infty}} \ar[d]_-{p} & \wt{X}_{\infty}  \ar[r]^-{id} \ar[d]^-{p_{\infty}} & \wt{X}_{lim} \ar[d]^-{p_{lim}}  \\
      & X \ar[r]^-{F_{\infty}} & X_{\infty} \ar[r]_{id} & X_{lim}
}\]
Since we have proved that $\tX$ is simply connected, $\wt{\alpha}$ is inessential. It follows that $\alpha=p\circ\wt{\alpha}$ is inessential. This completes the proof that $\phi$ is injective.
\end{proof}

A restatement of Theorem \ref{shapeinjectivethm}, including the assumed hypotheses on $X$, is the following. Note that this is precisely Theorem \ref{mainthm1} in the case where $X$ has countably many non-degenerate pieces.

\begin{corollary}\label{countablecase}
Let $(X,\scrp)$ be a disjointly tree-graded space with countably many non-degenerated pieces and such that $\pc(X)$ is uniformly $1$-$UV_0$. If $\alpha:S^1\to X$ is an essential loop, then there exists a finite set $\scrf\subseteq \scrp$ such that if $\Gamma_{\scrf}:X\to X_{\scrf}$ is the metric quotient map, then $\Gamma_{\scrf}\circ\alpha:S^1\to X_{\scrf}$ is essential.
\end{corollary}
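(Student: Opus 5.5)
This is the contrapositive of Theorem \ref{shapeinjectivethm}, so I would deduce it directly from there. Enumerate the non-degenerate pieces of $X$ as $P_1,P_2,P_3,\dots$; if there are only finitely many, say $m$, repeat the last one or set $\scrf_n=\{P_1,\dots,P_{\min(n,m)}\}$. Take $\scrf_n=\{P_1,\dots,P_n\}$ and $X_n=X_{\scrf_n}$ as in the setup preceding Theorem \ref{shapeinjectivethm}.

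Given an essential loop $\alpha:S^1\to X$, I would regard it as a based loop at $x_0=\alpha(1)$ and apply Theorem \ref{shapeinjectivethm} with that basepoint. Since $[\alpha]\neq 1$ in $\pi_1(X,x_0)$ and $\phi$ is injective, $\phi([\alpha])=([F_n\circ\alpha])_{n}$ is non-trivial. Hence some coordinate $[F_n\circ\alpha]$ is non-trivial in $\pi_1(X_n,x_n)$. A based loop that is non-trivial in $\pi_1$ is essential as a free loop, since free null-homotopy is equivalent to based null-homotopy. Setting $\scrf=\scrf_n$ gives $\Gamma_{\scrf}=F_n$, so $\Gamma_{\scrf}\circ\alpha$ is essential.

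Two small points need attention. The first is that the degenerate pieces of $X$ are excluded from $\scrf$, which is harmless because collapsing a one-point piece does nothing: $X_{\scrf_n}$ agrees with the quotient in which every piece outside $\scrf_n$ is collapsed. The second is the finitely-many-pieces case, where the system stabilizes and $X_n=X$ for large $n$, so the conclusion is immediate.

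The only real content lies in Theorem \ref{shapeinjectivethm}, so I expect no step here to be a genuine obstacle.
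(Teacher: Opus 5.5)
Your proposal is correct and matches the paper, which presents this corollary simply as the contrapositive restatement of Theorem \ref{shapeinjectivethm}, taking $\scrf=\scrf_n$ so that $\Gamma_{\scrf}=F_n$. Your extra remarks make explicit what the paper leaves implicit, and both are valid: based versus free null-homotopy, and the finitely-many-pieces case, where collapsing only one-point pieces leaves $X$ unchanged isometrically.
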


\section{A Proof of Theorem \ref{mainthm1}}\label{sectionproofofmain}

Finally, we have all the ingredients to prove our main theorem. Recall the statement from the introduction.

\begin{proof}[Proof of Theorem \ref{mainthm1}]
The ``only if" direction is clear. For the converse, we fix a compatible path-diameter metric $d$ on $X$. Suppose that $\alpha:S^1\to X$ is essential. By Lemma \ref{expansionofpclemma}, there exist a sectional graded-subspace $(Y,\scrq)$ of $(X,\scrp)$ such that $\im(\alpha)\subseteq Y$, $Y$ is closed in $X$, $Y$ has countably many non-degenerate pieces, and such that $\alpha:S^1\to Y$ is essential in $Y$. Since $\pc(X)$ is assumed to be uniformly $1$-$UV_0$ in the subspace metric, Lemma \ref{sectionaluniformlemma} gives that $\pc(Y)$ is uniformly $1$-$UV_0$ in the subspace metric.

Corollary \ref{countablecase} gives the existence of a finite subset $\scrf\subseteq \scrq$ such that if $\Gamma_{Y,\scrf}:Y\to Y_{\scrf}$ is the metric quotient map that collapses all elements of $\scrq\backslash \scrf$ to a point, then $\Gamma_{Y,\scrf}\circ\alpha:S^1\to Y_{\scrf}$ is essential.

Since the inclusion $i:Y\to X$ and the quotient map $\Gamma_{Y,\scrf}$ are non-expansive and $\Gamma_{\scrf}\circ i$ is constant on the fibers of $\Gamma_{Y,\scrf}$, the universal property of $\Gamma_{Y,\scrf}$ induces a non-expansive map $i_{\scrf}:Y_{\scrf}\to X_{\scrf}$ that makes the left square commute in the diagram below. Additionally, if $r:X\to Y$ is the canonical non-expansive retraction from Lemma \ref{retractionlemma3}, then $\Gamma_{Y,\scrf}\circ r$ is non-expansive and constant on the fibers of $\Gamma_{\scrf}$. Hence, there exists a unique non-expansive map $r_{\scrf}:X_{\scrf}\to Y_{\scrf}$ for which the right square commutes. Since $r\circ i=id_{Y}$, the universal property of $\Gamma_{Y,\scrf}$ ensures that $r_{\scrf}\circ i_{\scrf}$ is the identity of $Y_{\scrf}$.
\[\xymatrix{
Y \ar[d]_-{\Gamma_{Y,\scrf}} \ar[r]^-{i} & X \ar[d]^-{\Gamma_{\scrf}} \ar[r]^-{r} & Y  \ar[d]^-{\Gamma_{Y,\scrf}}\\
Y_{\scrf} \ar[r]_{i_{\scrf}} & X_{\scrf} \ar[r]_-{r_{\scrf}} & Y_{\scrf}
}\]
We conclude that $Y_{\scrf}$ is a retract of $X_{\scrf}$ with section $i_{\scrf}$. Since $\Gamma_{Y,\scrf}\circ\alpha:S^1\to Y_{\scrf}$ is essential and $i_{\scrf}$ is $\pi_1$-injective, $\Gamma_{\scrf}\circ\alpha= \Gamma_{\scrf}\circ i\circ \alpha=i_{\scrf}\circ\Gamma_{Y,\scrf}\circ\alpha$ is essential.
\end{proof}

We also note that the topological quotient maps $\Gamma_{\scrf}:X\to X_{\scrf}^{qt}$ may also be used to detect essential loops.

\begin{corollary}\label{maincorollary}
Let $(X,\scrp)$ be a disjointly tree-graded space where $\pc(X)$ is uniformly $1$-$UV_0$. Then a loop $\alpha:S^1\to X$ is essential if and only if there exists a finite set of pieces $\scrf\subseteq \scrp$ such that $\Gamma_{\scrf}\circ \alpha:S^1\to X_{\scrf}^{qt}$ is essential in the topological quotient space $X_{\scrf}^{qt}$.
\end{corollary}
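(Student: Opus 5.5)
Corollary \ref{maincorollary} follows from Theorem \ref{mainthm1} once we compare the two topologies on $X_{\scrf}$ via Lemma \ref{identitypoioneisomorphismlemma}. The set-theoretic map $\Gamma_{\scrf}:X\to X_{\scrf}$ factors as the topological quotient map $\Gamma_{\scrf}:X\to X^{qt}_{\scrf}$ followed by the identity function $id:X^{qt}_{\scrf}\to X_{\scrf}$. This identity is continuous because $\Gamma_{\scrf}:X\to X_{\scrf}$ is continuous (it is non-expansive) and $X^{qt}_{\scrf}$ carries the finest topology making $\Gamma_{\scrf}$ continuous. Hence, for any loop $\alpha:S^1\to X$, the metric-quotient loop is the composite $id\circ(\Gamma_{\scrf}\circ\alpha)$, where $\Gamma_{\scrf}\circ\alpha$ is regarded as a loop in $X^{qt}_{\scrf}$.

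For the ``if'' direction, suppose $\Gamma_{\scrf}\circ\alpha$ is essential in $X^{qt}_{\scrf}$ for some finite $\scrf$. If $\alpha$ were inessential in $X$, via a map $g:\bbd\to X$ with $g|_{S^1}=\alpha$, then $\Gamma_{\scrf}\circ g:\bbd\to X^{qt}_{\scrf}$ would be a continuous null-homotopy of $\Gamma_{\scrf}\circ\alpha$, since the topological quotient map is continuous. This is a contradiction, so $\alpha$ is essential. This direction needs no hypothesis on $\pc(X)$.

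For the ``only if'' direction, suppose $\alpha$ is essential. Theorem \ref{mainthm1} provides a finite $\scrf\subseteq\scrp$ such that $\Gamma_{\scrf}\circ\alpha$ is essential in the metric quotient $X_{\scrf}$. If $\Gamma_{\scrf}\circ\alpha$ were inessential in $X^{qt}_{\scrf}$, composing a null-homotopy with the continuous map $id:X^{qt}_{\scrf}\to X_{\scrf}$ would make it inessential in $X_{\scrf}$, which is a contradiction. So only continuity of the identity is needed here, and the isomorphism of Lemma \ref{identitypoioneisomorphismlemma} is not even required for this implication.

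The only care point is the continuity of $id:X^{qt}_{\scrf}\to X_{\scrf}$, which follows directly from the universal property of the quotient topology. One should also remember that a free loop is inessential exactly when it bounds a disk, so basepoints play no role. I expect no real obstacle; the substance is entirely in Theorem \ref{mainthm1}.
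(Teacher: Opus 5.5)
Your proof is correct and follows the paper's argument: apply Theorem \ref{mainthm1} to get essentiality in the metric quotient $X_{\scrf}$, then transfer it back to $X_{\scrf}^{qt}$ through the continuous identity $X_{\scrf}^{qt}\to X_{\scrf}$. You are also right that only continuity of this identity is needed here, so the full isomorphism of Lemma \ref{identitypoioneisomorphismlemma} that the paper cites is more than necessary; you additionally spell out the easy direction, which the paper leaves implicit.
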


\begin{proof}
By Theorem \ref{mainthm1}, there exists finite subset $\scrf\subseteq\scrp$ such that $\Gamma_{\scrf}\circ \alpha$ is essential in the quotient metric space $X_{\scrf}$. By Lemma \ref{identitypoioneisomorphismlemma}, the continuous identity function $X_{\scrf}^{qt}\to X_{\scrf}$ induces an isomorphism of fundamental groups. Thus $\Gamma_{\scrf}\circ \alpha$ is essential in $X_{\scrf}^{qt}$.
\end{proof}


\section{Applications of Theorem \ref{mainthm1}}\label{sectionapplications}

In our primary application of Theorem \ref{mainthm1}, we identify conditions sufficient to conclude that a grade-preserving map is $\pi_1$-injective. 

\begin{remark}
For a disjointly tree-graded space $(X,\scrp)$, each piece $P$ of $X$ is a retract of $X$ and a path-component of $\pc(X)$. Thus the inclusion $\pc(X)\to X$ is $\pi_1$-injective. It follows that if a grade-preserving map $f:(X,\scrp)\to (Y,\scrq)$ is $\pi_1$-injective, then the restriction $f|_{\pc(X)}:\pc(X)\to \pc(Y)$ is $\pi_1$-injective. The converse need not hold since $f$ might map distinct pieces of $X$ into the same piece of $Y$ in an overlapping fashion.  
\end{remark}

\begin{proof}[Proof of Theorem \ref{mainapplicationcorollary}]
The assumptions on $f$ ensure that $f$ is a grade-preserving map. Let $\alpha:S^1\to X$ be an essential loop based at a point $x_0\in X$. By Corollary \ref{maincorollary}, there exists a finite set of pieces $\scrf\subseteq\scrp$ such that $\Gamma_{\scrf}\circ \alpha:\ui\to X_{\scrf}^{qt}$ is essential. Let $Q_P$ denote the piece of $Y$ such that $f(P)\subseteq Q_P$. Let $\scrg=\{Q_P\mid P\in\scrp\}$. By assumption, the function $\scrf\to \scrg$ given by $P\mapsto Q_P$ is bijective. Since $\Gamma_{\scrg}\circ f:X\to Y_{\scrg}^{qt}$ is constant on the fibers of $\Gamma_{\scrf}:X\to X_{\scrf}^{qt}$, there is an induced map $g:X_{\scrs}^{qt}\to Y_{\scrg}^{qt}$ such that $g\circ \Gamma_{\scrf}=\Gamma_{\scrg}\circ f$. We claim that $g$ is $\pi_1$-injective. Once this is done, it will follow that $g\circ \Gamma_{\scrf}\circ \alpha=\Gamma_{\scrg}\circ f\circ\alpha$ is essential and thus $f\circ\alpha$ is essential.

Set $y_0=f(x_0)$, $x_{\scrf}=\Gamma_{\scrf}(x_0)$, and $y_{\scrg}=g(x_{\scrf})$. For each $P\in\scrp$, fix point $x_P\in P$ and let $y_P=f(x_P)\in Q_P$. By Corollary \ref{isocor2}, there are paths $\alpha_P:\ui\to X_{\scrf}$, $P\in\scrf$ from $x_{\scrf}$ to $x_P$ so that the homomorphism $\phi:\ast_{P\in\scrf}\pi_1(P,x_P)\to \pi_1(X_{\scrf},x_{\scrf})$, defined by $\phi([\gamma])=[\alpha_P\cdot\gamma\cdot\alpha_{P}^{-1}]$ for $[\gamma]\in \pi_1(P,x_P)$ is an isomorphism. Let $\beta_P=g\circ \alpha_P:\ui\to Y_{\scrg}$. By the last statement of Corollary \ref{isocor2}, the homomorphism $\psi:\ast_{P\in\scrf}\pi_1(Q_P,y_P)\to \pi_1(Y_{\scrg},y_{\scrg})$, defined by $\phi([\delta])=[\beta_P\cdot\delta\cdot\beta_{P}^{-1}]$ for $[\delta]\in \pi_1(Q_P,y_P)$ is injective (but need not be surjective). Finally, recall that each restriction $g|_{P}:P\to Q_{P}$ agrees with $f|_{P}:P\to Q_P$ and is $\pi_1$-injective by assumption. Therefore, the free product $\ast_{P\in\scrf}(g|_{P})_{\#}:\ast_{P\in\scrf}\pi_1(P,x_P)\to \ast_{P\in\scrf}\pi_1(Q_P,y_P)$ is injective. Since the following diagram commutes, it follows that $g_{\#}$ is injective.
\[\xymatrix{
\pi_1(X_{\scrf},x_{\scrf}) \ar[rr]^-{g_{\#}} && \pi_1(Y_{\scrg},y_{\scrg}) \\
\ast_{P\in\scrf}\pi_1(P,x_P) \ar[u]^-{\phi}_-{\cong} \ar@{^{(}->}[rr]_-{\ast_{P\in\scrf}(g|_{P})_{\#}} && \ast_{P\in\scrf}\pi_1(Q_P,y_P) \ar@{^{(}->}[u]_-{\psi}
}\]

\end{proof}

\begin{definition}
Let $(X,\scrp)$ be a disjointly tree-graded space $(X,\scrp)$. We say that $(X,\scrp)$ is a \textit{string-light space} if it has the property that every piece $P$ of $X$ meets $\ov{X\backslash P}$ at a single point $x_P$ (called the \textit{attachment point} of $P$). If $\scrq\subseteq \scrp$ is the set of non-degenerate pieces of $X$, then we refer to $T=X\backslash \bigcup\{P\backslash\{x_p\}\mid P\in\scrq\}$ as the \textit{wire} of $(X,\scrp)$.
\end{definition}

The wire $T$ of a string-light space $(X,\scrp)$ may be identified with the metric quotient $X_{\scrp}$ where each piece of $X$ is identified to a point. Hence, $T$ is a topological $\bbr$-tree. In the next application of our main result, we consider maps $X\to X/T$ that collapse the wire of a string-light space while permitting the quotient $X/T$ to be either a topological or metric quotient (formally $X/T$ may be equipped with any topology between the quotient topology and the topology induced as a subspace of the direct product $\prod_{P\in\scrq}P$).

\begin{corollary}\label{quotientcor1}
Let $(X,\scrp)$ be a string-light space with wire $T$ and such that $\pc(X)$ is uniformly $1$-$UV_0$. Suppose that $f:X\to Y$ is a map satisfying the following:
\begin{enumerate}
\item $f(T)$ is a point,
\item $f$ is bijective on $X\backslash T$,
\item if $P$ is a non-degenerate piece of $X$ and $r_P:X\to P$ is the canonical retraction, then there is a map $R_P:Y\to P$ such that $R_P\circ f=r_P$.
\end{enumerate}
Then $f$ is $\pi_1$-injective.
\end{corollary}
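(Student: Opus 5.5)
Take an essential loop $\alpha:S^1\to X$ and aim to show $f\circ\alpha$ is essential. By Theorem \ref{mainthm1} (or Corollary \ref{maincorollary}) there is a finite set $\scrf=\{P_1,\dots,P_m\}$ of non-degenerate pieces such that $\Gamma_{\scrf}\circ\alpha$ is essential in $X_{\scrf}$; degenerate pieces may be discarded since collapsing them changes nothing. We then detect $f\circ\alpha$ through the finitely many maps $R_{P_i}:Y\to P_i$, for which $R_{P_i}\circ f=r_{P_i}$. The idea is that for a string-light space, $X_{\scrf}$ is homotopy equivalent to a wedge $\bigvee_{i=1}^m P_i$ and the map $X\to\prod_i P_i$ given by $(r_{P_1},\dots,r_{P_m})$ should play the role of the canonical map from a wedge to a product. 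That map kills commutators, however, so the product alone will not do; we need to factor through a wedge.

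\textbf{Step 1.} Build the wedge map. Let $W=\bigvee_{i}(P_i,x_{P_i})$, the wedge formed by identifying the attachment points. Then $\Gamma_{\scrf}$ followed by collapsing the image of the wire is a map $c:X\to W$. Since $T$ is an $\bbr$-tree, the image of the wire in $X_{\scrf}$ is a contractible subtree containing the attachment points, so $X_{\scrf}\to W$ is $\pi_1$-injective (indeed a $\pi_1$-isomorphism, by Corollary \ref{isocor2} and the free product decomposition). Hence $c\circ\alpha$ is essential in $W$, and $\pi_1(W)\cong\ast_i\pi_1(P_i)$. Conditions (1) and (2) give that $c$ is constant on fibers of $f$: the points identified by $f$ all lie in $T$, and $c(T)$ is the wedge point. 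So set-theoretically $c=\bar c\circ f$ on $f(X)$.

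\textbf{Step 2.} This is the main obstacle: making $\bar c$ continuous on $f(X)$, or replacing it with something that is. We use (3). On $f(X)$, define $\bar c(y)=R_{P_i}(y)$ if $R_{P_i}(y)\neq x_{P_i}$ for some $i$ (at most one such $i$, since $f$ is injective off $T$ and $r_{P_i}(x)\ne x_{P_i}$ forces $x\in P_i\setminus\{x_{P_i}\}$), and the wedge point otherwise. Continuity follows by a pasting argument: the sets $R_{P_i}^{-1}(P_i\setminus\{x_{P_i}\})$ are open and pairwise disjoint in $f(X)$, and on a neighborhood of a point where all $R_{P_i}$ equal the base point, each coordinate is close to its base point. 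Here the finiteness of $\scrf$ is essential, since it lets us control the wedge topology coordinatewise. This gives a continuous $\bar c:\im(f\circ\alpha)\to W$ with $\bar c\circ f\circ\alpha=c\circ\alpha$. To avoid needing $\bar c$ on all of $Y$, it suffices to work on the Peano continuum $\im(f\circ\alpha)$ together with a null-homotopy's image, defining $\bar c$ on the whole image of a null-homotopy $H:\bbd\to Y$ by the same formula, which is valid on all of $Y$ since it uses only the $R_{P_i}$.

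\textbf{Step 3.} Conclude. If $f\circ\alpha$ were null-homotopic via $H$, then $\bar c\circ H$ would null-homotope $c\circ\alpha$ in $W$, contradicting Step 1. One caveat remains: the formula for $\bar c$ on $Y\setminus f(X)$ requires that at most one $R_{P_i}(y)$ differs from its attachment point. If this fails, we instead define $\bar c$ by composing $(R_{P_1},\dots,R_{P_m}):Y\to\prod_i P_i$ with a retraction-like collapse onto the wedge. If even that is unavailable, the fallback is to note that $f$ maps pieces injectively into $Y$ and apply the free-product argument in the proof of Theorem \ref{mainapplicationcorollary}, using the $R_{P_i}$ to certify $\pi_1$-injectivity on each factor.
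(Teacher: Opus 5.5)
Your main line is correct, and it runs in the direction that actually works. Put $W=\bigvee_{i=1}^{m}P_i$ and let $\mu\colon W\to\prod_{i=1}^{m}P_i$ be the canonical map onto the union of the axes. For a finite wedge in which points are closed, $\mu$ is a closed embedding, and your pasting argument in Step 2 amounts to exactly this fact. Then $\bar c=\mu^{-1}\circ(R_{P_1},\dots,R_{P_m})$ satisfies $\bar c\circ f=\mu^{-1}\circ(r_{P_1},\dots,r_{P_m})=c$ directly from (3), so the fiber-by-fiber discussion in Step 1 is not needed. If $c\circ\alpha$ is essential, so is $f\circ\alpha$.

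The paper goes the other way. It asserts a factorization $f=\zeta\circ g\circ\Gamma_{\scrf}$ through the wedge and then argues that $\zeta$ is a section. That factorization only holds on $T\cup\bigcup\scrf$: $g\circ\Gamma_{\scrf}$ collapses every non-degenerate piece outside $\scrf$ to the wedge point, while $f$ is injective on such a piece. The left inverse of $\zeta$ implicit in the paper's last step, namely $\mu^{-1}\circ R$, is exactly your $\bar c$. Composing with it on the left, as you do, is what makes the argument go through.

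Two steps still need repair.

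First, Step 1. Contractibility of the image of the wire does not by itself give $\pi_1$-injectivity of $X_{\scrf}\to W$ when pieces are wild at their attachment points. Your parenthetical claim that this map is a $\pi_1$-isomorphism is false in general. Attach Hawaiian earrings $P_1,P_2$ at their wild points to the two ends of an arc and take $\scrf=\{P_1,P_2\}$; one-dimensional pieces are uniformly $1$-$UV_0$, so this is allowed. Then the image of $\pi_1(X_{\scrf})\cong\pi_1(P_1)\ast\pi_1(P_2)$ in $\pi_1(W)$ is a proper subgroup. The correct justification is the paper's. Choose the conjugating paths of Corollary \ref{isocor2} inside the image of the wire, so that the collapse carries the isomorphism $\ast_i\pi_1(P_i,x_{P_i})\cong\pi_1(X_{\scrf})$ to the canonical homomorphism $\ast_i\pi_1(P_i,x_{P_i})\to\pi_1(W,w_0)$. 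Then invoke Eda's theorem that this homomorphism is injective for finite wedges \cite[Proposition 5]{Edaonepointunions}. This is the one nontrivial external input, and a contractibility argument cannot replace it.

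Second, the caveat in Step 3. Resolve it by reading (2) as saying that $f$ maps $X\backslash T$ bijectively onto $Y\backslash f(T)$, so $f$ is surjective. This is the intended reading, since $Y$ is meant to be $X/T$ with a topology between the quotient and product topologies. Then each $y\in Y$ has at most one coordinate with $R_{P_i}(y)\neq x_{P_i}$, so $\bar c$ is defined and continuous on all of $Y$, and no restriction to the image of a null-homotopy is needed.

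Drop your fallbacks: under the weaker reading (mere injectivity off $T$) the statement is false. Let $X$ be an arc with circles $P_1,P_2$ attached at its endpoints and $Y=S^1\times S^1$. Let $f$ be the collapse of the arc followed by the inclusion $S^1\vee S^1\hookrightarrow S^1\times S^1$, and let $R_{P_i}$ be the coordinate projections. All standing hypotheses, (1), and (3) hold, and $f$ is injective off $T$, yet $f_{\#}$ kills the commutator. In particular, there is no retraction of $\prod_i P_i$ onto the wedge in general. Theorem \ref{mainapplicationcorollary} is also unavailable, because $Y$ need not be disjointly tree-graded.
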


\begin{proof}
First, we make an observation about the hypotheses on $f$. Let $\scrq$ denote the set of non-degenerate pieces of $X$ and let $\sigma_P:P\to X$ denote the inclusion map for each piece $P$ of $X$. Then $R_P\circ f\circ \sigma_P=r_P\circ \sigma_P=id_{P}$ when $P\in\scrq$. Therefore, $f$ maps each $P\in\scrq$ homeomorphically onto $f(P)$. Let $\zeta_P=f\circ \sigma_P:P\to Y$ denote the resulting embedding.

Suppose $\alpha:S^1\to X$ is an essential loop based at $x_0\in T$ and let $y_0\in Y$ be the image of $T$. Using Corollary \ref{maincorollary}, find a finite set of a $\scrf=\{P_1,P_2,\dots,P_n\}\subseteq \scrp$ such that if $\Gamma_{\scrf}:X\to X_{\scrf}^{qt}$ is the topological quotient map, then $\Gamma_{\scrf}\circ\alpha:S^1\to X_{\scrf}^{qt}$ is essential. Let $x_i$ denote the attachment point of $P_i$, which we also take as the basepoint of $P_i$, and let $\bigvee_{i=1}^{n}P_i$ be the finite wedge with wedgepoint $w_0$. Let $g:X_{\scrf}^{qt}\to \bigvee_{i=1}^{n}P_i$ be the quotient map that sends the topological tree $\Gamma_{\scrf}(T)$ in $X_{\scrf}^{qt}$ to $w_0$. The hypotheses on $f$ imply that $f$ is constant on the fibers of $g\circ \Gamma_{\scrf}$ and so there is an induced map $\zeta:\bigvee_{i=1}^{n}P_i\to Y$ such that $\zeta\circ g\circ \Gamma_{\scrf}=f$. We claim that both $g$ and $\zeta$ are $\pi_1$-injective from which it will follow that $f\circ\alpha$ is essential.

It follows from Corollary \ref{isocor2} that the inclusion maps $P_i\to X_{\scrf}^{qt}$ induce an isomorphism $\theta_1:\ast_{i=1}^{n}\pi_1(P_i,x_{i})\to \pi_1(X_{\scrf}^{qt},x_0)$ (here, we choose the conjugating paths from $x_{i}$ to $x_0$ to have image entirely in $T$). The inclusion maps $P_i\to \bigvee_{i=1}^{n}P_i$ also induce a canonical homomorphism  $\theta_2:\ast_{i=1}^{n}\pi_1(P_i,x_{i})\to \pi_1(\bigvee_{i=1}^{n}P_i,w_0)$, which is known to always be injective \cite[Proposition 5]{Edaonepointunions}. Because of our choice of conjugating paths for $\theta_1$, we have $\theta_2=g_{\#}\circ\theta_1$. It follows that $g_{\#}$ is injective.
\[\xymatrix{
\pi_1(X,x_0) \ar[rr]^-{f_{\#}} \ar[d]_-{\Gamma_{\scrf\#}} && \pi_1(Y,y_0) \\
\pi_1(X_{\scrf},x_0) \ar[rr]_-{g_{\#}} && \pi_1(\bigvee_{i=1}^{n}P_i,w_0) \ar[u]_-{\zeta_{\#}}\\
& \ast_{i=1}^{n}\pi_1(P_i,x_i) \ar[ul]^-{\theta_1}_{\cong} \ar@{^{(}->}[ur]_-{\theta_2}
}\]
Let $\rho_i: \prod_{i=1}^{n}P_i\to P_i$ be the product projection maps. Note that there is a canonical embedding $\mu:\bigvee_{i=1}^{n}P_i\to \prod_{i=1}^{n}P_i$ for which $\rho_i\circ \mu=R_{P_i}\circ\zeta$ for each $i$. The maps $R_{P_i}:Y\to P_i$ together induce a map $R:Y\to \prod_{i=1}^{n}P_i$ for which $\rho_i\circ R=R_{P_i}$. Since $\rho_i\circ \mu = R_{P_i}\circ \zeta= \rho_i\circ R\circ\zeta$ for each $i$, we have $R\circ \zeta=\mu$. Thus $\mu$ is an embedding. It follows that $\zeta$ is a section and is therefore $\pi_1$-injective.
\[\xymatrix{
P_i \ar@{_{(}->}[dr] \ar[r]^-{\zeta_{P_i}} &Y \ar@/^1.5pc/[rr]^-{R_{P_i}} \ar[r]_-{R} & \prod_{i=1}^{n}P_i \ar[r]_-{\rho_i} & P_i \\
& \bigvee_{i=1}^{n}P_i \ar[u]^-{\zeta} \ar@{^{(}->}[ur]_-{\mu}
}\]
\end{proof}

\end{document}